\documentclass[twoside,11pt]{article}
\usepackage{geometry}
\usepackage{amsmath,amsfonts,amssymb,bm,mathrsfs,amsthm,amsxtra,xcolor}

\usepackage{graphicx}
\usepackage{indentfirst}                                
\usepackage[colorlinks]{hyperref}
\hypersetup{linkcolor=blue,filecolor=black,urlcolor=blue, citecolor=black}   
\usepackage[numbers,sort&compress]{natbib}         

\ifxetex
\usepackage{letltxmacro}
\LetLtxMacro\SavedIncludeGraphics\includegraphics
\def\includegraphics#1#{
	\IncludeGraphicsAux{#1}%
}%
\newcommand*{\IncludeGraphicsAux}[2]{%
	\XeTeXLinkBox{%
		\SavedIncludeGraphics#1{#2}%
}}
\fi

\usepackage[titletoc,title]{appendix}
\usepackage{titlesec}
\titlespacing{\section}{0pt}{2.5ex}{1.5ex}
\titlespacing{\subsection}{0pt}{1.5ex}{1ex}
\titlespacing{\subsubsection}{0pt}{1.5ex}{1ex}
\titleformat{\section}{\large\bfseries\centering}{\thesection}{1em}{}
\titleformat{\subsection}[runin]{\bfseries}{\thesubsection.}{0.5em}{}[.\mbox{\ }]
\titleformat{\subsubsection}[runin]{\bfseries}{\thesubsubsection.}{0.4em}{}[.\mbox{\ }]

\numberwithin{equation}{section}
\newtheorem{lemma}{Lemma}[section]
\newtheorem{proposition}[lemma]{Proposition}
\newtheorem{theorem}{Theorem}[section]

\newtheorem{definition}{Definition}[section]
\newtheorem{remark}{Remark}[section]

\usepackage{ulem}

\newcommand{\VERT}{\vert\kern-0.3ex\vert\kern-0.3ex\vert}
\def\d{\,\mathrm{d}}
\def\p{\partial}

\usepackage{accents}
\makeatletter
\def\widebar{\accentset{{\cc@style\underline{\mskip10mu}}}}
\makeatother

\newcommand\w[1]{\makebox[1em]{$#1$}}

\begin{document}
	\title{\bf Stability and existence of relativistic plasma--vacuum interfaces\let\thefootnote\relax\footnotetext{%
The research of {\sc Paolo Secchi} was supported by the Italian MIUR Project PRIN 20204NT8W4-002.
			The research of {\sc Yuri Trakhinin} was carried out within the framework of the state contract of the Sobolev Institute of Mathematics (Project No.~FWNF-2026-0028).
			The research of {\sc Tao Wang} was supported by the National Natural Science Foundation of China under Grants No.~12422109, 12426203, 12371225, and 12221001.
		}
	}

	\author{
		{\sc Paolo Secchi}
		\thanks{DICATAM, Sezione di Matematica, Universit\`a di Brescia, Via Valotti 9, 25133 Brescia, Italy. e-mail: paolo.secchi@unibs.it}	
		\qquad  \ \
		{\sc Yuri Trakhinin}
		\thanks{Sobolev Institute of Mathematics, Koptyug av.~4, 630090 Novosibirsk, Russia. e-mail: trakhin@math.nsc.ru}
		\qquad  \ \
		{\sc Tao Wang}
		\thanks{School of Mathematics and Statistics, Wuhan University, Wuhan 430072, China. e-mail: tao.wang@whu.edu.cn
		}
	}
	
	\vspace{1mm}
	
	\date{\today}
	
	\maketitle

	\vspace{-4mm}
	
	{\small
		\noindent{\bf Abstract}:\quad
We consider the free boundary problem for relativistic plasma--vacuum interfaces in two and three spatial dimensions. The plasma flow is governed by the equations of ideal relativistic magnetohydrodynamics, while the vacuum magnetic and electric fields satisfy Maxwell's equations.
The plasma and vacuum magnetic fields are tangential to the interface, which moves with the plasma flow.
This yields a nonlinear, multidimensional hyperbolic problem with a free boundary that is characteristic of variable multiplicity. We identify a quantitative stability condition and establish the linear stability of three-dimensional relativistic plasma--vacuum interfaces in the sense that the variable-coefficient linearized problem satisfies energy estimates in anisotropic Sobolev spaces.
In estimating tangential derivatives, we exploit an intrinsic cancellation effect to convert the boundary term into an instant integral.
We then separate the estimate involving spatial derivatives from that involving time derivatives,  so that the instant integral can be mainly absorbed by the instant energy under the stability condition.
Moreover, we prove the local-in-time existence and uniqueness of solutions to the nonlinear problem in two-dimensional space, provided that the plasma and vacuum magnetic fields do not vanish simultaneously at any point of the initial interface.
The proof combines the solvability and tame estimate of the linearized problem with a suitable modified Nash--Moser iteration.
In particular, to establish its solvability, the two-dimensional linearized problem is reduced to a transport equation for the interface function and a hyperbolic boundary problem with maximally nonnegative boundary conditions.

		\vspace{2mm}
		\noindent{\bf Keywords}:\quad
		relativistic magnetohydrodynamics,
		Maxwell's equations in vacuum,
		characteristic boundary of variable multiplicity,
				free boundary problem,
		linear stability,
		nonlinear existence

		\vspace{1mm}
		\noindent{\bf Mathematics Subject Classification (2020)}:\quad
		76W05,
		35L65,
		35R35,
		35Q75,
		83A05
		
		
		\vspace{2mm}
		{\footnotesize \tableofcontents}
	}

	\vspace{2mm}

	\section{Introduction}\label{sec:intro}

We study the stability and existence of an interface that separates an ideal (i.e., neglecting viscosity and electrical resistivity) relativistically flowing plasma from a vacuum. In the plasma region $\Omega^+(t) \subset\mathbb{R}^d$, we consider the following equations of ideal relativistic magnetohydrodynamics (RMHD) in Minkowski spacetime $\mathbb{R}^{1+d}$ for the spatial dimension $d=2,3$:
	\begin{subequations} \label{RMHD0}
		\begin{alignat}{2}
			\label{RMHD0a}
			&\p_t(\rho \varGamma)+\nabla\cdot (\rho \varGamma v)=0,\\[0.5mm]
			\label{RMHD0b}	
			&\p_t\big(\rho \mathfrak{f} \varGamma^2+\epsilon^2|H|^2-\epsilon^2q\big)
			+\nabla \cdot \big( \rho \mathfrak{f} \varGamma^2 v+\epsilon^2|H|^2 v -\epsilon^2( v\cdot H)H \big)=0,\\[0.5mm]
			\nonumber
			&\p_t\big( \rho \mathfrak{f} \varGamma^2 v+\epsilon^2|H|^2 v -\epsilon^2( v\cdot H)H \big)-\nabla\cdot \big(\varGamma^{-2}H\otimes H\big)+\nabla q\\[0.5mm]
			\label{RMHD0c}
			&\hspace*{1.85em}+\nabla\cdot \big( (\rho \mathfrak{f} \varGamma^2+\epsilon^2 |H|^2)v\otimes v-\epsilon^2 (v\cdot H)(H\otimes v+v\otimes H) \big)=0,\\[0.5mm]
			\label{RMHD0d}
			&\p_t H-\nabla\times (v\times H)=0,
		\end{alignat}
	\end{subequations}
	together with the divergence constraint
	\begin{align} \label{H.inv1}
		\nabla\cdot H=0.
	\end{align}
	Here the particle number density $\rho$,
	the coordinate velocity $v=(v_1,\ldots,v_d)$,
	the magnetic field $H=(H_1,\ldots,H_d)$,
	and the total pressure $q$ are unknown functions of time $t$ and spatial variables $x=(x_1,\ldots,x_d)$.
	The parameter $\epsilon^{-1}$ represents the speed of light
	and the velocity $v$ satisfies the physical condition $|v|<\epsilon^{-1}$.
	The Lorentz factor $\varGamma$, the index of the fluid $\mathfrak{f}$, and the total pressure $q$ are of the form
	\begin{align}  \label{Gamma:def}
		\varGamma=\frac{1}{\sqrt{1-\epsilon^2|v|^2}},\quad
		\mathfrak{f}=1+\epsilon^{2}\left(\mathfrak{e}+\frac{p}{\rho}\right),\quad
		q=p+\frac{|H|^2}{2\varGamma^{2}}+\frac{\epsilon^2}{2} (v\cdot H)^2,
	\end{align}
	where $\mathfrak{e}$ is the internal energy and $p$ is the pressure.
	The equations \eqref{RMHD0} constitute a closed system of conservation laws for the unknowns $U:=(q,v,H,S)\in \mathbb{R}^{2d+2}  $ through smooth constitutive relations $\rho=\rho(p, S)$ and $\mathfrak{e}=\mathfrak{e}(p, S)$, which satisfy the Gibbs relation
	\begin{align} \label{Gibbs}
		\vartheta \d S=\d \mathfrak{e}-\frac{p}{\rho^2} \d \rho,
	\end{align}
	where $S$ is the entropy and $\vartheta>0$ is the absolute temperature.
See Section \ref{sec:nontation} for  the conventional notation in the vector calculus
and Appendix \ref{app:A} for a derivation of the equations \eqref{RMHD0}--\eqref{H.inv1}.

We consider for plasmas very general constitutive relations satisfying the physical assumption that the relativistic sound speed $c_{\rm s}=c_{\rm s}(\rho, S)$ is positive and smaller than the speed of light, i.e.,
\begin{align} \label{cs:def}
	c_{\rm s}(\rho, S):=\sqrt{\frac{1}{\mathfrak{f}}\frac{\p p}{\p \rho} (\rho, S)}\in (0,\epsilon^{-1})
	\qquad \textrm{for all }  \rho \in (\rho_*,\rho^*) ,	
\end{align}
where $\rho_*$, $\rho^*$ are some nonnegative constants.
Then the RMHD equations \eqref{RMHD0} can become symmetric hyperbolic for smooth solutions satisfying $\rho_*<\rho<\rho^*$, as established by {\sc Ruggeri--Strumia} \cite{RS81MR0628566} and {\sc Anile--Pennisi} \cite{A90zbMATH05040442, AP87MR0877994} via the Godunov's procedure.
Furthermore, a concrete form of symmetric matrices was derived by {\sc Freist\"{u}hler--Trakhinin} \cite{FT13MR3044369} for the study of relativistic current-vortex sheets using the Lorentz transform.
For the detailed symmetrization for RMHD, we refer the reader to Appendix \ref{app:A}.
	
In the vacuum region $\Omega^-(t)\subset\mathbb{R}^d$, corresponding to the dynamics of plasmas,
the evolution of the magnetic field $h\in \mathbb{R}^d$ and the electric field $e\in \mathbb{R}^{2d-3}$ is governed by Maxwell's equations in vacuum (see {\sc Jackson} \cite[\S 1.1]{J75MR0436782})
	\begin{align} \label{Maxwell1}
 \epsilon \p_t h+\nabla\times e=0,\qquad
 \epsilon \p_t e-\nabla\times h=0,
	\end{align}
together with  the divergence constraints
\begin{align}
	\label{h.inv1}
	\nabla\cdot h=0,
\end{align}	
and
\begin{alignat}{3}
	\label{e.inv1}
	&\nabla\cdot e=0\qquad &&\textrm{if }d=3.
\end{alignat}	
The two-dimensional Maxwell's equations follow under the assumptions that no unknown depends on $x_3$, and that the third component of the magnetic field and the first two components of the electric field are identically zero. 

The plasma--vacuum interface $\Sigma(t)=\overline{\Omega^+(t)}\cap \overline{\Omega^-(t)}
$ is assumed to move with the plasma flow. Let the plasma and vacuum magnetic fields remain tangential to $\Sigma(t)$. Then the conservation laws \eqref{RMHD0}--\eqref{H.inv1} and \eqref{Maxwell1}--\eqref{e.inv1}  yield the boundary conditions
	\begin{gather}
			\label{BC1}
	H\cdot \bm{n}=0,\ \   h\cdot \bm{n}=0, \ \
				\mathcal{V}=v\cdot \bm{n},\ \
 2q=  |h|^2-|e|^2,\ \
 \epsilon \mathcal{V} h=\bm{n}\times e
 \quad \textrm{on }\Sigma(t),
		\end{gather}	
where we denote by $\bm{n}$ the unit normal pointing into $\Omega^+(t)$ and by $\mathcal{V}$ the normal speed of $\Sigma(t)$. 		
The last two conditions in \eqref{BC1} come from the balance of normal stresses at $\Sigma(t)$. A derivation of \eqref{BC1} is provided in Appendix \ref{app:B}. The system \eqref{RMHD0}--\eqref{BC1} is supplemented with the initial conditions
\begin{align} \label{IC0}
	\Omega^+(0)=\Omega_0^+,\ \
		\Omega^-(0)=\Omega_0^-,\ \
	U|_{t=0}=U_0\   \textrm{in }\Omega_0^+,\ \
	(h,e)|_{t=0}=(h_0,e_0)\   \textrm{in }\Omega_0^-,
\end{align}
where the domains $\Omega_0^{\pm}\subset \mathbb{R}^d$ and the data $(U_0,h_0,e_0)\in \mathbb{R}^{5d-1}$ are prescribed and satisfy the constraints \eqref{H.inv1}, \eqref{h.inv1}--\eqref{e.inv1}, and $|v_0|<\epsilon^{-1}$.

Plasma--vacuum interface problems in non-relativistic and relativistic magnetohydrodynamics (MHD) arise in the mathematical modeling of plasma confinement by magnetic fields \cite{GKP19,G84,BFKK58MR0091737}. The non-relativistic MHD model is adequate for laboratory and astrophysical plasmas with non-relativistic velocities, whereas the RMHD model, which accounts for relativistic effects, is appropriate for relativistically flowing plasmas \cite{A90zbMATH05040442,Lichnerowicz67,GKP19}.

For non-relativistic plasma--vacuum interface problems, the displacement current $\epsilon\partial_te$ is neglected in Maxwell's equations \eqref{Maxwell1}, so that the vacuum magnetic field $h$ satisfies the (elliptic) div-curl system and the vacuum electric field $e$ becomes a secondary variable. In ideal compressible MHD, this leads to a nonlinear hyperbolic--elliptic coupled problem with a free boundary that is {\it characteristic of constant multiplicity} (i.e., the boundary matrix is singular and has constant rank on the boundary; cf.~\cite{R85MR0797053,OSY95MR1346224,S95MMASMR1346663,S96aMR1401431}).

Local well-posedness for non-relativistic plasma--vacuum interface problems has been studied extensively in the recent decades.
For linear stability of 3D plasma--vacuum interfaces in ideal compressible MHD,
the second author \cite{T10MR2718711} proposed two different conditions:
the non-collinearity condition
\begin{align} \label{non-col1}
	|H\times h|\geq \delta>0\qquad \textnormal{on }\Sigma(t)
\end{align}
and  the Taylor-type sign condition
\begin{align} \label{Taylor}
	 \bm{n}\cdot \nabla (q-\tfrac{1}{2}|h|^2)\leq -\delta<0\qquad \textnormal{on }\Sigma(t),
\end{align}
for the basic state around which the problem was linearized,
where $\delta$ is some positive constant.
Basic {\it a priori} estimates were derived in \cite{T10MR2718711} for the {\it variable coefficient} linearized problem under the non-collinearity condition \eqref{non-col1} and for the {\it constant coefficient} linearized problem under the Taylor-type sign condition \eqref{Taylor}.
Subsequently, the first two authors \cite{ST14MR3151094,ST13MR3148595} proved the linear and nonlinear existence of solutions for the 3D plasma--vacuum interface problem under the non-collinearity condition \eqref{non-col1}.  Also see \cite{STW23MR4654316} for another proof of the linear existence.
In the case of zero vacuum magnetic field ($h\equiv 0$),
under the Taylor-type sign condition \eqref{Taylor},
the second and third authors \cite{TW21MR4201624} established
the nonlinear existence in two and three spatial dimensions,
and {\sc Lindblad--Zhang} \cite{LZ23MR4632835} proved
the {\it a priori} estimates without loss of anisotropic regularity.
Recently, {\sc Morando et al.}~\cite{MSTTY24MR4754746} constructed the local solutions to the 2D nonlinear problem, provided that the plasma and vacuum magnetic fields do not vanish simultaneously at any point of the initial interface, that is,
\begin{align} \label{non-vanishing}
	|H|+|h|\geq \delta>0\qquad \textnormal{on }\Sigma(t)
\end{align}
holds at the initial time.
As for the ideal incompressible case, we refer to \cite{MTT14MR3237563,HL21MR4295166} for linear well-posedness, \cite{H17MR3614754,HL14MR3187678,SWZ19MR3981394,GW19MR3980843,LL26MR4976713,LX25MR4861065,Z24MR4668328} for nonlinear well-posedness,
and \cite{HL20MR4093862} for nonlinear ill-posedness.

On the other hand, for well-posedness of the relativistic plasma--vacuum interface problem \eqref{RMHD0}--\eqref{IC0},
the only result that we know is \cite[Theorem 4.2]{T12MR2974767}, where a basic energy estimate was deduced for the 3D linearized problem with variable coefficients satisfying the non-collinearity condition \eqref{non-col1} and the assumption that $|e_1+\epsilon v_2h_3-\epsilon v_3h_2|$ is sufficiently small.
A relevant model is the MHD--Maxwell interface problem, which consists of the non-relativistic MHD equations for plasma flow, Maxwell's equations for vacuum fields, and suitable boundary conditions on the interface.
For this model, basic energy estimates for the 3D and 2D linearized problem were established in \cite{MT14MR3194371,CDS14MR3248397,MSTT20MR4124205} and \cite{CDS16MR3527627,T24MR4793931}, respectively.

{In this paper, we identify a quantitative stability condition and establish the linear stability of 3D relativistic plasma--vacuum interfaces in the sense that the variable-coefficient linearized problem satisfies appropriate energy estimates (cf.~\eqref{stability3D} and Theorem \ref{thm:main1}).}
Our stability condition \eqref{stability3D} specifies an explicit domain for the linear stability of 3D relativistic plasma--vacuum interfaces, which substantially refines the smallness assumption in \cite[Theorem 4.2]{T12MR2974767}.
Moreover, we prove the local-in-time existence and uniqueness of solutions to the nonlinear problem \eqref{RMHD0}--\eqref{IC0} in 2D, provided that the condition \eqref{non-vanishing} is satisfied by the initial data.
To the best of our knowledge, this is the first rigorous result on the existence of relativistic plasma--vacuum interfaces.

Unlike its non-relativistic counterpart, the relativistic plasma--vacuum interface problem \eqref{RMHD0}--\eqref{IC0} is a nonlinear hyperbolic problem, and the moving interface $\Sigma(t)$ is {\it characteristic of variable multiplicity}, that is, the boundary matrix is singular with non-constant rank on $\Sigma(t)$.
According to the well-posedness theory of hyperbolic boundary problems \cite{BGS07MR2284507,CP82MR0678605}, a different number of boundary conditions should be imposed in different parts of the interface, depending on the sign of the normal velocity of the interface (plasma expansion into vacuum or vice versa); see \cite{T12MR2974767, T24MR4793931,CDS14MR3248397} for a thorough discussion of these issues.
To overcome this difficulty, we reformulate \eqref{RMHD0}--\eqref{IC0} to a symmetrizable hyperbolic problem with {\it characteristic boundary of constant multiplicity} by means of the vacuum divergence equations \eqref{h.inv1} and \eqref{e.inv1}. We shall prove that \eqref{h.inv1} and \eqref{e.inv1}  hold automatically for $t>0$ if they are satisfied at the initial time.

For technical simplicity, we suppose that the interface $\Sigma(t)$ has the form of a graph, which allows us to transform the nonlinear problem into a fixed domain by a simple lift of the graph. More general interfaces can be handled by standard but technically involved arguments as in \cite[Remark 2.3]{T09CPAMMR2560044}.
In general, for hyperbolic problems with a characteristic boundary, there is a loss of normal derivatives of the characteristic variables in higher-order energy estimates \cite{S96MR1405665,C07MR2289911}.
As in \cite{
	CW08MR2372810, ST13MR3148595,
	LZ23MR4632835,
	T09ARMAMR2481071,YM91MR1092572,TW21MR4201624,TW22aMR4444136,TW22cMR4506578,MSTTY24MR4754746,ST14MR3151094,STW23MR4654316,MSTY23MR4586863} for various characteristic boundary problems in ideal compressible MHD, where the full regularity cannot be expected,
we shall work for plasmas in anisotropic Sobolev spaces $H_*^m$, first introduced by {\sc Chen} \cite{C07MR2289911}, where two tangential derivatives count as one normal derivative.

To study well-posedness of the variable coefficient linearized problem around the basic state $(\mathring{U},\mathring{h},\mathring{e},\mathring{\varphi})$, we employ Alinhac's good unknowns \cite{A89MR976971} to obtain the effective linear problem \eqref{ELP1}.
For problem \eqref{ELP1}, we construct certain auxiliary functions to perform a partial homogenization, so that the solution satisfies several important homogeneous constraints (cf.~Lemma \ref{lem:homo}).
Then we introduce suitable change of unknowns to separate the non-characteristic variables from the system \eqref{ELP4}.

For the 3D problem, the term $\mathring{e}_1+\epsilon\mathring{v}_2\mathring{h}_3-\epsilon\mathring{v}_3\mathring{h}_2$ does not vanish generally, and hence the boundary conditions involve the traces of the characteristic variables (specifically, the trace of the normal electric field $w_4$), 
which is completely different from the previous works such as \cite{CW08MR2372810,CS08MR2423311,FT13MR3044369,LZ23MR4632835,MSTY23MR4586863,MSTTY24MR4754746,ST13MR3148595,ST14MR3151094,STW23MR4654316,T09ARMAMR2481071,T09CPAMMR2560044,T10MR2718711,TW21MR4201624,TW22aMR4444136,TW22cMR4506578,YM91MR1092572}.
New ideas are required to control the problematic boundary term $\mathcal{Q}_{\alpha}$ arising in the estimate of tangential derivatives due to the complex nature of the boundary conditions.
To overcome this difficulty, we show an intrinsic cancellation effect by means of the boundary conditions and
the restriction of the interior equations on the boundary.
This cancellation reduces the boundary term $\mathcal{Q}_{\alpha}$ to
the instant boundary integral $\mathcal{R}_{\alpha}$ ({\it cf.}~\eqref{Q:es1}--\eqref{R.cal:def}).

To identify a quantitative condition for 3D linear stability, we separate the estimate of $\mathcal{Q}_{\alpha}$ involving spatial derivatives from that involving time derivatives.
More precisely, we first consider the case with spatial derivatives.
Under the non-collinearity condition \eqref{non-coll},
from the homogeneous constraints \eqref{H.cons3} and \eqref{h.cons3}, we can transform the spatial derivatives of the interface function $\psi$ into the traces of the normal magnetic fields $W_5$ and $w_1$ (cf.~\eqref{psi:id1.3D}).
Then we apply a localization method to convert the instant boundary integral $\mathcal{T}$ defined by \eqref{T.cal:def} into volume integrals involving the normal derivatives $( \p_1 W_5, \p_1 w_1, \p_1  w_4)$.
Using the homogeneous constraints \eqref{H.inv3}--\eqref{e.inv3} and an explicit lower bound for the instant energy, we can identify the stability condition \eqref{stability3D} on the basic state, under which the volume integrals can be mainly absorbed by the instant energy.
Then we are led to deal with the estimate of $\mathcal{R}_{\alpha}$ involving time derivatives.
To this end, we observe from boundary conditions \eqref{ELP4}, identities \eqref{psi:id1.3D}, and cancellation property \eqref{cancellation} that the time derivatives $\p_t \psi$ and $\p_t w_4$ can be transformed into the traces of $(W_5, w_1, w_4)$ and $\p_3\mu_2-\p_2\mu_3$, respectively.
As a consequence, we can apply the argument of localization and passing to the volume integral to
control the boundary integral $\mathcal{R}_{\alpha}$ by the instant energy without imposing any additional assumption.
With these estimates and the estimate for the front $\psi$ in hand,
we can deduce the desired higher-order estimate \eqref{tame.es} for the effective linear problem \eqref{ELP1} and therefore establish the quantitative linear stability of 3D relativistic plasma--vacuum interfaces.

{By contrast},
for the 2D linearized problem, 
the boundary conditions \eqref{ELP4c} depend on the traces of $(W,w)$ solely through the non-characteristic variables $\mathcal{W}_{\rm nc}$. 
By eliminating the first derivatives of  the interface function $\psi$ from the third boundary condition in \eqref{ELP4c}, we reformulate the boundary conditions \eqref{ELP4c} to a transport equation for $\psi$ together with two boundary conditions that involve only $(\mathcal{W}_{\rm nc},\psi)$.
Consequently, problem \eqref{ELP4} decouples into a transport equation for $\psi$ and a hyperbolic boundary problem for $(W,w)$ whose boundary source terms depend on $\psi$.
Since the problem for $(W,w)$ is characteristic of constant multiplicity and the corresponding boundary conditions are maximally nonnegative for $\psi=0$, we can establish the existence of solutions to the reformulated problem by employing the fixed point argument developed in \cite{ST13MR3148595}.
However, the presence of characteristic variables in the 3D boundary conditions makes the above approach inapplicable to the 3D case, and the existence of 3D relativistic plasma--vacuum interfaces remains an interesting open problem for future study.

Remark that the estimate \eqref{tame.es} for the effective linear problem \eqref{ELP1} is a so-called tame estimate, since the loss of regularity from the basic state and source terms to the solution is fixed.
Based on the solvability and tame estimate of the 2D linearized problem, we establish the local-in-time existence of solutions to the 2D nonlinear problem via a suitable iteration scheme of Nash--Moser type, as developed by {\sc H\"{o}rmander} \cite{H76MR0602181} and {\sc Coulombel--Secchi} \cite{CS08MR2423311}.
In particular, a smooth intermediate state is constructed and estimated (cf.~Proposition \ref{pro:modified}), so that the state around which we linearize at each iteration step can satisfy the necessary constraints for linear solvability.

The rest of the paper is organized as follows.
In Section \ref{sec:main}, we transform the relativistic plasma--vacuum interface problem \eqref{RMHD0}--\eqref{IC0} into a fixed domain whose boundary is characteristic of constant multiplicity.
We then state our main results on linear stability for $d=2,3$ (Theorems \ref{thm:main1} and \ref{thm:main2}) and on nonlinear existence for the spatial dimension $d=2$ (Theorem \ref{thm:main3}).
We also collect the notation used throughout the paper.
In Section \ref{sec:refor}, we perform a partial homogenization for the effective linear problem \eqref{ELP1} and separate the non-characteristic variables from the system.
Section \ref{sec:lin3D} is devoted to the proof of Theorem \ref{thm:main1}, i.e., the linear stability of 3D relativistic plasma--vacuum interfaces.
To this end, we begin in \S\ref{sec:prelude} with a standard anisotropic energy estimate, introducing the instant energy $\mathcal{E}_{\alpha}$ and the boundary term $\mathcal{Q}_{\alpha}$.
To handle tangential derivatives, we deduce in \S\ref{sec:cancel} an intrinsic cancellation that reduces $\mathcal{Q}_{\alpha}$ to the instant boundary integral $\mathcal{R}_{\alpha}$.
In \S\S \ref{sec:Qes1}--\ref{sec:Qes2}, we identify a quantitative stability condition under which $\mathcal{R}_{\alpha}$ can be mainly absorbed by $\mathcal{E}_{\alpha}$.
Estimate for the front $\psi$ and proof of Theorem \ref{thm:main1} are provided in \S \ref{sec.psi} and \S \ref{sec.Thm2.2}, respectively.
In Section \ref{sec:2D}, we first apply a fixed point argument to prove Theorem \ref{thm:main2}, that is, the solvability and higher-order tame estimates for the effective linear problem \eqref{ELP1} in 2D.
We then construct solutions to the nonlinear problem \eqref{NP1} in 2D and establish Theorem \ref{thm:main3} by employing a modified Nash--Moser iteration to overcome the loss of regularity in the linear well-posedness result.
For completeness, the RMHD system and its symmetrization are presented in Appendix \ref{app:A} and the boundary conditions \eqref{BC1} are deduced in Appendix \ref{app:B}.
To keep the presentation clear, we prove Proposition \ref{Pro:invol} and the solvability of problem \eqref{u.natural:eq} in Appendices \ref{app:C} and \ref{app:D}.

\vspace*{2mm}

	\section{Preliminaries and main results}\label{sec:main}
	
	In this section, we first transform the relativistic plasma--vacuum interface problem {into a fixed domain whose boundary is characteristic of constant multiplicity}.
	Then we state the linear stability theorems for $d=2,3$ and the nonlinear existence theorem for spatial dimension $d=2$. We also collect the notation used throughout the paper.
	
	\subsection{Reduction to characteristic boundary of constant multiplicity}
We assume that the plasma--vacuum interface $\Sigma(t)$ has the form of a graph:
\begin{align*}
	\Sigma(t):=\{x\in\mathbb{R}^d:x_1=\varphi(t,x')\} \qquad \textrm{with \ } x'=(x_2,\ldots,x_d),
\end{align*}
where the interface function $\varphi$ is to be determined.
Denote
\begin{align}
	N:=(1,-\p_2\varphi,\ldots,-\p_d\varphi).
	\label{N:def}
\end{align}
Then the boundary conditions \eqref{BC1} in 3D ($d=3$) become	
\begin{alignat}{3}
\label{BC2a}	
	&	H\cdot N=0,\quad  && h\cdot N=0\qquad &&
	\textrm{on }\Sigma(t),\\
\label{BC2b}		
&\p_t \varphi=v\cdot N,\quad &&2q=  |h|^2-|e|^2\qquad &&
\textrm{on }\Sigma(t),\\
	&
	e_2+\p_2 \varphi e_1=\epsilon \p_t \varphi h_3,\quad &&
e_3+\p_3 \varphi e_1=-\epsilon \p_t \varphi h_2
\qquad &&
\textrm{on }\Sigma(t).
\label{BC2c}	
\end{alignat}
On the other hand, for the 2D 
problem, the vacuum electric field $e$ is a scalar-valued function and the boundary conditions \eqref{BC1} are reduced to \eqref{BC2a}, \eqref{BC2b}, and
\begin{alignat}{3}
 e=-\epsilon \p_t \varphi h_2\qquad \textrm{on }\Sigma(t) \qquad \textrm{if }d=2.
\label{BC2d}		
\end{alignat}

It follows from the constraint \eqref{H.inv1}, the Gibbs relation \eqref{Gibbs}, and the physical condition \eqref{cs:def} that the RMHD equations \eqref{RMHD0} can be reformulated into the symmetrizable hyperbolic system
	\begin{align}
		\label{RMHD:vec}
		\p_t U+
		A_j^+(U)\p_j U=0\qquad \textrm{in } \Omega^+(t) :=\{x\in\mathbb{R}^d: x_1> \varphi(t,x') \}
	\end{align}
	for smooth 
	solutions $U:=(q,v,H,S)\in\mathbb{R}^{2d+2}$ with $\rho=\rho(p,S) \in (\rho_*,\rho^*)$.
	For clear presentation, we put the sophisticated expressions of coefficients $A_j^+(U)$ and symmetrizer $S^+(U)$ in Appendix \ref{app:A} (cf.~\eqref{B.def}--\eqref{S+U}).
Here and below, we adopt the Einstein summation convention, for which Greek and Latin indices range from $0$ to $d$ and from $1$ to $d$, respectively.	
	
	The Maxwell's equations \eqref{Maxwell1} for the vacuum fields $u:=(h,e)\in \mathbb{R}^{3d-3}$ can be written as the symmetric hyperbolic system
	\begin{align}\label{vacuum1}
		\epsilon \p_t u+
		 B_j^-\p_j u=0\qquad\textrm{in }\Omega^-(t):=\{x\in\mathbb{R}^d: x_1< \varphi(t,x') \},
	\end{align}
	where
\begin{gather}
\nonumber 
	B_1^-:=\begin{bmatrix}
		&  & &0 & 0 &0 \\
		&   & & 0 & 0 &-1 \\
		&   & & 0 & 1 &0 \\
		0 & 0 &0 & &  &  \\
		0 & 0 &1 &  &  &  \\
		0 &-1 &0& &  &
	\end{bmatrix},
	\
	B_2^-:=\begin{bmatrix}
		&  & &0 & 0 &1 \\
		&   & & 0 & 0 &0 \\
		&   & & -1 & 0 &0 \\
		0&0& -1& &   & \\
		0 & 0 &0 &  &  & \\
		1 & 0 &0 &  &    &
	\end{bmatrix},
	\
	B_3^-:=\begin{bmatrix}
		&  & & 0& -1 &0\\
		&  && 1& 0 &0\\
		&  && 0 & 0 &0 \\
		0 & 1 &0 & &  & \\
		-1 & 0 &0 & &  & \\
		0 & 0 &0&  &  &
	\end{bmatrix} 
\end{gather}
if $d=3$, while
\begin{align}
\nonumber 
	B_1^-:=\begin{bmatrix}
		0 & 0 &0 \\
		0 & 0 &-1 \\
		\w{0} & -1 &0
	\end{bmatrix},
	\quad
	B_2^-:=\begin{bmatrix}
		0 & 0 &1 \\
		0 & 0 &0 \\
		\w{1} & \w{0} &\w{0}
	\end{bmatrix}\qquad \textrm{if }d=2.
\end{align}	
The plasma--vacuum system \eqref{BC2a}--\eqref{vacuum1} is supplemented with the initial data
\begin{align} \label{IC1}
		\varphi|_{t=0}=\varphi_0\ \ \textrm{on }\mathbb{R}^{d-1},\quad
	U|_{t=0}=U_0\ \ \textrm{in }\Omega^+(0),\quad
	u|_{t=0}=u_0\  \ \textrm{in }\Omega^-(0).
\end{align}

	To study the relativistic plasma--vacuum interface problem \eqref{BC2a}--\eqref{IC1}, we introduce its boundary matrix
	$A_{\rm b}:=\mathrm{diag}\big(A_{\rm b}^+,\, A_{\rm b}^-\big)$ with
	\begin{align}
	\label{Ab:def}	
		A_{\rm b}^+:=\p_t\varphi I_{2d+2}-N_jA_j^+(U),\quad
		A_{\rm b}^-:=-\epsilon\p_t\varphi I_{3d-3}+N_jB_j^-,
	\end{align}
	where $I_m$ stands for  the identity matrix of order $m$
	and $N=(N_1,\ldots,N_d)$ is defined by \eqref{N:def}.
	In view of \eqref{S+U} and \eqref{iden:App1},
	we find that the matrix $A_{\rm b}^+$ has $1$ positive, $1$ negative, and $2d$ zero eigenvalues on the interface $\Sigma(t)$.
	Hence, the boundary matrix $A_{\rm b}$ is singular on $\Sigma(t)$, that is, the boundary $\Sigma(t)$ is characteristic.
	On the other hand, a direct calculation shows that the matrix $A_{\rm b}^-$ has eigenvalues
\begin{alignat}{5} \nonumber 
&\lambda_{1 }=\lambda_{2}=-\epsilon\p_t\varphi-|N| , \ \ &&\lambda_{3 }=\lambda_{4}=-\epsilon\p_t\varphi,\ \ && \lambda_{5}=\lambda_{6}=-\epsilon\p_t\varphi+|N| \qquad   &&\textrm{if }d=3,\\[1mm]
&\lambda_{1}=-\epsilon\p_t\varphi-|N| ,\ \ &&
\lambda_{2}=-\epsilon\p_t\varphi, \ \ &&
  \lambda_{3}=-\epsilon\p_t\varphi+|N|\qquad  &&\textrm{if }d=2.
		\nonumber
\end{alignat}		
{It follows from the first condition in \eqref{BC2b} and the physical assumption $|v|<\epsilon^{-1}$ that $\lambda_{1,2}<0$ and $\lambda_{5,6}>0$ if $d=3$, while $\lambda_{1}<0$ and $\lambda_{3}>0$ if $d=2$.}
Since the interface $\Sigma(t)$ is a free boundary,  the eigenvalue $-\epsilon\p_t\varphi$ has no definite sign.
Consequently,
{the relativistic plasma--vacuum interface problem \eqref{BC2a}--\eqref{IC1} is {\it characteristic of variable multiplicity}, meaning that the boundary matrix is singular and its rank is not constant on the boundary.}
Furthermore, according to the well-posedness theory of hyperbolic boundary problems, a different number of boundary conditions should be imposed in different parts of the interface, depending on the sign of the normal velocity of the interface (plasma expansion into vacuum or vice versa)
.

	To overcome the above difficulty, we observe that the equations \eqref{h.inv1}--\eqref{e.inv1} and the first condition in \eqref{BC2b} can enable us to reformulate \eqref{BC2a}--\eqref{IC1} into {a problem
	 with boundary being {\it characteristic of constant multiplicity} (that is, the boundary matrix has a constant rank on the boundary; cf.~\cite{R85MR0797053,S95MMASMR1346663,S96aMR1401431,OSY95MR1346224}).}
Precisely,
we set $\nu$ to be an extension of the plasma velocity $v$ to the vacuum domain $\Omega^-(t)$, so that
\begin{align} \label{nu:def0}
	\nu(t,\cdot)=v(t,\cdot)\ \ \textrm{on }\Sigma(t),\quad
	|\nu(t,\cdot)|<\epsilon^{-1}\ \ \textrm{in }\Omega^-(t).
\end{align}
By virtue of \eqref{h.inv1} and \eqref{e.inv1},
we can reduce the Maxwell's equations \eqref{Maxwell1} to
\begin{alignat*}{3}
		&	\epsilon \p_t  h+\nabla \times e+\epsilon (\nabla \cdot h) \nu =0, \ \
		\epsilon \p_t  e-\nabla \times h+\epsilon  (\nabla \cdot e )\nu =0\qquad  &&\textrm{if }d=3,\\[0.5mm]
		&	\epsilon \p_t  h+\nabla \times e+\epsilon (\nabla \cdot h )\nu=0, \ \
		\epsilon \p_t e-\nabla \times h =0 &&\textrm{if }d=2,
\end{alignat*}
or equivalently,
	\begin{align}\label{vacuum2}
		\epsilon \p_t u+
		A_j^-(\nu)\p_j u=0\qquad\textrm{in }\Omega^-(t),
	\end{align}
	where  
	\begin{alignat}{5}
		\nonumber
		&A_1^-(\nu):=\begin{bmatrix}
			\epsilon \nu_1 & 0 &0 &0 & 0 &0 \\
			\epsilon \nu_2 & 0 &0& 0 & 0 &-1 \\
			\epsilon \nu_3 & 0 &0& 0 & 1 &0 \\
			0 & 0 &0 &\epsilon \nu_1 & 0 &0 \\
			0 & 0 &1 &\epsilon \nu_2 & 0 &0 \\
			\w{0} & \w{-1} &\w{0}&\w{\epsilon \nu_3} & \w{0} &\w{0}
		\end{bmatrix},
		\quad
&&		A_2^-(\nu):=\begin{bmatrix}
			0 & \epsilon \nu_1 &0 &0 & 0 &1 \\
			0 & \epsilon \nu_2 &0& 0 & 0 &0 \\
			0 & \epsilon \nu_3 &0& -1 & 0 &0 \\
			\w{0} &\w{0} & \w{-1} &\w{0} & \w{\epsilon \nu_1} &\w{0}\\
			0 & 0 &0 &0 & \epsilon \nu_2 &0 \\
			1 & 0 &0 &0 & \epsilon \nu_3 &0
		\end{bmatrix},
		\\[1mm]
		&A_3^-(\nu):=\begin{bmatrix}
			\w{0} &\w{0} &\w{\epsilon \nu_1} & \w{0} & \w{-1} &\w{0}\\
			0 & 0 &\epsilon \nu_2& 1& 0 &0\\
			0 & 0 &\epsilon \nu_3& 0 & 0 &0 \\
			0 & 1 &0 &0 & 0 &\epsilon \nu_1 \\
			-1 & 0 &0 &0 & 0 &\epsilon \nu_2 \\
			0 & 0 &0& 0 & 0 &\epsilon \nu_3
		\end{bmatrix}&&\hspace*{11.8em} \textrm{if }d=3,
		\label{A-:def.3D}	\\[2mm]
	\label{A-:def.2D}
&	A_1^-(\nu):=\begin{bmatrix}
		\epsilon \nu_1 & 0 &0 \\
		\epsilon \nu_2 & 0 &-1 \\
		\w{0} & -1 &0
	\end{bmatrix},
	\quad
&&	A_2^-(\nu):=\begin{bmatrix}
		0 & \epsilon \nu_1  &1 \\
		0 & \epsilon \nu_2 &0 \\
		\w{1} & \w{0} &\w{0}
	\end{bmatrix} \qquad \textrm{if }d=2.
\end{alignat}	
On the interface $\Sigma(t)$,  the boundary matrix
for the reformulated vacuum equations \eqref{vacuum2}
is $-\epsilon\p_t\varphi I_{3d-3}+N_jA_j^-(\nu)$,
which has eigenvalues
\begin{align*}
\left\{
\begin{aligned}
	&\lambda_{1,2}=-\epsilon\p_t\varphi-|N|<0, \\
	& \lambda_{3,4}=-\epsilon(\p_t\varphi-\nu\cdot N)=0,\\
	& \lambda_{5,6}=-\epsilon\p_t\varphi+|N|>0,
\end{aligned}
\right.\ \  \textrm{if }d=3,\quad
\left\{
\begin{aligned}
	& \lambda_{1}=-\epsilon\p_t\varphi-|N|<0,\\
	& \lambda_{2}=-\epsilon(\p_t\varphi-\nu \cdot N)=0, \\
	& \lambda_{3}=-\epsilon\p_t\varphi+|N|>0,
\end{aligned}
\right.\ \ \textrm{if }d=2,
\end{align*}
thanks to	\eqref{nu:def0} and \eqref{BC2b}.
	As a result, the problem \eqref{BC2a}--\eqref{RMHD:vec}, \eqref{IC1}, \eqref{vacuum2} is {\it characteristic of constant multiplicity} on $\Sigma(t)$.

The boundary matrix for the nonlinear
problem \eqref{BC2a}--\eqref{RMHD:vec}, \eqref{IC1}, \eqref{vacuum2}
 has $d$ positive, $d$ negative, and $3d-1$ zero eigenvalues on $\Sigma(t)$.	
Since one boundary condition is needed to determine the front function $\varphi$,
the correct number of boundary conditions should be $d+1$.
	To be consistent with this assertion, we shall keep \eqref{BC2b}--\eqref{BC2d} as genuine boundary conditions  and take \eqref{BC2a} as initial constraints.
	Furthermore, we shall prove that the equations \eqref{H.inv1}, \eqref{h.inv1}, and \eqref{e.inv1} hold automatically for $t>0$ if they are satisfied by the initial data \eqref{IC1}. Therefore, solutions of the reformulated problem
	 \eqref{BC2a}--\eqref{RMHD:vec}, \eqref{IC1}, \eqref{vacuum2} that satisfy \eqref{H.inv1},  \eqref{h.inv1}, and \eqref{e.inv1} at $t=0$ are also solutions of the original relativistic plasma--vacuum interface problem \eqref{BC2a}--\eqref{IC1} (see Proposition \ref{Pro:invol} and Remark \ref{Rem1} in \S \ref{sec2.2}).
	
	It is worth noting that the reduced system \eqref{vacuum2} is symmetrizable hyperbolic with the symmetrizer $S_-(\nu) $ defined by (cf.~\cite[\S 2]{T12MR2974767})
	\begin{alignat}{5}
		&S_-(\nu):=\begin{bmatrix}
			1& 0 &0 &0 & \epsilon \nu_3 & -\epsilon \nu_2\\
			0& 1 &0  & -\epsilon \nu_3  &0 & \epsilon \nu_1\\
			0& 0 &1  & \epsilon \nu_2  &-\epsilon \nu_1 &0\\						
			0 & -\epsilon \nu_3 & \epsilon \nu_2 &1& 0 &0 \\
			\epsilon \nu_3 & 0& -\epsilon \nu_1 &0& 1 &0 \\		
			-\epsilon \nu_2 & \epsilon \nu_1& 0 &0& 0 &1		
		\end{bmatrix}&& \qquad \textrm{if }d=3,
		\label{S-3D}
\\[2mm]
	&S_-(\nu):=\begin{bmatrix}
		1 & 0 & -\epsilon \nu_2\\
		0 & 1& \epsilon \nu_1\\
		-\epsilon \nu_2 & \epsilon \nu_1 & 1
	\end{bmatrix}
&&\qquad \textrm{if }d=2.
	\label{S-2D}
\end{alignat}	
Remark that  the positive definiteness of the matrix $S_-(\nu)$ follows from the second condition in
\eqref{nu:def0}.

\subsection{Reformulation in a fixed domain} \label{sec2.2}
Assume without loss of generality that $\|\varphi_0\|_{L^{\infty}(\mathbb{R}^{d-1})}<1$.
Let us transform the nonlinear free boundary problem \eqref{BC2a}--\eqref{RMHD:vec}, \eqref{IC1}, \eqref{vacuum2} to an equivalent fixed boundary problem.
For this purpose, we replace the unknowns $U$ and $u$ by
\begin{align} \nonumber 
	\widetilde{U}({t},x)=U(t,\Phi^+({t}, {x}), {x}'),\qquad  \tilde{u}({t}, {x})=u(t,\Phi^-({t}, {x}), {x}'),
\end{align}
respectively, where $\Phi^{+}$ and $\Phi^-$ are the lifting functions defined by
\begin{align} \label{Phi:def}
	\Phi^{\pm}(t,x):=\pm x_1+ \chi( x_1)\varphi(t,x'),
\end{align}
with the cut-off function $\chi$ satisfying
\begin{align} \label{chi:def}
	\chi\in C^{\infty}_0(\mathbb{R}; [0,1]),\quad 4\|\chi'\|_{L^{\infty}(\mathbb{R})} < 1,\quad
	\chi\equiv1 \ \ \textrm{on } [-1,1].
\end{align}
Then the change of variable is admissible on the time interval $[0,T]$ for some small $T>0$ so that
 $\|\varphi\|_{L^{\infty}([0,T]\times\mathbb{R}^{d-1})}	\leq 2$.
Dropping  the tildes for convenience,
we can reduce \eqref{BC2a}--\eqref{RMHD:vec}, \eqref{IC1}, \eqref{vacuum2} into the equivalent fixed boundary problem
	\begin{subequations} \label{NP1}
		\begin{alignat}{2}
			\label{NP1a}
			&\mathbb{L}_+(U,\Phi^+) :=L_+(U,\Phi^+)U =0
			& &\textrm{in  }  [0,T]\times
			\Omega,\\
			\label{NP1b}
			&\mathbb{L}_-(u,\nu,\Phi^-) :=L_-(\nu,\Phi^-)u =0
			& &\textrm{in  }  [0,T]\times
			\Omega,\\
			\label{NP1c}
			&  \mathbb{B}(U,u,\varphi)=0
			&\qquad&\textrm{on  }    [0,T]\times \Sigma,\\
			\label{NP1d}
			&(U,u, \varphi)|_{t=0}=(U_0,u_0,\varphi_0),  &  &
		\end{alignat}
	\end{subequations}
with $\Omega:=\mathbb{R}_+\times \mathbb{R}^{d-1}$ and
$\Sigma:=\{0\}\times \mathbb{R}^{d-1}$,
where the function $\nu$ satisfies \eqref{nu:def0},
the operators $L_{\pm}$ are defined by
	\begin{alignat}{3}	\label{L+:def}
		&L_+(U,\Phi):=I_{2d+2}\partial_t+\widetilde{A}_1^+(U,\Phi)\partial_1+ \sum_{j=2}^{d}A_j^+(U)\partial_j,\\
		&L_-(\nu,\Phi):=\epsilon I_{3d-3} \partial_t+\widetilde{A}_1^-(\nu,\Phi)\partial_1+\sum_{j=2}^{d} A_j^-(\nu)\partial_j ,
		\label{L-:def}
	\end{alignat}
	with
	\begin{align}
		\nonumber 
		&\widetilde{A}_1^+(U,\Phi):=
		\frac{1}{\partial_1\Phi}\Big(\!-\partial_t\Phi I_{2d+2}+A_1^+(U)-\sum_{j=2}^{d}\partial_j\Phi A_j^+(U)\Big),\\[1mm]
		&\widetilde{A}_1^-(\nu,\Phi):=
		\frac{1}{\partial_1\Phi}\Big(\!-\epsilon\partial_t\Phi I_{3d-3}+A_1^-(\nu)-\sum_{j=2}^{d}\partial_j\Phi A_j^-(\nu)\Big),
		\nonumber 
	\end{align}
the boundary operator $\mathbb{B}$ is given by
	\begin{alignat}{3} \label{Bbb3D:def}
 &\mathbb{B}(U,u,\varphi):=
 \begin{bmatrix}
	\p_t \varphi -v_1+v_2\p_2\varphi+v_3\p_3\varphi\\[0.5mm]
	q-\tfrac{1}{2}|h|^2+\frac{1}{2}|e|^2\\[0.5mm]
	e_2+\p_2 \varphi e_1-\epsilon \p_t \varphi h_3\\[0.5mm]
	e_3+\p_3 \varphi e_1+\epsilon \p_t \varphi h_2
\end{bmatrix}\qquad &&\textrm{if }d=3,\\[1mm]
 &\mathbb{B}(U,u,\varphi):= \begin{bmatrix}
	\p_t \varphi -v_1+v_2\p_2\varphi \\[0.5mm]	 	
	q-\tfrac{1}{2}|h|^2+\frac{1}{2}e^2\\[0.5mm]
	e+\epsilon \p_t \varphi h_2
\end{bmatrix}\qquad &&\textrm{if }d=2,
\label{Bbb2D:def}	
	\end{alignat}
and the matrices $A_j^+(U)$ and $A_j^-(\nu)$ are defined by \eqref{Ai.def.r} and \eqref{A-:def.3D}--\eqref{A-:def.2D}.
For concreteness, we choose $\nu$ to be extended from the trace of the velocity $v$ by
\begin{align} \label{nu:def1}
	\nu(t,x):=\chi(x_1)v(t,0, x')\qquad \textrm{for } x\in\Omega.
\end{align}

Since the equations for $H$ in \eqref{RMHD:vec} are the same as \eqref{RMHD0d}, we can rewrite the equations for $H$ contained in \eqref{NP1a} as
	\begin{align} \label{Hbb:def}
		\mathbb{H}(H,v,\Phi^+):=	
		\big(\p_t^{\Phi^+}+v\cdot \nabla^{\Phi^+}\big)H-\big(H\cdot \nabla^{\Phi^+}\big)v+H\nabla^{\Phi^+}\cdot v=0
		,
	\end{align}
	where 
	\begin{align}
		\label{Phi.d:def}	
		\p_t^{\Phi}:=\p_t-\frac{\p_t\Phi}{\p_1\Phi}\p_1,\
				\nabla^{\Phi}:=(\p_1^{\Phi},\ldots,\p_d^{\Phi}),\
		\p_1^{\Phi}:=\frac{\p_1}{\p_1\Phi},\
		\p_j^{\Phi}:=\p_j-\frac{\p_j\Phi}{\p_1\Phi}\p_1
	\end{align}
	for $j=2,\ldots,d$.
	In the new variables, the equations \eqref{H.inv1}, \eqref{h.inv1}, \eqref{e.inv1}, and \eqref{BC2a} become
	\begin{alignat}{5}
		\label{H.h.inv2}
		   &\nabla^{\Phi^+}\cdot H  =0,\qquad&& \nabla^{\Phi^-}\cdot h  =0\qquad& &\textrm{in  }    \Omega,
		\\[0.5mm]
		\label{H.h.cons2}
		&   H\cdot N=0,\quad& &h\cdot N=0 \qquad && \textrm{on }    \Sigma,\\[0.5mm]
 \label{e.inv2}
	&\nabla^{\Phi^-}\cdot e  =0&&\quad
	 \textrm{in  }   \Omega\qquad&& \textrm{if }d=3.
\end{alignat}	
The following proposition shows that the equations \eqref{H.h.inv2}--\eqref{e.inv2} can be regarded as constraints on the initial data (see Appendix \ref{app:C} for the proof).
\begin{proposition}
	\label{Pro:invol}
For sufficiently smooth solutions of the problem \eqref{NP1} on the time interval $[0,T]$,
the equations \eqref{H.h.inv2}--\eqref{e.inv2} are satisfied for all $t\in [0,T]$ provided they hold initially.	
\end{proposition}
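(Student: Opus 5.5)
I will argue in the original (moving) variables, since \eqref{H.h.inv2}--\eqref{e.inv2} are just \eqref{H.inv1}, \eqref{h.inv1}, \eqref{e.inv1}, \eqref{BC2a} rewritten through $\Phi^\pm$. The change of variables is a diffeomorphism on $[0,T]$, so it suffices to treat the free boundary formulation \eqref{BC2a}--\eqref{RMHD:vec}, \eqref{vacuum2}, \eqref{nu:def0}.

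\emph{Boundary constraints.} Set $\eta:=H\cdot N$ and $\zeta:=h\cdot N$ on $\Sigma(t)$. Restrict \eqref{RMHD0d} to the boundary and use $\p_t\varphi=v\cdot N$. The standard computation (as in the non-relativistic case, cf.~\cite{T09CPAMMR2560044,ST13MR3148595}) gives a transport equation
\[
\p_t\eta+v_2\p_2\eta+\dots+v_d\p_d\eta+(\p_2v_2+\dots+\p_dv_d)\eta=0\qquad\textrm{on }\Sigma(t),
\]
where the normal derivatives cancel because of the interior divergence $\nabla\cdot H$, which is itself controlled below. It is cleaner to first derive the equation for $a:=\nabla\cdot H$ and then handle $\eta$ together with $a$.

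For $\zeta$, I will use the boundary conditions \eqref{BC2c}/\eqref{BC2d}, namely $\bm{n}\times e=\epsilon\mathcal{V}h$. Apply the tangential surface divergence and use the normal component of the first equation in \eqref{Maxwell1}. Since $(\nabla\times e)\cdot N$ involves only tangential derivatives of $e$ on $\Sigma$, it can be expressed through $\p_t\varphi\, h$ via the boundary condition. This yields a homogeneous transport equation for $\zeta$ along $\Sigma$ with velocity equal to the tangential part of $\nu=v$ on $\Sigma$, up to a zero-order term.

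\emph{Interior constraints.} Taking the divergence of \eqref{RMHD0d} gives $\p_t a+\nabla\cdot(va)=0$ in $\Omega^+(t)$. This is transport by $v$, which is tangent to $\Sigma(t)$ because $\p_t\varphi=v\cdot N$, so no boundary data are needed and $a\equiv0$ follows. For the vacuum, set $b:=\nabla\cdot h$ and $c:=\nabla\cdot e$ ($d=3$). Taking divergences in \eqref{vacuum2} and using $\nabla\cdot\nabla\times=0$ gives
\[
\epsilon\p_t b+\epsilon\nabla\cdot(b\nu)=0,\qquad \epsilon\p_t c+\epsilon\nabla\cdot(c\nu)=0
\]
in $\Omega^-(t)$ (for $d=2$ only the equation for $b$ appears). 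These are transport equations with velocity $\nu$, and $\nu\cdot N=v\cdot N=\p_t\varphi$ on $\Sigma(t)$. So $\Sigma(t)$ is characteristic for them as well, and no boundary condition is required.

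\emph{Energy estimates.} Multiply each transport equation by its unknown and integrate over $\Omega^\pm(t)$ (or over $\Sigma(t)$ for $\eta,\zeta$). By Reynolds' transport theorem the flux terms on $\Sigma(t)$ cancel exactly, since the domain moves with normal speed $\mathcal{V}=v\cdot\bm n=\nu\cdot\bm n$. This gives $\frac{\d}{\d t}\|a\|_{L^2}^2\le C\|a\|_{L^2}^2$, and likewise for $b$, $c$, $\eta$, $\zeta$. Gronwall's lemma and the vanishing initial data then give vanishing for all $t\in[0,T]$, using the smoothness and decay of the solution.

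The main obstacle is the boundary constraint $h\cdot N=0$. Unlike $H\cdot N$, it is not transported by a plasma equation, so it must come from the boundary conditions \eqref{BC2c}/\eqref{BC2d} combined with the Maxwell equations. The cross terms $e_1\p_j\varphi$ and $\epsilon\p_t\varphi\,h$ have to be checked carefully to close a homogeneous equation for $\zeta$. I will first try writing \eqref{BC2c} as $N\times e=\epsilon\p_t\varphi\,h+\lambda N$ and computing $\p_t$ of $h\cdot N$ along the trajectories of $\nu$ on $\Sigma(t)$.
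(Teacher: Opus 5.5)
Your plan is correct and is essentially the paper's argument in Appendix~\ref{app:C}. Both use transport equations for the divergences with velocities $v$ and $\nu$, which need no boundary data since $\p_t\varphi=v\cdot N=\nu\cdot N$, plus the normal components of the $H$- and $h$-equations on the interface; the paper simply works in the flattened variables with $\nabla^{\Phi^\pm}$ rather than applying Reynolds' theorem on $\Omega^\pm(t)$.

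Some details are worth correcting.
\begin{itemize}
\item The step you flag as the main obstacle closes at once. Dot the $h$-equation of \eqref{vacuum2} with $N$ and insert \eqref{BC2c} or \eqref{BC2d}; the result is simply $\p_t(h\cdot N)=0$ in the graph parametrization, with no tangential transport term. The term $\epsilon(\nabla\cdot h)\,\nu\cdot N$ cancels the normal-derivative term proportional to $\p_t\varphi\,\p_1(h\cdot N)$ that appears when $\p_t$ is rewritten along the graph. If you use \eqref{Maxwell1} instead, you must first invoke $\nabla\cdot h=0$.
\item Avoid your tentative rewrite $N\times e=\epsilon\p_t\varphi\,h+\lambda N$. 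Under \eqref{BC2c} one actually has $N\times e-\epsilon\p_t\varphi\,h=-\epsilon\p_t\varphi\,(h\cdot N)\,(1,0,0)$, so the $\lambda N$ form already presupposes $h\cdot N=0$.
\item For $H\cdot N$ no interior divergence is needed. The normal derivatives cancel identically, given only $\p_t\varphi=v\cdot N$, for the non-conservative form \eqref{Hbb:def} that \eqref{NP1a} actually contains. Likewise, your equation $\p_t a+\nabla\cdot(va)=0$ is the divergence of \eqref{Hbb:def}; taken literally, \eqref{RMHD0d} would only give $\p_t a=0$.
\end{itemize}
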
	
\begin{remark} \label{Rem1}
Thanks to Proposition \ref{Pro:invol},
solutions of  
problem \eqref{BC2a}--\eqref{RMHD:vec}, \eqref{IC1}, \eqref{vacuum2} satisfy \eqref{H.inv1}, \eqref{h.inv1}, and \eqref{e.inv1} for all $t\in [0,T]$ provided they hold at the initial time.
Therefore, solutions of problem \eqref{BC2a}--\eqref{RMHD:vec}, \eqref{IC1}, \eqref{vacuum2} with initial constraints \eqref{H.inv1}, \eqref{h.inv1}, and \eqref{e.inv1} are also solutions of the original relativistic plasma--vacuum interface problem \eqref{BC2a}--\eqref{IC1}.	
\end{remark}

As in \cite{
	CW08MR2372810, ST13MR3148595,
	LZ23MR4632835,
	T09ARMAMR2481071,YM91MR1092572,TW21MR4201624,TW22aMR4444136,TW22cMR4506578,MSTTY24MR4754746,ST14MR3151094,STW23MR4654316,MSTY23MR4586863}  for various characteristic boundary problems in ideal compressible MHD, where the full regularity cannot be expected,
we shall work in the anisotropic Sobolev spaces $H_*^m$, first introduced by {\sc Chen} \cite{C07MR2289911}.
Let $\alpha=(\alpha_0,\ldots,\alpha_{d+1})\in\mathbb{N}^{d+2}$ and denote
\begin{align} \label{D*}
	\mathrm{D}_*^{\alpha}:=
	\p_t^{\alpha_0} (\sigma \p_1)^{\alpha_1}\p_2^{\alpha_2} \cdots\p_d^{\alpha_d}  \p_1^{\alpha_{d+1}},
	\quad
	\langle \alpha \rangle :=\alpha_0+\cdots+\alpha_d +2\alpha_{d+1},
\end{align}
where $\sigma=\sigma(x_1)$ is a nondecreasing smooth function defined on $\mathbb{R}$ and satisfies
\begin{align} \label{sigma:def}
	\sigma(x_1)=x_1\ \ \textrm{for } |x_1|\leq 1,\quad
	\sigma(x_1)=\pm 2  \ \ \textrm{for }  x_1\gtrless 4.
\end{align}
For any integer $m\in\mathbb{N}$, interval $I\subset \mathbb{R}$, and domain $\Omega\subset \mathbb{R}^{d}$, 
the anisotropic Sobolev space $H_*^{m}(I\times \Omega)$ is defined by
\begin{align} \label{H*m:def}
	H_*^m(I\times\Omega):=
	\left\{ w:\, \mathrm{D}_*^{\alpha} w\in L^2(I\times\Omega) \textrm{ for all } \alpha\in \mathbb{N}^{d+2}\
	\textrm{with}\  \langle \alpha \rangle\leq m  \right\},
\end{align}
and equipped with the norm ${\|}\cdot{\|}_{H^m_*(I\times \Omega)}$,
where
\begin{align}
	{\|}u{\|}_{H^m_*(I\times \Omega)}^2 :=
	\sum_{\langle \alpha\rangle\leq m} \|\mathrm{D}_*^{\alpha} u\|_{L^2(I \times\Omega)}^2.	
	\label{H*m:norm}
\end{align}
{The anisotropic Sobolev space indicates that one order gain of normal differentiation should be compensated by two order loss of tangential differentiation \cite{C07MR2289911}.}

	\subsection{Linear stability results}
	To present the linear stability results for two and three spatial dimensions ($d=2,3$),
we perform the linearization for the problem \eqref{NP1} around a suitable basic state.

Let $(\mathring{U}(t,x), \mathring{u}(t,x))  \in \mathbb{R}^{5d-1}$ and $\mathring{\varphi}(t,x')\in\mathbb{R}$ with $x'=(x_2,\ldots,x_d)$ be some sufficiently smooth functions defined respectively on $\Omega_T:=(-\infty,T)\times \Omega$ and $\Sigma_T:=(-\infty,T)\times \Sigma$,
where $\mathring{U}=(\mathring{q} ,\mathring{v},\mathring{H},\mathring{S})\in\mathbb{R}^{2d+2}$ and $\mathring{u}=(\mathring{h},\mathring{e}) \in \mathbb{R}^{3d-3}$.
We define 
	\begin{alignat}{3}\nonumber
	&\mathring{\Phi}^{\pm}(t,x):={\pm}x_1+\chi(x_1)\mathring{\varphi}(t,x'),\quad  \mathring{\nu}(t,x):=\chi(x_1)\mathring{v}(t,0,x'),\\[1mm]
	&\mathring{N}(t,x):
	=\left(1,  -\chi(x_1)\p_2\mathring{\varphi}(t,x') ,\ldots, -\chi(x_1)\p_d\mathring{\varphi}(t,x') \right)
	\label{N.ring:def}
	\end{alignat}
	for $x\in \Omega.$
	Assume that the basic state $(\mathring{U},\mathring{u},\mathring{\varphi})$  satisfies
\begin{gather}
	\label{bas1}
	 	 \big\|  \big( \mathring{U}-\widebar{U}, \mathring{u}-\bar{u}\big) \big\|_{H_*^{10}(\Omega_{T})} +\big\|\mathring{\varphi} \big\|_{H^{10}(\Sigma_{T})}
	\leq K,\\[1mm] 	
		\|\mathring{\varphi}\|_{L^{\infty}(\Sigma_T)}\leq
	2,
	\quad
{	\|\mathring{v}\|_{L^{\infty}(\Omega_{T})}<\epsilon^{-1},\quad }
	\label{bas2}
	\rho_*<\inf_{\Omega_{T}} \rho(\mathring{U})\leq \sup_{\Omega_{T}} \rho(\mathring{U})<\rho^*,
\end{gather}
for some constant $K>0$ {and constant state $(\widebar{U},\bar{u})\in\mathbb{R}^{5d-1}$},
the constraints
	\begin{alignat}{3}
		\label{bas3}
	&	 \mathring{h}\cdot \mathring{N}=0,
		\quad  \mathring{H}\cdot \mathring{N}=0,
\quad \p_t \mathring{\varphi}=\mathring{v}\cdot \mathring{N}
&&		\qquad  \textrm{on }\Sigma_T,\\[1mm]
		\label{bas4}
&		 \mathbb{H}(\mathring{H},\mathring{v},\mathring{\Phi}^+) =0,\quad
 {\mathbb{L}_-(\mathring{u},\mathring{\nu},\mathring{\Phi}^-) =0}
&&		\qquad   \textrm{on }\overline{\Omega_T},\\[1mm]
		\label{bas5}
&			\nabla^{\mathring{\Phi}^+}\cdot \mathring{H}=0	,\quad
		\nabla^{\mathring{\Phi}^-}\cdot \mathring{h}=0
&&		\qquad   \textrm{on }{\Omega_T},
	\end{alignat}
the boundary conditions
\begin{alignat}{4}
		\label{bas6}	
	&\mathring{e}=-\epsilon\p_t\mathring{\varphi}\mathring{h}_2
\qquad &&\textrm{on }\Sigma_T\quad \textrm{if }d=2,\\[1mm]
		\label{bas7}	
\mathring{e}_2+\p_2 \mathring\varphi \mathring{e}_1=\epsilon \p_t \mathring\varphi \mathring{h}_3,\quad
&\mathring{e}_3+\p_3 \mathring\varphi \mathring{e}_1=-\epsilon \p_t \mathring\varphi \mathring{h}_2		
\qquad &&\textrm{on }\Sigma_T\quad \textrm{if }d=3,
\end{alignat}
and the elliptic equation
\begin{align} \label{bas8}
			\nabla^{\mathring{\Phi}^-}\cdot \mathring{e}=0	
					\qquad   \textrm{on } {\Omega_T}\quad \textrm{if }d=3.
\end{align}	
As shown in Appendix \ref{app:C}, the equations \eqref{bas4} and the third constraint in \eqref{bas3} imply that the first two identities in \eqref{bas3}, and the equations \eqref{bas5}, \eqref{bas8} hold for $t>0$ if they hold at $t=0$.

The linearized operators for the interior equations
\eqref{NP1a}--\eqref{NP1b} around the basic state $(\mathring{U},\mathring{u},\mathring{\varphi})$ are defined by
	\begin{align}
&	\mathbb{L}'_+\big(\mathring{U},\mathring{\Phi}^+\big)(U,\Psi)
	:=   {{L}_{+}\big(\mathring{U},\mathring{\Phi}^+\big) {U}+\mathcal{C}_{+}(\mathring{U},\mathring{\Phi}^+) {U}  -\frac{1}{\partial_1\mathring{\Phi}^+} {L}_+(\mathring{U},\mathring{\Phi}^+)\Psi \p_1 \mathring{U}},	\nonumber 	\\[0.5mm]	
&	\mathbb{L}_{-}'\big(\mathring{u},\mathring{{\nu}}, \mathring{\Phi}^-\big)(u,{\nu},\Psi)
	:=  {	L_-\big(\mathring{{\nu}} , \mathring{\Phi}^-\big) {u}
		+\mathcal{C}_{-}(\mathring{u},\mathring{\nu},\mathring{\Phi}^-)  {\nu}
		-\frac{1}{\partial_1\mathring{\Phi}^-} {L}_-(\mathring{\nu},\mathring{\Phi}^- )\Psi \p_1 \mathring{u} },\nonumber
\end{align}
with $\Psi(t,x)=\chi(x_1)\psi(t,x')$, where
\begin{align}	\nonumber 
	&\mathcal{C}_+(\mathring{U},\mathring{\Phi}^+)U:=
	\sum_{k=1}^{2d+2}U_k\bigg(
	\frac{\p \widetilde{A}^{+}_1}{\p {U_k}}(\mathring{U},\mathring{\Phi}^+) \partial_1 \mathring{U}
	+\sum_{j=2}^{d}\frac{\p A^{+}_j}{\p {U_k}}(\mathring{U}) \partial_j \mathring{U}
	\bigg),\\[0.5mm]
	&\mathcal{C}_{-}(\mathring{u},\mathring{\nu},\mathring{\Phi}^-)  {\nu}
	:=
	\sum_{k=1}^{d}\nu_k\bigg(
	\frac{\p \widetilde{A}^{-}_1}{\p {\nu_k}}(\mathring{\nu},\mathring{\Phi}^-) \partial_1 \mathring{u}
	+\sum_{j=2}^{d}\frac{\p A^{-}_j}{\p {\nu_k}}(\mathring{\nu}) \partial_j \mathring{u}	
	\bigg)=0.
	\label{identity1}
\end{align}
We emphasize that the constraints \eqref{bas5} and \eqref{bas8} are used to obtain the last identity in \eqref{identity1}.
Introducing the good unknowns of {\sc Alinhac} \cite{A89MR976971}:
\begin{align} \label{good}
	\dot{U}:=U-\frac{\Psi}{\partial_1 \mathring{\Phi}^+}\partial_1\mathring{U},
	\quad \dot{u}:=u-\frac{\Psi}{\p_1 \mathring{\Phi}^-}\p_1 \mathring{u},
\end{align}
we infer (cf.~\cite[Proposition\,1.3.1]{M01MR1842775})
\begin{alignat}{3}
	&	\mathbb{L}'_+\big(\mathring{U},\mathring{\Phi}^+\big)(U,\Psi)
	=   {L}_{+}\big(\mathring{U},\mathring{\Phi}^+\big)\dot{U}+\mathcal{C}_{+}(\mathring{U},\mathring{\Phi}^+)\dot{U}  +\frac{\Psi}{\partial_1\mathring{\Phi}^+}	\partial_1\mathbb{L}_+(\mathring{U},\mathring{\Phi}^+ ),
	\label{Alinhac1}
	\\[1mm]
	&		\mathbb{L}_{-}'\big(\mathring{u},\mathring{{\nu}}, \mathring{\Phi}^-\big)(u,{\nu},\Psi)
	=
	L_-\big(\mathring{{\nu}} , \mathring{\Phi}^-\big)\dot{u}
	+\frac{\Psi}{\partial_1\mathring{\Phi}^-}\partial_1\mathbb{L}_-(\mathring{u},\mathring{{\nu}} , \mathring{\Phi}^- ).
	\label{Alinhac2}
\end{alignat}

According to \eqref{Bbb3D:def} and \eqref{Bbb2D:def}, the linearized operator for the boundary conditions \eqref{NP1c} is defined by
	\begin{alignat}{3}
&		\mathbb{B}'(\mathring{U} ,\mathring{u}, \mathring{\varphi})(U,u,\psi)
		:=
		\begin{bmatrix}
			(\p_t + \mathring{v}_2 \p_2 + \mathring{v}_3 \p_3) \psi-v\cdot\mathring{N}\\[0.5mm]
			q-\mathring{h}\cdot h+ \mathring{e}\cdot e\\[0.5mm]				
			e_2+\p_2\mathring{\varphi} e_1-\epsilon \p_t\mathring{\varphi} h_3
			+\mathring{e}_1\p_2 \psi
			-\epsilon \mathring{h}_3 \p_t \psi\\[0.5mm]
			e_3+\p_3\mathring{\varphi} e_1+\epsilon \p_t\mathring{\varphi} h_2
			+\mathring{e}_1\p_3 \psi
			+\epsilon \mathring{h}_2 \p_t \psi
		\end{bmatrix}\quad &&\textrm{if }d=3,
		\label{B'bb3D:def}\\[1mm]
&		\mathbb{B}'(\mathring{U} ,\mathring{u}, \mathring{\varphi})(U,u,\psi)
:=
\begin{bmatrix}
	(\p_t + \mathring{v}_2 \p_2  ) \psi-v\cdot\mathring{N}\\[0.5mm]
	q-\mathring{h}\cdot h+ \mathring{e} e\\[0.5mm]				
	e+\epsilon \p_t\mathring{\varphi} h_2
	+\epsilon \mathring{h}_2 \p_t \psi
\end{bmatrix}\quad &&\textrm{if }d=2.
\label{B'bb2D:def}		
	\end{alignat}

	Neglecting the last terms in \eqref{Alinhac1} and \eqref{Alinhac2} gives the effective linear problem
	\begin{subequations} \label{ELP1}
		\begin{alignat}{3}
			&\mathbb{L}'_{e+}(\mathring{U}, \mathring{\Phi}^+) \dot{U}
			:=L_+(\mathring{U}, \mathring{\Phi}^+) \dot{U}
			+\mathcal{C}_+( \mathring{U},\mathring{\Phi}^+) \dot{U}=f^+
			&\qquad  &\textrm{in } \Omega_T,
			\label{ELP1a}\\[0.5mm]
			&L_-(\mathring{{\nu}},\mathring{\Phi}^-) \dot{u}=f^-
			&\quad  &\textrm{in } \Omega_T,
			\label{ELP1b}\\[0.5mm]
			\label{ELP1c}   &
			\mathbb{B}'_{e}(\mathring{U}, \mathring{u}, \mathring{\varphi}) (\dot{U},\dot{u},\psi)
			=g
			&&\textrm{on } \Sigma_T,\\[0.5mm]
			&(\dot{U},\dot{u},\psi)=0
			&& \textrm{if } t<0.
			\label{ELP1d}
		\end{alignat}
	\end{subequations}
The operator $\mathbb{B}'_{e}$ 
is defined via $\mathbb{B}'_{e}(\mathring{U}, \mathring{u}, \mathring{\varphi}) (\dot{U},\dot{u},\psi)=\mathbb{B}'(\mathring{U} ,\mathring{u}, \mathring{\varphi})(U,u,\psi)$, so that
	\begin{align}
		\label{B'e:def}
		&\mathbb{B}'_{e}(\mathring{U}, \mathring{u}, \mathring{\varphi}) (\dot{U},\dot{u},\psi)
		:=	\mathbb{B}'(\mathring{U} ,\mathring{u}, \mathring{\varphi})(\dot{U},\dot{u},\psi)
		+\psi \mathring{b},
	\end{align}
	where $\mathbb{B}'$  is given in \eqref{B'bb3D:def}--\eqref{B'bb2D:def} and
	\begin{alignat}{3} \label{b.ring3D:def}
&		\mathring{b}=
		\begin{bmatrix}
			\mathring{b}_1\\[0.5mm]
			\mathring{b}_2\\[0.5mm]
			\mathring{b}_3\\[0.5mm]
			\mathring{b}_4	
		\end{bmatrix}
		:=	\begin{bmatrix}
			-\p_1 \mathring{v}\cdot \mathring{N}\\[0.5mm]
			\p_1\mathring{q}-\mathring{{h}}\cdot \p_1\mathring{{h}}+
			\mathring{e}\cdot \p_1\mathring{e}\\[0.5mm]
			\p_1 \mathring{e}_2+\p_2\mathring{\varphi}\p_1 \mathring{e}_1-\epsilon \p_t\mathring{\varphi} \p_1 \mathring{h}_3	 \\[0.5mm]
			\p_1 \mathring{e}_3+\p_3\mathring{\varphi}\p_1 \mathring{e}_1+\epsilon \p_t\mathring{\varphi} \p_1 \mathring{h}_2
		\end{bmatrix}\quad &&\textrm{if }d=3,\\[1mm]
&		\mathring{b}=
\begin{bmatrix}
	\mathring{b}_1\\[0.5mm]
	\mathring{b}_2\\[0.5mm]
	\mathring{b}_3
\end{bmatrix}
:=	\begin{bmatrix}
	-\p_1 \mathring{v}\cdot \mathring{N}\\[0.5mm]
	\p_1\mathring{q}-\mathring{{h}}\cdot \p_1\mathring{{h}}+
	\mathring{e}  \p_1\mathring{e}\\[0.5mm]
	\p_1 \mathring{e}+\epsilon \p_t\mathring{\varphi} \p_1 \mathring{h}_2
\end{bmatrix}\quad &&\textrm{if }d=2.		
\label{b.ring2D:def}
	\end{alignat}
	To be consistent with \eqref{ELP1d}, we assume that the source terms $f^{\pm}$ and $g$ vanish in the past.
The dropped terms in \eqref{Alinhac1} and \eqref{Alinhac2} will be taken as error terms and the case of general initial data will be addressed in the nonlinear analysis.

	Now we state the quantitative linear stability result for 3D relativistic plasma--vacuum interfaces with variable coefficients.
	\begin{theorem}
		\label{thm:main1}
Let $d=3$ and $\delta>0$.		
Suppose that the basic state  $(\mathring{U},\mathring{u},\mathring{\varphi}) $ satisfies
the constraints \eqref{bas1}--\eqref{bas8} for some constant $K>0$ and constant state $(\widebar{U},\bar{u})\in\mathbb{R}^{14}$.
Then there exists $T_0>0$, such that if
$(\mathring{U},\mathring{u},\mathring{\varphi}) $ satisfies
$(\mathring{U}-\widebar{U},\mathring{u}-\bar{u},\mathring{\varphi}) \in H_*^{{m+4}}(\Omega_{T})\times H^{{m+4}}(\Omega_{T}) \times  H^{{m+4}}(\Sigma_T)$
and the stability conditions
\begin{gather}
		\label{non-coll}
{ \big|\mathring{H}\times \mathring{h}\big|\geq \delta >0 \qquad \textnormal{on }\Sigma_T,}\\[1mm]
\sup_{\Sigma_{T}}\frac{\big|\mathring{e}_1+\epsilon \mathring{v}_2\mathring{h}_3-\epsilon \mathring{v}_3\mathring{h}_2\big| \big| {  (  \mathring{H}_2, \mathring{H}_3, \mathring{h}_2,\mathring{h}_3)}\big|\big|\mathring{N}\big|}{\big|\mathring{H}_2\mathring{h}_3 -\mathring{H}_3\mathring{h}_2\big|\big(1-\epsilon\big|\mathring{v}\big|\big)} <\frac{1}{2},
		\label{stability3D}
\end{gather}
and $(f^+,f^-,g )\in H_*^{m+1}(\Omega_{T})\times H^{m+2}(\Omega_{T})\times H^{m+{5/2}}(\Sigma_{T})$ vanishes in the past
for some $T\in (0, T_0]$ and integer $m\geq 6$, then solutions $(\dot{U},\dot{u},\psi)$ of problem \eqref{ELP1} satisfy
\begin{align}
	\nonumber
	&\|\dot{U}\|_{H_*^{m}(\Omega_{T})}^2+
	\|\dot{u}\|_{H^{m}(\Omega_{T})}^2+	\|\psi\|_{H^{m+1/2}(\Sigma_T)}^2&&
	\\[1mm]
	&\quad \leq C(K)\Big(
	\|f^+  \|_{H_*^{m+1}(\Omega_{T}) }^2+\|f^-  \|_{H^{m+2}(\Omega_{T}) }^2
	+\|g\|_{H^{m+{5/2}}(\Sigma_T)}^2&&
	\nonumber\\[1mm]
	& \hspace*{7em}
	+	\mathring{\rm C}_{m+4}
	\big\|\big(f^+,f^-, g \big) \big\|_{H_*^{6}(\Omega_{T}) \times H^{7}(\Omega_{T}) \times H^{8}(\Sigma_{T})  }^2\Big), &&
	\label{tame.es}
\end{align}	
where $\mathring{\rm C}_{m}$ is defined by
\begin{align}
	\label{norm:ring} 	
	\mathring{\rm C}_{m}:= 		 1+ \big\|\mathring{U}-\widebar{U}\big\|_{H_*^{m}(\Omega_{T})}^2+\big\|\mathring{u}-\bar{u}\big\|_{H^{m}(\Omega_{T})}^2 +\big\|\mathring{\varphi} \big\|_{H^{m}(\Sigma_{T})}^2.
\end{align}
	\end{theorem}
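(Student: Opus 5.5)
The plan follows the roadmap sketched in the introduction: reduce \eqref{ELP1} to a problem with homogeneous constraints, obtain an $L^2$-type energy identity for tangential derivatives, show that the troublesome boundary term reduces to an instant boundary integral, and absorb that integral using \eqref{non-coll}--\eqref{stability3D}.

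\textbf{Homogenization and change of unknowns.} I construct auxiliary functions $(U^\natural,u^\natural)$ that absorb $f^\pm$ and the first two boundary sources, exactly as in \cite{ST13MR3148595,ST14MR3151094}. This costs one order of regularity on $f^+$ and two on $f^-$, which is why those norms appear on the right of \eqref{tame.es}. The difference $(W,w)$ then solves a problem with $f^\pm=0$ and the linearized constraints $\nabla^{\mathring\Phi^+}\cdot H=0$, $\nabla^{\mathring\Phi^-}\cdot h=0$, $\nabla^{\mathring\Phi^-}\cdot e=0$ in the interior, and $H\cdot\mathring N$, $h\cdot\mathring N$ expressed through derivatives of $\psi$ on the boundary. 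These are propagated by Appendix~\ref{app:C}-type arguments. Next I use the symmetrizers $S^+$ and $S_-(\mathring\nu)$ and a change of unknowns so that the boundary matrix is block-diagonal. This separates the noncharacteristic variables $\mathcal W_{\rm nc}$ (the plasma $q$ and normal velocity, plus the vacuum combinations for the nonzero eigenvalues) from the characteristic ones $W_5,w_1,w_4$, etc.

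\textbf{Energy estimates.} I apply $\mathrm D_*^\alpha$ with $\langle\alpha\rangle\le m$, $\alpha_{d+1}=0$, and take the weighted $L^2$ product with the symmetrizer. This gives
\[
\mathcal E_\alpha(t)\lesssim \int_{\Sigma_t}\mathcal Q_\alpha+\text{lower order}.
\]
Normal derivatives of the noncharacteristic variables are recovered from the equations, and normal derivatives of the characteristic variables are recovered through the anisotropic count, as in \cite{ST13MR3148595}. For the boundary term I use the boundary conditions together with the traces of the interior equations (Maxwell restricted to $\Sigma$, and the $H$-equation). The goal is a cancellation showing $\mathcal Q_\alpha=\frac{d}{dt}\mathcal R_\alpha+$ controllable terms, where $\mathcal R_\alpha$ is an instant integral, essentially of the form $(\mathring e_1+\epsilon\mathring v_2\mathring h_3-\epsilon\mathring v_3\mathring h_2)\,\p^\alpha w_4\,\p^\alpha\nabla'\psi$.

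For purely spatial tangential $\alpha$, I invert the $2\times2$ system from $H\cdot\mathring N$, $h\cdot\mathring N$. Under \eqref{non-coll} this gives $\p_2\psi,\p_3\psi$ in terms of $W_5,w_1$ with coefficient matrix of determinant $\mathring H_2\mathring h_3-\mathring H_3\mathring h_2$. The boundary integral $\mathcal T$ then becomes a quadratic form in traces of $(W_5,w_1,w_4)$. Localizing and writing $\int_\Sigma fg=-\int_\Omega \p_1(fg)$, I get volume integrals of $\p_1 W_5,\p_1 w_1,\p_1w_4$. These are replaced via the divergence constraints by tangential derivatives, and then bounded by the explicit lower bound of the instant energy, with the constant $(1-\epsilon|\mathring v|)$ coming from the eigenvalues of $S_-$. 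Condition \eqref{stability3D} is precisely what makes this quadratic form less than half the energy. For $\alpha$ containing time derivatives, I rewrite $\p_t\psi$ using the boundary conditions and $\p_t w_4$ using the cancellation and Maxwell's equations (as $\p_3\mu_2-\p_2\mu_3$). This converts these terms into the spatial case or into integrable commutators, so no further condition is needed.

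\textbf{Closing and main obstacle.} To control $\psi$, I use the first boundary condition as a transport equation and the identities for $\nabla'\psi$ to get $\|\psi\|_{H^{m+1/2}}$ from traces, costing half a derivative. I then sum over $\alpha$, use Moser-type tame commutator estimates (hence the factor $\mathring{\rm C}_{m+4}$ with the fixed low norms of the data), apply Gronwall, and take $T_0$ small. The main obstacle is the cancellation step together with the sharp absorption: the precise algebra must identify $\mathcal R_\alpha$ exactly and verify that its quadratic form is bounded by the energy with constant matching \eqref{stability3D}. That algebra is what I would work out first.
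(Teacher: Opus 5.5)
Your outline is correct and follows the paper's proof essentially step for step. The steps are: partial homogenization; the change of unknowns separating $\mathcal{W}_{\rm nc}$ from the characteristic variables; the cancellation \eqref{cancellation} reducing $\mathcal{Q}_\alpha$ to an instant boundary integral; the spatial case via \eqref{psi:id1.3D}, localization, the divergence constraints and the explicit energy lower bound, which yields \eqref{stability3D}; the time-derivative case reduced to the already established spatial estimate; and the front estimate with Gr\"onwall.

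Two small corrections. First, the instant integral is $\mathcal{R}_\alpha=-2\epsilon\int_\Sigma\mathring{\zeta}\,\mathrm{D}_*^\alpha\psi\,\mathrm{D}_*^\alpha w_4$, with no extra $\nabla'$ on $\psi$. This is what makes the derivative count close once you write $\alpha=\beta+\mathbf{e}_3$ or $\alpha=\beta+\mathbf{e}_4$. Second, the homogenization removes $f^-$, $g$ and only the $H$-components of $f^+$, so a harmless nonzero $\tilde{f}^+$ remains rather than $f^+=0$.
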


\begin{remark}
Condition \eqref{non-coll} is the non-collinearity condition introduced in {\rm\cite{T10MR2718711}} for the stability of non-relativistic plasma--vacuum interfaces.
		Condition \eqref{stability3D} provides an explicit domain for the linear stability of 3D relativistic plasma--vacuum interfaces, which substantially refines the assumption in {\rm\cite[Theorem 4.2]{T12MR2974767}} that $ |\mathring{e}_1+\epsilon \mathring{v}_2\mathring{h}_3-\epsilon \mathring{v}_3\mathring{h}_2 |$ is sufficiently small on ${\Sigma_T}$.
\end{remark}

\begin{remark}
	A relevant model is the MHD--Maxwell interface problem {\rm\cite{MT14MR3194371,CDS14MR3248397}}, consisting of the non-relativistic MHD equations for plasma flow, Maxwell's equations for vacuum fields, and suitable boundary conditions on the interface.
	For this model, we can obtain a stability result similar to Theorem \ref{thm:main1}, which essentially improves the results in {\rm\cite{MT14MR3194371,CDS14MR3248397}} that require the smallness of $ |\mathring{e}_1+\epsilon \mathring{v}_2\mathring{h}_3-\epsilon \mathring{v}_3\mathring{h}_2 ||_{\Sigma_T}$.
\end{remark}

The following main result concerns the existence of solutions to the 2D linearized relativistic plasma--vacuum interface problem with variable coefficients.
		\begin{theorem}
		\label{thm:main2}
Let $d=2$ and $\delta>0$. Suppose that the basic state  $(\mathring{U},\mathring{u},\mathring{\varphi}) $ satisfies \eqref{bas1}--\eqref{bas6} for some constant $K>0$ and constant state $(\widebar{U},\bar{u})\in\mathbb{R}^9$.
Then there exists $T_0>0$, such that if
 $(\mathring{U},\mathring{u},\mathring{\varphi}) $ satisfies
$(\mathring{U}-\widebar{U},\mathring{u}-\bar{u},\mathring{\varphi}) \in H_*^{{m+4}}(\Omega_{T})\times H^{{m+4}}(\Omega_{T}) \times  H^{{m+4}}(\Sigma_T)$
and the stability condition
\begin{align}
	\label{bas10}
{	\big| \mathring{H}\big|+\big|\mathring{h} \big|\geq {\delta} >0	\qquad \textrm{on }\Sigma_T, }
\end{align}
and $(f^+,f^-,g )\in H_*^{m+1}(\Omega_{T})\times H^{m+2}(\Omega_{T})\times H^{m+{5/2}}(\Sigma_{T})$ vanishes in the past
for some time $T\in (0, T_0]$ and integer $m\geq 6$,
then problem \eqref{ELP1} has a unique solution $(\dot{U},\dot{u},\psi)\in H_*^{m}(\Omega_{T})\times H^{m}(\Omega_{T})\times H^{m+1/2}(\Sigma_T)$ satisfying tame estimate \eqref{tame.es}.
	\end{theorem}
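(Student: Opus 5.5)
We prove Theorem \ref{thm:main2} by reducing \eqref{ELP1} to a decoupled system, as outlined in the introduction. The reduction is set up first; then we solve, estimate, and pass to higher order.

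\emph{Reduction.} We begin with the partial homogenization of Section \ref{sec:refor}. Using auxiliary functions constructed from $(f^+,f^-,g)$, we subtract a lift so that the new unknowns solve \eqref{ELP1} with $f^\pm$ replaced by sources vanishing on the boundary and with $g$ essentially only in the $\psi$-equation. This reduction costs the loss $f^+\in H_*^{m+1}$, $f^-\in H^{m+2}$, $g\in H^{m+5/2}$. The homogenized unknowns then satisfy the linearized divergence constraints and the boundary constraints $\mathring H\cdot N$-type and $\mathring h\cdot N$-type identities, by the argument of Proposition \ref{Pro:invol} applied to the linearized equations. Next we change unknowns so as to isolate the noncharacteristic variables $\mathcal W_{\rm nc}$. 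In 2D the characteristic part is absent from \eqref{ELP1c}: the third condition reads $e+\epsilon\p_t\mathring\varphi h_2+\epsilon\mathring h_2\p_t\psi+\psi\mathring b_3=g_3$, and $\mathcal W_{\rm nc}$ contains $e$ and $h_2$ only through noncharacteristic combinations.

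\emph{Transport equation for $\psi$.} We combine the first and third boundary conditions, together with the boundary constraints $\mathring H\cdot N$ and $\mathring h\cdot N$ linearized (they express $\p_2\psi$ through traces of $H_1$, $h_1$), to eliminate first derivatives of $\psi$. Condition \eqref{bas10} is what makes this possible: where $\mathring H_2\neq0$, the linearized $H\cdot N=0$ expresses $\p_2\psi$ via $H$-traces, and where $\mathring h_2\neq0$ the $h$-constraint does the same. A partition of unity on $\Sigma_T$ adapted to $|\mathring H_2|\ge\delta/2$ or $|\mathring h_2|\ge\delta/2$ glues the two choices. This yields a transport equation
\[
(\p_t+\mathring v_2\p_2)\psi=\ell_0\psi+\ell\cdot \mathcal W_{\rm nc}|_{\Sigma}+\tilde g,
\]
plus two boundary conditions involving only $(\mathcal W_{\rm nc},\psi)$ with zero-order dependence on $\psi$. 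For $\psi=0$ these two conditions are maximally nonnegative for the symmetrized system with symmetrizers $S^+(\mathring U)$ and $S_-(\mathring\nu)$. We check this directly: the boundary quadratic form has $2$ incoming directions, it is nonnegative on the kernel of the boundary conditions (using $2q=|h|^2-|e|^2$ linearized and $e=-\epsilon\p_t\varphi h_2$), and the kernel dimension is maximal.

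\emph{Existence and estimates.} We solve by the fixed point argument of \cite{ST13MR3148595}. Given $\psi$, the problem for $(W,w)$ is a hyperbolic problem characteristic of constant multiplicity with maximally nonnegative conditions and source depending on $\psi$. Rauch's/Secchi's theory gives a solution in the anisotropic space $H^s_*$ with an $L^2$ estimate and tame $H^m_*$ estimates. Conversely, given $\mathcal W_{\rm nc}|_\Sigma$, the transport equation yields $\psi$ with $\|\psi\|_{H^{s}(\Sigma_t)}\lesssim \int_0^t\|\mathcal W_{\rm nc}|_\Sigma\|_{H^s}$. Since only traces of noncharacteristic variables enter, those traces are controlled by the interior estimate in $H^s(\Sigma_T)$ without loss, for instance through the boundary integral in the energy identity. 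A contraction then follows for small $T_0$. The gain $\psi\in H^{m+1/2}$ comes from the fact that $\p_2\psi$ is a trace of $H_1$ or $h_1$, which lies in $H^{m-1/2}$ by the identities above. Uniqueness follows from the $L^2$ estimate.

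\emph{Higher order.} For higher regularity we differentiate tangentially ($\p_t,\p_2,\sigma\p_1$), so that the boundary terms are handled by the same maximal nonnegativity. Normal derivatives of the noncharacteristic unknowns are recovered from the equations, and those of the characteristic unknowns from the vorticity/divergence-type equations, as in \cite{ST13MR3148595,MSTTY24MR4754746}. Commutators are treated by Moser-type calculus, which gives the factor $\mathring{\rm C}_{m+4}$ multiplying the low norms in \eqref{tame.es}.

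\emph{Main obstacle.} We expect the main difficulty to be the verification that the reduced boundary conditions are maximally nonnegative uniformly, together with the gluing across the region where $\mathring H_2$ and $\mathring h_2$ swap roles. The partition-of-unity coefficients must not destroy the nonnegativity of the boundary form. We plan to first try defining the elimination via the convex combination $(\mathring H_2 \cdot+\mathring h_2\cdot)/(\mathring H_2^2+\mathring h_2^2)$, which is smooth under \eqref{bas10} and avoids cutoffs.
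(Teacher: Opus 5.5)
Your final structure matches the paper: a kinematic transport equation for $\psi$, the two conditions \eqref{BC.2Db}--\eqref{BC.2Dc} that are maximally nonnegative for $\psi=0$, and a fixed point in $\psi$. However, the step that makes the fixed point close is missing, and the justification you give in its place is wrong.

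\textbf{The contraction does not close as described.} You claim the traces of $\mathcal{W}_{\rm nc}$ are controlled ``without loss, through the boundary integral in the energy identity'', so that the transport equation returns $\phi$ in the same space as $\psi$. This fails for two reasons.
\begin{itemize}
\item For $\psi=0$ the boundary form $2\epsilon W_1W_2+2w_2w_3$ vanishes identically on the kernel of \eqref{FP1c}--\eqref{FP1c2}. The conditions are conservative, not strictly dissipative, so the energy identity gives no trace control at all.
\item Traces of $\mathcal{W}_{\rm nc}$ come only from the trace theorem, after recovering $\partial_1\mathcal{W}_{\rm nc}$ from \eqref{d1W}--\eqref{d1w}, and this loses half a derivative. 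Meanwhile the $(W,w)$-problem needs its datum $\psi$ half a derivative above the interior level.
\end{itemize}
Concretely, $\psi\in H^{3/2}(\Sigma_T)$ gives $(W,w)$ in $H^1$, hence only $W_2|_{\Sigma}\in H^{1/2}(\Sigma_T)$, and the transport equation then gives only $\phi\in H^{1/2}(\Sigma_T)$. So $\psi\mapsto\phi$ loses a full derivative, and no factor $T$ compensates for that.

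\textbf{What is missing.} The regularity gain has to be built into the image $\phi$. You must show that $\partial_t\phi$ and $\partial_2\phi$ are traces of $H^1(\Omega_T)$ functions, using \eqref{bas10} together with the magnetic constraints written for the iterate.
\begin{itemize}
\item The $H$-constraint \eqref{psi:id1} does hold with $\phi$, because it involves only the $H$-equation and the kinematic condition \eqref{BC.2Da}.
\item The vacuum constraint does not hold with $\phi$, because \eqref{FP1c2} carries $\psi$ while \eqref{BC.2Da} carries $\phi$. The paper handles this through the defect $\mathcal{G}=w_1-\mathring{h}_2\partial_2\phi-\partial_2\mathring{h}_2\,\phi$. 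It satisfies the transport equation \eqref{G.cal:eq}, forced by $\partial_2(\mathring{b}_5(\psi-\phi))$, and is therefore $O(T)$ in $H^{1/2}$.
\end{itemize}
Combined with $W_4,w_1\in H^1(\Omega_T)$ from \eqref{H.inv3}--\eqref{h.inv3}, this yields \eqref{psi:id2} and then \eqref{FP:es5}: $\|\phi\|_{H^{3/2}}^2\lesssim T\|\psi\|_{H^{3/2}}^2+T\|\bm{f}^+\|_{H^1_*}^2$. Your remark that $\partial_2\psi$ is a trace of $H\cdot\mathring{N}$ or $h\cdot\mathring{N}$ is the right mechanism for the final a priori estimate. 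Applied to $\psi$ rather than to the iterate $\phi$, with the defect argument, it does not make the iteration close.

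\textbf{The role of \eqref{bas10} is misplaced.} Eliminating the first derivatives of $\psi$ from the third condition in \eqref{B.bm2D:def} needs neither \eqref{bas10}, nor the constraints, nor a partition of unity. Substituting $(\partial_t+\mathring{v}_2\partial_2)\psi=W_2-\mathring{b}_1\psi$ from the kinematic condition gives \eqref{BC.2Dc} at once.
\begin{itemize}
\item Using the constraints at that point would replace $\partial_2\psi$ by traces of the characteristic variables $W_4=H\cdot\mathring{N}$ and $w_1=h\cdot\mathring{N}$. That would reintroduce the 3D difficulty into the boundary conditions and take you outside the maximally nonnegative framework.
\item So the ``main obstacle'' you name (gluing cut-offs while preserving the sign of the boundary form) never arises. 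Condition \eqref{bas10} is needed only for the regularity gain of the front, both in the contraction and in the tame estimate.
\end{itemize}
Finally, the homogenization must make the $H$-equations and the vacuum equations exactly homogeneous; Lemma \ref{lem:homo} constructs $H^{\natural}$ and $u^{\natural}$ for precisely this purpose. Sources merely ``vanishing on the boundary'' would not produce the homogeneous constraints your argument relies on.
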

\begin{remark}
	Theorem \ref{thm:main2} establishes the existence of solutions to the 2D linearized relativistic plasma--vacuum interface problem with variable coefficients, while the solvability for the 3D problem is left as an interesting open problem for future study.
\end{remark}

\subsection{Nonlinear existence result}	

To state the existence theorem for the nonlinear problem \eqref{NP1} in two-dimensional space ($d=2$), we introduce the compatibility conditions on the initial data \eqref{NP1d}.
Assume without loss of generality that $\|\varphi_0\|_{L^{\infty}(\mathbb{R})}<1$.
Let the initial data satisfy
\begin{align}  \label{initial:H1}
	(U_0-\widebar{U},\, u_0-\bar{u},\varphi_0)\in H^{m+3/2}(\Omega)\times  H^{m+3/2}(\Omega)\times  H^{m+2}(\mathbb{R}),
\end{align}
{the physical assumption $\|v_0\|_{L^{\infty}(\Omega)}<\epsilon^{-1}$},
and the hyperbolicity condition
\begin{align}
	\label{hyperbolicity}
	\rho_*< \rho(U_0)<\rho^* \qquad \textrm{on }\overline{\Omega}
\end{align}
for some constant state $(\widebar{U},\bar{u})\in\mathbb{R}^9$ and integer $m\geq 3$.
{Here $\rho_*$ and $\rho^*$ are the nonnegative constants specified in \eqref{cs:def}, so that the
	matrix $S^+(U_0)$ defined by \eqref{S+U} is positive definite under condition \eqref{hyperbolicity}.}
We define $\Phi_0^{\pm}$ and $\nu_0$ 
by
\begin{alignat}{3}
	\Phi_0^{\pm}(x):=\pm x_1+\chi(x_1)\varphi_0(x_2),\quad
	\nu_0(x):=\chi(x_1)v_0(0, x_2) \qquad   \textrm{for  }   x\in \Omega.
	\nonumber 
\end{alignat}
Let us denote
\begin{align*}
	U_{(j)}  := \p_t^{j }U\big|_{t=0},\quad
	u_{(j)}  := \p_t^{j }u\big|_{t=0},\quad
	\varphi_{(j)}:= \p_t^{j}\varphi\big|_{t=0}\qquad
	\textrm{for }j\in\mathbb{N}.
\end{align*}
Then
${U}_{(0)}={U}_0,$ $u_{(0)}=u_0,$ and $\varphi_{(0)}=\varphi_0.$
For integer $j\geq 1$, we take $j-1$ time derivatives of the interior equations \eqref{NP1a}--\eqref{NP1b} and the first condition in \eqref{NP1c}.
Evaluating the resulting identities at the initial time determines the traces $U_{(j)}  $, $u_{(j)}  $, and $\varphi_{(j)}$ inductively as functions of $U_0,$  $u_0,$ $\varphi_0$, and their spatial derivatives.
In particular, we have
\begin{align}
	\varphi_{(j+1)} = v_{1(j)}\big|_{x_1=0}-\sum_{i=0}^{j}
	\begin{pmatrix}
		j \\ i
	\end{pmatrix}  \p_2\varphi_{(j-i)}v_{2(i)}\big|_{x_1=0},
	\label{trace.id1}
\end{align}
where $\left(\begin{smallmatrix}
	j \\[0.3mm] i
\end{smallmatrix}\right)$ is the binomial coefficient.
Moreover, using the extension \eqref{nu:def1} and the multiplicative properties of Sobolev spaces, we obtain the following lemma; see \cite[Lemma 4.2.1]{M01MR1842775} for the detailed proof.
\begin{lemma} \label{lem:CA1}
	Suppose that 
	$(U_0,u_0,\varphi_0)$ satisfy $\|\varphi_0\|_{L^{\infty}(\mathbb{R})}<1$,
	{$\|v_0\|_{L^{\infty}(\Omega)}<\epsilon^{-1}$,} and \eqref{initial:H1}--\eqref{hyperbolicity} for some constant state $(\widebar{U},\bar{u})\in\mathbb{R}^9$ and integer $m\geq 3$. Then the procedure described above determines the traces
	$(U_{(j)},u_{(j)})\in H^{m+3/2-j}(\Omega)$ and
	$\varphi_{(j)}\in H^{m+2-j}(\mathbb{R})$, for $j=1,2,\ldots,m$,
	satisfying
	\begin{align}
		\sum_{j=1}^{m}
		\left( \|(U_{(j)},\,u_{(j)})\|_{H^{m+3/2-j}(\Omega) }
		+\|\varphi_{(j)}\|_{H^{m+2-j}(\mathbb{R}) }
		\right)\leq C(M_0)
		\nonumber
	\end{align}
	for some positive constant $C(M_0)$ depending on
	\begin{align} \label{M0:def}
		M_0:=\big\|(U_{0}-\widebar{U},\, u_{0}-\bar{u})\big\|_{H^{m+3/2}(\Omega) } +\big\|\varphi_{0}\big\|_{H^{m+2}(\mathbb{R}) } .
	\end{align}
\end{lemma}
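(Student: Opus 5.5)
The plan is an induction on $j$, following \cite[Lemma 4.2.1]{M01MR1842775}. First I would rewrite the interior equations \eqref{NP1a}--\eqref{NP1b} in solved form
\begin{align*}
\p_t U=-\widetilde{A}_1^+(U,\Phi^+)\p_1U-A_2^+(U)\p_2U,\qquad
\p_t u=-\epsilon^{-1}\big(\widetilde{A}_1^-(\nu,\Phi^-)\p_1u+A_2^-(\nu)\p_2u\big),
\end{align*}
and write the first boundary condition in \eqref{NP1c} as $\p_t\varphi=v_1-v_2\p_2\varphi$ on $\Sigma$. Here $\nu=\chi(x_1)v(t,0,x_2)$ by \eqref{nu:def1}, and $\Phi^\pm=\pm x_1+\chi(x_1)\varphi$. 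The coefficients are smooth functions of $(U,\nu,\p_t\Phi,\p_2\Phi)$ and of $1/\p_1\Phi$. At $t=0$ we have $\p_1\Phi_0^\pm=\pm1+\chi'\varphi_0$, which stays away from zero by \eqref{chi:def} and $\|\varphi_0\|_{L^\infty}<1$. The matrices are well defined because $\rho(U_0)$ lies in $(\rho_*,\rho^*)$ by \eqref{hyperbolicity} and $|\nu_0|<\epsilon^{-1}$.

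Differentiating $j-1$ times in $t$ with the Leibniz rule and evaluating at $t=0$ expresses $U_{(j)}$ and $u_{(j)}$ as sums of products. Each product has the form
\[
F\big(U_0,\nu_0,\p\Phi_0\big)\,\p^{k_1}(\cdot)_{(i_1)}\cdots\p^{k_r}(\cdot)_{(i_r)},
\]
with $i_1+\cdots+i_r\le j-1$ and at most one extra spatial derivative per factor. The factors involve $U_{(i)}$, $u_{(i)}$, $\nu_{(i)}=\chi v_{(i)}|_{x_1=0}$, and $\chi\varphi_{(i)}$, together with $\chi\p_2\varphi_{(i)}$ and $\chi\varphi_{(i+1)}$ coming from $\p_t^i\p_t\Phi$. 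Similarly, \eqref{trace.id1} gives $\varphi_{(j+1)}$ in terms of the traces $v_{(i)}|_{x_1=0}$ with $i\le j$ and of $\varphi_{(i)}$ with $i\le j$.

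The induction hypothesis at step $i$ is $(U_{(i)},u_{(i)})\in H^{m+3/2-i}(\Omega)$ and $\varphi_{(i)}\in H^{m+2-i}(\mathbb{R})$. For the step I would use three facts:
\begin{itemize}
\item[(i)] the trace theorem, which gives $v_{(i)}|_{x_1=0}\in H^{m+1-i}(\mathbb{R})$;
\item[(ii)] $\varphi_{(i+1)}\in H^{m+1-i}(\mathbb{R})$, which follows from \eqref{trace.id1} and step (i), consistent with the claimed index;
\item[(iii)] extension by $\chi(x_1)$, which maps $H^s(\mathbb{R})$ into $H^s(\Omega)$ with $s\ge m+3/2-i-1$.
\end{itemize}
Then the Moser-type product estimates in $H^s(\Omega)$ for $s>1=d/2$ and the smooth-composition estimate $\|F(w)-F(0)\|_{H^s}\le C(\|w\|_{L^\infty})\|w\|_{H^s}$ show that each term in $U_{(j)},u_{(j)}$ lies in $H^{m+3/2-j}$. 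In a term the total number of $t$-derivatives plus $x$-derivatives is $j$, so the term with the most derivatives on one factor loses exactly $j$ derivatives from $m+3/2$. The constants depend only on $M_0$ and on $L^\infty$ bounds that Sobolev embedding also controls by $M_0$. The bound also uses a positive lower bound on $|\p_1\Phi_0|$, on $\epsilon^{-1}-|v_0|$, and on the distance of $\rho(U_0)$ to $\rho_*,\rho^*$, which is fixed data.

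The main obstacle is the product estimates when the regularity index $m+3/2-j$ drops to $3/2$ or below for $j$ close to $m$, since $H^{s}$ is then no longer an algebra. I would handle these terms by the standard tame splitting: in each product, all but one factor carry few derivatives and are estimated in $L^\infty$, or in $H^{s'}$ with $s'>1$, using $m\ge3$. The remaining factor carries the low regularity, and I would use the product rule $H^{s_1}\cdot H^{s_2}\subset H^{s}$ for $s_1+s_2\ge s+1$ and $s_1,s_2\ge s$ with $s\ge0$. The count of derivatives as above, together with $m\ge3$, should make these indices admissible. Summing over $j\le m$ then gives the stated bound by $C(M_0)$.
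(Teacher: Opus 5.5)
Your proposal follows the same route as the paper. The paper gives no argument beyond invoking the extension \eqref{nu:def1}, the multiplicative properties of Sobolev spaces, and \cite[Lemma 4.2.1]{M01MR1842775}, and your induction is exactly that scheme: the solved-form equations and \eqref{trace.id1}, together with the trace theorem, the $\chi$-extension, and Moser and composition estimates.

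The low-regularity obstacle you anticipate does not arise. For $j\le m$ the target index satisfies $m+3/2-j\ge 3/2>1=d/2$. Every coefficient factor $\p_t^a(U,\nu,\p\Phi)|_{t=0}$ with $1\le a\le j-1$ lies in $H^{s}(\Omega)$ with $s\ge 2$, so plain multiplier estimates suffice.

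One point should be made explicit: $\varphi_{(1)}=v_1(0,\cdot)-\p_2\varphi_0\,v_2(0,\cdot)$ belongs to $H^{m+1}(\mathbb{R})$ only if $\bar v_1=0$. This is an assumption on the constant state that the paper also leaves implicit.
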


Taking time derivatives of the second and third conditions in \eqref{NP1c} (cf.~\eqref{Bbb2D:def}) leads to the terminology for the compatibility conditions on the initial data.
\begin{definition}
	\label{def:1}
	Assume that all the conditions in Lemma \ref{lem:CA1} hold.
	Then the initial data $(U_0, u_0, \varphi_0)$ are said to be compatible up to order $m$,
	if the traces $U_{(j)}$, $u_{(j)}$, and $\varphi_{(j)}$, for $j=1,2,\ldots,m$,  satisfy
	\begin{alignat}{5}
		\label{comp1}
		&  2q_{(0)}= |h_{(0)}|^2-  e_{(0)} ^2,\quad &&
		q_{(j)} = \sum_{i=0}^{j -1}  \begin{pmatrix}
			j-1 \\ i
		\end{pmatrix}
		\left(h_{(i)}\cdot h_{(j-i)}-e_{(i)}e_{(j-i)}\right) \quad&&\textrm{on }\Sigma,\\
		\label{comp2}
		& e_{(0)}=-\epsilon \varphi_{(1)} h_{2(0)},\quad &&
		e_{(j)} = -\epsilon\sum_{i=0}^{j }  \begin{pmatrix}
			j \\ i
		\end{pmatrix} \varphi_{(i+1)}h_{2(j-i)}
		\quad&&\textrm{on }\Sigma.
	\end{alignat}
\end{definition}


We are now ready to present the local existence theorem for the nonlinear problem \eqref{NP1} in 2D.

\begin{theorem}
	\label{thm:main3}
	Let $d=2$ and $\delta_0>0$.
	Assume that the initial data
	$(U_0, u_0, \varphi_0)$ satisfy  	$\|\varphi_0\|_{L^{\infty}(\mathbb{R})}<1$,
	{the physical assumption $\|v_0\|_{L^{\infty}(\Omega)}<\epsilon^{-1}$,}
	the hyperbolicity condition  \eqref{hyperbolicity},
	the constraints \eqref{H.h.inv2} and \eqref{H.h.cons2},
	\begin{align}  \label{initial:H2}
		(U_0-\widebar{U},\, u_0-\bar{u},\varphi_0)\in H^{m+11.5}(\Omega)\times  H^{m+11.5}(\Omega)\times  H^{m+12}(\mathbb{R}),
	\end{align}
	the compatibility conditions up to order $m+10$,
	and the stability condition
	\begin{align} \label{stability2D}
		|H_0|+| h_0 |    \geq \delta_0>0 \qquad \textrm{on } \Sigma,
	\end{align}
	for some constant state $(\widebar{U},\bar{u})\in\mathbb{R}^9$ and integer $m\geq 12$.
	Then there exists $T>0$,
	such that problem \eqref{NP1} has a unique solution $(U,u, \varphi)$ on the time interval $[0, T]$ with
	$U-\widebar{U}\in H_{*}^{m}([0,T]\times\Omega)$,
	$u-\bar{u}\in H^{m}([0,T]\times\Omega)$, and
	$ \varphi\in  H^{m+0.5}([0,T]\times\mathbb{R}).$
\end{theorem}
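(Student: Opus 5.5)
We will follow the by-now standard route for characteristic free boundary problems in ideal MHD: construct an approximate solution, reduce to a problem with zero initial data, and close a modified Nash--Moser iteration in the spirit of \cite{H76MR0602181,CS08MR2423311,ST13MR3148595}, using Theorem \ref{thm:main2} as the linear input.

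\emph{Step 1: approximate solution.} Lemma \ref{lem:CA1} and the compatibility conditions up to order $m+10$ let us build $(U^a,u^a,\varphi^a)$ with $U^a-\widebar{U},u^a-\bar{u}\in H^{m+12}_*(\Omega_T)$ and $\varphi^a\in H^{m+12.5}(\Sigma_T)$, via a lifting of the traces $(U_{(j)},u_{(j)},\varphi_{(j)})$. This approximate solution must satisfy $\p_t^j\mathbb{L}_\pm|_{t=0}=0$ for $j\le m+9$ and the boundary conditions \eqref{NP1c} exactly on $\Sigma_T$. We can enforce this by prescribing $\varphi^a$ first, then correcting the traces of $q^a$ and $e^a$ through the second and third rows of \eqref{Bbb2D:def}. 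We also require that it satisfy \eqref{H.h.cons2} and the equations $\mathbb{H}=0$ and $\mathbb{L}_-=0$, so that \eqref{bas3}--\eqref{bas5} hold for the basic states. Continuity in time keeps $\|\varphi^a\|_{L^\infty}\le 2$, $|v^a|<\epsilon^{-1}$, the bound $\rho_*<\rho(U^a)<\rho^*$, and $|H^a|+|h^a|\ge\delta_0/2$ on $\Sigma_T$ for $T$ small.

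\emph{Step 2: reduction to zero data.} We write $U=U^a+V$, $u=u^a+\mathfrak{v}$, $\varphi=\varphi^a+\psi$. The source is $f^a:=-\mathbb{L}(U^a,\ldots)$ for $t>0$, extended by $0$ for $t<0$; it is small in $H_*^{m+?}$ as $T\to0$ because it vanishes to high order at $t=0$. The problem thus becomes $\mathcal{L}(V,\mathfrak{v},\psi)=f^a$ with zero boundary data and solutions vanishing in the past.

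\emph{Step 3: Nash--Moser scheme.} We use smoothing operators $S_\theta$ on $H_*^m$ and $H^m$ spaces with $\theta_k=(\theta_0^2+k)^{1/2}$. At each step we linearize around an intermediate state $(U^a+V_{k+1/2},\ldots)$. This state is constructed as in Proposition \ref{pro:modified}: starting from $S_{\theta_k}V_k$, we correct $\psi$ so that the first row of \eqref{Bbb2D:def} holds, adjust $q,e$ on the boundary, and re-solve for $H$ and $h$ so that the constraints \eqref{bas3}--\eqref{bas6} hold. These constraints are needed to apply Theorem \ref{thm:main2}. We must also check \eqref{bas10}: it follows from the approximate solution plus the small $W^{1,\infty}$ size of the increments. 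We then solve the effective linear problem \eqref{ELP1} for the good unknowns, using Theorem \ref{thm:main2} with source terms collecting the quadratic errors, the substitution errors, the dropped Alinhac terms $\frac{\Psi}{\p_1\mathring\Phi}\p_1\mathbb{L}$, and the errors from the modified state. Next we prove by induction the hypotheses $\|\delta V_k\|_{H^s_*}+\ldots\le\varepsilon\theta_k^{s-\alpha-1}\Delta_k$ for $s\le\tilde\alpha$, with $\alpha=m+1$ and $\tilde\alpha=m+?$ chosen so that the fixed loss in \eqref{tame.es} (about four derivatives on the coefficients) is absorbed. This uses the tame estimate \eqref{tame.es}, the Moser-type calculus in $H_*^m$ (with its embedding $H^{m}_*\hookrightarrow H^{m-1}$), and the estimates of the quadratic and substitution errors. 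Passing to the limit gives a solution with $V\in H^m_*$ and $\psi\in H^{m+1/2}$. Uniqueness comes from a standard energy argument applied to the difference of two solutions in a low norm, reusing the basic $L^2$ estimate behind Theorem \ref{thm:main2}.

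\emph{Main obstacle.} The hardest step will be the construction and estimates of the modified state. The difficulty is that it must satisfy simultaneously the transport-type boundary relation, the boundary conditions \eqref{bas6}, the constraints \eqref{bas3}--\eqref{bas5}, and $\mathbb{L}_-(\mathring u,\mathring\nu,\mathring\Phi^-)=0$ exactly (needed for identity \eqref{identity1}), while keeping tame bounds compatible with the induction.

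For the vacuum part, we would first solve the linear Maxwell system \eqref{vacuum2} on the intermediate front, using the boundary trace from $e+\epsilon\p_t\varphi h_2=0$. This system is maximally dissipative once $\psi$ is fixed, so it has tame solvability. For $H$, we would solve the transport equation $\mathbb{H}=0$ with given $v$, as in \cite{TW21MR4201624,MSTTY24MR4754746}. Controlling the additional error these corrections produce at order $\theta_k^{\,s-\alpha-1}$ is the delicate bookkeeping we expect to dominate the proof.
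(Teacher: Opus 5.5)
\textbf{Where the proposal fails.} Your overall architecture is the paper's: an approximate solution, reduction to \eqref{P.new} with zero data, a modified Nash--Moser scheme with $\alpha=m+1$, Theorem~\ref{thm:main2} as the linear input, and a modified state as in Proposition~\ref{pro:modified}. The gap is in the sub-step you single out as the crux, which rests on a false claim. The reduced Maxwell system \eqref{vacuum2} with the boundary condition $e+\epsilon\p_t\varphi h_2=0$ is \emph{not} maximally dissipative. So neither $u^a$ in Lemma~\ref{lem:app} nor $u_{n+1/2}$ with \eqref{MS:u.bdy} can be obtained directly from the standard theory for characteristic problems of constant multiplicity.

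To see this, work in the variables \eqref{w:def1}. The boundary matrix on $\Sigma$ is $\bm{B}_1^-$ of \eqref{decom2.2D}. Its kernel is the direction $w_2=w_3=0$ (i.e.\ $w_1=h\cdot N$), and the boundary form is $2w_2w_3$. The condition $e+\epsilon\p_t\varphi h_2=0$ reads $\mu_3=0$, i.e.\ $w_3=-\epsilon\nu_2 w_1$. On this admissible subspace the form is therefore $-2\epsilon\nu_2 w_1w_2$, which is indefinite wherever the tangential velocity $v_2$ does not vanish on the interface. Equivalently, the admissible subspace does not contain the kernel, on which $e+\epsilon\p_t\varphi h_2=\epsilon\nu_2\,h\cdot N$. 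The kernel does not depend on the symmetrizer, and every symmetrized boundary form is invariant under translation along it, so no other symmetrizer helps.

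A related point in your Step~1: you cannot ``correct the trace of $e^a$'' by a lifting and still keep $\mathbb{L}_-=0$. The third row of \eqref{Bbb2D:def} must be imposed as the boundary condition of exactly this IBVP, with \eqref{comp2} supplying compatibility.

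\textbf{The missing idea} is the reduction of Appendix~\ref{app:D}.
\begin{enumerate}
\item The normal component of the $h$-equation gives $\epsilon\p_t(h\cdot N)+\p_2(e+\epsilon\p_t\varphi h_2)=0$ on $\Sigma$ (cf.~\eqref{cons2v:id1.2D}).
\item Replace the boundary condition by $w_3=0$, i.e.\ $e+\epsilon\p_t\varphi h_2=\epsilon\nu_2\,h\cdot N$. This subspace contains the kernel and annihilates the boundary form, so it is maximally nonnegative, and the problem can be solved.
\item The identity in 1 then becomes the boundary transport equation $\p_t(h\cdot N)+\p_2(\nu_2\,h\cdot N)=0$. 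Hence $h\cdot N=0$ on $\Sigma$ by \eqref{H.h.cons2} at $t=0$, and the original condition follows. For the modified state, apply this to the total field $u^a+u_{n+1/2}$, using that $u_{n+1/2}$ and $\psi_{n+1/2}$ vanish at $t=0$.
\item The normal derivatives of the characteristic component $w_1$ are recovered from the transport equation for $\nabla^{\Phi^-}\cdot h$. This upgrades anisotropic regularity to the full $H^s$ regularity needed for $u-\bar u\in H^m$ and \eqref{MS.est3}.
\end{enumerate}

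\textbf{Two smaller points.}
\begin{itemize}
\item The paper keeps $\psi_{n+1/2}=\mathcal{S}_{\theta_n}\psi_n$ and restores the kinematic condition by lifting the boundary defect into $v_1$. It also simply sets $q_{n+1/2}=\mathcal{S}_{\theta_n}q_n$, since pressure balance is not among \eqref{bas1}--\eqref{bas6}. If you instead correct $\psi$ through the transport equation, you control $\psi_{n+1/2}-\mathcal{S}_{\theta_n}\psi_n$ only through traces of $v$. That costs derivatives in \eqref{MS.est1} and reintroduces a nonzero front slot in $\tilde{e}_n'''$.
\item The embedding $H^m_*\hookrightarrow H^{m-1}$ is false; one only has $H^m_*\hookrightarrow H^{\lfloor m/2\rfloor}$. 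What the argument actually uses is the trace map $H^{m}_*(\Omega_T)\to H^{m-1}(\Sigma_T)$ and the embedding $H^6_*\hookrightarrow W^{2,\infty}_*$.
\end{itemize}
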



\begin{remark}
	In view of Remark \ref{Rem1}, 
	Theorem \ref{thm:main3} implies corresponding existence results for both the free boundary problem \eqref{BC2a}--\eqref{RMHD:vec}, \eqref{IC1}, \eqref{vacuum2} and the original relativistic plasma--vacuum interface problem \eqref{BC2a}--\eqref{IC1},
	since the relations $|\p_1\Phi^{\pm}|>0$  and the constraints \eqref{H.h.inv2}--\eqref{H.h.cons2} hold on $[0, T]$.
\end{remark}

\begin{remark}
	{Condition \eqref{stability2D} was exploited in {\rm\cite{MSTTY24MR4754746}} for the stability of 2D non-relativistic plasma--vacuum interfaces to gain an extra half-order regularity for the interface function.}
\end{remark}

\begin{remark}
	From \eqref{NP1c} and \eqref{Gamma:def}, we have
	$$	|h|^2=e^2+2p+ \frac{|H|^2}{ \varGamma^{2}}+ {\epsilon^2}  (v\cdot H)^2
	\qquad\textnormal{on }\Sigma.
	$$
Therefore, for general plasmas satisfying the conventional assumption that $p>0$ whenever $\rho>0$, the stability condition \eqref{stability2D} is implied by the hyperbolicity condition \eqref{hyperbolicity}.
	This argument
	excludes the case of Chaplygin gases, where the pressure $p$ can be negative for small densities and condition \eqref{stability2D} does not appear to follow from hyperbolicity.
\end{remark}

	
	\subsection{Notation} \label{sec:nontation}
	We employ the following notation throughout the paper:
\begin{itemize}
		\item
Let $d=2,3$ denote the spatial dimension.
We adopt the Einstein summation convention, for which Greek and Latin indices range from $0$ to $d$ and from $1$ to $d$, respectively.	

\item
We denote by $\p_0=\p_t:=\frac{\p}{\p t}$ the time derivative and by $\nabla:=(\p_1,\ldots,\p_d)$ the gradient with $\p_i:=\frac{\p}{\p x_i}$.
For any scalar $f$,
vectors $u=(u_1,\ldots,u_d)$ and
$w=(w_1,\ldots,w_d)$, and matrix $F=(F_{ij})_{d\times d}$,
we define 
\begin{align*}
	&\nabla\cdot F:=  (\p_j F_{1j},\ldots,\p_j F_{dj}),
	\quad \nabla\cdot u:= \p_j u_{j},
	\quad u\otimes w=(u_i w_j)_{d\times d},
	\\[0.5mm]
	&u\times w:=
	\left\{
	\begin{aligned}
		&u_1 w_2 -u_2 w_1\ \quad & \textrm{if }d=2,\\
		&(u_2 w_3 -u_3 w_2,u_3 w_1 -u_1 w_3,u_1 w_2 -u_2 w_1)\ \quad & \textrm{if }d=3,
	\end{aligned}
	\right.\\[0.5mm]
	&\nabla\times w:=
	\left\{
	\begin{aligned}
		&\p_1 w_2 -\p_2 w_1\ \quad & \textrm{if }d=2,\\
		&(\p_2 w_3 -\p_3 w_2,\p_3 w_1 -\p_1 w_3,\p_1 w_2 -\p_2 w_1) \ \quad & \textrm{if }d=3,
	\end{aligned}
	\right.\\[0.5mm]
	&u\times f = -f\times u:=(u_2 f, -u_1 f),
	\quad
	\nabla\times f :=(\p_2f , -\p_1 f)
	\quad \textrm{if }d=2.
\end{align*}

		\item
For a vector $w\in\mathbb{R}^d$, we write $w'$ for its last $d-1$ components, so that $w=(w_1,w')$  and $\nabla:=(\p_1,\nabla')$.	
Set $\mathrm{D}:=(\p_0,\nabla)$ and $\mathrm{D}_{\rm tan}:=(\p_0, \nabla')$.
For multi-indices $\alpha:=(\alpha_0,\alpha_1,\ldots,\alpha_{d})$ and $\beta:=(\beta_0, \beta_{2}, \ldots,\beta_{d})$, we denote
\begin{align*}
	\mathrm{D}^{\alpha}:=\p_0^{\alpha_0}\p_1^{\alpha_1} \cdots\p_d^{\alpha_d} ,\quad
	\mathrm{D}_{\rm tan}^{\beta}:=\p_0^{\beta_0}\p_2^{\beta_2}\cdots \p_d^{\beta_d}.
\end{align*}
	For $m\in \mathbb{N}$, let $\mathring{\rm c}_m$ denote
a generic and smooth matrix-valued function of
$\{(\mathrm{D}^{\alpha} \mathring{U},\mathrm{D}^{\alpha} \mathring{{u}},\mathrm{D}^{\alpha}\mathring{\varphi}): |\alpha|\leq m\}$.
The exact form of $\mathring{\rm c}_m$ may vary at different places.

		\item
		Denote by   $\Sigma:=\{0\}\times\mathbb{R}^{d-1}$
		the boundary of
		$\Omega:=\mathbb{R}_{+}\times \mathbb{R}^{d-1}$.
Set
		\begin{align*}
			\Sigma_T:=(-\infty,T)\times \Sigma,\quad
			\Omega_T :=(-\infty,T)\times \Omega \qquad
			\textrm{for }T>0.
		\end{align*}	
		Define the operator $\mathrm{D}_*^{\alpha}$ and the weight function $\sigma$ by \eqref{D*} and \eqref{sigma:def}, respectively.
		Let the anisotropic Sobolev space $H_*^m(I\times \Omega)$ and its norm
		$\|\cdot \|_{H_*^m(I\times\Omega)}$ be defined by \eqref{H*m:def} and \eqref{H*m:norm}.
		We abbreviate $ \mathrm{D}_*:=(\p_t,\sigma \p_1, \p_2,\ldots,\p_d)$.		
		
\item
We denote by $O_m$ and $I_m$ the zero and identity matrices of order $m$, respectively.
We write $C$ for some universal positive constant
and $C(\cdot)$ for some positive constant depending on the quantities listed in the parenthesis.
Notation $A\lesssim B$ means that $A \leq C B$.
Let $A\lesssim_{a_1,\ldots,a_m} B$ denote $A \leq C(a_1,\ldots,a_k)B$ for given parameters $a_1,\ldots,a_k$.
		
\end{itemize}

\vspace*{2mm}
	
	\section{Reformulation of the linearized problem}	 \label{sec:refor}
To prove Theorems \ref{thm:main1}--\ref{thm:main3}, we construct certain auxiliary functions to partially homogenize the effective linear problem \eqref{ELP1} and introduce a suitable change of unknowns to separate the noncharacteristic variables from the system.	

	\subsection{Partial homogenization}
In the following lemma, we perform a partial homogenization for the problem \eqref{ELP1}.
	
	\begin{lemma}
	\label{lem:homo}		
		Suppose that the basic state $(\mathring{U},\mathring{u},\mathring{\varphi}) $ satisfies \eqref{bas1}--\eqref{bas8}.
		Then there exist auxiliary functions  $u^{\natural}=(h^{\natural}, e^{\natural}) \in\mathbb{R}^{3d-3}$
		and $U^{\natural}=(q^{\natural}, v^{\natural}, H^{\natural},0)\in\mathbb{R}^{2d+2}$, such that
		\begin{align} \nonumber
			&	 \|(u^{\natural},\, q^{\natural},\, v^{\natural})\|_{H^{m+1}(\Omega_{T})}^2
			+\|H^{\natural}\|_{H_*^{m+1}(\Omega_{T})}^2\\[0.5mm]
			&\qquad \lesssim_{K}
			\|f^+  \|_{H_*^{m+1}(\Omega_{T}) }^2+\|f^-  \|_{H^{m+2}(\Omega_{T}) }^2
			+\|g\|_{H^{m+{5/2}}(\Sigma_T)}^2
			\nonumber\\[0.5mm]
			& \qquad \qquad\
			+\mathring{\rm C}_{m+4}
			\big\|\big(f^+,f^-, g \big) \big\|_{H_*^{6}(\Omega_{T}) \times H^{7}(\Omega_{T}) \times H^{8}(\Sigma_{T})  }^2,
			\label{Unatural:es}
		\end{align}
		and the {differences}
		\begin{align} \label{Uu:def}
			U:=\dot{U}-U^{\natural},\quad {u}:=\dot{u}-u^{\natural}
		\end{align}	
		satisfy the equations
		\begin{subequations} \label{ELP2}
			\begin{alignat}{3}
				\label{ELP2a}
				&\mathbb{L}'_{e+}(\mathring{U}, \mathring{\Phi}^+) {U}=\tilde{f}^+
				:= f^+-\mathbb{L}_{e+}'(\mathring{U},\mathring{\Phi}^+)U^{\natural}
				&\qquad &  \textrm{in } \Omega_T,\\[0.5mm]
				\label{ELP2b}
				& L_-(\mathring{\nu},\mathring{\Phi}^-) {u}=0
				&& \textrm{in }\Omega_T,\\[0.5mm]
				\label{ELP2c}
				&  \mathbb{B}'_{e}(\mathring{U}, \mathring{u}, \mathring{\varphi}) (U,{u},\psi)=0
				&& \textrm{on } \Sigma_T,\\[0.8mm]
				\label{ELP2d}
				&
				({U},u,\psi) =0   &&\textrm{if } t<0,
			\end{alignat}
		\end{subequations}
		and the homogeneous constraints
		\begin{alignat}{3}
			\label{H.cons3}	
&H\cdot\mathring{N}-\mathring{H}'\cdot \nabla'\psi+\p_1\mathring{H}\cdot\mathring{N}\psi =0 && \textrm{on } \Sigma_{T},\\[0.5mm]			
			\label{h.cons3}		
			&h\cdot\mathring{N}-\mathring{h}'\cdot \nabla'\psi+\p_1\mathring{h}\cdot\mathring{N}\psi =0&&  \textrm{on } \Sigma_{T},	 \\[0.5mm]		
\label{H.inv3}	
&\p_1\big(H\cdot\mathring{N}\big)+\nabla'\cdot\big(\p_1\mathring{\Phi}^+ H'\big) =0
&\quad& \textrm{in }\Omega_T,\\[0.5mm]	
			\label{h.inv3}	
&\p_1\big(h\cdot\mathring{N}\big)+\nabla'\cdot \big(\p_1\mathring{\Phi}^- h'\big) =0	&&	
\textrm{in }\Omega_T,\\[0.5mm]				
			\label{e.inv3}	
			&\p_1\big(e\cdot\mathring{N}\big)+\nabla'\cdot\big(\p_1\mathring{\Phi}^- e'\big)=0 &&	
			\textrm{in }\Omega_T\quad \textrm{if }d=3 ,
		\end{alignat}	
where 		$\mathring{\rm C}_{m}$
and $\mathring{N}$ are defined by \eqref{norm:ring} and \eqref{N.ring:def}, respectively,
$\nabla':=(\p_2,\ldots,\p_d)$, and $w':=(w_2,\ldots,w_d)$.
	\end{lemma}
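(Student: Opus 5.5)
The construction follows the partial homogenization of \cite{ST13MR3148595,TW21MR4201624,MSTTY24MR4754746}, adapted to the relativistic setting. I would build $u^{\natural}$ and $U^{\natural}$ in two stages. First I remove the interior vacuum source and the boundary data $g$. Then I check that the homogeneous constraints \eqref{H.cons3}--\eqref{e.inv3} propagate from $t<0$, where everything vanishes.

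\emph{Stage one: the vacuum corrector $u^{\natural}$.} I take $u^{\natural}=(h^{\natural},e^{\natural})$ to be the solution, vanishing in the past, of the hyperbolic problem $L_-(\mathring{\nu},\mathring{\Phi}^-)u^{\natural}=f^-$ in $\Omega_T$. It is supplemented with boundary conditions prescribing the trace of its incoming non-characteristic components so as to realize the electric-field parts of $g$. Concretely, I require the third (and fourth, if $d=3$) components of $\mathbb{B}'(\mathring{U},\mathring{u},\mathring{\varphi})(0,u^{\natural},0)$ to equal the corresponding components of $g$.

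Because the boundary matrix of $L_-$ has constant rank, with $d-1$ negative eigenvalues handled by these conditions, I can choose boundary conditions that are maximally dissipative. Standard $H^{m+1}$ theory for such problems, with tame constants, then yields the $\mathring{\rm C}_{m+4}$ structure in \eqref{Unatural:es}; the regularity $f^-\in H^{m+2}$ and $g\in H^{m+5/2}$ leaves room for this. The solvability of this auxiliary problem is presumably what the paper defers to Appendix \ref{app:D}, in the form \eqref{u.natural:eq}. I would also impose $\nabla^{\mathring\Phi^-}$-divergence compatibility so that the differences keep the involutions.

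\emph{Stage two: the plasma corrector $U^{\natural}$.} I take $U^{\natural}=(q^{\natural},v^{\natural},H^{\natural},0)$ with $q^{\natural}$ and $v^{\natural}$ lifted from $\Sigma$ via $\chi(x_1)$ times an extension. This fixes the first two components of $g$ after subtracting the contribution of $u^{\natural}$: I need $-v^{\natural}\cdot\mathring N=g_1$ and $q^{\natural}=g_2+\mathring h\cdot h^{\natural}-\mathring e\cdot e^{\natural}$ on $\Sigma$. Then I define $H^{\natural}$ by solving the transport-type equation
\[
\mathbb{H}'(H^{\natural})=\text{($H$-components of $f^+$)}-(\text{terms from }v^{\natural}),
\]
which is an ODE along the vector field $\p_t^{\mathring\Phi^+}+\mathring v\cdot\nabla^{\mathring\Phi^+}$, tangent to $\Sigma$ by the third condition in \eqref{bas3}. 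This gives $H^{\natural}\in H^{m+1}_*$ and makes the $H$-part of $\tilde f^+$ vanish. With these choices, \eqref{ELP2} follows by linearity.

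\emph{Stage three: the constraints.} Using the $H$-equation of \eqref{ELP2a} with zero source, together with \eqref{bas3}--\eqref{bas5}, I compute that the quantities on the left of \eqref{H.inv3} and \eqref{H.cons3} satisfy homogeneous transport equations. The same holds on $\Sigma$ for \eqref{H.cons3}, combined with the first boundary condition for $\psi$. Vanishing for $t<0$ then forces them to vanish identically. For \eqref{h.inv3}, \eqref{e.inv3} and \eqref{h.cons3} I argue as in Proposition \ref{Pro:invol}: the divergences of $\mathring\Phi^-$-type satisfy transport along $\mathring\nu$ because of the added $\epsilon(\nabla\cdot h)\nu$ terms. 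The boundary identity \eqref{h.cons3} comes from the tangential component of the Maxwell system restricted to $\Sigma$, combined with the electric boundary conditions in \eqref{ELP2c}.

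I expect this last verification to be the main obstacle: checking that the $\Psi$-dependent good-unknown terms and $\mathring b$ combine exactly into the $\psi$-terms of \eqref{h.cons3}. My first attempt would be to differentiate \eqref{h.cons3} along $\p_t+\mathring v'\cdot\nabla'$ and substitute the boundary equations directly.
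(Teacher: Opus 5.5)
Your construction follows the paper's steps. $u^{\natural}$ solves $L_-(\mathring{\nu},\mathring{\Phi}^-)u^{\natural}=f^-$ with the two electric boundary conditions. $q^{\natural}$ and $v^{\natural}$ lift $g_2+\mathring h\cdot h^{\natural}-\mathring e\cdot e^{\natural}$ and $-g_1$. $H^{\natural}$ solves $\mathbb{H}_e'(\mathring H,\mathring v,\mathring\Phi^+)(H^{\natural},v^{\natural})=(f_5^+,f_6^+,f_7^+)$. The constraints then follow from transport equations with zero past.

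\textbf{The gap is in Stage one.} You cannot ``choose'' boundary conditions for $u^{\natural}$. For $u=\dot u-u^{\natural}$ to satisfy the last rows of \eqref{ELP2c}, you must impose exactly $\mu_5^{\natural}=g_3$ and $\mu_6^{\natural}=g_4$ (for $d=2$, $\mu_3^{\natural}=g_3$). That problem is \emph{not} maximally nonnegative. By \eqref{decom1.3D}, on the kernel $\{\mu_5=\mu_6=0\}$ the boundary form is $\mathsf{A}_1^-\mu\cdot\mu=-2\epsilon\mu_1(\mathring v_2\mu_2+\mathring v_3\mu_3)$, which is indefinite; for $d=2$ it is $-2\epsilon\mathring v_2\mu_1\mu_2$. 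So the standard theory you invoke does not apply. Moreover, even for a maximally nonnegative characteristic problem it would give only $H^{m+1}_*$ regularity, not the full $H^{m+1}$ bound on $u^{\natural}$ in \eqref{Unatural:es}.

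\textbf{The missing idea is the boundary identity you reserve for Stage three, applied to $u^{\natural}$ itself.}
\begin{itemize}
\item Dot the $h$-equation of \eqref{u.natural:eq} with $\mathring N$ on $\Sigma_T$. Use $\p_t\mathring\varphi=\mathring\nu\cdot\mathring N$ (this is what the added term $\epsilon(\nabla^{\mathring\Phi^-}\cdot h)\mathring\nu$ is for) and the imposed electric conditions. This gives $\epsilon\p_t(h^{\natural}\cdot\mathring N)=f_1^--\p_2\mathring\varphi f_2^--\p_3\mathring\varphi f_3^--\p_2g_4+\p_3g_3$, hence the a priori boundary constraint $h^{\natural}\cdot\mathring N=g_5$.
\item Subtract a lifting $u^{\sharp}$ of $(g_3,g_4,g_5)$. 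The remainder then has $\tilde\mu_1=0$ on $\Sigma_T$.
\item This lets you replace the boundary conditions by $\tilde w_5=\tilde\mu_5+\epsilon\mathring\nu_3\tilde\mu_1=0$ and $\tilde w_6=\tilde\mu_6-\epsilon\mathring\nu_2\tilde\mu_1=0$. These are maximally nonnegative by \eqref{decom2.3D}.
\item After solving, prove equivalence. For the modified problem the same identity becomes $\epsilon\p_t\tilde\mu_1+\epsilon\nabla'\cdot(\mathring v'\tilde\mu_1)=0$ on $\Sigma_T$. With zero past this forces $\tilde\mu_1|_{\Sigma}=0$, and hence the original conditions hold.
\item Recover full normal regularity of the characteristic components $\tilde h\cdot\mathring N$ and $\tilde e\cdot\mathring N$ from transport equations for $\nabla^{\mathring\Phi^-}\cdot\tilde h$ and $\nabla^{\mathring\Phi^-}\cdot\tilde e$. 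This step, together with the lifting of $g_5$, is where $f^-\in H^{m+2}$ and $g\in H^{m+5/2}$ are really used.
\end{itemize}

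\textbf{Two smaller points.} First, do not impose divergence conditions on $u^{\natural}$. That would overdetermine the problem, and the involutions for $u$ follow for free from $L_-u=0$ with zero past. Second, \eqref{h.cons3} needs no transport along $\mathring v'$. The same normal-Faraday identity, applied to $u$ with the homogeneous electric rows of \eqref{ELP2c}, gives $\epsilon\p_t$ of the constraint plus $\p_2(\psi\,\cdot)+\p_3(\psi\,\cdot)$. The coefficients of these last terms vanish by \eqref{bas4} restricted to $\Sigma_T$.
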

	\begin{proof}
We prove the lemma for $d=3$ in the following five steps.		
The 2D case can be handled similarly, and hence we omit the details here. 

		\vspace*{2mm}
		\noindent {\it Step 1: Construction of  $u^{\natural}$}.\quad
		We first reformulate the vacuum equations in \eqref{ELP1} to those with homogeneous source terms.
		For this purpose, we set $u^{\natural}$ as the unique solution of 
		the problem	
		\begin{align}
			\label{u.natural:eq}
			\left\{
			\begin{aligned}
				&L_-(\mathring{\nu},\mathring{\Phi}^-) {u}^{\natural}=f^-
				&\qquad  &\textrm{in } \Omega_T,	\\
				& e_2^{\natural}+\p_2\mathring{\varphi}e_1^{\natural}-\epsilon \p_t\mathring{\varphi} h_3^{\natural}=g_3
				&\quad  &\textrm{on } \Sigma_T,\\[0.3mm]
				& e_3^{\natural}+\p_3\mathring{\varphi}e_1^{\natural}+\epsilon \p_t\mathring{\varphi} h_2^{\natural}=g_4
				&\quad  &\textrm{on } \Sigma_T,\\[0.3mm]		
				&u^{\natural}=0
				&& \textrm{if } t<0.
			\end{aligned}
			\right.
		\end{align}
Then ${u}:=\dot{u}-u^{\natural}$ satisfies \eqref{ELP2b} and the last two conditions in \eqref{ELP2c}.		
However, the problem \eqref{u.natural:eq} is not maximally nonnegative, so the standard arguments in \cite{S95MMASMR1346663,S96aMR1401431,OSY95MR1346224,R85MR0797053} cannot be applied directly for the solvability.
Certain reductions should be made for constructing solutions $u^{\natural}$ in $H^{m+1}(\Omega_{T})$.
The proof of the existence and regularity of solutions to problem \eqref{u.natural:eq} is postponed to Appendix \ref{app:D}.
In particular, we have
\begin{align}   \nonumber
	\| u^{\natural} \|_{H^{m+1}(\Omega_{T})}^2
	\lesssim_K
	\|(f^-, g)\|_{H^{m+2}(\Omega_{T})\times H^{m+{5/2}}(\Sigma_T)}^2
	+ \mathring{\rm C}_{m+4}
	\|(f^-, g)\|_{H^{7}(\Omega_{T})\times H^{8}(\Sigma_T)}^2.
\end{align}

		\vspace*{2mm}
\noindent {\it Step 2: Construction of $(q^{\natural}, v^{\natural})$}.\quad
The first two equations in \eqref{ELP2c} are implied by
setting
\begin{align} \label{U.sharp:def}
	q^{\natural}:=\mathfrak{R}_{T}\big(g_2+\mathring{h}\cdot h^{\natural}-\mathring{e}\cdot e^{\natural}\big),\quad
	v^{\natural}:=\left(\mathfrak{R}_{T}(-g_1 ),\,  0,\,0\right),
\end{align}
where $\mathfrak{R}_T$ is a continuous extension operator from $H^{m+1/2}(\Sigma_T)$ to $H^{m+1}(\Omega_{T})$.

		\vspace*{2mm}
\noindent {\it Step 3: Construction of $H^{\natural}$}.\quad
Let us find a suitable function $H^{\natural}$ such that \eqref{H.cons3} and \eqref{H.inv3} are satisfied.
Note that the fifth, sixth, and seventh components of \eqref{ELP1a} can be written as
\begin{multline}
  \mathbb{H}_e'(\mathring{H},\mathring{v},\mathring{\Phi}^+)(\dot{H},\dot{v})
 	:=\big(\p_t^{\mathring{\Phi}^+}+\mathring{v}\cdot \nabla^{\mathring{\Phi}^+}\big)\dot{H}-\big(\mathring{H}\cdot\nabla^{\mathring{\Phi}^+}\big)\dot{v}+\mathring{H}\nabla^{\mathring{\Phi}^+}\cdot\dot{v}	 \\
	+\big(\dot{v}\cdot\nabla^{\mathring{\Phi}^+}\big)\mathring{H}-\big(\dot{H}\cdot\nabla^{\mathring{\Phi}^+}\big)\mathring{v}+\dot{H}\nabla^{\mathring{\Phi}^+}\cdot\mathring{v}
	=\big(f_5^+,\, f_6^+,\, f_7^+\big).
	\label{H.e:def}
\end{multline}
We define $H^{\natural}$ as the unique solution of
\begin{align}
	\mathbb{H}_e'(\mathring{H},\mathring{v},\mathring{\Phi}^+)(H^{\natural},v^{\natural})=
	\big(f_5^+,\, f_6^+,\, f_7^+\big)\qquad \textrm{in }\Omega_{T}
	\label{H.nat}
\end{align}
vanishing in the past,
where the operator $\mathbb{H}_e'(\mathring{H},\mathring{v},\mathring{\Phi}^+)$ is defined by \eqref{H.e:def}.
The solvability of the linear equation \eqref{H.nat} does not need any boundary condition due to the third condition in \eqref{bas3}.

		\vspace*{2mm}
\noindent {\it Step 4:  Proof of \eqref{Unatural:es},  \eqref{ELP2}, and \eqref{h.cons3}}.\quad
In view of \eqref{u.natural:eq}, \eqref{U.sharp:def}, and \eqref{H.nat}, we can deduce \eqref{ELP2} and  \eqref{Unatural:es} by using the embedding and Moser-type calculus inequalities for standard and anisotropic Sobolev spaces (see, for instance, \cite[\S 3.2]{TW21MR4201624}).
Take the scalar product of \eqref{h:equ} with $\mathring{N}=(1,-\p_2\mathring{\Phi}^-,-\p_3\mathring{\Phi}^-)$ to get
\begin{align}
	\nonumber 
	\epsilon \p_t\big(h\cdot \mathring{N}\big)
	+\p_2\big(e_3 +\p_3\mathring{\varphi}e_1+\epsilon \p_t\mathring{\varphi} h_2\big)
	-\p_3\big(e_2+\p_2\mathring{\varphi}e_1-\epsilon \p_t\mathring{\varphi} h_3\big)
	=0
	\ \  \textrm{on } \Sigma_{T}.
\end{align}
Plug the last two conditions in \eqref{ELP2c} into the last identity to have
\begin{align*}
	&	\epsilon\p_t\big(h\cdot \mathring{N}-\mathring{h}'\cdot\nabla'\psi -\nabla'\cdot\mathring{h}'\psi\big)\\
	&\qquad	+\p_2\big(\psi(\epsilon\p_t \mathring{h}_2+\p_3\mathring{e}_1 -\mathring{b}_4)\big)
	+\p_3\big(\psi(\epsilon\p_t \mathring{h}_3-\p_2\mathring{e}_1 +\mathring{b}_3)\big)	
	=0
	\quad \textrm{on }\Sigma_{T},
\end{align*}
where $\mathring{b}_3$ and $\mathring{b}_4$ are defined by \eqref{b.ring3D:def}.
The second identity in \eqref{bas4} implies
\begin{align*}
	\epsilon\p_t \mathring{h}_3-\p_2\mathring{e}_1 +\mathring{b}_3=
	\epsilon\p_t \mathring{h}_2+\p_3\mathring{e}_1 -\mathring{b}_4=0
	\quad \textrm{on }\Sigma_{T}.
\end{align*}
Then we use \eqref{ELP2d} to obtain the constraint \eqref{h.cons3}.		
		
		\vspace*{2mm}
		\noindent {\it Step 5: Proof of \eqref{H.cons3} and \eqref{H.inv3}--\eqref{e.inv3}}.\quad
	For $U=(q, v,H,S)$ defined by \eqref{Uu:def},
it  follows from \eqref{H.e:def} and \eqref{H.nat} that  $\mathbb{H}_e'(\mathring{H},\mathring{v},\mathring{\Phi}^+)(H ,v )=0$.
Then we can deduce from \eqref{ELP2c}
that (cf.~\cite[Proposition 2]{T09ARMAMR2481071})
\begin{alignat}{3}
	\nonumber 
	&\big(\p_t^{\mathring{\Phi}^+}+\mathring{v}\cdot \nabla^{\mathring{\Phi}^+}\big) \nabla^{\mathring{\Phi}^+}\cdot {H}+\big(\nabla^{\mathring{\Phi}^+}\cdot \mathring{v}\big) \nabla^{\mathring{\Phi}^+}\cdot {H}=0 \quad
	&&\textrm{in } \Omega_T,\\
	& \big(\p_t+\mathring{v}'\cdot\nabla'+\nabla'\cdot\mathring{v}'\big)\big(H\cdot\mathring{N}-\mathring{H}'\cdot\nabla'\psi+\p_1\mathring{H}\cdot\mathring{N}\psi \big)=0
	&\qquad &\textrm{on } \Sigma_T,
	\nonumber
\end{alignat}
which combined with \eqref{ELP2d} yield the constraints \eqref{H.cons3} and \eqref{H.inv3}.
		
The first three equations  in \eqref{ELP2b} can be rewritten as
\begin{align}
	\epsilon \p_t^{\mathring{\Phi}^-}h+\nabla^{\mathring{\Phi}^-}\times e+\epsilon  ( \nabla^{\mathring{\Phi}^-}\cdot h )\mathring{\nu}=0, \label{h:equ}
\end{align}
where
$\p_t^{\mathring{\Phi}}$ and $\nabla^{\mathring{\Phi}}$ are defined by \eqref{Phi.d:def}.
Applying the divergence operator $\nabla^{\mathring{\Phi}^-}\cdot$ to \eqref{h:equ},
we utilize $\p_t\mathring{\Phi}^{-}=\mathring{\nu}\cdot \mathring{N}|_{\Sigma_T}$ and \eqref{ELP2d} to obtain $\nabla^{\mathring{\Phi}^-}\cdot h=0$, which leads to the identity \eqref{h.inv3}.		

Similarly, we can deduce from the last three equations in \eqref{ELP2b}, i.e.,
\begin{align}
	\epsilon \p_t^{\mathring{\Phi}^-}e-\nabla^{\mathring{\Phi}^-}\times h+\epsilon   (\nabla^{\mathring{\Phi}^-}\cdot e) \mathring{\nu}=0, \label{e:equ}
\end{align}
that $\nabla^{\mathring{\Phi}^-}\cdot e=0$, which implies \eqref{e.inv3}.		
This completes the proof of this lemma.
	\end{proof}
	
	\subsection{Separation of the non-characteristic variables}
	Next we reformulate \eqref{ELP2} into an equivalent problem with a concise boundary matrix in order to separate the non-characteristic variables from the system.
	To this end, we introduce
	\begin{align}
		\label{W:def}
		W:= (q,\, \mathring{N}\cdot v,\, v',\, \mathring{N}\cdot H, \, H',\, S) =\mathring{J}_1^{-1}U
 ,
	\end{align}
	where
	\begin{align}
		\label{J1.ring:def}
		&\mathring{J}_1:=
		\begin{bmatrix}
			\;1 \;&   & & & &  \\
			&\; 1\; & \;\nabla' {\mathring{\Phi}^+} \;       &  & & \\
			& 0 & I_{d-1}        & & & \\
			&  &      & \;1 \;&\;\nabla' {\mathring{\Phi}^+} \;    & \\
			&  &      & 0 & I_{d-1}  & \\
			&  &  &  &    & \w{\;1\;}
		\end{bmatrix}
	\end{align}
	with $\nabla':=(\p_2,\ldots,\p_d)$.
	In view of the conditions \eqref{ELP2c} and constraint \eqref{h.cons3}, we set 
	\begin{align}	
&\mu:=\begin{bmatrix}
			h\cdot\mathring{N}\\[0.5mm]
			h_2+\p_2\mathring{\Phi}^-h_1+\epsilon  \p_t\mathring{\Phi}^- e_3 \\[0.5mm]
			h_3+\p_3\mathring{\Phi}^-h_1-\epsilon  \p_t\mathring{\Phi}^- e_2 \\[0.5mm]
			e\cdot\mathring{N}\\[0.5mm]
			e_2+\p_2\mathring{\Phi}^-e_1-\epsilon  \p_t\mathring{\Phi}^- h_3 \\[0.5mm]
			e_3+\p_3\mathring{\Phi}^-e_1+\epsilon  \p_t\mathring{\Phi}^- h_2
		\end{bmatrix}
 \textrm{ if }d=3,	\label{mu:def1.3D}
	\\[1mm]
&\mu:=\begin{bmatrix}
	h\cdot\mathring{N}\\[0.5mm]
	h_2+\p_2\mathring{\Phi}^-h_1+\epsilon  \p_t\mathring{\Phi}^- e \\[0.5mm]
	e+\epsilon \p_t\mathring{\Phi}^- h_2
\end{bmatrix}	
 \textrm{ if }d=2,
	\label{mu:def1}
	\end{align}
which can be written as
\begin{align}
	\label{mu:def2}
	\mu:=\mathring{J}_2^{-1}u.
\end{align}
The change of variables \eqref{mu:def1.3D}--\eqref{mu:def2} is invertible due to \eqref{bas3} and the second condition in \eqref{bas2}.
	Then we use the constraints \eqref{bas6} and \eqref{bas7} to transform the problem \eqref{ELP2}  to
	\begin{subequations}
		\label{ELP3}
		\begin{alignat}{3}
			\label{ELP3a}
			&{\mathsf{A}}_0^+\p_t W+ {\mathsf{A}}_j^+\p_j W +{\mathsf{A}}_{d+1}^+ W =\mathring{J}_1^{\mathsf{T}}S_+(\mathring{U})\tilde{f}^+
			&\qquad &\textrm{in }\Omega_T,\\[1mm]
			\label{ELP3b}
			&{\mathsf{A}}_0^-\p_t   \mu+  {\mathsf{A}}_j^-\p_j   \mu  +{\mathsf{A}}_{d+1}^- \mu =0
			&&\textrm{in }\Omega_T,\\[1mm]
			\label{ELP3c}
&
\mathsf{B}(W,\mu, \psi) =0
&& \textrm{on }\Sigma_T,\\[1mm]
			\label{ELP3d}
			&
			(W,\mu, \psi) =0
			&& \textrm{if } t<0,
		\end{alignat}
	\end{subequations}
where
	\begin{align}
\left\{\begin{aligned}		
			&{\mathsf{A}}_{0}^{+}:= \mathring{J}_1^{\mathsf{T}}S_{+}(\mathring{U})\mathring{J}_1,\quad &&
			{\mathsf{A}}_{0}^{-}:=\epsilon\mathring{J}_2^{\mathsf{T}}S_{-}(\mathring{\nu})\mathring{J}_2, 	
			\\[0.5mm]
			& 		{\mathsf{A}}_{1}^{+}:= \mathring{J}_1^{\mathsf{T}}S_{+}(\mathring{U})\widetilde{A}_{1}^{+}(\mathring{U},\mathring{\Phi}^+)\mathring{J}_1,\quad &&
			{\mathsf{A}}_{1}^{-}:=\mathring{J}_2^{\mathsf{T}}S_{-}(\mathring{\nu})\widetilde{A}_{1}^{-}(\mathring{\nu},\mathring{\Phi}^-)\mathring{J}_2,\\[0.5mm]
			&{\mathsf{A}}_{j}^{+}:= \mathring{J}_1^{\mathsf{T}}S_{+}(\mathring{U}){A}_{j}^{+}(\mathring{U})\mathring{J}_1,\quad&&
			{\mathsf{A}}_{j}^{-}:=\mathring{J}_2^{\mathsf{T}}S_{-}(\mathring{\nu}){A}_{j}^{-}(\mathring{\nu})\mathring{J}_2, \\[0.5mm]
			&	{\mathsf{A}}_{d+1}^{+}:= \mathring{J}_1^{\mathsf{T}}S_{+}(\mathring{U})\mathbb{L}'_{e+}(\mathring{U}, \mathring{\Phi}^+) \mathring{J}_1,
			\quad &&
			{\mathsf{A}}_{d+1}^{-}:=\mathring{J}_2^{\mathsf{T}}S_{-}(\mathring{\nu})L_-(\mathring{\nu}, \mathring{\Phi}^-) \mathring{J}_2,
\end{aligned}\right.		
		\label{A.sf:def}
	\end{align}
for $ j=2,\ldots,d,$ with $S_{+} $ and $S_- $ being defined by \eqref{S+U} and \eqref{S-3D}--\eqref{S-2D},
and
\begin{alignat}{3}
	\nonumber		
	&\mathsf{B}(W,\mu, \psi)	
	:=\begin{bmatrix}
		W_2-(\p_t+\mathring{v}_2\p_2+\mathring{v}_3\p_3+\mathring{b}_1)\psi\\[1mm]
		W_1-\mathring{{h}}_2 \mu_2-\mathring{{h}}_3 \mu_3+\mathring{e}_1 \mu_4	+\mathring{b}_2 \psi\\[1mm]				
		\mu_5-\epsilon \mathring{h}_3 \p_t \psi+\mathring{e}_1\p_2\psi
		+\mathring{b}_3 \psi \\[1mm]
		\mu_6+\epsilon \mathring{h}_2 \p_t \psi+\mathring{e}_1\p_3\psi
		+\mathring{b}_4\psi
	\end{bmatrix}\quad
	&&	\textrm{if }d=3,\\[1mm]
	&\mathsf{B}(W,\mu, \psi)	
	:=\begin{bmatrix}
		W_2-(\p_t+\mathring{v}_2\p_2 +\mathring{b}_1)\psi\\[1mm]		
		W_1-\mathring{{h}}_2 \mu_2 		+\mathring{b}_2 \psi\\[1mm]		
		\mu_3+\epsilon \mathring{h}_2 \p_t \psi+\mathring{b}_3\psi
	\end{bmatrix}\quad
	&&	\textrm{if }d=2.
	\nonumber	
\end{alignat}
	Using \eqref{iden:App1} and \eqref{bas3}, after a long but direct calculation, we discover
	\begin{alignat}{5}
		\label{decom1.3D}
	&	{\mathsf{A}}_1^+\big|_{\Sigma}=
		\begin{bmatrix}
			0 & \varGamma(\mathring{v}) &0 \\
			\varGamma(\mathring{v}) & 0 & 0\\
			0  &  0 &  O_{6}
		\end{bmatrix} ,\
		{\mathsf{A}}_1^-\big|_{\Sigma}=-
		\begin{bmatrix}
			 0  &  \epsilon \mathring{v}_2 &  \epsilon \mathring{v}_3 & 0 &0&0 \\[0.3mm]
			 \epsilon \mathring{v}_2\;&0&0&0&0&-1\\[0.3mm]
			\epsilon \mathring{v}_3&0&0&0&1&0\\[0.3mm]
			0 &0&0  &  0  &  \epsilon \mathring{v}_2 &  \epsilon \mathring{v}_3 \\[0.3mm]
			0&0&1 &\epsilon \mathring{v}_2 &0&0\\[0.3mm]
			0&-1&0 &\epsilon \mathring{v}_3&0&0
		\end{bmatrix}
		&&\quad
	\textrm{if }d=3,\\[2mm]
	\label{decom1.2D}
&	{\mathsf{A}}_1^+\big|_{\Sigma}=
	\begin{bmatrix}
		0 & \varGamma(\mathring{v}) &0 \\
		\varGamma(\mathring{v}) & 0 & 0\\
		0  & 0  & O_{4}
	\end{bmatrix} ,\
	{\mathsf{A}}_1^-\big|_{\Sigma}=-
\begin{bmatrix}
	0 & \epsilon \mathring{v}_2&0 \\
	\epsilon \mathring{v}_2\; \;& 0 \;& \;-1\\
	0 & -1 & 0
\end{bmatrix} &&\quad \textrm{if }d=2.
\end{alignat}	

To distinguish the non-characteristic variables from the others,
we shall make further reformulation according to \eqref{decom1.3D} and \eqref{decom1.2D}.
Precisely, we set
	\begin{align}
		w:=\begin{bmatrix}
			\mu_1  \\[0.5mm]
			\mu_2-\epsilon \mathring{\nu}_3{\mu_4} \\[0.5mm]
			\mu_3+\epsilon \mathring{\nu}_2{\mu_4} \\[0.5mm] 		
			{\mu_4}  \\[0.5mm]
			\mu_5+\epsilon \mathring{\nu}_3\mu_1 \\[0.5mm]
			\mu_6-\epsilon \mathring{\nu}_2\mu_1
		\end{bmatrix}
		\ \ \textrm{if }d=3,
		\quad
	w:=\begin{bmatrix}
	\mu_1  \\[0.5mm]  \mu_2 \\[0.5mm] \mu_3-\epsilon \mathring{\nu}_2\mu_1
\end{bmatrix}		
		\ \ \textrm{if }d=2,	
		\label{w:def1}	
	\end{align}
which can be written as
\begin{align}
	w=\mathring{J}_3^{-1}\mathring{J}_2^{-1}u,
		\label{w:def2}		
\end{align}	
where $\mathring{J}_2$ is given by \eqref{mu:def1.3D}--\eqref{mu:def2}
and
	\begin{align}
		\label{J3.ring:def}
		\mathring{J}_3:=
		\begin{bmatrix}
			1 & \w{0}& \w{0}& 0& 0&0\\
			0& 1& 0  & \epsilon \mathring{\nu}_3& 0& 0\\
			0&0&1 & -\epsilon \mathring{\nu}_2& 0 & 0\\
			0& 0&0 &1 & 0& 0\\
			-\epsilon \mathring{\nu}_3& \w{0}& \w{0}&	\w{0}& \w{1}&\w{0} \\
			\epsilon \mathring{\nu}_2 & 0 & 0 &0&0&1 		
		\end{bmatrix}\ \ \textrm{if }d=3,
		\quad
\mathring{J}_3:=
\begin{bmatrix}
	1 & 0& 0\\
	0 & 1 & 0\\
	\epsilon \mathring{\nu}_2 & \w{0} &\w{1}
\end{bmatrix}\  \ \textrm{if }d=2.
	\end{align}
	Using the constraint \eqref{h.cons3}, we obtain the following equivalent symmetric hyperbolic problem in terms of
	$W$ and $w$ (cf.~\eqref{W:def} and \eqref{w:def1}--\eqref{w:def2}):
	\begin{subequations}
		\label{ELP4}
		\begin{alignat}{3}
			\label{ELP4a}
			& {\bm{A}}_0^+\p_tW  +{ \bm{A}}_j^+\p_jW +{ \bm{A}}_{d+1}^+ W = \bm{f}^+
			&\qquad  &\textrm{in }\Omega_T,\\
			\label{ELP4b}
			& { \bm{A}}_0^-\p_tw  +{ \bm{A}}_j^-\p_j w    +{ \bm{A}}_{d+1}^-  w =0
			&&\textrm{in }\Omega_T,\\
			\label{ELP4c}
&\bm{B}(W,w,\psi)=0
&&\textrm{on }\Sigma_T,\\			
			\label{ELP4d}
			&
			(W,w, \psi) =0
			&& \textrm{if } t<0,
		\end{alignat}
	\end{subequations}
where
	\begin{alignat}{3} \label{A.bm:def}
\left\{
\begin{aligned}
	&	\bm{f}^+:=\epsilon\varGamma(\mathring{v})^{-1}\mathring{J}_1^{\mathsf{T}}S_+(\mathring{U})\tilde{f}^+, \quad
\bm{A}_i^+:=\epsilon\varGamma(\mathring{v})^{-1}\mathsf{A}_i^+,\quad
\bm{A}_i^-:=\mathring{J}_3^{\mathsf{T}}\mathsf{A}_i^-\mathring{J}_3,
\\
&		\bm{A}_{d+1}^+:=\epsilon\varGamma(\mathring{v})^{-1}\mathsf{A}_{d+1}^+,\quad	
\bm{A}_{d+1}^-:=\mathring{J}_3^{\mathsf{T}}\big({\mathsf{A}}_0^-\p_t  +{\mathsf{A}}_j^-\p_j   +{\mathsf{A}}_{d+1}^-\big)\mathring{J}_3, 	
\end{aligned}		
\right.
\end{alignat}
for $i=0,\ldots,d,$
with $\tilde{f}^+$  and $\mathsf{A}_i^{\pm}$ given in  \eqref{ELP2a} and \eqref{A.sf:def}, and
\begin{alignat}{3}
\label{B.bm3D:def}	
	&\bm{B}(W,\mu, \psi)	
	:=\begin{bmatrix}
		W_2-(\p_t+\mathring{v}'\cdot\nabla'+\mathring{b}_1)\psi\\[1mm]				
		W_1-\mathring{{h}}_2 w_2-\mathring{{h}}_3 w_3+\mathring{\zeta} w_4
		+\mathring{b}_2 \psi\\[1mm]
		w_5-\epsilon \mathring{h}_3 (\p_t+\mathring{v}'\cdot\nabla')\psi
		+\mathring{\zeta} \p_2\psi
		+\mathring{b}_5 \psi \\[1mm]
		w_6+\epsilon \mathring{h}_2 (\p_t+\mathring{v}'\cdot\nabla') \psi+\mathring{\zeta} \p_3\psi
		+\mathring{b}_6\psi
	\end{bmatrix}\quad
	&&	\textrm{if }d=3,\\[1mm]
	&\bm{B}(W,\mu, \psi)	
	:=\begin{bmatrix}
		W_2-(\p_t+\mathring{v}_2\p_2 +\mathring{b}_1)\psi\\[1mm]				
		W_1-\mathring{{h}}_2 w_2 +\mathring{b}_2 \psi\\[1mm]
		w_3+\epsilon \mathring{h}_2 (\p_t+\mathring{v}_2\p_2)\psi+\mathring{b}_4\psi
	\end{bmatrix}\quad
	&&	\textrm{if }d=2,
\label{B.bm2D:def}		
\end{alignat}
with (cf.~\eqref{b.ring3D:def}--\eqref{b.ring2D:def})
\begin{alignat}{3}
	\label{zeta:def}
&\mathring{\zeta}:=
\mathring{e}_1+\epsilon \mathring{v}_2\mathring{h}_3-\epsilon \mathring{v}_3\mathring{h}_2, && \\[1mm]	
	\nonumber
&
	(\mathring{b}_5,\,
	\mathring{b}_6)
:=
	(\mathring{b}_3+\epsilon\mathring{v}_3\p_1\mathring{h}\cdot\mathring{N},\,
	\mathring{b}_4-\epsilon\mathring{v}_2\p_1\mathring{h}\cdot\mathring{N})
 \quad  && \textrm{ if }d=3,\\[1mm]	
&\mathring{b}_4 :=\mathring{b}_3-\epsilon\mathring{v}_2\p_1\mathring{h}\cdot\mathring{N}
  && \textrm{ if }d=2.
\nonumber
\end{alignat}
The equivalence of the problems \eqref{ELP4} and \eqref{ELP2} follows by utilizing the identity \eqref{h.cons3} and the vacuum equations \eqref{ELP2b}.

By virtue of \eqref{decom1.3D}--\eqref{decom1.2D}, we obtain
	\begin{alignat}{3}
		\label{decom2.3D}
&		\bm{A}_1^+\big|_{\Sigma}=
		\begin{bmatrix}
			0 & \epsilon&0 \\
			\epsilon& 0 & 0\\
			0 \;&\; 0 \;&\; O_6
		\end{bmatrix}	=:{\bm{B}}_1^{+},\   \
		\bm{A}_1^-\big|_{\Sigma}=
		\begin{bmatrix}
			&  &  & 0 &0&0 \\
			& & &0&0&1\\
			& &  &0&\w{-1}&0\\
			0 &0&0  &  & &\\
			0&0&\w{-1} & & & \\
			0&1&0&&&
		\end{bmatrix}	=:{\bm{B}}_1^{-}
\ \ \	&&	\textrm{if }d=3,\\[1mm]
		\label{decom2.2D}
&		\bm{A}_1^+\big|_{\Sigma}=
\begin{bmatrix}
	0 & \epsilon&0 \\
	\epsilon& 0 & 0\\
	0 \;&\; 0 \;&\; O_4
\end{bmatrix}	=:{\bm{B}}_1^{+},\ \
\bm{A}_1^-\big|_{\Sigma}=
	\begin{bmatrix}
	0 & 0&0 \\
	0& 0 & 1\\
	\w{0} & \w{1} & \w{0}
\end{bmatrix}	=:{\bm{B}}_1^{-}
\ &&	\textrm{if }d=2.
\end{alignat}
Then we decompose
\begin{align}
	{\bm{A}}_1^{\pm}={\bm{B}}_{0}^{\pm}+{\bm{B}}_1^{\pm}
	\quad \textrm{with } \ {\bm{B}}_{0}^{\pm}|_{\Sigma}=0.
	\label{decom3}
\end{align}
According to the kernels of the boundary matrices $\bm{A}_1^{\pm}$ on ${\Sigma}$, we denote by
	\begin{alignat}{5}
		\label{W.nc:def}		
	\mathcal{W}_{\rm nc}:=
\left\{
\begin{aligned}
	&(W_1,W_2,w_2,w_3,w_5,w_6)\quad &&\textrm{if }d=3,\\[1mm]
	&(W_1,W_2,w_2,w_3)\quad &&\textrm{if }d=2,
\end{aligned}
\right.
	\end{alignat}
the noncharacteristic variables,
and by
	\begin{alignat}{5}
	\label{W.c:def}		
	\mathcal{W}_{\rm c}:=
	\left\{
	\begin{aligned}
		&(W_3,W_4,W_5,W_6,W_7,W_8,w_1,w_4)\quad &&\textrm{if }d=3,\\[1mm]
		&(W_3,W_4,W_5,W_6,w_1)\quad &&\textrm{if }d=2,
	\end{aligned}
	\right.
\end{alignat}	
the characteristic variables.

It is important to point out that the homogeneous constraint \eqref{h.cons3} has been used to make the boundary conditions independent of the characteristic variable $w_1$.
Consequently, the boundary conditions \eqref{ELP4c} for the 2D problem depend upon the traces of $(W,w)$ only through the noncharacteristic variables $\mathcal{W}_{\rm nc}$.
In contrast, for the 3D problem, $\mathring{\zeta}=\mathring{e}_1+\epsilon \mathring{v}_2\mathring{h}_3-\epsilon \mathring{v}_3\mathring{h}_2$ does not vanish generally,
so the boundary conditions \eqref{ELP4c} involve the traces of  the characteristic variables $\mathcal{W}_{\rm c}$.
This situation, which appears also in the study of thermoelastic contact discontinuities \cite{CSW20MR4110436}, is different from the standard one (see, for instance, \cite[\S 4.1]{BGS07MR2284507}).


\vspace*{2mm}
	\section{Linear stability in 3D} \label{sec:lin3D}
This section is devoted to proving Theorem \ref{thm:main1}, that is, the linear stability of relativistic plasma--vacuum interfaces in three-dimensional space.

	\subsection{Prelude} \label{sec:prelude}
Let $m\in\mathbb{N}_+$ and $\alpha=(\alpha_0,\ldots,\alpha_{d+1})\in\mathbb{N}^{d+2}$ with
$\langle \alpha \rangle \leq m$.	
Applying the operator $\mathrm{D}_*^{\alpha}:=\p_t^{\alpha_0} (\sigma \p_1)^{\alpha_1}\p_2^{\alpha_2} \cdots\p_d^{\alpha_d}  \p_1^{\alpha_{d+1}}
$  
to \eqref{ELP4a} and \eqref{ELP4b},
we take the scalar product of the resulting equations
with $\mathrm{D}_*^{\alpha} W$ and $\mathrm{D}_*^{\alpha}w$, respectively,
to find
\begin{align}
 \mathcal{E}_{\alpha}(t)
:=\int_{\Omega} \left(\bm{A}_0^+\mathrm{D}_*^{\alpha}W\cdot \mathrm{D}_*^{\alpha}W
	+ \bm{A}_0^-\mathrm{D}_*^{\alpha}w\cdot \mathrm{D}_*^{\alpha}w\right)(t,x)\mathrm{d}x
 	\leq
	 \mathcal{Q}_{\alpha}+\int_{\Omega_t} {R}_{1},
	 \label{es1}
\end{align}
where
\begin{align}
	\nonumber
	{R}_{1}:=\; &
	\mathrm{D}_*^{\alpha}W\cdot \bigg(
	2\mathrm{D}_*^{\alpha}(\bm{f}^+- {\bm{A}}_{d+1}^+ W)
	-\sum_{i=0}^{d}\big(2[\mathrm{D}_*^{\alpha},{\bm{A}}_i^+\p_i]W
	-\p_i {\bm{A}}_i^+\mathrm{D}_*^{\alpha}W\big) \bigg)\\
	&	
	-	\mathrm{D}_*^{\alpha}w\cdot \bigg(
	2\mathrm{D}_*^{\alpha}({\bm{A}}_{d+1}^- w)
	+\sum_{i=0}^{d}\big(2[\mathrm{D}_*^{\alpha},{\bm{A}}_i^-\p_i]w
	-\p_i {\bm{A}}_i^-\mathrm{D}_*^{\alpha}w\big) \bigg),
 \nonumber 
	\\[2mm]
		\label{Q:def}
\mathcal{Q}_{\alpha}:=\; &\int_{\Sigma_t}{Q}_{\alpha},\qquad 		
{Q}_{\alpha}:=\big(\bm{A}_1^+ \mathrm{D}_*^{\alpha}W\cdot \mathrm{D}_*^{\alpha}W
+
\bm{A}_1^- \mathrm{D}_*^{\alpha}w\cdot \mathrm{D}_*^{\alpha}w\big)\big|_{\Sigma}.	
\end{align}

In this section, we consider the spatial dimension $d=3$.
To estimate the last term in \eqref{es1},
we plug \eqref{decom2.3D} and \eqref{decom3} into \eqref{ELP4a}--\eqref{ELP4b} and get
\begin{alignat}{3}
	\label{d1W.3D}
	&	\begin{bmatrix}
		\epsilon \p_1 W_2\\   \epsilon \p_1 W_1  \\0
	\end{bmatrix}
	=\bm{f}^+-{\bm{A}}_{4}^+ W-\sum_{i=0,2,3}{\bm{A}}_i^+\p_i W-{\bm{B}}_0^+ \p_1W
	\qquad &&\textrm{in }\Omega_{T}	,\\
	\label{d1w.3D}	
	&	\begin{bmatrix}
		0\\ \p_1 w_6\\ -\p_1 w_5\\ 0\\ -\p_1 w_3\\  \p_1w_2
	\end{bmatrix}
	=-{\bm{A}}_4^- w-\sum_{i=0,2,3}{\bm{A}}_i^-\p_i w-{\bm{B}}_0^- \p_1w
	\quad &&\textrm{in }\Omega_{T},
\end{alignat}
where $\p_0:=\p_t$.
Similar to \cite[Lemma 3.5]{TW21MR4201624}, we can use the identities \eqref{d1W.3D}--\eqref{d1w.3D} and the Moser-type calculus inequalities for anisotropic Sobolev spaces 
to deduce
\begin{align}
	\label{R1:es}
	\int_{\Omega_t} {R}_{1}\lesssim_K
	\mathcal{M}_1(t)
	\qquad \textrm{for all } \langle\alpha\rangle \leq m,
\end{align}
where
{\begin{align}
 	\mathcal{M}_1(t):=
{\|}(\bm{f}^+,W,w){\|}_{H_*^m(\Omega_{t})}^2
	+\mathring{\rm C}_{m+4}  \|(\bm{f}^+,W,w)\|_{W_*^{2,\infty}(\Omega_{t})}^2
\label{M1cal:def}	
\end{align}}
with
$
\|u\|_{W_*^{2,\infty}(\Omega_{t}) }:=\|(u, \mathrm{D}_* u)\|_{W^{1,\infty}(\Omega_{t}) }.	
$

Next we estimate the boundary integral in \eqref{es1}.
Plug \eqref{decom2.3D} into \eqref{Q:def} to get
\begin{align} \label{Q3D:id1}
	{Q}_{\alpha} =
	2\epsilon\mathrm{D}_*^{\alpha}W_1\mathrm{D}_*^{\alpha}W_2
	+2\mathrm{D}_*^{\alpha}w_2\mathrm{D}_*^{\alpha}w_6
	-2\mathrm{D}_*^{\alpha}w_3\mathrm{D}_*^{\alpha}w_5.
\end{align}
If $\alpha_1>0$, then ${Q}_{\alpha}=0$, since $\sigma=0$ on the boundary $\Sigma$.
If $\alpha_1=0$ and $\alpha_4>0$, then we utilize the identities \eqref{d1W.3D}, \eqref{d1w.3D}, and \eqref{Q3D:id1} to obtain
	\begin{align}
		\mathcal{Q}_{\alpha}\lesssim_K
		\sum_{i=0,2,3}\|\mathrm{D}_*^{\alpha-{\mathbf{e}}} (\bm{f}^+,  {\bm{A}}_4^+ W,{\bm{A}}_4^- w,{\bm{A}}_i^+\p_i W,{\bm{A}}_i^-\p_i w,{\bm{B}}_0^{+}\p_1W,{\bm{B}}_0^{-}\p_1w)\|_{L^2(\Sigma_t)}^2	 \nonumber
	\end{align}
	for ${\mathbf{e}}:=(0, 0, 0, 0, 1).  $
	As in \cite[\S\,3.5.3]{TW22cMR4506578}, we can employ the trace theorem and Moser-type calculus inequalities for anisotropic Sobolev spaces to derive
\begin{align}
\mathcal{Q}_{\alpha}\lesssim_K \mathcal{M}_1(t)
\qquad \textrm{for all }  \langle \alpha \rangle \leq m \ \textrm{ with }
\alpha_1+\alpha_4>0.
\nonumber
\end{align}
Plugging the last inequality and \eqref{R1:es} into \eqref{es1} and using the positivity of the matrices $\bm{A}_0^{\pm}$ lead to
\begin{align}
	\sum_{\langle \alpha\rangle \leq m,\
	\alpha_1+\alpha_4>0}	\|(\mathrm{D}_*^{\alpha}W,\, \mathrm{D}_*^{\alpha}w)(t)\|_{L^2(\Omega)}^2
	\lesssim_K  \mathcal{M}_1(t),
\label{es2}
\end{align}
where $\mathcal{M}_1(t)$ is defined by \eqref{M1cal:def}.	
The case with pure tangential derivatives $\alpha_1=\alpha_4=0$ will be handled in the remaining part of this section.

	\subsection{Cancellation} \label{sec:cancel}
We shall establish a cancellation to estimate the boundary integral in \eqref{es1} with pure tangential derivatives.
Let $\alpha=(\alpha_0,0,\alpha_2,\alpha_3,0)\in\mathbb{N}^5$
satisfy $\langle \alpha \rangle  \leq m$.
Then $\mathrm{D}_*^{\alpha}=\p_t^{\alpha_0}\p_2^{\alpha_2}\p_3^{\alpha_3}$ and $\alpha_0+\alpha_2+\alpha_3\leq m$. It follows from \eqref{Q3D:id1} and \eqref{ELP4c} that
\begin{align}\nonumber
	{Q}_{\alpha}
	=\; &
	2\epsilon \mathrm{D}_*^{\alpha}W_2\mathrm{D}_*^{\alpha}\Big(
	\mathring{{h}}_2 w_2+\mathring{{h}}_3 w_3-\mathring{\zeta} w_4
	-\mathring{b}_2 \psi
	\Big)\nonumber\\[0.5mm]
 &
-2 \mathrm{D}_*^{\alpha}w_2\mathrm{D}_*^{\alpha}\Big(
\epsilon \mathring{h}_2 (W_2-\mathring{b}_1\psi) +\mathring{\zeta} \p_3\psi
+\mathring{b}_6\psi
\Big)\nonumber
\\[0.5mm]
	&-2\mathrm{D}_*^{\alpha}w_3\mathrm{D}_*^{\alpha}\Big(
	\epsilon \mathring{h}_3 (W_2-\mathring{b}_1\psi)
	-\mathring{\zeta} \p_2\psi
	-\mathring{b}_5 \psi
	\Big).\nonumber
\end{align}
Then we get
	\begin{align}  \label{Q3D:id2}
		{Q}_{\alpha}
		=
		-2 \mathring{\zeta}  \Big(
\epsilon \mathrm{D}_*^{\alpha}W_2\mathrm{D}_*^{\alpha} w_4
+\mathrm{D}_*^{\alpha}w_2 \mathrm{D}_*^{\alpha}  \p_3\psi
-\mathrm{D}_*^{\alpha}w_3\mathrm{D}_*^{\alpha} \p_2\psi
\Big)+R_2
	\end{align}
with
\begin{align}
R_2:=
	\mathrm{D}_*^{\alpha} \mathcal{W}_{\rm nc}
	\Big(\big[\mathrm{D}_*^{\alpha},\,\mathring{\rm c}_0\big]
	\big(\mathcal{W}_{\rm nc},w_4, \nabla'\psi\big)
	+\mathrm{D}_*^{\alpha}\big(\psi \mathring{\rm c}_1\big)
	\Big), \nonumber
\end{align}
where
$\mathcal{W}_{\rm nc}$ is defined by \eqref{W.nc:def},
$\nabla':=(\p_2,\p_3)$,
and $\mathring{\rm c}_m$ denotes a generic and smooth matrix-valued function of
$\{(\mathrm{D}^{\alpha} \mathring{U},\mathrm{D}^{\alpha} \mathring{{u}},\mathrm{D}^{\alpha}\mathring{\varphi}): |\alpha|\leq m\}$.
In view of \eqref{w:def1} and the first condition in \eqref{ELP4c}, we have
\begin{align}
	\nonumber
	W_2=\p_t\psi+(\mathring{v}'\cdot\nabla' )\psi+\mathring{\rm c}_1 \psi ,\ \
	w_2=\mu_2-\epsilon \mathring{v}_3 {w_4},\ \
		w_3=\mu_3+\epsilon \mathring{v}_2{w_4} \quad
		\textrm{on }\Sigma_T.
\end{align}
Insert the last identities into \eqref{Q3D:id2} to discover
\begin{align}\nonumber
	{Q}_{\alpha}=\;&
-2 \mathring{\zeta}  \Big(
\epsilon \mathrm{D}_*^{\alpha}\p_t\psi\mathrm{D}_*^{\alpha} {w_4}
+\mathrm{D}_*^{\alpha}	\mu_2 \mathrm{D}_*^{\alpha}  \p_3\psi
-\mathrm{D}_*^{\alpha}\mu_3\mathrm{D}_*^{\alpha} \p_2\psi
\Big)
 \\[0.5mm]
&
-2 \epsilon\mathring{\zeta}  \Big(
\mathrm{D}_*^{\alpha} (\mathring{v}'\cdot\nabla' )\psi\mathrm{D}_*^{\alpha} {w_4}
- \mathrm{D}_*^{\alpha}({w_4}	\mathring{v}'   ) \cdot \mathrm{D}_*^{\alpha}  \nabla' \psi
\Big)
+ \mathring{\rm c}_0  \mathrm{D}_*^{\alpha}(\mathring{\rm c}_1 \psi  )\mathrm{D}_*^{\alpha} w_4
+R_2
\nonumber\\[0.5mm]
\nonumber
=\;&\p_t \Big(\!-2 \mathring{\zeta}
\epsilon \mathrm{D}_*^{\alpha} \psi\mathrm{D}_*^{\alpha} {w_4} \Big)
-\p_3\Big(2\mathring{\zeta}   \mathrm{D}_*^{\alpha}	\mu_2 \mathrm{D}_*^{\alpha}   \psi   \Big)
+ \p_2\Big(2\mathring{\zeta} \mathrm{D}_*^{\alpha}\mu_3\mathrm{D}_*^{\alpha}\psi  \Big)\\[0.5mm]
&+R_3
+2\mathring{\zeta}  \mathrm{D}_*^{\alpha}\psi\mathrm{D}_*^{\alpha}\Big(\epsilon \p_t {w_4}+\p_3\mu_2-\p_2\mu_3\Big),
\label{Q3D:id3}
\end{align}	
with
	\begin{align}
		R_3:=\;&	\mathrm{D}_*^{\alpha}\big(\mathcal{W}_{\rm nc},w_4,\nabla'\psi \big)
		\Big(\mathring{\rm c}_0\big[\mathrm{D}_*^{\alpha},\,\mathring{\rm c}_0\big]
		\big(\mathcal{W}_{\rm nc},w_4,\nabla'\psi\big)
		+\mathring{\rm c}_0\mathrm{D}_*^{\alpha}\big(\psi \mathring{\rm c}_1\big)
		\Big) \nonumber\\[0.5mm]
\label{R3:def}
&
+\mathring{\rm c}_0 \mathrm{D}_*^{\alpha} \nabla'\psi \big[\mathrm{D}_*^{\alpha},\,\mathring{\rm c}_0\big] w
+\mathring{\rm c}_1 \mathrm{D}_*^{\alpha}\psi \mathrm{D}_*^{\alpha}w
+\mathring{\rm c}_1 \mathrm{D}_*^{\alpha}\psi \big[\mathrm{D}_*^{\alpha},\,\mathring{\rm c}_0\big]w .
\end{align}

The following lemma provides the desired cancellation for estimating the last term in \eqref{Q3D:id3}.
\begin{lemma}
It holds that
\begin{align}
	\label{cancellation}
		\epsilon \p_t {w_4}+\p_3\mu_2-\p_2\mu_3=0\qquad \textrm{on } \Sigma_{T}.
\end{align}
\end{lemma}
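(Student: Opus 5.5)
The identity \eqref{cancellation} is the electric-field analogue of the computation in Step 4 of the proof of Lemma \ref{lem:homo}. There we derived \eqref{h.cons3} by taking the scalar product of \eqref{h:equ} with $\mathring{N}$ on $\Sigma_T$. Here I plan to take the scalar product of the equation \eqref{e:equ} with $\mathring{N}=(1,-\p_2\mathring{\varphi},-\p_3\mathring{\varphi})$ and restrict it to $\Sigma_T$. That equation is satisfied by $e$, since $u=\dot u-u^{\natural}$ solves \eqref{ELP2b}. The two facts I will use are the following. On $\Sigma$ we have $\chi=1$, so that $\p_t\mathring{\Phi}^-=\p_t\mathring{\varphi}$ and $\p_j\mathring{\Phi}^-=\p_j\mathring{\varphi}$. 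Moreover, \eqref{bas3} together with \eqref{nu:def0} (in the form $\mathring{\nu}=\mathring{v}$ on $\Sigma$) gives $\p_t\mathring{\varphi}=\mathring{\nu}\cdot\mathring{N}$ on $\Sigma_T$. By \eqref{w:def1} and \eqref{mu:def1.3D}, $w_4=\mu_4=e\cdot\mathring{N}$.

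The first step is to check that every normal derivative $\p_1$ drops out of $\mathring{N}\cdot\eqref{e:equ}$ on $\Sigma_T$. The $\p_1$-part of $\epsilon\p_t^{\mathring{\Phi}^-}e$ contributes $-\epsilon(\p_t\mathring{\Phi}^-/\p_1\mathring{\Phi}^-)\,\mathring{N}\cdot\p_1 e$. The $\p_1$-part of $\epsilon(\nabla^{\mathring{\Phi}^-}\cdot e)\,\mathring{N}\cdot\mathring{\nu}$ is $\epsilon(\mathring{\nu}\cdot\mathring{N})(\mathring{N}\cdot\p_1e)/\p_1\mathring{\Phi}^-$. These two cancel exactly because $\p_t\mathring{\varphi}=\mathring{\nu}\cdot\mathring{N}$. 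The $\p_1$-part of $\mathring{N}\cdot\nabla^{\mathring{\Phi}^-}\times h$ is a multiple of $\mathring{N}\cdot(\mathring{N}\times\p_1 h)=0$. This is precisely the characteristic structure behind $\lambda_{3,4}=0$.

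The second step is to rewrite the remaining purely tangential terms in conservative form. For the time derivative term I write $\epsilon\mathring{N}\cdot\p_t e=\epsilon\p_t(e\cdot\mathring{N})+\epsilon(e_2\p_t\p_2\mathring{\varphi}+e_3\p_t\p_3\mathring{\varphi})$. For the divergence term I use $\mathring{\nu}\cdot\mathring{N}=\p_t\mathring{\varphi}$ to write it as $\epsilon\p_t\mathring{\varphi}(\p_2e_2+\p_3e_3)$. Adding these, the last two groups combine into $\p_2(\epsilon\p_t\mathring{\varphi}e_2)+\p_3(\epsilon\p_t\mathring{\varphi}e_3)$.

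For the curl term, $-\mathring{N}\cdot(\nabla'\text{-curl of }h)$ expands to $\p_3h_2-\p_2h_3+\p_2\mathring{\varphi}\,\p_3h_1-\p_3\mathring{\varphi}\,\p_2h_1$, up to the overall sign to be fixed. This equals $\p_3(h_2+\p_2\mathring{\varphi}h_1)-\p_2(h_3+\p_3\mathring{\varphi}h_1)$, because the mixed second derivatives of $\mathring{\varphi}$ cancel. Collecting everything against the definitions of $\mu_2$ and $\mu_3$ in \eqref{mu:def1.3D} should give exactly $\epsilon\p_t w_4+\p_3\mu_2-\p_2\mu_3=0$ on $\Sigma_T$.

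The only real obstacle is sign bookkeeping. One must track $\p_1\mathring{\Phi}^-=-1$ on $\Sigma$, the orientation of the $3$D curl convention, and the signs in the definitions of $\mu_2$ and $\mu_3$. To guard against sign errors, I will first check the computation in the flat case $\mathring{\varphi}\equiv0$, where it reduces to the boundary restriction of $\epsilon\p_te_1-(\p_2h_3-\p_3h_2)=0$. I will then reinstate the $\mathring{\varphi}$-terms as above. Unlike Step 4 of the proof of Lemma \ref{lem:homo}, no boundary condition from \eqref{ELP2c} and no use of \eqref{ELP2d} is needed, since \eqref{cancellation} holds pointwise in time.
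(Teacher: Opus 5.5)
Your proposal is correct and is essentially the paper's proof. The paper expands $\epsilon\p_t w_4+\p_3\mu_2-\p_2\mu_3$ via \eqref{mu:def1.3D} into $\mathring{N}\cdot\big(\epsilon\p_t^{\mathring{\Phi}^-}e-\nabla^{\mathring{\Phi}^-}\times h\big)+\epsilon\p_t\mathring{\Phi}^-\,\nabla^{\mathring{\Phi}^-}\cdot e$, then invokes \eqref{e:equ} together with $\p_t\mathring{\varphi}=\mathring{\nu}\cdot\mathring{N}$ on $\Sigma_T$; this is your computation run in the opposite direction, and your cancellation of the $\p_1$-terms, your curl signs and your conservative rewriting of the tangential terms all check out.
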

\begin{proof}
It follows from the definition \eqref{mu:def1.3D} that
\begin{align} \nonumber
 &	\epsilon \p_t {w_4}+\p_3\mu_2-\p_2\mu_3\\
\nonumber & \quad	=
	\epsilon \p_t e_1-\epsilon \nabla'\mathring{\Phi}^-\cdot \p_t e'
	+\epsilon \p_t\mathring{\Phi}^-\nabla'\cdot  e'
		-\p_2h_3-\p_3\mathring{\Phi}^-\p_2 h_1
	+\p_3h_2+ \p_2\mathring{\Phi}^-\p_3 h_1\\
\nonumber & \quad	=
\big(	\epsilon \p_t^{\mathring{\Phi}^-} e_1-\p_2^{\mathring{\Phi}^-} h_3+\p_3^{\mathring{\Phi}^-} h_2\big)
	-\p_2  \mathring{\Phi}^- \big( 	\epsilon \p_t^{\mathring{\Phi}^-} e_2-\p_3^{\mathring{\Phi}^-} h_1+\p_1^{\mathring{\Phi}^-} h_3 \big)\\
&\qquad\; 	  -\p_3  \mathring{\Phi}^- \big( 	\epsilon \p_t^{\mathring{\Phi}^-} e_3-\p_1^{\mathring{\Phi}^-} h_2+\p_2^{\mathring{\Phi}^-} h_1 \big)
+\epsilon\p_t  \mathring{\Phi}^-
\nabla^{\mathring{\Phi}^-} \cdot e.
\label{can:pro1}
\end{align}	
In view of the equations \eqref{ELP2b}, we have
\begin{align} \nonumber
	\epsilon \p_t^{\mathring{\Phi}^-}e-\nabla^{\mathring{\Phi}^-}\times h+\epsilon \nabla^{\mathring{\Phi}^-}\cdot e \mathring{\nu}=0\qquad  \textrm{in }\Omega_T.
\end{align}
Plugging the last identity into \eqref{can:pro1} and using the third constraint in \eqref{bas3},
we consider the resulting equation on the boundary $\Sigma$ to obtain \eqref{cancellation}.
\end{proof}
Plugging the identity \eqref{cancellation} into \eqref{Q3D:id3} and using \eqref{ELP4d}, we discover
\begin{align} \label{Q:es1}
	\mathcal{Q}_{\alpha}=\int_{\Sigma_t}{Q}_{\alpha}=
	\mathcal{R}_{\alpha}(t)+
	\int_{\Sigma_{t}}R_3,
\end{align}
where $R_3$ is defined by \eqref{R3:def} and
\begin{align}\label{R.cal:def}
	\mathcal{R}_{\alpha}(t):=-2\epsilon \int_{\Sigma} \mathring{\zeta}
	\mathrm{D}_*^{\alpha} \psi\mathrm{D}_*^{\alpha} {w_4}.
\end{align}

Let us estimate the last term in \eqref{Q:es1} first
and put the estimate of the instant boundary integral $\mathcal{R}_{\alpha}(t)$ in the next two subsections.
Introducing
\begin{align} \label{W.cal:nc}
	\widetilde{\mathcal{W}}_{\rm nc}:=
	(W_1,W_2, \mathring{N}\cdot H
	,  w),
\end{align}	
{for which the normal derivatives can be compensated from the identities \eqref{H.inv3}--\eqref{e.inv3} and \eqref{d1W.3D}--\eqref{d1w.3D} as }
\begin{align}
\label{W.cal:id}
	\p_1 \widetilde{\mathcal{W}}_{\rm nc}=
	\mathring{\rm c}_1\mathrm{D}_*\big( W,  w \big)
	+	\mathring{\rm c}_2\big(\bm{f}^+, W, w \big)
\end{align}
for $\mathrm{D}_*:=(\p_t,\sigma\p_1,\p_2,\p_3)$.
Hence
\begin{align}
	\big\| \widetilde{\mathcal{W}}_{\rm nc} \big\|_{H^{m-1/2}(\Sigma_t)}^2
	\lesssim \sum_{\langle\gamma\rangle\leq m-1}  \big\|\mathrm{D}_*^{\gamma}\widetilde{\mathcal{W}}_{\rm nc} \big\|_{H^{1}(\Omega_{t}) }^2
\lesssim_{K}  \mathcal{M}_1(t)
	\label{Wnc:es}
\end{align}
for $m\geq 1$.
We suppose $m\geq1$, so that
\begin{align}
	\nonumber
	\int_{\Sigma_{t}}R_3
	\;  & \lesssim_K  \big\|\mathrm{D}_*^{\alpha}\big(\widetilde{\mathcal{W}}_{\rm nc},\nabla'\psi\big) \big\|_{H^{-1/2}(\Sigma_t)}^2
	+ \big\|\big([\mathrm{D}_*^{\alpha},\, \mathring{\rm c}_0]\big(\widetilde{\mathcal{W}}_{\rm nc},\nabla'\psi\big),\, \mathrm{D}_*^{\alpha} (\psi \mathring{\rm c}_1 )\big) \big\|_{H^{1/2}(\Sigma_t)}^2  \\[0.5mm]
	\;  & \lesssim_K \big\|  \widetilde{\mathcal{W}}_{\rm nc}  \big\|_{H^{m-1/2}(\Sigma_t)}^2
	+\big\|\psi\big\|_{H^{m+1/2}(\Sigma_t)}^2
	+\big\|[\mathrm{D}_*^{\alpha},\, \mathring{\rm c}_1]\big(\widetilde{\mathcal{W}}_{\rm nc},\nabla'\psi, \psi\big) \big\|_{H^{1/2}(\Sigma_t)}^2. \nonumber
\end{align}
Using the Moser-type calculus inequalities yields
\begin{align}
		\int_{\Sigma_{t}}R_3 \lesssim_K   \mathcal{M}(t)
		\qquad \textrm{for \ }
	\mathcal{M}(t)	:=
		\mathcal{M}_1(t)+\mathcal{M}_2(t),	
	\label{R3:es}
\end{align}
where $\mathcal{M}_1(t)$ is given in \eqref{M1cal:def} and
\begin{align}
	\mathcal{M}_2(t):=
		&	{\|}\psi{\|}_{H^{m+1/2}(\Sigma_{t})}^2	+\mathring{\rm C}_{m+4}  \|\psi\|_{L^{\infty}(\Sigma_{t})}^2
.
 \label{M2cal:def}		
\end{align}
Plug \eqref{Q:es1}, \eqref{R3:es}, and \eqref{R1:es} into \eqref{es1} to get
\begin{align}
	\mathcal{E}_{\alpha}(t)
	\leq \mathcal{R}_{\alpha}(t)+C(K)\mathcal{M}(t)\qquad
	\textrm{for } \alpha=(\alpha_0,0,\alpha_2,\alpha_3,0) \textrm{ with }|\alpha|\leq m,
	\label{es3}
\end{align}
where $\mathcal{R}_{\alpha}(t)$ and $\mathcal{M}(t)$ are given in \eqref{R.cal:def} and \eqref{R3:es}, respectively.

\subsection{Estimate of $\mathcal{R}_{\alpha}$ with spatial derivatives} \label{sec:Qes1}
In this subsection, we make the estimate of $\mathcal{R}_{\alpha}(t)$ defined by \eqref{R.cal:def} for $\alpha=(\alpha_0,0,\alpha_2, \alpha_3,0)$ with $\alpha_2+\alpha_3\geq 1$ and $|\alpha|\leq m$.
To this end, we
denote $\mathbf{e}_3:=(0,0,1,0,0)$, $\mathbf{e}_4:=(0,0,0,1,0)$
and let $\beta=(\beta_0,0,\beta_2, \beta_3,0)\in\mathbb{N}^5$ with $|\beta|\leq m-1$.
Then it follows from \eqref{es3} with $\alpha=\beta+\mathbf{e}_3$ and $\alpha=\beta+\mathbf{e}_4$  that
\begin{align}
	\mathcal{E}_{\beta+\mathbf{e}_3}(t)+
	\mathcal{E}_{\beta+\mathbf{e}_4}(t)
	\leq
	 \mathcal{T}+
	C(K)\mathcal{M}(t)
,
	\label{es4}
\end{align}
where
\begin{align}
\label{T.cal:def}
 \mathcal{T} := \mathcal{R}_{\beta+\mathbf{e}_3}(t)
 +\mathcal{R}_{\beta+\mathbf{e}_4}(t)=
	-2\epsilon \int_{\Sigma} \mathring{\zeta}
	\left(\mathrm{D}_*^{\beta} \p_2\psi\mathrm{D}_*^{\beta} \p_2{w_4}
	+	\mathrm{D}_*^{\beta} \p_3\psi\mathrm{D}_*^{\beta} \p_3{w_4}\right).
\end{align}

To estimate the instant boundary integral $\mathcal{T}$, we shall use the identities \eqref{H.cons3} and \eqref{h.cons3}, which can be rewritten as
\begin{align}
	\nonumber
	\begin{bmatrix}
		\mathring{h}_2 & \mathring{h}_3\\[0.5mm]
		\mathring{H}_2 & \mathring{H}_3
	\end{bmatrix}
	\begin{bmatrix}
		\p_2\psi \\[0.8mm] \p_3\psi
	\end{bmatrix}
	=\begin{bmatrix}
		w_1 \\[0.8mm] W_5
	\end{bmatrix}
	+\psi \mathring{\rm c}_1
	\qquad\textrm{on }\Sigma_{T}.
\end{align}
{It follows from the non-collinearity condition \eqref{non-coll} and the first two constraints in \eqref{bas3} that}
\begin{align} \label{stab1.3D}
	\big|\mathring{h}_2\mathring{H}_3  - \mathring{h}_3\mathring{H}_2\big|\geq \kappa_0>0  \qquad\textrm{ on }\Sigma_{T}
\end{align}
for some constant $\kappa_0>0$, which implies
\begin{align}
	\p_2\psi =\frac{\mathring{H}_3 w_1 -\mathring{h}_3W_5}{\mathring{h}_2\mathring{H}_3 -\mathring{h}_3\mathring{H}_2}+\psi \mathring{\rm c}_1,
	\quad
	\p_3\psi =\frac{\mathring{h}_2W_5-\mathring{H}_2 w_1}{\mathring{h}_2\mathring{H}_3 -\mathring{h}_3\mathring{H}_2}+\psi \mathring{\rm c}_1
	\qquad\textrm{on }\Sigma_{T}.
	\label{psi:id1.3D}		
\end{align}
Plug \eqref{psi:id1.3D} into \eqref{T.cal:def} to get
\begin{align}
	\mathcal{T}	= 		-2\epsilon \sum_{j=1}^{4} \int_{\Sigma} T_j		+\int_{\Sigma} R_4,
	\label{Q:es2}
\end{align}
where
\begin{alignat*}{5}
	&T_1:=\mathring{\xi} \mathring{H}_3 \mathrm{D}_*^{\beta}w_1  	\mathrm{D}_*^{\beta} \p_2{w_4},
	\qquad &&
	T_2:=-\mathring{\xi}
	\mathring{h}_3 \mathrm{D}_*^{\beta}W_5
	\mathrm{D}_*^{\beta} \p_2{w_4},\\[0.5mm]
	&T_3:=\mathring{\xi} \mathring{h}_2\mathrm{D}_*^{\beta}W_5
	\mathrm{D}_*^{\beta} \p_3{w_4},
	\qquad &&
	T_4:=-\mathring{\xi} 	 \mathring{H}_2\mathrm{D}_*^{\beta}w_1
	\mathrm{D}_*^{\beta} \p_3{w_4},
\end{alignat*}
with
\begin{align} \label{xi.ring:def}
	\mathring{\xi}:=\frac{\mathring{\zeta}}{\mathring{h}'\times \mathring{H}'}
	=\frac{\mathring{e}_1+\epsilon \mathring{v}_2\mathring{h}_3-\epsilon \mathring{v}_3\mathring{h}_2}{\mathring{h}_2\mathring{H}_3 -\mathring{h}_3\mathring{H}_2},
\end{align}
and
$
	R_4:= \big(\mathring{\rm c}_0
	\big[\mathrm{D}_*^{\beta},\,\mathring{\rm c}_0\big]
	\big(w_1, W_5\big)   +
 \mathring{\rm c}_0\mathrm{D}_*^{\beta}\big(\psi \mathring{\rm c}_1\big)\big)\mathrm{D}_*^{\beta}\nabla'{w_4}.
$
Applying integration by parts  and using \eqref{Wnc:es}, we infer
\begin{align}
 \int_{\Sigma} R_4 &=\int_{\Sigma_t} \p_t R_4 \nonumber\\[1mm]
& 	\lesssim_K
\big\| \big([\mathrm{D}_*^{\beta},\mathring{\rm c}_0]\widetilde{\mathcal{W}}_{\rm nc},\,
\mathrm{D}_*^{\beta}\big(\psi \mathring{\rm c}_1\big)
\big) \big\|_{H^{3/2}(\Sigma_{t})}^2
+\big\| \mathrm{D}_*^{\beta}\mu \big\|_{H^{1/2}(\Sigma_{t})}^2	
\lesssim_K \mathcal{M}(t), \label{R4:es}
\end{align}
where $\mathcal{M}(t)$ is given in \eqref{R3:es}.

The other terms in \eqref{Q:es2} shall be reduced by applying the argument of passing to the volume integral (cf.~\cite[pp.~273--274]{T09ARMAMR2481071}).
Precisely, we have
\begin{align}
	-2\epsilon   \int_{\Sigma} T_j
	=2\epsilon \int_{\Omega} \chi_{\varepsilon}(x_1)T_{j+4}+\int_{\Omega}R_5,
	\label{Q:es3}
\end{align}
for $j=1,\ldots,4$,
where
$\chi_{\varepsilon}(x_1):=\chi(x_1/\varepsilon)$
with $\chi$ satisfying \eqref{chi:def},
$\varepsilon>0$ is some small constant to be determined later,
\begin{align*}
	&T_5:= {\mathring{\xi}\mathring{H}_3} \left(\mathrm{D}_*^{\beta}\p_1 w_1  	\mathrm{D}_*^{\beta} \p_2{w_4}	 -  \mathrm{D}_*^{\beta} \p_2w_1  	\mathrm{D}_*^{\beta} \p_1{w_4}\right),\\[1mm]
	&T_6:= -{\mathring{\xi}\mathring{h}_3} \left(\mathrm{D}_*^{\beta}\p_1 W_5	\mathrm{D}_*^{\beta} \p_2{w_4}	 -  \mathrm{D}_*^{\beta} \p_2W_5	\mathrm{D}_*^{\beta} \p_1{w_4}\right),\\[1mm]
	&T_7:= {\mathring{\xi}\mathring{h}_2} \left(\mathrm{D}_*^{\beta}\p_1 W_5  	\mathrm{D}_*^{\beta} \p_3{w_4}	 -  \mathrm{D}_*^{\beta} \p_3W_5  	\mathrm{D}_*^{\beta} \p_1{w_4}\right),\\[1mm]
	&T_8:=- {\mathring{\xi}\mathring{H}_2} \left(\mathrm{D}_*^{\beta}\p_1 w_1  	\mathrm{D}_*^{\beta} \p_3{w_4}	 -  \mathrm{D}_*^{\beta} \p_3w_1  	\mathrm{D}_*^{\beta} \p_1{w_4}\right).
\end{align*}
and $R_5:= \mathring{\rm c}_1\mathrm{D}_*^{\beta}\big(w_1, W_5\big)\mathrm{D}_*^{\beta}\nabla{w_4}$.
Thanks to \eqref{e.inv3}, we can utilize integration by parts and Moser-type calculus inequalities to derive 
\begin{align}
	\int_{\Omega}R_5
=\int_{\Omega_t}\p_t \Big(\mathring{\rm c}_1\mathrm{D}_*^{\beta}\big(w_1, W_5\big)\mathrm{D}_*^{\beta}\nabla'\big({w_4}, \p_1\mathring{\Phi}^{-}e'\big)\Big)
\lesssim_K \mathcal{M}(t). \label{R5:es}
\end{align}
Substituting the estimates \eqref{R4:es}--\eqref{R5:es} into \eqref{Q:es2} implies
\begin{align} \label{Q:es4}
	\mathcal{T}	\leq C(K)\mathcal{M}(t)+	2\epsilon \sum_{j=9}^{12} \int_{\Omega} \chi_{\varepsilon}(x_1)T_j,
\end{align}
where
\begin{align}
	\nonumber 	T_9:=\; &{\mathring{\xi} } \mathrm{D}_*^{\beta}\p_1 w_1
	\left({ \mathring{H}_3}  \mathrm{D}_*^{\beta}\p_2 {w_4}
	-{ \mathring{H}_2}  \mathrm{D}_*^{\beta}\p_3 {w_4} \right),
	\\[1mm]
	\nonumber 	T_{10}:=\; &{\mathring{\xi} } \mathrm{D}_*^{\beta}\p_1 {w_4}
	\left(
	{ \mathring{H}_2}  \mathrm{D}_*^{\beta}\p_3 w_1
	-{ \mathring{H}_3}  \mathrm{D}_*^{\beta}\p_2 w_1 \right),
	\\[1mm]
	\nonumber 	T_{11}:=\; &{\mathring{\xi} } \mathrm{D}_*^{\beta}\p_1 {w_4}
	\left({ \mathring{h}_3}  \mathrm{D}_*^{\beta}\p_2 W_5
	-{ \mathring{h}_2}  \mathrm{D}_*^{\beta}\p_3 W_5 \right),
	\\[1mm]
	\nonumber 	T_{12}:=\; &{\mathring{\xi} } \mathrm{D}_*^{\beta}\p_1 W_5
	\left(
	{ \mathring{h}_2}  \mathrm{D}_*^{\beta}\p_3 {w_4}
	-{ \mathring{h}_3}  \mathrm{D}_*^{\beta}\p_2 {w_4}
	\right).
\end{align}

We shall identify a quantitative stability condition
, under which
the last term in \eqref{Q:es4} can be mainly absorbed by the instant energy functionals in \eqref{es4}.
To this end, we use the constraints \eqref{H.inv3}--\eqref{e.inv3} and integration by parts to reformulate the last term in \eqref{Q:es4}.
For instance,
we utilize \eqref{h.inv3} and ${w_4}=\mathring{N}\cdot e$ to obtain
\begin{align} \nonumber
& \int_{\Omega} \chi_{\varepsilon}(x_1){\mathring{\xi} } \mathrm{D}_*^{\beta}\p_1 w_1{ \mathring{H}_3}  \mathrm{D}_*^{\beta}\p_2 {w_4}
	 =-	  \int_{\Omega} \chi_{\varepsilon}(x_1){\mathring{\xi} }{ \mathring{H}_3}   \mathrm{D}_*^{\beta}\nabla'\cdot(\p_1\mathring{\Phi}^-h')\mathrm{D}_*^{\beta}\p_2 {w_4}\\[1mm]
&\qquad =	-	   \int_{\Omega} \chi_{\varepsilon}(x_1){\mathring{\xi} }{ \mathring{H}_3}   \mathrm{D}_*^{\beta}\p_2(\p_1\mathring{\Phi}^-h') \cdot \mathrm{D}_*^{\beta}\nabla' (\mathring{N}_i e_i)
+\int_{\Omega} \mathring{\rm c}_1\mathrm{D}_*^{\beta}(\p_1\mathring{\Phi}^-h_3)\mathrm{D}_*^{\beta}\nabla' {w_4}
\nonumber\\[1mm]
&\qquad =
-	   \int_{\Omega} \chi_{\varepsilon}(x_1){\mathring{\xi} }{ \mathring{H}_3} \p_1\mathring{\Phi}^-  \mathrm{D}_*^{\beta}\p_2 h'  \cdot \big(\mathring{N}_i \mathrm{D}_*^{\beta}\nabla'  e_i\big)
+\int_{\Omega} \mathring{\rm c}_1\mathrm{D}_*^{\beta}(\p_1\mathring{\Phi}^-h_3)\mathrm{D}_*^{\beta}\nabla' {w_4}
\nonumber\\[1mm]
&\hspace*{3em}+  \int_{\Omega}\mathring{\rm c}_1[\mathrm{D}_*^{\beta}\nabla', \mathring{\rm c}_1]\big(h,e\big)\mathrm{D}_*^{\beta}\nabla' \big(\mathring{\rm c}_1(h,e)\big).
\nonumber
\end{align}
Similarly, from \eqref{H.inv3}--\eqref{e.inv3} and $({w_4}, w_1,  W_5)=(\mathring{N}\cdot e,\, \mathring{N}\cdot h,\, \mathring{N}\cdot H)$, we can deduce
\begin{align}
	 \int_{\Omega} \chi_{\varepsilon}(x_1)T_j=\;&
	  \int_{\Omega} \chi_{\varepsilon}(x_1)T_{j+4}
	 +\int_{\Omega}\mathring{\rm c}_1\mathrm{D}_*^{\beta}\big(\mathring{\rm c}_1(h,e,H)\big)\mathrm{D}_*^{\beta}\nabla' \big({w_4},w_1,W_5\big)\nonumber
	 \\[1mm]
	&+  \int_{\Omega}\mathring{\rm c}_1[\mathrm{D}_*^{\beta}\nabla', \mathring{\rm c}_1]\big(h,e,H\big)\mathrm{D}_*^{\beta}\nabla' \big(\mathring{\rm c}_1(h,e,H)\big),
	 \label{Q:es5}
\end{align}
for $j=9,\ldots,12$, where
\begin{align}
	\nonumber 	T_{13}:=\; &-{\mathring{\xi} }\p_1\mathring{\Phi}^- \left(\mathring{N}_i  \mathrm{D}_*^{\beta}\nabla' e_i\right)\cdot
	\left({ \mathring{H}_3}  \mathrm{D}_*^{\beta}\p_2  h'
	-{ \mathring{H}_2}  \mathrm{D}_*^{\beta}\p_3  h' \right),
	\\[1mm]
	\nonumber 	T_{14}:=\; &-{\mathring{\xi} } \p_1\mathring{\Phi}^- \left(\mathring{N}_i  \mathrm{D}_*^{\beta}\nabla' h_i\right)\cdot
	\left(
	{ \mathring{H}_2}  \mathrm{D}_*^{\beta}\p_3  e'
	-{ \mathring{H}_3}  \mathrm{D}_*^{\beta}\p_2 e' \right),
	\\[1mm]
	\nonumber 	T_{15}:=\; &-{\mathring{\xi} } \p_1\mathring{\Phi}^- \left(\mathring{N}_i  \mathrm{D}_*^{\beta}\nabla' H_i\right)\cdot
	\left({ \mathring{h}_3}  \mathrm{D}_*^{\beta}\p_2   e'
	-{ \mathring{h}_2}  \mathrm{D}_*^{\beta}\p_3   e'  \right),
	\\[1mm]
	\nonumber 	T_{16}:=\; &-{\mathring{\xi} } \p_1\mathring{\Phi}^+\left(\mathring{N}_i \mathrm{D}_*^{\beta}\nabla' e_i\right)\cdot
	\left(
	{ \mathring{h}_2}  \mathrm{D}_*^{\beta}\p_3    H'
	-{ \mathring{h}_3}  \mathrm{D}_*^{\beta}\p_2    H'
	\right).
\end{align}
Similar to \eqref{R5:es},
the last two terms in \eqref{Q:es5} can be bounded by $C(K)\mathcal{M}(t)$.
A direct computation gives
\begin{align}
	\sum_{j=9}^{12}\big|T_{j+4}\big|\leq
	 \max\big|\p_1\mathring{\Phi}^{\pm}\big|\big|{\mathring{\xi} (  \mathring{H}' ,\,  \mathring{h}')}\big|\big|\mathring{N} \big|\big| \mathrm{D}_*^{\beta}\nabla' (H, h, e)\big|^2
	 ,
	\nonumber
\end{align}
which along with \eqref{Q:es4}, \eqref{Q:es5}, and \eqref{es4} yields
\begin{align}
&	\mathcal{E}_{\beta+\mathbf{e}_3}(t)+
\mathcal{E}_{\beta+\mathbf{e}_4}(t)\nonumber \\[1mm]
&\quad	\leq
	C(K)\mathcal{M}(t) +
2\epsilon   \int_{\Omega} \chi_{\varepsilon}(x_1)
	 \max\big|\p_1\mathring{\Phi}^{\pm}\big|\big|{\mathring{\xi} (  \mathring{H}' ,\,  \mathring{h}')}\big|\big|\mathring{N} \big|\big| \mathrm{D}_*^{\beta}\nabla' (H, h, e)\big|^2
	.
	\label{es4b}
\end{align}

To control the last term in \eqref{es4b}, we make an explicit lower bound for the instant energy functional $\mathcal{E}_{\alpha}(t)$ defined in \eqref{es1}.
It follows from \eqref{W:def}, \eqref{w:def2}, \eqref{A.bm:def}, \eqref{A.sf:def}, and \eqref{B.def}--\eqref{S+U} that
\begin{align}
	\mathcal{E}_{\alpha}(t)
	=\;&
	\int_{\Omega} \left(\bm{A}_0^+\mathrm{D}_*^{\alpha}(\mathring{J}_1^{-1}U)\cdot \mathrm{D}_*^{\alpha}(\mathring{J}_1^{-1}U)
	+ \bm{A}_0^-\mathrm{D}_*^{\alpha}(\mathring{J}_3^{-1}\mathring{J}_2^{-1}u)\cdot \mathrm{D}_*^{\alpha}(\mathring{J}_3^{-1}\mathring{J}_2^{-1}u)\right) \nonumber\\[1mm]
	=\;&
	\epsilon \int_{\Omega} \left(\varGamma(\mathring{v})^{-1}S_{+}(\mathring{U})\mathrm{D}_*^{\alpha}U\cdot \mathrm{D}_*^{\alpha}U
	+  S_{-}(\mathring{\nu})\mathrm{D}_*^{\alpha}u\cdot \mathrm{D}_*^{\alpha}u\right)
+\int_{\Omega}R_6
\nonumber	\\[1mm]
=\; &
\epsilon \int_{\Omega} \left(\varGamma(\mathring{v})^{-1} B_{0}(\mathring{V})\mathrm{D}_*^{\alpha}V\cdot \mathrm{D}_*^{\alpha}V
+  S_{-}(\mathring{\nu})\mathrm{D}_*^{\alpha}u\cdot \mathrm{D}_*^{\alpha}u\right)
+\int_{\Omega}R_6,
\nonumber 
\end{align}
with $R_6:=\mathring{\rm c}_1\mathrm{D}_*^{\alpha}\big(\mathring{\rm c}_1(W,w) )\big[\mathrm{D}_*^{\alpha},\mathring{\rm c}_1\big] (W,w)$,
where
$U=(q,v,H,S)$, $u=(h,e)$,
$V=(p,\varGamma v, H,S)$, and $\mathring{V}=V(\mathring{U})$.
By virtue of \eqref{B.def}, we discover
\begin{align}
 \mathcal{E}_{\alpha}(t)
 \geq \; &	 \epsilon \int_{\Omega}  \varGamma(\mathring{v})^{-1}\mathcal{M}_{0}(\mathring{v})\mathrm{D}_*^{\alpha}H\cdot \mathrm{D}_*^{\alpha}H
 +  \epsilon \int_{\Omega} S_{-}(\mathring{\nu})\mathrm{D}_*^{\alpha}u\cdot \mathrm{D}_*^{\alpha}u  \nonumber \\[1mm]
&+ c(K)\big\|\mathrm{D}_*^{\alpha}(p,v,S)(t)\big\|^2_{L^2(\Omega)}
	-C(K)\mathcal{M}(t).
	\label{E.cal:es3}
\end{align}

We compute that the eigenvalues of the positive definite matrix $\mathcal{M}_{0}(\mathring{v})$ are
\begin{alignat*}{3}
	& \varGamma(\mathring{v})^{-1}=\sqrt{1-\epsilon^2|\mathring{v}|^2}\qquad&&\textrm{with multiplicity }2,\\[1mm]
	& \big(1+\epsilon^2 \varGamma(\mathring{v})^2|\mathring{v}|^2 \big)\varGamma(\mathring{v})^{-1}\qquad&&\textrm{with multiplicity }1,
\end{alignat*}
and the symmetrizer $S_{-}(\mathring{\nu})$ defined by \eqref{S-3D} has double eigenvalues
$1$ and $1\pm \epsilon|\mathring{\nu}|$.
Applying the spectral theorem for symmetric matrices (see, e.g., \cite[Theorem 8.6.10]{A91MR1129886}) to \eqref{E.cal:es3}, we discover
\begin{align}
 		\mathcal{E}_{\beta+\mathbf{e}_3}(t)+
	\mathcal{E}_{\beta+\mathbf{e}_4}(t)
	\geq \; &	\epsilon \int_{\Omega} \big(1-\epsilon^2|\mathring{v}|^2\big)\big|\mathrm{D}_*^{\beta}\nabla' H\big|^2
	+ \epsilon \int_{\Omega} \big(1-\epsilon|\mathring{\nu}|\big) \big|\mathrm{D}_*^{\beta}\nabla'u\big|^2
	\nonumber \\[1mm]
&
	+ c(K)\big\|\mathrm{D}_*^{\beta}\nabla'(p,v,S)(t)\big\|^2_{L^2(\Omega)}
	-C(K)\mathcal{M}(t)
	\label{E.cal:es4}
\end{align}
for all $\langle\beta\rangle\leq m-1$.
Plugging \eqref{E.cal:es4}  into  \eqref{es4b}
and using the continuity of the basic state $(\mathring{U},\mathring{u},\mathring{\varphi})$,
we can choose $\varepsilon>0$ small enough to absorb the last term in \eqref{es4b} and conclude
\begin{align}
	\big\|\mathrm{D}_*^{\beta}\nabla'(U,u)(t)\big\|_{L^2(\Omega)}^2
	\lesssim_{K} \mathcal{M}(t)
	\label{es5}
\end{align}
for all $\beta=(\beta_0,0,\beta_2, \beta_3,0)\in\mathbb{N}^5$ with $|\beta|\leq m-1$,
provided the stability conditions \eqref{non-coll} and \eqref{stability3D} hold.

	\subsection{Estimate of $\mathcal{R}_{\alpha}$ with time derivatives}\label{sec:Qes2}
This subsection is devoted to showing the estimate of the term $\mathcal{R}_{\alpha}(t)$ given in \eqref{R.cal:def} for $\alpha=(\alpha_0,0,\alpha_2, \alpha_3,0)$ with $\alpha_0\geq 1$ and $|\alpha|\leq m$.
For this purpose,
we combine the identities \eqref{psi:id1.3D} with the first condition in \eqref{ELP4c} to get
\begin{align}
	\mathrm{D}_{\rm tan }\psi
	:=(\p_t, \p_2,\p_3)\psi
	= \mathring{\rm c}_0 (W_2,W_5,w_1)
	+\psi \mathring{\rm c}_1
	\qquad\textrm{on }\Sigma_{T}.
	\label{psi:id2.3D}
\end{align}
Setting $\mathbf{e}_1:=(1,0,0,0,0)$ and $\beta=\alpha-\mathbf{e}_1$,
we use \eqref{cancellation} and \eqref{psi:id2.3D} to infer
\begin{align}
	\mathcal{R}_{\beta+\mathbf{e}_1}(t)=\; & 2 \int_{\Sigma} \mathring{\zeta}
	  \mathrm{D}_*^{\beta} \big(\mathring{\rm c}_0 (W_2,W_5,w_1)
	  +\psi \mathring{\rm c}_1
	  \big)\mathrm{D}_*^{\beta} \big(\p_3\mu_2-\p_2\mu_3\big)\nonumber\\[1mm]
=\; &   \int_{\Sigma} \mathring{\rm c}_1\mathrm{D}_*^{\beta} \widetilde{\mathcal{W}}_{\rm nc}\mathrm{D}_*^{\beta} \nabla'\mu
+\int_{\Sigma} \Big(\mathring{\rm c}_1\big[\mathrm{D}_*^{\beta},\,\mathring{\rm c}_0  \big]\widetilde{\mathcal{W}}_{\rm nc}
+  \mathring{\rm c}_1\mathrm{D}_*^{\beta} \big( \psi \mathring{\rm c}_1\big)\Big)
\mathrm{D}_*^{\beta} \nabla'\mu.
\label{Qt:es1}
\end{align}
Similar to \eqref{R4:es},
the last term in \eqref{Qt:es1} can be bounded by $C(K)\mathcal{M}(t)$.
Passing the first term on the right-hand side of \eqref{Qt:es1} to the volume integral and using integration by parts, we infer
\begin{align}
	\int_{\Sigma} \mathring{\rm c}_1\mathrm{D}_*^{\beta} \widetilde{\mathcal{W}}_{\rm nc}\mathrm{D}_*^{\beta} \nabla'\mu
	=\int_{\Omega} T_{17}+\int_{\Omega}R_7,
	\label{Qt:es2}
\end{align}
where
\begin{align}
	T_{17}:=\;&\mathring{\rm c}_1\mathrm{D}_*^{\beta} \p_1 \widetilde{\mathcal{W}}_{\rm nc}\mathrm{D}_*^{\beta} \nabla'\mu
	+ \mathring{\rm c}_1\mathrm{D}_*^{\beta} \nabla' \widetilde{\mathcal{W}}_{\rm nc}\mathrm{D}_*^{\beta} \p_1 \mu,\nonumber\\[1mm]
	R_7:=\;& \mathring{\rm c}_2\mathrm{D}_*^{\beta}   \widetilde{\mathcal{W}}_{\rm nc}\mathrm{D}_*^{\beta} \nabla'\mu
	+ \mathring{\rm c}_2\mathrm{D}_*^{\beta} \widetilde{\mathcal{W}}_{\rm nc}\mathrm{D}_*^{\beta} \p_1 \mu.
	\nonumber
\end{align}
Thanks to \eqref{W.cal:id} and \eqref{Qt:es2}, we have
\begin{align}
&\int_{\Sigma} \mathring{\rm c}_1\mathrm{D}_*^{\beta} \widetilde{\mathcal{W}}_{\rm nc}\mathrm{D}_*^{\beta} \nabla'\mu
	\lesssim_K
	\mathcal{M}(t)+
	 \int_{\Omega} \big|\mathrm{D}_*^{\beta} \mathrm{D}_*\big( W,  w \big)\big|
	 \big|\mathrm{D}_*^{\beta} \nabla'\big(  W,  w \big)\big|
	 \nonumber\\
&\qquad 	 \lesssim_K
	 	 \mathcal{M}(t)+
	 \varepsilon \int_{\Omega} \big|\mathrm{D}_*^{\beta} \mathrm{D}_*\big( W,  w \big)\big|^2
	 +C(\varepsilon)
	 \int_{\Omega} \big|\mathrm{D}_*^{\beta} \nabla'\big( W,  w \big)\big|^2
	 \label{Qt:es3}
\end{align}
for any $\varepsilon>0$.
Plug \eqref{Qt:es1}--\eqref{Qt:es3} into \eqref{es3} with $\alpha=\beta+\mathbf{e}_1$ and use
the estimate \eqref{es5} for space tangential derivatives to derive
\begin{align}
\mathcal{E}_{\beta+\bf{e}_1}(t)
	\lesssim_K
	C(\varepsilon)\mathcal{M}(t)+
	\varepsilon \int_{\Omega} \big|\mathrm{D}_*^{\beta} \mathrm{D}_*\big( W,  w \big)\big|^2
\nonumber
\end{align}
for all $\beta=(\beta_0,0,\beta_2, \beta_3,0)\in\mathbb{N}^5$ with $|\beta|\leq m-1$.
Combining the last estimate with \eqref{es2} and \eqref{es5}, we take $\varepsilon>0$ small enough to obtain
\begin{align}
		\sum_{\langle \alpha\rangle \leq m}	\|(\mathrm{D}_*^{\alpha}W,\, \mathrm{D}_*^{\alpha}w)(t)\|_{L^2(\Omega)}^2
	\lesssim_K  \mathcal{M}(t). \label{es6}
\end{align}

\subsection{Estimate of front $\psi$} \label{sec.psi}
In view of \eqref{psi:id2.3D}, we get 
\begin{align}
	& \|\psi\|_{H^{m+1/2}(\Sigma_{t})}^2 	\lesssim 	\|\psi\|_{L^{2}(\Sigma_{t})}^2 	+	\|\mathring{\rm c}_1\widetilde{\mathcal{W}}_{\rm nc}\|_{H^{m-1/2}(\Sigma_{t})}^2  +\|\psi \|_{H^{m-1/2}(\Sigma_{t})}^2.
	\label{es:psi1.3D}
\end{align}
To estimate the last term in  \eqref{es:psi1.3D}, we apply $\mathrm{D}_{\rm tan}^{\beta}:=\p_t^{\beta_0}\p_2^{\beta_2}\p_3^{\beta_3}$ 
to the first condition in \eqref{ELP4c} and multiply the resulting equation with $\mathrm{D}_{\rm tan}^{\beta}\psi$ to obtain
\begin{align}
	\|\mathrm{D}_{\rm tan}^{\beta}\psi(t)\|_{L^{2}(\Sigma)}^{2}
	\lesssim_K	\big\|\big(\mathrm{D}_{\rm tan}^{\beta}\psi , \, \mathrm{D}_{\rm tan}^{\beta}W_2,\,
	[\mathrm{D}_{\rm tan}^{\beta},\mathring{v}'\cdot\nabla']\psi, \, \mathrm{D}_{\rm tan}^{\beta}(\mathring{b}_1\psi) \big) \big\|_{L^{2}(\Sigma_t)}^{2}  \nonumber
\end{align}
which leads to
\begin{align}
	\|\mathrm{D}_{\rm tan}^{\beta}\psi(t)\|_{L^{2}(\Sigma)}^{2}
	\lesssim_K
		& {\|}(W_2,\psi){\|}_{H^{|\beta|}(\Sigma_{t})}^2
		+\mathring{\rm C}_{m+4}  \|\psi\|_{L^{\infty}(\Sigma_{t})}^2
		\ \  &&  		\textrm{if }|\beta|\leq m
		.
	\label{es:psi2a}
\end{align}
Then it follows from Gr\"{o}nwall's inequality that
\begin{align}
	{\|}\psi{\|}_{H^{s}(\Sigma_{t})}^2 	 \lesssim_K	 t\mathrm{e}^{C(K)t} \left({\|}W_2{\|}_{H^{s}(\Sigma_{t})}^2
	+\mathring{\rm C}_{m+4}  \|\psi\|_{L^{\infty}(\Sigma_{t})}^2\right)
	\label{es:psi2.3D}
\end{align}
for $s=0,1,\ldots,m$. Interpolating between \eqref{es:psi2.3D} with $s=m-1$ and $s=m$ gives
\begin{align}
	{\|}\psi{\|}_{H^{m-1/2}(\Sigma_{t})}^2 	 \lesssim_K 	t\mathrm{e}^{C(K)t}\left({\|}W_2{\|}_{H^{m-1/2}(\Sigma_{t})}^2
	+ \mathring{\rm C}_{m+4}  \|\psi\|_{L^{\infty}(\Sigma_{t})}^2\right) ,
\nonumber 
\end{align}
which along with \eqref{es:psi1.3D} yields
\begin{align}
	\|\psi\|_{H^{m+1/2}(\Sigma_{t})}^2
	&\lesssim_K  t\mathrm{e}^{C(K)t}
	\left(\big\| \widetilde{\mathcal{W}}_{\rm nc} \big\|_{H^{m-1/2}(\Sigma_{t})}^2+
	\mathring{\rm C}_{m+4}  \big\|\big(\widetilde{\mathcal{W}}_{\rm nc},\, \psi\big)\big\|_{L^{\infty}(\Sigma_{t})}^2\right),
	\label{es:psi4.3D}
\end{align}
where $\widetilde{\mathcal{W}}_{\rm nc}$ is defined by \eqref{W.cal:nc}.

	\subsection{Proof of Theorem \ref{thm:main1}} \label{sec.Thm2.2}
	Applying induction argument to
\eqref{h.inv3}--\eqref{e.inv3} and \eqref{d1w.3D} and using \eqref{es6}, we can deduce
\begin{align}\label{es:Ww}
	\mathcal{I}_m(t):=\sum_{\langle \alpha\rangle \leq m}	\| \mathrm{D}_*^{\alpha}W (t)\|_{L^2(\Omega)}^2
	+\sum_{|\alpha| \leq m}	\|  \mathrm{D}^{\alpha}w(t)\|_{ L^2(\Omega)}^2
	\lesssim_K  \mathcal{M}(t),
\end{align}
for any integer $m\geq 1$, where $\mathcal{M}(t)$ is given in \eqref{R3:es} (cf.~\eqref{M1cal:def} and \eqref{M2cal:def}).
Applying Gr\"{o}nwall's inequality to \eqref{es:Ww}, we obtain
\begin{align} \nonumber
	 \mathcal{I}_m(t) \lesssim_K \mathcal{N}_m(t),
\end{align}
which implies
\begin{align}
\label{es:pr1}
	\|W\|_{H_*^m(\Omega_{T})}^2	+	\|w\|_{H^m(\Omega_{T})}^2
	\lesssim_K
	T\mathrm{e}^{C(K)T}\mathcal{N}_m(T),
\end{align}
where 
\begin{align}\nonumber
	\mathcal{N}_m(t)
	:=	\;&
	{\|} \bm{f}^+ {\|}_{H_*^m(\Omega_{t})}^2+\|\psi\|_{H^{m+1/2}(\Sigma_{t})}^2\\[1mm]
	&\nonumber	+\mathring{\rm C}_{m+4}  \Big(\|(\bm{f}^+,W,w)\|_{W_*^{2,\infty}(\Omega_{t})}^2+\|\psi\|_{L^{\infty}(\Sigma_{t})}^2\Big)
.
\end{align}	
Plug \eqref{es:psi4.3D}
and \eqref{Wnc:es} into \eqref{es:pr1}, take $T>0$ small enough, and
use the embedding $H_*^6(\Omega_{T})\hookrightarrow W_*^{2,\infty}(\Omega_{T})$ (cf.~\cite[Lemma 3.3]{TW21MR4201624}) to obtain
\begin{align}
	\nonumber
&	\|W\|_{H_*^m(\Omega_{T})}^2	+	\|w\|_{H^m(\Omega_{T})}^2	+ \|\psi \|_{H^{m+1/2}(\Sigma_{T})}^2
+\big\| \widetilde{\mathcal{W}}_{\rm nc} \big\|_{H^{m-1/2}(\Sigma_{t})}^2
\\[1mm]
\nonumber	
&\quad \lesssim_K
	T{\|} \bm{f}^+ {\|}_{H_*^m(\Omega_{T})}^2
	+T\mathring{\rm C}_{m+4} \Big(\|(\bm{f}^+,W,w)\|_{H_*^{6}(\Omega_{T})}^2+\|\psi\|_{H^2(\Sigma_{T})}^2\Big)
.
\end{align}
By virtue of the last estimate with $m=6$ and the constraint \eqref{bas1}, we can find a small constant $T_0>0$, depending on $K$, such that if $0<T\leq T_0$, then
\begin{align}
	\nonumber
 \|W\|_{H_*^6(\Omega_{T})}^2	+	\|w\|_{H^6(\Omega_{T})}^2	+ \|\psi \|_{H^{13/2}(\Sigma_{T})}^2
  \lesssim_K
  {\|} \bm{f}^+ {\|}_{H_*^6(\Omega_{T})}^2.
\end{align}
Then we infer
\begin{align}\nonumber
 &\|W\|_{H_*^m(\Omega_{T})}^2	+	\|w\|_{H^m(\Omega_{T})}^2	+ \|\psi \|_{H^{m+1/2}(\Sigma_{T})}^2
 +\big\| \widetilde{\mathcal{W}}_{\rm nc} \big\|_{H^{m-1/2}(\Sigma_{t})}^2
 \\
&\hspace*{9em}\lesssim_K
{\|} \bm{f}^+ {\|}_{H_*^m(\Omega_{T})}^2+\mathring{\rm C}_{m+4} \| \bm{f}^+ \|_{H_*^{6}(\Omega_{T})}^2
\qquad\textrm{for }m\geq 6.
\label{es:pr}
\end{align}

It suffices to show the estimate \eqref{tame.es} for the problem \eqref{ELP1}.
To this end, we derive from \eqref{ELP2a},
\eqref{decom2.3D}, and \eqref{W:def} that
\begin{align*}
	\big\|\tilde{f}^+\big\|^2_{H^m_*(\Omega_{t})}
	\lesssim_K \big\| {f}^+\big\|^2_{H^m_*(\Omega_{t})}
	+\big\| \mathring{\rm c}_2 \big(U^{\natural},\, \mathrm{D}_*U^{\natural},\, \p_1q^{\natural},\, \p_1v^{\natural}   \big)\big\|^2_{H^m_*(\Omega_{t})}.
\end{align*}
Then we can utilize the above estimates and Lemma \ref{lem:homo} to obtain 
\eqref{tame.es}.
This completes the proof of Theorem \ref{thm:main1}.

 \vspace*{2mm}

\section{Nonlinear existence in 2D}	 \label{sec:2D}
In this section, we 
first establish the well-posedness and higher-order tame estimates to the linearized problem \eqref{ELP1} in 2D. 
Then we prove the solvability of the nonlinear problem \eqref{NP1} in 2D through a suitable Nash--Moser iteration; see \cite{AG07MR2304160, S16MR3524197} for a general presentation of this method.

\subsection{2D linear well-posedness}
To solve the problem \eqref{ELP4},
we write the boundary conditions \eqref{ELP4c} in a different form
by eliminating the first derivatives of $\psi$ from the third condition in \eqref{ELP4c} (cf.~\eqref{B.bm2D:def}).
More precisely, using the first condition in \eqref{ELP4c},
we find that the problem \eqref{ELP4} is equivalent to that consisting of
\eqref{ELP4a}, \eqref{ELP4b}, \eqref{ELP4d}, and
\begin{alignat}{3}
\label{BC.2Da}	
&W_2=(\p_t+\mathring{v}_2\p_2+\mathring{b}_1)\psi
\qquad &&\textrm{on }\Sigma_T,\\
\label{BC.2Db}	
&W_1-\mathring{{h}}_2 w_2=-\mathring{b}_2 \psi
&&\textrm{on }\Sigma_T,\\
\label{BC.2Dc}	
&\epsilon \mathring{h}_2 W_2+w_3=\mathring{b}_5 \psi&&\textrm{on }\Sigma_T,
\end{alignat}
where $\mathring{b}_5:=\epsilon \mathring{h}_2 \mathring{b}_1-\mathring{b}_4.$

{We shall prove the existence of solutions to the problem \eqref{ELP4a}, \eqref{ELP4b}, \eqref{ELP4d}, \eqref{BC.2Da}--\eqref{BC.2Dc} by employing a fixed point argument developed in \cite{ST13MR3148595}.}
Indeed, we let $\psi\in H^{3/2}(\Sigma_{T})$ be given and vanish in the past.
Consider the problem \eqref{ELP4a}, \eqref{ELP4b}, \eqref{ELP4d}, \eqref{BC.2Db}--\eqref{BC.2Dc}: 
\begin{subequations}
	\label{FP1}
	\begin{alignat}{3}
		\label{FP1a}
		&\big({\bm{A}}_0^+\p_t  +{ \bm{A}}_1^+\p_1 +{ \bm{A}}_2^+\p_2    +{ \bm{A}}_3^+\big)W = \bm{f}^+
		&\qquad  &\textrm{in }\Omega_T,\\[2mm]
		\label{FP1b}
		& \big({ \bm{A}}_0^-\p_t  +{ \bm{A}}_1^-\p_1 +{ \bm{A}}_2^-\p_2    +{ \bm{A}}_3^-\big) w =0
		&&\textrm{in }\Omega_T,\\[0.5mm]
		\label{FP1c}
&W_1-\mathring{{h}}_2 w_2=-\mathring{b}_2 \psi
&&\textrm{on }\Sigma_T,\\
		\label{FP1c2}
&\epsilon \mathring{h}_2 W_2+w_3=\mathring{b}_5 \psi&&\textrm{on }\Sigma_T,\\[0.5mm]
		\label{FP1d}
		&
		(W,w) =0
		&& \textrm{if } t<0.
	\end{alignat}
\end{subequations}
Once the solutions $(W,w)$ for \eqref{FP1} have been constructed, we take $\phi$ as the solution vanishing in the past of the transport equation 
\begin{align}
	(\p_t+\mathring{v}_2\p_2+\mathring{b}_1)\phi=W_2
	\qquad  \textrm{on }\Sigma_T.
\end{align}
Then we prove that ${\psi}\mapsto \phi$ defines a contraction mapping from $H^{3/2}(\Sigma_{T})$ to itself, which relies on the existence and suitable estimates of solutions for the problems \eqref{FP1} and \eqref{BC.2Da}.

\subsubsection{Existence and estimate for problem \eqref{FP1}}

It follows from \eqref{decom2.2D} that the hyperbolic problem \eqref{FP1} is characteristic of  constant multiplicity
and has correct number of boundary conditions \cite{R85MR0797053,OSY95MR1346224,S95MMASMR1346663,S96aMR1401431,BGS07MR2284507}.
If we consider the boundary conditions \eqref{FP1c} and \eqref{FP1c2} in homogeneous form, that is, if we set ${\psi}=0$, then  the quadratic form
$$Q_0:= \bm{A}_1^+  W\cdot  W
+
\bm{A}_1^-  w\cdot  w= 2\epsilon W_1 W_2
+2w_2 w_3 $$
vanishes on the boundary $\Sigma_T$.
As a result, the boundary conditions \eqref{FP1c} and \eqref{FP1c2} are {\it maximally nonnegative} if $\psi=0$.
The problem \eqref{FP1} can be reduced to that with homogeneous boundary conditions by subtracting from $(W,w)$ a regular function satisfying \eqref{FP1c} and \eqref{FP1c2}.
Therefore, we can apply the results of \cite{S95MMASMR1346663,S96aMR1401431,OSY95MR1346224} to obtain the existence of solutions to the problem \eqref{FP1}.

To show the contraction property of the mapping ${\psi}\mapsto \phi$ described above,
we deduce the $H_*^1\times H^1$ estimate for solutions $(W,w)$ of the problem \eqref{FP1}.

For the 2D problem \eqref{FP1},
we plug \eqref{decom2.2D}--\eqref{decom3} into \eqref{FP1a}--\eqref{FP1b} to get
\begin{alignat}{3}
	\label{d1W}
	&	\begin{bmatrix}
		\epsilon \p_1 W_2\\   \epsilon \p_1 W_1  \\0
	\end{bmatrix}
	=\bm{f}^+-{\bm{A}}_3^+ W-\sum_{i=0,2}{\bm{A}}_i^+\p_i W-{\bm{B}}_0^+ \p_1W
	\quad &&\textrm{in }\Omega_{T}	,\\
	\label{d1w}	
	&	\begin{bmatrix}
		0\\ \p_1 w_3\\  \p_1 w_2
	\end{bmatrix}
	=-{\bm{A}}_3^- w-\sum_{i=0,2}{\bm{A}}_i^-\p_i w-{\bm{B}}_0^- \p_1w
	\quad &&\textrm{in }\Omega_{T}.
\end{alignat}
Using \eqref{decom2.2D}, \eqref{d1W}, and \eqref{d1w},
we can obtain \eqref{es1} and \eqref{R1:es} with $d=2$ and
\begin{align}
	{Q}_{\alpha}=	2\epsilon\mathrm{D}_*^{\alpha}W_1\mathrm{D}_*^{\alpha}W_2
	+2\mathrm{D}_*^{\alpha}w_2\mathrm{D}_*^{\alpha}w_3.
	\label{Q.alpaha}
\end{align}
Let $\alpha=(\alpha_0, 0 ,\alpha_2,\alpha_3)$ with $\langle \alpha\rangle \leq  1$.
Then
$\mathrm{D}_*^{\alpha}:=\p_t^{\alpha_0}\p_2^{\alpha_2} $ with $\alpha_0+\alpha_2\leq 1$
and
\begin{align}
	{Q}_{\alpha}
	=\; & 2\epsilon \mathrm{D}_*^{\alpha} W_2 \mathrm{D}_*^{\alpha}\big(\mathring{h}_2w_2-\mathring{b}_2 {\psi}\big)
	+2\mathrm{D}_*^{\alpha}w_2 \mathrm{D}_*^{\alpha}\big(-\epsilon \mathring{h}_2 W_2+\mathring{b}_5 {\psi}\big)\nonumber\\[1mm]
	=\; &
	\mathring{\rm c}_1
	\big(w_2\mathrm{D}_*^{\alpha} W_2-W_2\mathrm{D}_*^{\alpha} w_2\big)
	-2\epsilon \mathrm{D}_*^{\alpha} W_2 \mathrm{D}_*^{\alpha}\big(\mathring{b}_2 {\psi}\big)
	+2\mathrm{D}_*^{\alpha}w_2 \mathrm{D}_*^{\alpha}\big(\mathring{b}_5 {\psi}\big), \nonumber
\end{align}
due to the boundary conditions \eqref{FP1c} and \eqref{FP1c2}.
Consequently,
\begin{align} \nonumber
	\int_{\Sigma_t} {Q}_{\alpha}
	\,  & \lesssim_K  \|  {\psi} \|_{H^{3/2}(\Sigma_t)}^2 + \|(W_2, \, w_2) \|_{H^{1/2}(\Sigma_t)}^2  \\[0.5mm]
	& \lesssim_K  \|  {\psi} \|_{H^{3/2}(\Sigma_t)}^2
	+\| (W_2,w_2 )\|_{H^{1}(\Omega_{t})}^2
	\qquad \textrm{for }\langle \alpha\rangle \leq  1,
	\nonumber
\end{align}
which together with \eqref{es1} and \eqref{R1:es} yields
\begin{align}
	\|\mathrm{D}_*^{\alpha}(W,w)(t)\|_{L^2(\Omega)}^2
 \lesssim_K 	\|  {\psi} \|_{H^{3/2}(\Sigma_t)}^2+ {\|}(\bm{f}^+,W){\|}_{H_*^1(\Omega_{t})}^2
	+ \| (W_2, w)  \|_{H^{1}(\Omega_{t})}^2
	\label{FP:es1}	
\end{align}
for ${\langle\alpha\rangle\leq 1}$.
Introduce
\begin{align} \nonumber
	\mathcal{I}(t):= 	\sum_{\langle\alpha\rangle\leq 1}\big\|\big(\mathrm{D}_*^{\alpha}W,\, \p_1 W_1 , \,\p_1 W_2 , \, \p_1 W_4,\, w,\,\mathrm{D} w\big)(t)\big\|_{L^2(\Omega)}^2.
\end{align}
It follows from \eqref{d1W}--\eqref{FP:es1} and \eqref{H.inv3}--\eqref{h.inv3} that
\begin{align}
	\mathcal{I}(t) \lesssim_K 	\|  {\psi} \|_{H^{3/2}(\Sigma_t)}^2+ {\|}(\bm{f}^+,W){\|}_{H_*^1(\Omega_{t})}^2
	+ \| (W_2, w)  \|_{H^{1}(\Omega_{t})}^2.
	\nonumber 
\end{align}
Applying Gr\"{o}nwall's inequality to the last estimate implies
\begin{align}
	{\|} W{\|}_{H_*^1(\Omega_{T})}^2
	+\| (W_1, W_2, W_4,  w)   \|_{H^{1}(\Omega_{T})}^2
	\lesssim_K 	T   \left( \|  {\psi} \|_{H^{3/2}(\Sigma_T)}^2+ {\|} \bm{f}^+ {\|}_{H_*^1(\Omega_{T})}^2 \right).
	\label{FP:es4}	
\end{align}

\subsubsection{Existence for problem \eqref{ELP4}}
According to the inequality \eqref{FP:es4}, $W_2$ belongs to $H^{1}(\Omega_T)$,
and hence $W_2|_{\Sigma}\in H^{1/2}(\Sigma_T)$.
This implies the existence of solutions $\phi$ in $H^{1/2}(\Sigma_T)$ of the equation \eqref{BC.2Da}.

To show  that  $\phi$ belongs to $H^{3/2}(\Sigma_T)$, we deduce some boundary constraints for $\phi$.
As in the proof of Lemma \ref{lem:homo}, we can deduce that
solutions $(W,w, \phi)$ of the problems \eqref{FP1} and \eqref{BC.2Da} satisfy
\begin{alignat}{3}
&\big(\p_t+\mathring{v}_2\p_2+\p_2\mathring{v}_2\big)\big(H\cdot\mathring{N}-\mathring{H}_2\p_2\phi+\p_1\mathring{H}\cdot\mathring{N}\phi \big)=0
 \qquad &&\textrm{on } \Sigma_T,	\\[1mm]
	\label{cons2v:id1.2D}
&	\epsilon \p_t\big(h\cdot \mathring{N}\big)
	+\p_2\big(e +\epsilon \p_t\mathring{\psi} h_2\big)
	=0
	\qquad&&\textrm{on } \Sigma_{T}.
\end{alignat}
Then we have 
\begin{align} \label{psi:id1}
	W_4=H\cdot\mathring{N}=\mathring{H}_2\p_2\phi-\p_1\mathring{H}\cdot\mathring{N}\phi \qquad \textrm{on } \Sigma_{T}.
\end{align}
By virtue of 
\eqref{mu:def1}, \eqref{w:def1}, and \eqref{FP1c}--\eqref{FP1c2},
we infer
\begin{align}
	e +\epsilon \p_t\mathring{\psi} h_2
	=\epsilon \mathring{v}_2 w_1 +w_3
	= \epsilon \mathring{v}_2 w_1-\epsilon \mathring{h}_2 W_2 +\mathring{b}_5 \psi.
	\nonumber
\end{align}
Plugging the last identity into \eqref{cons2v:id1.2D}, we can utilize the equation \eqref{BC.2Da} and the second condition in \eqref{bas4} to discover
\begin{align} \label{G.cal:eq}
	\epsilon\p_t \mathcal{G}+\epsilon\p_2\big(\mathring{v}_2 \mathcal{G} \big)
	+\p_2\big(\mathring{b}_5 (\psi-\phi)\big)=0 \qquad \textrm{on } \Sigma_{T},
\end{align}
where
\begin{align} \label{G.cal:def}
	\mathcal{G}:=w_1-\mathring{h}_2\p_2 \phi -\p_2\mathring{h}_2\phi .
\end{align}

Thanks to \eqref{bas3} and \eqref{bas10}, we derive
\begin{align} \label{noncol2}
	\big|\mathring{H}_2\big|+\big|\mathring{h}_2\big|\geq \kappa_0 >0 \qquad \textrm{on }\Sigma_T
\end{align}
for some constant $\kappa_0>0$. Then it follows from \eqref{BC.2Da}, \eqref{psi:id1}, and \eqref{G.cal:def} that
\begin{align}
	(\p_t\phi,\, \p_2\phi)
	=\mathring{\rm c}_0 (W_2,\, W_4,\, w_1-\mathcal{G})
	+\mathring{\rm c}_1 \phi
	\qquad \textrm{on }\Sigma_T.
	\label{psi:id2}
\end{align}
Plug \eqref{psi:id2} into \eqref{G.cal:eq} to obtain
\begin{align*}
	\epsilon\p_t \mathcal{G}+\epsilon\p_2\big(\mathring{v}_2 \mathcal{G} \big)
	+\mathring{\rm c}_1 \mathcal{G}
	=\mathring{\rm c}_2\phi+
	\mathring{\rm c}_1(W_2,\, W_4,\, w_1)
	+\mathring{\rm c}_1\p_2\psi+\mathring{\rm c}_2\psi
	\quad \textrm{on }\Sigma_T.
\end{align*}
Applying a standard energy method to the last equation yields the estimates of $\|\mathcal{G} \big\|_{L^{2}(\Sigma_T)}$ and $\|\mathcal{G} \big\|_{H^{1}(\Sigma_T)}$.
Then we employ interpolation to get
\begin{align}
	\big\|\mathcal{G} \big\|_{H^{1/2}(\Sigma_T)}^2
	\lesssim_K	
	T \mathrm{e}^{C(K)T} \big\|\big(W_2, W_4, w_1,  \phi,  \psi,   \p_2 \psi\big)\big\|_{H^{1/2}(\Sigma_T)}^2,
	\nonumber
\end{align}
which together with \eqref{psi:id2} and \eqref{FP:es4} leads to
\begin{align}
	\| \phi\|_{H^{3/2}(\Sigma_T)}^2
	\lesssim_K	\; &
	\big\|\big(W_2, W_4,  w_1 \big)\big\|_{H^{1/2}(\Sigma_T)}^2
	+T \big\|\psi \big\|_{H^{3/2}(\Sigma_T)}^2	
	\nonumber\\[0.5mm]
	\lesssim_K	\; &
	T\|  {\psi} \|_{H^{3/2}(\Sigma_T)}^2+  T {\|} \bm{f}^+ {\|}_{H_*^1(\Omega_T)}^2
	\label{FP:es5}
\end{align}
for $T>0$ small enough.
This defines a mapping ${\psi}\mapsto \phi$ in $H^{3/2}(\Sigma_T)$.

To show that the mapping ${\psi}\mapsto \phi$ is a contraction, we take
$\psi_1, \psi_2\in H^{3/2}(\Sigma_T)$ and let $(W_1, w_1, \phi_1), (W_2, w_2, \phi_2)\in H_*^{1}(\Omega_T)\times H^{1}(\Omega_T)\times H^{3/2}(\Sigma_T)$ be the corresponding solution of \eqref{FP1} and \eqref{BC.2Da}.
By virtue of the linearity of the problems \eqref{FP1} and \eqref{BC.2Da}, similar to \eqref{FP:es5}, we can deduce
\begin{align}
	\| \phi_1-\phi_2\|_{H^{3/2}(\Sigma_T)}^2
	\lesssim_K	
	T \|  \psi_1-\psi_2 \|_{H^{3/2}(\Sigma_T)}^2.
	\nonumber
\end{align}
Consequently, there is a small constant $T_0>0$, such that if $0<T\leq T_0$, then ${\psi}\mapsto \phi$ has a unique fixed point by the contraction mapping principle.
Then the fixed point ${\psi}=\phi$, together with the corresponding solution $(W,w)$ of \eqref{FP1}, provides the solution $(W , w , \phi ) \in H_*^{1}(\Omega_T)\times H^{1}(\Omega_T)\times H^{3/2}(\Sigma_T)$
for the problem \eqref{ELP4a}--\eqref{ELP4b}, \eqref{ELP4d}, \eqref{BC.2Da}--\eqref{BC.2Dc} and also for the equivalent problem \eqref{ELP4}.

\subsubsection{Proof Theorem \ref{thm:main2}}

Let $(W,w,\psi)$ be the solution of the problem \eqref{ELP4}.
We shall estimate $(W,w,\psi)$ in $H_*^m(\Omega_T)\times H^m(\Omega_T)\times H^{m+1/2}(\Sigma_T)$ from \eqref{es1} for any integer $m\geq 6$.

Using \eqref{decom2.2D}--\eqref{decom3} and \eqref{d1W}--\eqref{d1w}, similar to \eqref{es2} and \eqref{Wnc:es}, we have
\begin{align}
	\sum_{\langle \alpha\rangle \leq m,\
		\alpha_1+\alpha_3>0}	\|(\mathrm{D}_*^{\alpha}W,\, \mathrm{D}_*^{\alpha}w)(t)\|_{L^2(\Omega)}^2
		+	\big\| \widetilde{\mathcal{W}}_{\rm nc} \big\|_{H^{m-1/2}(\Sigma_t)}^2
	\lesssim_K  \mathcal{M}(t),
	\label{es:nor2D}
\end{align}
where $\mathcal{M}(t)$ and $\widetilde{\mathcal{W}}_{\rm nc}$ are defined by \eqref{R3:es} and \eqref{W.cal:nc}.
 	 If $\alpha_1=\alpha_3=0$, we use \eqref{BC.2Db} and \eqref{BC.2Dc} to infer
	\begin{align}\nonumber
		Q_{\alpha} =\; &2 \epsilon \mathrm{D}_*^{\alpha} W_2 \mathrm{D}_*^{\alpha}(\mathring{h}_2w_2-\mathring{b}_2 \psi )
		+2\mathrm{D}_*^{\alpha}w_2 \mathrm{D}_*^{\alpha}(-\epsilon \mathring{h}_2 W_2+\mathring{b}_5 \psi) \\[0.5mm]
		=\; &  \mathrm{D}_*^{\alpha} (W_2, w_2)[\mathrm{D}_*^{\alpha}, \, \mathring{\rm c}_0](W_2, w_2) +\mathrm{D}_*^{\alpha} (W_2, w_2)\mathrm{D}_*^{\alpha} ( \psi\mathring{\rm c}_1).
		\label{Q.alpha:id}
	\end{align}
Then similar to the estimate \eqref{R3:es}, we can deduce
	\begin{align} \nonumber
		\int_{\Sigma_t}{Q}_{\alpha}
		 \lesssim  \|\mathrm{D}_*^{\alpha}(W_2,w_2)\|_{H^{-1/2}(\Sigma_t)}^2 + \|([\mathrm{D}_*^{\alpha},\, \mathring{\rm c}_0](W_2,w_2),\, \mathrm{D}_*^{\alpha} (\psi \mathring{\rm c}_1 )) \|_{H^{1/2}(\Sigma_t)}^2   \lesssim_K  \mathcal{M}(t). \nonumber 
	\end{align}
Plugging the last estimate and \eqref{R1:es} into \eqref{es1}, using
\eqref{h.inv3} and \eqref{d1w}, and applying the induction argument  lead to
\begin{align}\label{es:H1}
	\sum_{\langle \alpha\rangle \leq m}	\| \mathrm{D}_*^{\alpha}W(t)\|_{L^2(\Omega)}^2
	+	\sum_{|\alpha| \leq m}	\|  \mathrm{D}^{\alpha}w(t)\|_{ L^2(\Omega)}^2
	\lesssim_K  \mathcal{M}(t).
\end{align}

In view of the constraint \eqref{bas10} (or equivalently, \eqref{noncol2}), we  utilize \eqref{H.cons3}--\eqref{h.cons3} and \eqref{ELP4c} to get
\begin{align}
	(\p_t\psi,\, \p_2\psi)
	=\mathring{\rm c}_0 (W_2,\, W_4,\, w_1)
	+\mathring{\rm c}_1 \psi
	\qquad \textrm{on }\Sigma_T.
\end{align}
Then we can obtain as in \S\S \ref{sec.psi}--\ref{sec.Thm2.2} that the estimates \eqref{es:psi4.3D} and \eqref{es:pr} also hold for the 2D case, which implies the tame estimate \eqref{tame.es}.
Applying the arguments in \cite{CP82MR0678605, S95MMASMR1346663} and using the estimate \eqref{es:pr},
one can establish the existence and uniqueness of solutions $(W,w,\psi )$ in $H_*^m(\Omega_{T})\times H^m(\Omega_{T})\times H^{m+1/2}(\Sigma_{T})$ to the problem \eqref{ELP4} for any integer $m\geq 6$ and sufficiently small $T>0$.
Then we can use Lemma \ref{lem:homo}
and \eqref{es:pr} to obtain the estimate \eqref{tame.es} and complete the proof of Theorem \ref{thm:main2}.

\subsection{Approximate solutions}	
To apply Theorem \ref{thm:main2}, which is valid for functions vanishing in the past,
we reduce the nonlinear problem \eqref{NP1} to the case with zero initial data.
For this purpose, we introduce the so-called approximate solutions to absorb the initial data into the interior equations.
The compatibility conditions on the initial data introduced in Definition \ref{def:1} are necessary in the following construction of the approximate solutions.

\begin{lemma}\label{lem:app}
	Assume that all the conditions in Lemma \ref{lem:CA1} are satisfied.
	Assume further that the initial data $(U_0, u_0, \varphi_0)$ are compatible up to order $m$ and satisfy the constraints \eqref{H.h.inv2}--\eqref{H.h.cons2} and \eqref{stability2D}.
	Then
	there exist functions $(U^a,u^a)$ and $\varphi^a$
	defined respectively on $Q:=\mathbb{R}\times \Omega$
	and $\omega:=\mathbb{R}\times \Sigma$, which satisfy the constraints
	\eqref{bas2}--\eqref{bas6},
	and
	\begin{alignat}{3}	
		&\p_t^{j}\mathbb{L}_+(U^{a},\Phi^{a+})\big|_{t=0}=0
		\quad&&\textrm{for } j=0,1,\ldots,m-1,		\label{app1a}\\
		&\p_t^{j}\mathbb{L}_-(u^{a}, \nu^{a},\Phi^{a-})\big|_{t=0}=0
		\quad&&\textrm{for } j=0,1,\ldots,m-1,		\label{app1b}\\		
		& \mathbb{B}(U^a,u^a,\varphi^a)=0\quad &&\textrm{on }\omega ,
		\label{app1d}	\\
		& (U^{a},u^{a}, \varphi^{a})=(U_0, u_0, \varphi_0)\qquad &&\textrm{if } t<0,
		\label{app1e}
	\end{alignat}
	where $\Phi^{a\pm}(t,x):=\pm x_1 +\chi(x_1)\varphi^{a}(t,x_2)$
	and $\nu^a(t,x):=\chi(x_1)v^a(t,0, x_2)$ for  $x\in\Omega$.
	Moreover,
	\begin{align}
		&	\label{app2b}
		\big|{H}^{a}\big|+\big| {h}^{a}\big|\geq \frac{3\delta_0}{4}>0
		\quad \textrm{on }\omega,\qquad
		\|\varphi^a\|_{L^{\infty}(\omega)}\leq 2,\\[0.5mm]			
		&	\|(U^{a}-\widebar{U}, u^{a}-\bar{u})\|_{H^{m+1}(Q)} +\|\varphi^a\|_{H^{m+5/2}(\omega)}  \leq C (M_0),
		\label{app2a}
	\end{align}
	for some positive constant $C(M_0)$ depending on $M_0$ $($cf.~\eqref{M0:def}$)$.
\end{lemma}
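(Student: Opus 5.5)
We will follow the classical construction of approximate solutions (as in \cite{CS08MR2423311,TW21MR4201624,MSTTY24MR4754746}). By Lemma \ref{lem:CA1}, the traces $U_{(j)},u_{(j)}\in H^{m+3/2-j}(\Omega)$ and $\varphi_{(j)}\in H^{m+2-j}(\mathbb{R})$, $j=0,\dots,m$, are already determined and bounded by $C(M_0)$. We first build $\varphi^a\in H^{m+5/2}(\omega)$ and $(\hat U^a,\hat u^a)$ with $\hat U^a-\widebar U,\hat u^a-\bar u\in H^{m+2}(Q)$, using a lifting theorem, so that $\p_t^j\varphi^a|_{t=0}=\varphi_{(j)}$ and $\p_t^j(\hat U^a,\hat u^a)|_{t=0}=(U_{(j)},u_{(j)})$ for $0\le j\le m$. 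By the inductive definition of the traces, \eqref{app1a}--\eqref{app1b} then hold for $j\le m-1$, and the first condition of $\mathbb{B}$ holds to order $m$ at $t=0$ by \eqref{trace.id1}. Multiplying by a cut-off in time and using continuity of the basic quantities, we obtain \eqref{bas2}, the first estimate of \eqref{app2b} (from \eqref{stability2D} and continuity, replacing $\delta_0$ by $3\delta_0/4$ on a short time interval, with a cut-off ensuring the functions equal their initial values for $t<0$ as required by \eqref{app1e}), and $\|\varphi^a\|_{L^\infty}\le 2$.

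The key step is to enforce $\mathbb{B}(U^a,u^a,\varphi^a)=0$ \emph{exactly} on $\omega$, not only to order $m$ at $t=0$. We modify $\hat U^a,\hat u^a$ by boundary correctors. The first component of \eqref{Bbb2D:def} is made exact by redefining $v_1^a$ by adding $\chi(x_1)\mathcal{R}\big(\p_t\varphi^a-v^a_1+v^a_2\p_2\varphi^a\big)$. We then set $e^a$ on $\Sigma$ equal to $-\epsilon\p_t\varphi^a h^a_2$, and $q^a$ on $\Sigma$ equal to $\frac12(|h^a|^2-(e^a)^2)$, again via lifted corrections. The compatibility conditions \eqref{comp1}--\eqref{comp2} guarantee that the boundary defects vanish to order $m-1$ (in time) at $t=0$, so the correctors can be chosen with vanishing time derivatives up to order $m-1$ at $t=0$. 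This preserves \eqref{app1a}--\eqref{app1b}, which involve only traces up to order $m-1$ of the equations, hence traces of the unknowns up to order $m$ — a small bookkeeping point to check. The regularity loss of one half-derivative from the trace explains why the final bound \eqref{app2a} is stated in $H^{m+1}(Q)$.

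The remaining constraints \eqref{bas3}--\eqref{bas6} concern only the part of \eqref{bas4} relevant here, namely $\mathbb{H}(H^a,v^a,\Phi^{a+})=0$ and $\mathbb{L}_-(u^a,\nu^a,\Phi^{a-})=0$ holding on all of $Q$, together with \eqref{H.h.cons2} and the divergence conditions. We would therefore not take $H^a,u^a$ from the lifting. Instead, with $v^a,\varphi^a$ fixed, we define $H^a$ as the solution of the linear transport-type equation $\mathbb{H}(H^a,v^a,\Phi^{a+})=0$ with data $H_0$. No boundary condition is needed since $\p_t\varphi^a=v^a\cdot N^a$ on $\Sigma$. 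We define $u^a$ as the solution of the linear Maxwell problem $L_-(\nu^a,\Phi^{a-})u^a=0$ with data $u_0$ and boundary condition $e^a=-\epsilon\p_t\varphi^a h^a_2$, solvable as in Appendix \ref{app:D}. The traces of these solutions at $t=0$ coincide with $H_{(j)},u_{(j)}$, and by Proposition \ref{Pro:invol}-type arguments (Appendix \ref{app:C}) the constraints \eqref{H.h.inv2}--\eqref{H.h.cons2} propagate from the initial data. Finally $q^a$ is corrected so that the second condition of $\mathbb{B}$ holds, which affects no constraint.

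The main obstacle is this last step. We must produce solutions of the vacuum problem and the $H$-equation with regularity $H^{m+1}$ together with the correct traces, and check that the order in which corrections are made (first $v^a$, then $H^a,u^a$, then $q^a$) keeps all of \eqref{app1a}--\eqref{app1e} simultaneously valid. The estimate \eqref{app2a} then follows from the linear estimates, the lifting bounds, and Lemma \ref{lem:CA1}, with constants depending only on $M_0$.
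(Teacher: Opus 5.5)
Your final construction is the paper's proof: lift $\varphi^a$, $v_2^a$ and $S^a$ with the prescribed time traces, and adjust $v_1^a$ so that the kinematic condition holds exactly on $\omega$. Then obtain $H^a$ from the transport equation (no boundary condition needed), obtain $u^a$ from the Maxwell boundary problem with $e^a+\epsilon\p_t\varphi^a h_2^a=0$ (using \eqref{comp2}), lift $q^a$ using \eqref{comp1}, propagate \eqref{H.h.inv2}--\eqref{H.h.cons2} as in Appendix~\ref{app:C}, and cut off. The one thing to fix is that your correctors must leave the time traces at $t=0$ unchanged up to order $m$, not $m-1$, since $\p_t^{m-1}\mathbb{L}_{\pm}|_{t=0}$ involves $\p_t^m$ of the unknowns; compatibility up to order $m$ supplies exactly this.
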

\begin{proof}
	The proof is divided into five steps.
	
	\vspace*{2mm}
	\noindent {\it Step 1}.\quad
	Since all the conditions in Lemma \ref{lem:CA1} hold, we can
	take $\varphi^a\in H^{m+5/2}(\omega)$
	and $(v_2^{a}-\bar{v}_2, {S}^{a}-\widebar{S})\in H^{m+2}(Q)$
	to satisfy	
	\begin{alignat*}{3}
		\p_t^{j}(\varphi^a, \, v_2^{a},\,  {S}^{a})\big|_{t=0}
		=(\varphi_{(j)},\, v_{2(j)},\,  S_{(j)})
		\quad \textrm{for } j=0,1,\ldots,m.	
	\end{alignat*}
	The trace theorem implies
	$ w^a:=\p_t\varphi^a +  \p_2\varphi^a v_2^{a} |_{\omega} \in  H^{m+3/2}(\omega). $
	In view of \eqref{trace.id1},
	we can choose $v_1^{a}$ with trace
	$v_1^{a}|_{\omega} =w^a$, such that
	$ v_1^{a}-\bar{v}_1\in H^{m+2}(Q)$ and
	$$
	\p_t^{j}v_1^{a}\big|_{t=0}=v_{1(j)}\qquad \textrm{in } \Omega \quad
	\textrm{for } j=0,1, \ldots,m.$$
	Then the first condition in \eqref{app1d} follows  directly.
	
	\vspace*{2mm}
	\noindent {\it Step 2}.\quad	
	Set $\Psi^{a}:=\chi( x_1)\varphi^{a} $
	and $\Phi^{a\pm }(t,x):= \pm x_1 +\Psi^{a }(t,x) $.
	Then $\Psi^a \in  H^{m+5/2}(Q)$.
	Take ${H}^{a}$ as the unique solution of \eqref{Hbb:def} (with $( {v}, {\Phi}^+)$ replaced by $(v^a,\Phi^{a+})$)
	supplemented with the initial data ${H}^{a}|_{t=0}={H}_{0}.$
	Then ${H}^{a}-\widebar{H}\in H^{{m+1}}(Q)$.
	
	\vspace*{2mm}
	\noindent {\it Step 3}.\quad	
	Define
	$\nu^a(t,x):=\chi(x_1)v^a(t,0, x_2)$.
	Then $v^{a}|_{\omega}-\bar{v}\in H^{m+3/2}(\omega)$ implies
	$\nu^{a}-\bar{v}\in H^{m+3/2}(Q)$.
	{Applying a similar argument as in Appendix \ref{app:D} and using the compatibility conditions \eqref{comp2},
		we can take $u^a=(h_1^a, h_2^a, e^a)$ to be the unique solution of \eqref{NP1b}
		(with $( {\nu}, {\Phi}^-)$ replaced by $(\nu^a,\Phi^{a-})$)
		 supplemented with the boundary and initial conditions:}
	\begin{align*}
		e^a+\epsilon \p_t \varphi^a  h_2^a=0\quad \textrm{on }\omega,\qquad {u}^{a}={u}_{0}\quad \textrm{if }t=0.
	\end{align*}
	Then $u^a-\bar{u}\in H^{m+1}(Q)$
	and $\p_t^{j}u^{a}|_{t=0}=u_{(j)}$ for $j=0,1, \ldots,m$.

	\vspace*{2mm}
	\noindent {\it Step 4}.\quad		
	By virtue of the compatibility conditions \eqref{comp1},
	we can apply the lifting result in \cite[Theorem 2.3]{LM72MR0350178}
	to find ${q}^{a}$ satisfying ${q}^{a}-\bar{q}\in H^{m+2}(Q)$,
	the second condition in \eqref{app1d}, and
	$\p^{j} {q}^{a}  |_{t=0}= {q}_{(j)}$ for $j=0,1, \ldots,m$.

	\vspace*{2mm}
	\noindent {\it Step 5}.\quad
Note that $H^a$ and $u^a$ are the solutions of the equations \eqref{bas4} (with $(\mathring{v},\mathring{\nu},\mathring{\Phi}^{\pm})$ replaced by $(v^a,\nu^a,\Phi^{a\pm})$).
As in Appendix \ref{app:C}, we can show that $(H^a, h^a, \Phi^{a\pm})$ satisfies
	\eqref{H.h.inv2}--\eqref{H.h.cons2} for all $t\in\mathbb{R}$, since they hold initially.
	Using Lemma \ref{lem:CA1} and the continuity of the lifting operators, we find that
	$(U^a,u^a,\varphi^a)$ constructed above satisfies \eqref{bas3}--\eqref{bas6}, \eqref{app1a}--\eqref{app1e}, and \eqref{app2a}.
	By a cut-off argument in the above procedure, we can choose  $U^a$, $u^a$,  and $\varphi^a$  to satisfy \eqref{bas2} and \eqref{app2b}. The proof is complete.
\end{proof}

The vector function $(U^a,u^a,\varphi^a)$ constructed in Lemma \ref{lem:app} is called the {\it approximate solution} to the nonlinear problem \eqref{NP1}. Let us define
\begin{align}\nonumber 
	f^{a}:=
	\bigg\{\begin{aligned}
		& -\mathbb{L}_+(U^{a},\Phi^{a+}) \ \  &\textrm{if }t>0,\\
		& \; 0 \ \  &\textrm{if }t<0.
	\end{aligned}
\end{align}
Then it follows from \eqref{app2a} and \eqref{app1a} that
$	f^{a}\in H^{m}(\Omega_T)$ and
\begin{align}\label{f^a:est}
	\|f^{a}\|_{ H^{m}(\Omega_T)}\leq\delta_0\left(T\right),
\end{align}
where $\delta_0(T)\to 0$ as $T\to 0$.
Since $(U^a,u^a,\varphi^a)$ satisfies \eqref{app1a}--\eqref{app1e} and \eqref{bas4}, we find that
$(\widehat{U},\hat{u},\hat\varphi)$ is a solution of  the problem \eqref{NP1} on the time interval $[0,T]$,
provided that $(U,u,\psi):=(\widehat{U},\hat{u},\hat\varphi)-(U^{a},u^{a},\varphi^a)$ solves
\begin{align} \label{P.new}
	\left\{
	\begin{aligned}
		&\mathcal{L}_+(U,\Psi):=\mathbb{L}_+(U^a+U,\Phi^{a+}+\Psi)-\mathbb{L}_+(U^a,\Phi^{a+})=f^a &\quad &\textrm{in }\Omega_{T},\\[0.3mm]
		&\mathcal{L}_-(u,\nu,\Psi):=\mathbb{L}_-(u^a+u,\nu^a+\nu, \Phi^{a-}+\Psi)=0 &&\textrm{in }\Omega_{T},\\[0.3mm]
		&\mathcal{B}(U,u,\psi):=\mathbb{B}(U^a+U,u^a+u,\varphi^a+\psi)=0&&\textrm{on }\Sigma_T,\\[0.3mm]
		&(U,u,\psi)=0\ &&\textrm{if }t< 0,
	\end{aligned}\right.
\end{align}
for $\Psi(t,x):=\chi(x_1)\psi(t,x_2)$
and $\nu(t,x):=\chi(x_1)v(t,0, x_2)$.
From Lemma \ref{lem:app}, we infer that $(U,u,\psi)\equiv 0$ satisfies \eqref{P.new} for $t<0$.
Therefore, the original problem on the time interval $[0,T]$ has been reduced to the problem on $(-\infty, T]$ with solutions vanishing in the past.

\subsection{Nash--Moser iteration}
To describe the iteration scheme,
we state the following properties on smoothing operators from
\cite{A89MR976971,CS08MR2423311,T09ARMAMR2481071}.
Let $\mathscr{F}_*^k(\Omega_{T})$ (resp.,~$\mathscr{F}^s(\Omega_{T})$, $\mathscr{F}^s(\Sigma_T)$) denote the class of $H_*^{k}(\Omega_{T})$ (resp.,~$H^{s}(\Omega_{T})$, $H^{s}(\Sigma_T)$) vanishing in the past
{with $k\in\mathbb{N}$ and $s\geq 0$ (not necessarily an integer)}.
\begin{proposition} \label{pro:smooth}
	Let $T>0$.
	Then there exists a family of smoothing operators $\{\mathcal{S}_{\theta}\}_{\theta\geq 1}$  from $  \mathscr{F}_*^0(\Omega_{T}) $ to $ \bigcap_{k\geq 0}\mathscr{F}_*^k(\Omega_{T})$, such that
	\begin{subequations}\label{smooth.p1}
		\begin{alignat}{2}
			&  \|\mathcal{S}_{\theta} u\|_{H_*^{k}(\Omega_{T})}\lesssim_m \theta^{(k-j)_+}\|u\|_{H_*^{j}(\Omega_{T})}
			&& \textrm{for  \;} k,j\geq 0,    \label{smooth.p1a}  \\[1.5mm]
			&  \|\mathcal{S}_{\theta} u-u\|_{H_*^{k}(\Omega_{T})}\lesssim_m \theta^{k-j}\|u\|_{H_*^{j}(\Omega_{T})}
			&& \textrm{for  \;}0\leq k\leq j \leq m,   \label{smooth.p1b} \\
			&  \left\|\frac{\d}{\d \theta}\mathcal{S}_{\theta} u\right\|_{H_*^{k}(\Omega_{T})}
			\lesssim_m \theta^{k-j-1}\|u\|_{H_*^{j}(\Omega_{T})}
			&\qquad&\textrm{for \;}  k,j\geq 0,    \label{smooth.p1c}
		\end{alignat}
	\end{subequations}
	where $(k-j)_+:=\max\{0,\, k-j \}$.
	Moreover, there are two families of smoothing operators (still denoted by $\mathcal{S}_{\theta}$) acting respectively on $\mathscr{F}^6(\Omega_{T})$ and $\mathscr{F}^6(\Sigma_T)$ that satisfy  the properties  \eqref{smooth.p1} with corresponding norms $\|\cdot\|_{H^{s}(\Omega_{T})}$ and   $\|\cdot\|_{H^{s}(\Sigma_T)}$ with $s\geq 0$ (not necessarily an integer).
\end{proposition}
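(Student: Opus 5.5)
The plan is to build $\mathcal{S}_\theta$ in the standard way: extension to the whole space, mollification, and restriction. The only point needing care is the anisotropic space $H_*^k(\Omega_T)$ together with the requirement that functions vanish in the past.

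\textbf{Step 1: reduction to a problem without time restriction.} For $u\in\mathscr{F}_*^0(\Omega_T)$, i.e.\ $u=0$ for $t<0$, extend $u$ from $(-\infty,T)$ to $t\in\mathbb{R}$ by a reflection across $t=T$ of sufficiently high order. This extension is bounded on $H_*^k$ for all $k\le m$, and the Hestenes-type reflection commutes with the tangential structure, since it acts only in $t$. Then cut off in time so that the support stays in $t>-1$.

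\textbf{Step 2: handling the weighted normal direction.} Use the change of variables $x_1=e^{y}$ near the boundary. Following \cite{A89MR976971,T09ARMAMR2481071}, this turns $\sigma\partial_1$ into (essentially) $\partial_y$, so that $H_*^k$ near $\Sigma$ becomes a space in which two tangential derivatives count as one $\partial_1$. Alternatively, one may follow the construction of \cite{CS08MR2423311}, which defines $\mathcal{S}_\theta$ for anisotropic spaces via Fourier truncation $|\xi'|+|\tau|\lesssim\theta$ in the tangential variables combined with $\theta^2$ in the normal frequency. I will adopt this anisotropic dyadic truncation: for the $\partial_1$ part, dilate by $\theta^{-2}$, and for $(t,\sigma\partial_1,x')$, dilate by $\theta^{-1}$. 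I will also use the extension operator of \cite{T09ARMAMR2481071} across $x_1=0$, which is bounded in $H_*^k$.

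\textbf{Step 3: causality.} To keep $\mathcal{S}_\theta u$ vanishing in the past, the time mollifier must have support in $t>0$, i.e.\ use $\rho_\theta(t)=\theta\rho(\theta t)$ with $\operatorname{supp}\rho\subset(0,1)$. Convolution then preserves vanishing for $t<0$. The moment conditions $\int t^j\rho=\delta_{j0}$ for $j<m$ give \eqref{smooth.p1b} up to order $m$; this is the source of the $\lesssim_m$ dependence.

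\textbf{Step 4: the three estimates.} Restricting to $\Omega_T$, the properties \eqref{smooth.p1a}--\eqref{smooth.p1c} follow from the standard Littlewood--Paley/Taylor arguments:
\begin{itemize}
\item \eqref{smooth.p1a} from $\|\partial^k\rho_\theta\|_{L^1}\lesssim\theta^k$;
\item \eqref{smooth.p1b} from the vanishing moments;
\item \eqref{smooth.p1c} from $\frac{d}{d\theta}\rho_\theta$ having all moments zero.
\end{itemize}
The counting $\langle\alpha\rangle$ works because the normal scaling is $\theta^2$. The same construction, using isotropic scaling, gives the operators on $\mathscr{F}^s(\Omega_T)$ and $\mathscr{F}^s(\Sigma_T)$; noninteger $s$ follows by interpolation.

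The main obstacle is Step 2: making the extension and mollification in $x_1$ compatible with the weight $\sigma$, which has a degenerate boundary behaviour, while keeping uniform $H_*^k$ bounds. Here I would rely on the detailed construction in \cite[Proposition 4]{T09ARMAMR2481071} and \cite{CS08MR2423311} and simply cite it.
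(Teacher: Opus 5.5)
The paper does not prove this proposition; it quotes it from \cite{A89MR976971,CS08MR2423311,T09ARMAMR2481071}. So your final decision to cite the literature is the same as the paper's, and your causality device is sound. A time kernel supported in $(0,1)$ with vanishing moments of order $<m$ only samples $u$ at earlier times, so it preserves vanishing in the past; it even makes the reflection across $t=T$ unnecessary.

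The gap is in Step 2. The normal direction is the only genuinely anisotropic point, and the construction you say you will adopt there does not work. A convolution or Fourier truncation in $x_1$ has a single scale $\delta$. ``Dilating by $\theta^{-2}$ for $\partial_1$ and by $\theta^{-1}$ for $\sigma\partial_1$'' does not define an operator, since both act on the same variable. For any position-independent $\delta$ the requirements contradict each other:
\begin{itemize}
\item Test \eqref{smooth.p1a} with $k=1$, $j=0$ on $u=\eta(t,x')\chi_0(x_1)\cos(cx_1/\delta)$, with $\eta$ vanishing for $t<0$, $\chi_0$ supported in $(\tfrac12,1)$ (where $\sigma\sim1$), and $c$ small. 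The smoothing essentially preserves $u$, so $\|\sigma\partial_1\mathcal{S}_\theta u\|_{L^2}\approx\delta^{-1}\|u\|_{L^2}$. This forces $\delta\gtrsim\theta^{-1}$.
\item Test \eqref{smooth.p1b} with $k=0$, $j=2$ on $u=\eta(t,x')\phi(Nx_1)$, with $\phi$ a bump supported in $(1,2)$. Since $H_*^2$ contains $\partial_1u$ but not $\partial_1^2u$, and $(x_1\partial_1)^iu$ is scale invariant, $\|u\|_{H_*^2}\approx N\|u\|_{L^2}$. For $N\delta\gg1$ the smoothing flattens the bump, $\|\mathcal{S}_\theta u\|_{L^2}\lesssim(N\delta)^{-1/2}\|u\|_{L^2}$. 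Taking $\delta^{-1}\ll N\ll\theta^2$ then violates \eqref{smooth.p1b}, which forces $\delta\lesssim\theta^{-2}$.
\end{itemize}
These two constraints are incompatible for large $\theta$.

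Your other option, $x_1=e^y$ with smoothing in $y$ at scale $\theta^{-1}$, fails the other way. It controls only conormal derivatives, because $\partial_1=e^{-y}\partial_y$ stays singular at the boundary. For $u=\eta(t,x')\phi(x_1/r)$ the $y$-smoothing leaves $u$ essentially unchanged, uniformly in $r$. Hence $\|\partial_1\mathcal{S}_\theta u\|_{L^2}\approx r^{-1}\|u\|_{L^2}$, which contradicts \eqref{smooth.p1a} with $k=2$, $j=0$ as $r\to0$.

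The missing idea is that the normal smoothing scale must depend on the distance to $\Sigma$, roughly $\max\{\theta^{-2},\sigma(x_1)/\theta\}$. That means isotropic scale $\theta^{-1}$ in the interior, where $H_*^k$ is locally $H^k$, and scale $\theta^{-2}$ in a boundary layer of width $\theta^{-1}$. Composing the two mechanisms you list as alternatives has this effective scale: an ordinary $x_1$-smoothing at scale $\theta^{-2}$ together with smoothing at scale $\theta^{-1}$ along the flow of $\sigma\partial_1$ (i.e.\ in $\log x_1$ near $\Sigma$). But \eqref{smooth.p1a}--\eqref{smooth.p1c} would still have to be verified for it: commutators with $\partial_1$ and $\sigma\partial_1$, the $\theta$-derivative, and the extension across $x_1=0$.

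Without that verification, your argument is a citation, exactly as in the paper, and the Step 2 sketch should not be presented as its mechanism. Note also that \cite{CS08MR2423311} is set in standard Sobolev spaces and does not supply the anisotropic truncation you attribute to it.
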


Next we follow \cite{CS08MR2423311,T09ARMAMR2481071,TW21MR4201624, 
	TW22aMR4444136,TW22cMR4506578} to describe the iteration scheme for the reformulated problem \eqref{P.new}.

\vspace{2mm}
\noindent{\bf Assumption (A-1)}. {\it  Set $ (U_0,u_0, \psi_0)=0$. Let $(U_k, u_k,\psi_k)$ be given and vanish in the past for $k=1,2,\ldots, n$. Set $\Psi_k(t,x):=\chi( x_1)\psi_k(t,x_2)$
and $\nu_k(t,x):=\chi( x_1)v_k(t,0,x_2)$.}

\vspace*{1mm}

Consider
\begin{align}\label{NM0}
	U_{{n}+1}=U_{{n}}+\delta U_{{n}},
	\quad u_{n+1}=u_n+\delta u_{n},
	\quad \psi_{{n}+1}=\psi_{{n}}+\delta \psi_{{n}},
\end{align}
where 
$\delta U_{{n}},$ $\delta u_{{n}},$ and $\delta \psi_{{n}}$ are specified by the effective linear problem
\begin{align}
	\left\{\begin{aligned}
		&\mathbb{L}'_{e+}(U^a+U_{n+1/2},\Phi^{a+}+\Psi_{n+1/2})\delta \dot{U}_{{n}}=f_{{n}}^+
		&\ &\textrm{in } \Omega_T, \\
		&L_-(\nu^a+\nu_{n+1/2}, \Phi^{a-}+\Psi_{n+1/2})\delta \dot{u}_{{n}}=f_{{n}}^-
		&&\textrm{in } \Omega_T, \\
		& \mathbb{B}'_{n+1/2}(\delta \dot{U}_{{n}}, \delta\dot{u}_{{n}},\delta\psi_n)=g_n
		&&\textrm{on } \Sigma_T ,\\
		&(\delta \dot{U}_{{n}},\delta \dot{u}_{{n}},\delta\psi_{{n}}) =0 &&
		\textrm{if } t<0,
	\end{aligned} \right.
	\label{effective.NM}
\end{align}
with
\begin{align}
	\label{B.bb':2}
	&\mathbb{B}'_{n+1/2}:=\mathbb{B}'_{e}(U^a+U_{n+1/2},u^a+u_{n+1/2}, \varphi^a+\psi_{n+1/2}).
\end{align}
The operators $\mathbb{L}'_{e+}$, $\mathbb{B}'_{e}$, and $L_-$ are given in \eqref{ELP1a}, \eqref{B'e:def}, and \eqref{L-:def}, respectively.
We extend $\delta \nu_n$ (resp., $\nu_{n+1/2}$) to $\Omega_{T}$ from the trace $\delta v_n|_{x_1=0}$ (resp., $v_{n+1/2}|_{x_1=0}$) as in \eqref{nu:def1}.
Here $(U_{n+1/2},u_{n+1/2}, \psi_{n+1/2})$ is a smooth modified state such that $(U^a+U_{n+1/2},u^a+u_{n+1/2},\varphi^a+\psi_{n+1/2})$ satisfies the constraints \eqref{bas1}--\eqref{bas6},
and $\delta \dot{U}_{{n}},$ $\delta \dot{u}_{{n}}$ are the good unknowns 
\begin{align}
	\label{good.NM}
	\left\{
	\begin{aligned}
		&\delta \dot{U}_{{n}}:=\delta U_{{n}}-\frac{\delta\Psi_{{n}}}{\p_1 (\Phi^{a+}+\Psi_{n+1/2})}\p_1 (U^a+U_{n+1/2}), \\[0.3mm]
		&
		\delta \dot{u}_{{n}}:=\delta u_{{n}}-\frac{\delta\Psi_{{n}}}{\p_1 (\Phi^{a-}+\Psi_{n+1/2})}\p_1 (u^a+u_{n+1/2}),
	\end{aligned}
	\right.
\end{align}
for $\delta\Psi_{{n}}:=\chi(x_1)\delta \psi_{{n}}$ and $\Psi_{n+1/2}:=\chi(x_1)\psi_{n+1/2}$.
The construction of the modified state will be provided in Proposition \ref{pro:modified}.
The source terms $f_n^{\pm}$ and $g_n$ will be determined through the accumulated error terms later on.

\vspace{2mm}
\noindent{\bf Assumption (A-2)}. {\it Set $f_0^{+}:=\mathcal{S}_{\theta_0}f^a$ and $(e_0^{\pm},\tilde{e}_0 ,f_0^-,g_0):=0$ for $\theta_0\geq 1$ sufficiently large.
	Let $(e_k^{\pm},\tilde{e}_k ,f_k^{\pm},g_k)$ be given and vanish in the past for $k=1,2,\ldots,{n}-1$.}

\vspace{1mm}

Under Assumptions (A-1) and (A-2), the accumulated error terms at Step ${n}$ are defined by
\begin{align}  \label{E.E.tilde}
	E_{{n}}^{\pm}:=\sum_{k=0}^{{n}-1}e_{k}^{\pm},\quad \widetilde{E}_{{n}} :=\sum_{k=0}^{{n}-1}\tilde{e}_{k} ,
\end{align}
and the source terms $f_n^{\pm}$ and $g_n$ are chosen from
\begin{align} \label{source}
	\sum_{k=0}^{{n}} f_k^+ +\mathcal{S}_{\theta_{{n}}}E_{{n}}^+=\mathcal{S}_{\theta_{{n}}}f^a,\quad
	\sum_{k=0}^{{n}}f_k^{-}+\mathcal{S}_{\theta_{{n}}}{E}_{{n}}^{-}=0,\quad
	\sum_{k=0}^{{n}}g_k +\mathcal{S}_{\theta_{{n}}}\widetilde{E}_{{n}} =0,
\end{align}
where  $\mathcal{S}_{\theta_{{n}}}$
are the smoothing operators given in Proposition \ref{pro:smooth} with
$\theta_{{n}}:=\sqrt{\theta^2_0+{n}}.$
Once $f_n^{\pm}$ and $g_n $ are specified,
we can get
$(\delta \dot{U}_{{n}},\delta\dot{u}_n, \delta \psi_{{n}})$ as the solution of
the problem \eqref{effective.NM} by using Theorem \ref{thm:main2}.
Then we obtain $(\delta U_{{n}}, \delta u_{{n}})$ and $(U_{n+1}, u_{n+1},\psi_{{n}+1})$ from \eqref{good.NM} and \eqref{NM0}.

To complete the description of the iteration scheme,
we define the error terms at Step $n$ through the decompositions
\begin{align}
	\nonumber&\mathcal{L}_+(U_{{n}+1}, \Psi_{{n}+1})-\mathcal{L}_+(U_{{n}}, \Psi_{{n}})\\
	\nonumber	&{}= \mathbb{L}_+'(U^a+U_{{n}}, \Phi^{a+}+ \Psi_{{n}})(\delta U_{{n}},\delta \Psi_{{n}})+e_{{n}+}'\\
	\nonumber&{} = \mathbb{L}_+'(U^a+\mathcal{S}_{\theta_{{n}}}U_{{n}}, \Phi^{a+}+\mathcal{S}_{\theta_{{n}}} \Psi_{{n}})(\delta U_{{n}},\delta \Psi_{{n}})+e_{{n}+}'+e_{{n}+}''\\
	\nonumber&{} = \mathbb{L}_+'(U^a+U_{n+1/2}, \Phi^{a+}+ \Psi_{n+1/2})(\delta U_{{n}},\delta \Psi_{{n}})+e_{{n}+}'+e_{{n}+}''+e_{{n}+}'''\\
	\label{deco1}
	&{} = \mathbb{L}_{e+}'(U^a+U_{n+1/2}, \Phi^{a+}+ \Psi_{n+1/2})\delta \dot{U}_{{n}}+e_{{n}+}'+e_{{n}+}''+e_{{n}+}'''+e_{{n}+}^{*},
	\\[1mm]
	\nonumber&
	\mathcal{L}_-(u_{{n}+1}, \nu_{{n}+1}, \Psi_{{n}+1})-\mathcal{L}_-(u_{{n}},\nu_{{n}},  \Psi_{{n}})\\
	\nonumber	&{}	
	= \mathbb{L}_-'(u^a+u_{{n}}, \nu^a+\nu_{{n}},  \Phi^{a-}+ \Psi_{{n}})(\delta u_{{n}},\delta \nu_{{n}}, \delta \Psi_{{n}})+e_{{n}-}'\\
	\nonumber&{} = \mathbb{L}_-'(u^a+\mathcal{S}_{\theta_{{n}}}u_{{n}},\nu^a+\mathcal{S}_{\theta_{{n}}}\nu_{{n}}, \Phi^{a-}+\mathcal{S}_{\theta_{{n}}} \Psi_{{n}})(\delta u_{{n}},\delta \nu_{{n}},\delta \Psi_{{n}})+e_{{n}-}'+e_{{n}-}''\\
	\nonumber&{} = \mathbb{L}_-'(u^a+u_{n+1/2},\nu^a+\nu_{n+1/2}, \Phi^{a-}+ \Psi_{n+1/2})(\delta u_{{n}},\delta \nu_{{n}},\delta \Psi_{{n}})+e_{{n}-}'+e_{{n}-}''+e_{{n}-}'''\\
	\label{deco2}&{} = {L}_{-}(\nu^a+\nu_{n+1/2}, \Phi^{a-}+ \Psi_{n+1/2})\delta \dot{u}_{{n}}+e_{{n}-}'+e_{{n}-}''+e_{{n}-}'''+e_{{n}-}^{*},
\end{align}
and
\begin{align}
	\nonumber&\mathcal{B}(U_{{n}+1},u_{n+1},\psi_{{n}+1})-\mathcal{B}(U_{{n}},u_n, \psi_{{n}})  \\
	\nonumber&{}
	= \mathbb{B}'(U^a+U_{{n}},u^a+u_n,\varphi^a+\psi_{{n}})(\delta U_{{n}},\delta u_n, \delta\psi_{{n}})+\tilde{e}_{{n} }'\\
	\nonumber&{} = \mathbb{B}'(U^a+\mathcal{S}_{\theta_{{n}}}U_{{n}},u^a+\mathcal{S}_{\theta_{{n}}}u_{{n}},\varphi^a
	+\mathcal{S}_{\theta_{{n}}}\psi_{{n}})(\delta U_{{n}},\delta u_n, \delta\psi_{{n}})+\tilde{e}_{{n} }'+\tilde{e}_{{n} }''\\
	\label{deco3}&{} =\mathbb{B}'_{n+1/2}(\delta \dot{U}_{{n}} ,\delta\dot{u}_n,\delta\psi_{{n}})+\tilde{e}_{{n} }'+\tilde{e}_{{n} }''+\tilde{e}_{{n} }''',
\end{align}
where the operators $\mathcal{L}_{\pm}$, $\mathcal{B}$,
$\mathbb{L}_{\pm}'$,
$\mathbb{B}'$, and $\mathbb{B}'_{n+1/2}$ are given in
\eqref{P.new}, \eqref{Alinhac1}, \eqref{Alinhac2},
\eqref{B'bb2D:def}, and \eqref{B.bb':2}, respectively.
Setting the error terms as
\begin{align} \label{e.e.tilde}
	e_{{n}}^{\pm}:=e_{{n}\pm}'+e_{{n}\pm}''+e_{{n}\pm}'''+e_{n\pm}^*,\quad
	\tilde{e}_{{n}} :=\tilde{e}_{{n} }'+\tilde{e}_{{n} }''+\tilde{e}_{{n} }'''
\end{align}
gives the iteration scheme, which is of Nash--Moser type \cite{A89MR976971,H76MR0602181,CS08MR2423311}.

Summing \eqref{deco1}--\eqref{deco3} from ${n}=0$ to $N$, respectively,
we can obtain from \eqref{effective.NM}, \eqref{E.E.tilde}, \eqref{source}, and \eqref{e.e.tilde} that
\begin{align}
	\label{LB:conver}
	\left\{
	\begin{aligned}
		&	\mathcal{L}_+(U_{N+1},\Psi_{N+1})= \mathcal{S}_{\theta_{N}}f^a
		+({I}-\mathcal{S}_{\theta_{N}})E_{N}^+ +e_{N}^+,
		\\
		&\mathcal{L}_-(u_{N+1}, \nu_{N+1}, \Psi_{N+1})=({I}-\mathcal{S}_{\theta_{N}})E_{N}^- +e_{N}^-,
		\\
		&\mathcal{B}(U_{N+1},u_{N+1}, \psi_{N+1})=({I}-\mathcal{S}_{\theta_{N}})\widetilde{E}_{N}
		+\tilde{e}_{{N}}.
	\end{aligned}
	\right.
\end{align}
Since $\mathcal{S}_{\theta_{N}}\to {I}$ as $N\to \infty$,
we can formally obtain the existence of solutions of the problem \eqref{P.new}
from the expectation $(e_{{N}}^{+},e_{{N}}^{-}, \tilde{e}_{N})\to 0$.

\subsection{Inductive hypothesis and error estimates}
Let $m\geq {13}$ be an integer and set $\widetilde{\alpha}:=m-{5}$.
The initial data $(U_0,u_0, \varphi_0)$ are assumed to satisfy all the conditions in Lemma \ref{lem:app},
which leads to the estimates \eqref{app2b}--\eqref{f^a:est}.
Suppose that Assumptions {\bf (A-1)} and {\bf (A-2)} are fulfilled.
For some integer $ {\alpha }\in [7,\widetilde{\alpha}-1]$
and constant $\varepsilon>0$,  which will be chosen later on,
our inductive hypothesis reads
\begin{align*}\tag{\textrm{$\mathbf{H}_{{N}-1}$}}
	\left\{\begin{aligned}
		\textrm{(a)} \ &
		\|(\delta U_{n}, \delta u_{n}, \delta \psi_{n},\delta \Psi_{n})\|_{s}
		\leq \varepsilon \theta_{n}^{s-{\alpha }-1}\varDelta_{n}
		\\[0.5mm]
		&\quad \quad \textrm{for } n=0,1,\ldots,{N}-1\textrm{ and }s=6,7,\ldots,\widetilde{\alpha} ;\\[1mm]
		\textrm{(b)} \  & \|\mathcal{L}_+( U_{n},  \Psi_{n})-f^a\|_{H_*^s(\Omega_{T})}\leq 2 \varepsilon \theta_{n}^{s-{\alpha }-1}, \\[0.5mm]
		&
		\|\mathcal{L}_-( u_{n},  v_n, \Psi_{n})\|_{H^{s+1}(\Omega_{T})}  \leq 2 \varepsilon \theta_{n}^{s-{\alpha }-1}\\[0.5mm]
		& \quad \quad  \textrm{for } n=0,1,\ldots,{N}-1\textrm{ and } s= 6,7,\ldots,\widetilde{\alpha}-2;\\[1mm]
		\textrm{(c)} \  &\|\mathcal{B}( U_{n}, u_{n}, \psi_{n})\|_{H^{s+3/2}(\Sigma_{T})} \leq  \varepsilon \theta_{n}^{s-{\alpha }-1}
		\\[0.5mm]
		& \quad \quad  \textrm{for } n=0,1,\ldots,{N}-1\textrm{ and } s=7,8,\ldots,{\alpha },
	\end{aligned}\right.
\end{align*}
where  $\varDelta_{n}:=\theta_{n+1}-\theta_{n}$ with
$\theta_{{n}}:=\sqrt{\theta^2_0+{n}}$ and
\begin{align} \label{norm:ring1}
	\|(U, u, \psi,\Psi)\|_{s}:=
	\|U\|_{H_*^{s}(\Omega_{T})}+ \|u\|_{H^{s}(\Omega_{T})}+	 \|\psi\|_{H^{s+1/2}(\Sigma_T)}
	+\| \Psi \|_{H^{s+1/2}(\Omega_{T})}.
\end{align}
We shall prove that
hypothesis ($\mathbf{H}_{{N}-1}$) implies ($\mathbf{H}_{{N}}$)
and that hypothesis ($\mathbf{H}_0$) holds,
for $T,\varepsilon>0$ sufficiently small and $\theta_0\geq 1$ large enough.

Supposing that hypothesis ($\mathbf{H}_{{N}-1}$) is satisfied,
we can obtain the following lemma as in \cite{CS08MR2423311,T09ARMAMR2481071,TW21MR4201624,TW22aMR4444136,TW22cMR4506578}.
\begin{lemma}
	\label{lem:triangle}
	If $\theta_0\geq 1$ is large enough, then
	\begin{align}
		& \| ( U_{n}, u_{n}, \psi_{n},\Psi_{n})\|_{s}
		\leq \bigg\{\begin{aligned}
			&\varepsilon \theta_{n}^{(s-{\alpha })_+}   &&\textrm{if }s\neq {\alpha },\\
			&\varepsilon \log \theta_{n}   &&\textrm{if }s= {\alpha },
		\end{aligned}
		\nonumber  
		\\[1mm]
		& \big\|\big(({I}-\mathcal{S}_{\theta_{n}})U_{n},\, ({I}-\mathcal{S}_{\theta_{n}})u_{n} ,\, ({I}-\mathcal{S}_{\theta_{n}})\psi_{n}, ({I}-\mathcal{S}_{\theta_{n}})\Psi_{n} \big)\big\|_{s}
		\lesssim \varepsilon \theta_{n}^{s-{\alpha }},
		\nonumber 
	\end{align}
	for ${n}=0,1,\ldots,{N}-1$ and  $s=6,7,\ldots,\widetilde{\alpha}$,
	where $\|\cdot \|_{s}$ is defined by \eqref{norm:ring1}. Moreover,
	\begin{align}
		&
		\big\|\big(\mathcal{S}_{\theta_{n}}U_{n} ,\, \mathcal{S}_{\theta_{n}}u_{n} ,\,\mathcal{S}_{\theta_{n}}\psi_{n}, \mathcal{S}_{\theta_{n}}\Psi_{n} \big)\big\|_{s}
		\lesssim  \left\{\begin{aligned}
			&\varepsilon \theta_{n}^{(s-{\alpha })_+}  &&\textrm{if }s\neq {\alpha },\\
			&\varepsilon \log \theta_{n}  &&\textrm{if }s= {\alpha },
		\end{aligned}\right.  \nonumber  
	\end{align}
	for ${n}=0,1,\ldots,{N}-1$ and $s=6,7,\ldots,\widetilde{\alpha}+6$.
\end{lemma}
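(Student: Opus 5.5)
The plan is to obtain all three groups of bounds from hypothesis ($\mathbf{H}_{N-1}$)(a) alone, combined with the properties of the smoothing operators in Proposition~\ref{pro:smooth}. This is the same route as in \cite{CS08MR2423311,T09ARMAMR2481071}. Since $(U_0,u_0,\psi_0)=0$, I will write
\begin{align*}
(U_n,u_n,\psi_n,\Psi_n)=\sum_{k=0}^{n-1}(\delta U_k,\delta u_k,\delta\psi_k,\delta\Psi_k).
\end{align*}
Applying ($\mathbf{H}_{N-1}$)(a) gives $\|(U_n,u_n,\psi_n,\Psi_n)\|_s\le \varepsilon\sum_{k<n}\theta_k^{s-\alpha-1}\varDelta_k$. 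The elementary facts I need are $\varDelta_k=\theta_{k+1}-\theta_k\sim (2\theta_k)^{-1}$ and $\theta_{k+1}\le\sqrt2\,\theta_k$, which let me compare the sum with $\int_{\theta_0}^{\theta_n}\theta^{s-\alpha-1}\,\mathrm{d}\theta$. I will split into three cases according to the sign of $s-\alpha$:
\begin{itemize}
\item if $s<\alpha$, the integral is bounded by $\theta_0^{s-\alpha}/(\alpha-s)\le C$;
\item if $s=\alpha$, it equals $\log(\theta_n/\theta_0)\le\log\theta_n$;
\item if $s>\alpha$, it is bounded by $\theta_n^{s-\alpha}/(s-\alpha)$.
\end{itemize}
Up to harmless constants, which can be absorbed by adjusting $\varepsilon$ or taking $\theta_0$ large, this gives the first estimate for $6\le s\le\widetilde\alpha$.

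For the second estimate I will apply \eqref{smooth.p1b} with $k=s\le j=\widetilde\alpha$. This gives $\|(I-\mathcal S_{\theta_n})U_n\|_s\lesssim\theta_n^{s-\widetilde\alpha}\|U_n\|_{\widetilde\alpha}\lesssim \varepsilon\theta_n^{s-\widetilde\alpha}\theta_n^{\widetilde\alpha-\alpha}=\varepsilon\theta_n^{s-\alpha}$, using $\widetilde\alpha>\alpha$ in the first estimate. The same argument applies to $u_n$, $\psi_n$ and $\Psi_n$ in the standard spaces $H^s$ and $H^{s+1/2}$, via the additional families of smoothing operators in Proposition~\ref{pro:smooth}.

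For the third estimate I will split into two ranges.
\begin{itemize}
\item For $s\le\widetilde\alpha$, I will use \eqref{smooth.p1a} with $j=k=s$ and then the first estimate.
\item For $\widetilde\alpha<s\le\widetilde\alpha+6$, I will use \eqref{smooth.p1a} with $j=\widetilde\alpha$, giving $\theta_n^{s-\widetilde\alpha}\varepsilon\theta_n^{\widetilde\alpha-\alpha}=\varepsilon\theta_n^{s-\alpha}$.
\end{itemize}

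There is one point I expect to be the main obstacle. The norm $\|\cdot\|_s$ mixes $H_*^s$ with $H^{s+1/2}$ on $\Sigma_T$ and on $\Omega_T$, so I must check that the smoothing families act compatibly on each component. I also need that $\Psi_n=\chi\psi_n$, and I must make sure I am not required to prove $\mathcal S_\theta\Psi=\chi\mathcal S_\theta\psi$. I will avoid this issue by treating $\Psi_n$ as an independent component, summing $\delta\Psi_k$ exactly as for the other components. I will also check that the constants from the telescoping comparison depend only on $\alpha$ and $\widetilde\alpha$, so that taking $\theta_0$ large absorbs them into $\varepsilon$, or, if necessary, one simply writes $\lesssim$ as in the second and third estimates.
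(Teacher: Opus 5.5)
Your route is the standard one: telescoping $U_n=\sum_{k<n}\delta U_k$ (likewise for $u_n,\psi_n,\Psi_n$), hypothesis $(\mathbf{H}_{N-1})$(a), and the smoothing properties of Proposition~\ref{pro:smooth}. The paper gives no proof of its own; it refers to \cite{CS08MR2423311,T09ARMAMR2481071}, which argue this way.

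Your second and third estimates go through as you describe. So does the first estimate for $s<\alpha$ and for $s\geq\alpha+1$:
\begin{itemize}
\item For $s<\alpha$, the factor $2^{(\alpha+1-s)/2}$ produced by $\theta_{k+1}\le\sqrt2\,\theta_k$ is beaten by $\theta_0^{s-\alpha}$ once $\theta_0$ is large.
\item For $s\geq\alpha+1$, the integrand $\theta^{s-\alpha-1}$ is nondecreasing. The left Riemann sum is therefore bounded directly by $\int_{\theta_0}^{\theta_n}\theta^{s-\alpha-1}\,\mathrm{d}\theta\le\theta_n^{s-\alpha}$.
\end{itemize}

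The step that fails as written is the first estimate at $s=\alpha$. That estimate is stated with constant exactly $1$, but your comparison via $\theta_{k+1}\le\sqrt2\,\theta_k$ only gives $\sqrt2\,\varepsilon\log(\theta_n/\theta_0)$. Neither of your absorption mechanisms removes the factor $\sqrt2$:
\begin{itemize}
\item Enlarging $\theta_0$ does not help, because the bound must hold uniformly in $n$.
\item $\varepsilon$ cannot be adjusted, because the same $\varepsilon$ appears in $(\mathbf{H}_{N-1})$(a) and in the conclusion.
\end{itemize}
Falling back to ``$\lesssim$'' would prove a weaker statement than the one given.

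The fix is to shift the index, using that $\varDelta_k=(\theta_{k+1}+\theta_k)^{-1}$ is decreasing in $k$. For $k\geq1$,
\[
\theta_k^{-1}\varDelta_k\le\theta_k^{-1}\varDelta_{k-1}\le\int_{\theta_{k-1}}^{\theta_k}\frac{\mathrm{d}\theta}{\theta},
\]
so that
\[
\sum_{k=0}^{n-1}\theta_k^{-1}\varDelta_k\le\frac{1}{2\theta_0^{2}}+\log\frac{\theta_{n-1}}{\theta_0}\le\log\theta_n
\]
as soon as $\log\theta_0\geq(2\theta_0^2)^{-1}$, for instance $\theta_0\geq2$. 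The loss now appears as an additive term, and an additive term is exactly what a large $\theta_0$ can absorb. The same index shift also gives a clean proof of the case $s<\alpha$.
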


To prove hypothesis ($\mathbf{H}_{{N}}$) from ($\mathbf{H}_{{N}-1}$),
we need to estimate the error terms $E_{k}^{\pm}$ and $\widetilde{E}_{k}$ defined by \eqref{E.E.tilde} and \eqref{e.e.tilde}.
For this purpose, we introduce the second derivatives $\mathbb{L}_{\pm}''$ and $\mathbb{B}''$ of  $\mathbb{L}_{\pm}$ and $\mathbb{B}$ as
\begin{align*}
	\mathbb{L}_+''\big(\mathring{U},\mathring{\Phi}\big)
	\big(\big(U,\Psi\big),\, \big(\widetilde{U},\widetilde{\Psi}\big)\big)
	:=\;& \frac{\d}{\d \theta}
	\mathbb{L}_+'\big(\mathring{U}+\theta \widetilde{U},
	\mathring{\Phi}+\theta \widetilde{\Psi}\big)
	\big(U,\Psi\big)\Big|_{\theta=0},\\[0.5mm]
	\mathbb{L}_-''\big(\mathring{u},\mathring{\nu},\mathring{\Phi}\big)
	\big(\big(u,\nu, \Psi\big),\, \big(\tilde{u},\tilde{\nu},\widetilde{\Psi}\big)\big)
	:= \;&\frac{\d}{\d \theta}
	\mathbb{L}_-'\big(\mathring{u}+\theta \tilde{u},\mathring{\nu}+\theta \tilde{\nu},
	\mathring{\Phi}+\theta \widetilde{\Psi}\big)
	\big(u,\nu,\Psi\big)\Big|_{\theta=0},	
	\\[0.5mm]
	\mathbb{B}''
	\big(\big(U, u, \psi\big),\, \big(\widetilde{U},\tilde{u},\tilde{\psi}\big)\big)
	:= \;& \frac{\d}{\d \theta}
	\mathbb{B}'\big(\mathring{U}+\theta \widetilde{U},
	\mathring{u}+\theta \tilde{u},
	\mathring{\varphi}+\theta \tilde{\psi}\big)\big(U,u,\psi\big)\Big|_{\theta=0},
\end{align*}
where $\mathbb{L}_{\pm}'$ and $\mathbb{B}'$ are given in \eqref{Alinhac1}, \eqref{Alinhac2}, and \eqref{B'bb2D:def}.
In particular, we have
\begin{align}
	\label{B'':def}
	\mathbb{B}''
	\big(\big(U, u, \psi\big),\, \big(\widetilde{U},\tilde{u},\tilde{\psi}\big)\big)
	=\begin{bmatrix}
		\tilde{v}_2\p_2 \psi+ v_2\p_2 \tilde{\psi}\\[1mm]
		-\tilde{h}\cdot h+\tilde{e}e\\[1mm]
		\epsilon\p_t \tilde{\psi}h_2+\epsilon\tilde{h}_2\p_t \psi
	\end{bmatrix}.	
\end{align}
Then we can deduce the following result by
applying the Moser-type calculus and embedding inequalities. 

\begin{proposition}  \label{pro:tame2}
	Let $T>0$ and $s\in\mathbb{N}$ with $s\geq 6$.
	Assume that $(U,u,\Psi)$ belongs to $H_*^{s+2}(\Omega_{T})\times H^{s+2}(\Omega_{T})\times H^{s+2}(\Omega_T)$ and satisfies 	
	$$\|(U,\Psi)\|_{H_*^{6}(\Omega_{T})} +\|(u,\Psi)\|_{H^{4}(\Omega_{T})} \leq M$$ for some ${M}>0$.
	If $(U_i, u_i,\Psi_i)\in H_*^{s+2}(\Omega_{T})\times H^{s+2}(\Omega_{T})\times H^{s+2}(\Omega_T)$ and $ \psi_i\in  H^{s+2}(\Sigma_T)$ for $i=1,2$, then
	\begin{align*}
		&\big\|\mathbb{L}_+''\big(\widebar{U}+U ,x_1+{\Psi} \big) \big((U_1,\Psi_1),(U_2,\Psi_2) \big)\big\|_{H_*^s(\Omega_{T})}
		\\[1mm]
		& 	\hspace*{1em}	 \lesssim_{{M}} 		\sum_{i \neq j} \|(U_i,\Psi_i)\|_{H_*^{6}(\Omega_{T})}   \|(U_j,\Psi_j)\|_{H_*^{s+2}(\Omega_{T})}
		\\
		&\hspace*{3em}	+ 		\|(U_1,\Psi_1)\|_{H_*^{6}(\Omega_{T})} \|(U_2,\Psi_2)\|_{H_*^{6}(\Omega_{T})}		\| (U ,\Psi )\|_{H_*^{s+2}(\Omega_{T})}
		,
\\[2mm]
		&
		\big\|\mathbb{L}_-''\big(\bar{u}+u,\bar{v}+\nu, x_1+\Psi\big) \big((u_1,\nu_1, \Psi_1),(u_2,\nu_2,\Psi_2) \big)\big\|_{H^{s+1}(\Omega_{T})}
		\\[1mm]
		&\hspace*{1em} \lesssim_{{M}}
		\sum_{i\neq j} \|(u_i,\nu_i,\Psi_i)\|_{H^{4}(\Omega_{T})}   \left( \|(u_j,\Psi_j)\|_{H^{s+2}(\Omega_{T})}+\|\nu_j \|_{H^{s+1}(\Omega_{T})}		\right)
		\\
		&\hspace*{3em}	+	
				\|(u_1,\nu_1,\Psi_1)\|_{H^{4}(\Omega_{T})} \|(u_2,\nu_2,\Psi_2)\|_{H^{4}(\Omega_{T})} 			 \| (u,v, {\Psi})\|_{H^{s+2}(\Omega_{T})}
		,
\\[2mm]
		&		\left\| \mathbb{B} '' \big((U_1,u_1,\psi_1),\, (U_2,u_2,\psi_2) \big)  \right\|_{H^{s+1}(\Sigma_{T})}	\\[1mm]
		&	
		\hspace*{1em} \lesssim_{{M}}
		\sum_{i\neq j} \|(U_i,u_i,\mathrm{D}_{\rm tan}\psi_i)\|_{H^{2}(\Sigma_T)}		
		\|(U_j,u_j,\mathrm{D}_{\rm tan}\psi_j)\|_{H^{s+1}(\Sigma_T)},
	\end{align*}
	where $\widebar{U}\in\mathbb{R}^6$ and $\bar{u}\in\mathbb{R}^3$ are the constant vectors given in \eqref{initial:H2}, and $\nu$, $\nu_1$, $\nu_2$ are extended to the region $\Omega_{T}$ as in \eqref{nu:def1}.
\end{proposition}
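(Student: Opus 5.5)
The plan is to write out $\mathbb{L}_+''$, $\mathbb{L}_-''$ and $\mathbb{B}''$ explicitly as multilinear expressions in the basic state and the two increments, and then estimate each term with the Moser-type calculus inequalities and embeddings in $H_*^s$ and $H^s$ (cf.~\cite[\S 3.2]{TW21MR4201624}).

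\emph{The boundary operator.} This is the easiest case, because \eqref{B'':def} shows that $\mathbb{B}''$ does not depend on the basic state at all. It is bilinear in $\big((U_1,u_1,\psi_1),(U_2,u_2,\psi_2)\big)$ with constant coefficients, and each term is a product of a trace of $U_i$ or $u_i$ with either a trace of $U_j,u_j$ or a tangential derivative $\mathrm{D}_{\rm tan}\psi_j$. The standard tame product estimate on $\Sigma_T$, a two-dimensional domain in $(t,x_2)$, reads
\[
\|fg\|_{H^{s+1}}\lesssim \|f\|_{H^2}\|g\|_{H^{s+1}}+\|f\|_{H^{s+1}}\|g\|_{H^2},
\]
and uses $H^2(\Sigma_T)\hookrightarrow L^\infty$. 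It gives the stated bound directly.

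\emph{The interior operators.} Differentiating \eqref{Alinhac1}, or equivalently the definition of $\mathbb{L}_+'$, in the direction $(U_2,\Psi_2)$ produces three groups of terms:
\begin{itemize}
\item[(i)] terms $\mathring{\rm c}\,U_2\,\mathrm{D}U_1$ and $\mathring{\rm c}\,\mathrm{D}\Psi_2\,\mathrm{D}U_1$, together with their symmetric counterparts;
\item[(ii)] terms of the form $\mathring{\rm c}\,U_1U_2\,\mathrm{D}(\widebar U+U)$ and $\mathring{\rm c}\,\mathrm{D}\Psi_1\mathrm{D}\Psi_2\,\mathrm{D}U$, coming from second derivatives of the coefficients $A_j^+$ and of the factor $1/\p_1\Phi$;
\item[(iii)] terms in which $\p_1(\widebar U+U)$ or $\p_1 U_i$ is multiplied by quotients involving $\Psi_i$.
\end{itemize}
Here $\mathring{\rm c}$ denotes smooth functions of $(U,\mathrm{D}\Psi)$, which are controlled in $W^{1,\infty}_*$ by $M$ through the embedding $H_*^6\hookrightarrow W_*^{2,\infty}$.

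Two facts drive the estimate. The normal derivative $\p_1$ costs two anisotropic orders, which accounts for the shift $s\to s+2$. Moreover, $\Psi$ and $\Psi_i$ are smooth in $x_1$ (they equal $\chi(x_1)\psi$), so every derivative of them is controlled in the standard $H^{s+2}$ norm, which dominates $H_*^{s+2}$.

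Applying the Moser inequality
\[
\|uv\|_{H_*^s}\lesssim \|u\|_{W_*^{2,\infty}}\|v\|_{H_*^s}+\|u\|_{H_*^s}\|v\|_{W_*^{2,\infty}}
\]
to each product, iterating it on triple products, and using the composition estimate for $\mathring{\rm c}$ then gives the first bound: the bilinear part is controlled by low$\times$high norms of the increments, and the part cubic in the basic state carries $\|(U,\Psi)\|_{H_*^{s+2}}$.

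For $\mathbb{L}_-''$ the structure is simpler. The coefficients $A_j^-(\nu)$ are affine in $\nu$, so $\p^2 A_j^-/\p\nu^2=0$, and only the $\Phi$-dependence through $1/\p_1\Phi$ produces genuinely nonlinear terms. We work in isotropic spaces on the three-dimensional domain $\Omega_T$ and use $H^4\hookrightarrow W^{1,\infty}$ together with the standard Moser estimate in $H^{s+1}$. The factor $\nu_j$ enters undifferentiated or with one derivative, which explains why only $\|\nu_j\|_{H^{s+1}}$ appears. The extension \eqref{nu:def1} gives $\|\nu\|_{H^{s+1}(\Omega_T)}\lesssim\|v|_{x_1=0}\|_{H^{s+1}(\Sigma_T)}$; we keep $\nu$ as a separate argument, so no trace loss needs to be tracked here.

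\emph{Main obstacle.} The only delicate point is the bookkeeping in the anisotropic estimate for $\mathbb{L}_+''$. Terms containing $\p_1U_i$ or $\p_1(\widebar U+U)$ multiplied by another factor must be split so that the $\p_1$ always falls on a factor measured in $H_*^{s+2}$ or $H_*^6$, never on the factor measured only in $L^\infty$-type norms. I would handle this by the same commutator and product lemma used in \cite[Lemma 3.5]{TW21MR4201624}, which treats $\p_1 w\in H_*^{s}$ for $w\in H_*^{s+2}$.
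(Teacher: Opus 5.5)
Your route is the paper's own. The paper justifies the proposition only by ``Moser-type calculus and embedding inequalities'', and expanding $\mathbb{L}_\pm''$ and $\mathbb{B}''$ into multilinear terms is how one makes that precise. Your boundary estimate is fine as written.

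The anisotropic step, however, does not give the stated indices with the product inequality you chose. Suppose the low-order factor is measured in $\|u\|_{W_*^{2,\infty}}=\|(u,\mathrm{D}_*u)\|_{W^{1,\infty}}$.

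\begin{itemize}
\item A term such as $\mathring{\rm c}\,\mathrm{D}\Psi_2\,\p_1U_1$ then produces $\|\mathrm{D}\Psi_2\|_{H_*^s}\|\p_1U_1\|_{W_*^{2,\infty}}$.
\item $\|\p_1U_1\|_{W_*^{2,\infty}}$ contains $\|\p_1\mathrm{D}_*\p_1U_1\|_{L^\infty}$, which has anisotropic order $5$.
\item Applying $H_*^6\hookrightarrow W_*^{2,\infty}$ to $\p_1U_1$ therefore costs $\|U_1\|_{H_*^8}$, not $\|U_1\|_{H_*^6}$.
\item Similarly, $\|\mathring{\rm c}(U,\mathrm{D}\Psi)\|_{W_*^{2,\infty}}$ involves $\p_1\mathrm{D}_*\p_1\Psi$, which $M$ does not control.
\end{itemize}

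Your remedy under ``main obstacle'' cannot work as phrased. In a two-term Moser bound each factor is the low-order one in some term, so $\p_1U_i$, $\p_1\Psi_i$ and $\p_1U$ must be placed in a sup-type norm somewhere. What you need is the anisotropic Moser inequality in which the low-order factor carries only $L^\infty$, or at most one tangential derivative, $\|(u,\mathrm{D}_*u)\|_{L^\infty}$. Then:
\begin{itemize}
\item apply $H_*^6\hookrightarrow W_*^{2,\infty}$ to $U_i,\Psi_i,U$ themselves, so that $\|(\p_1U_i,\mathrm{D}_*\p_1U_i)\|_{L^\infty}\le\|U_i\|_{W_*^{2,\infty}}\lesssim\|U_i\|_{H_*^6}$;
\item on the top-order factor, use $\|\p_1U_j\|_{H_*^s}\le\|U_j\|_{H_*^{s+2}}$.
\end{itemize}
This reproduces exactly the $H_*^6$ and $H_*^{s+2}$ indices of the statement, with constants depending only on $M$.

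Two smaller corrections.

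First, your remark that $\Psi,\Psi_i$ are controlled in standard $H^{s+2}$, ``which dominates $H_*^{s+2}$'', points the wrong way. The right-hand side contains only $\|\Psi_j\|_{H_*^{s+2}}$ and $\|\Psi\|_{H_*^{s+2}}$, so bounding by standard norms would prove a weaker estimate. It is also unnecessary: $\Psi_j$ enters only through $\mathrm{D}\Psi_j$, and $\|\p_1\Psi_j\|_{H_*^s}\le\|\Psi_j\|_{H_*^{s+2}}$. The standard bound $\|\Psi\|_{H^4}\le M$ in the hypothesis is needed only for the coefficients and for the isotropic estimate of $\mathbb{L}_-''$.

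Second, $\nu_j$ never appears differentiated in $\mathbb{L}_-''$; it enters only algebraically, through $A_j^-(\nu)$ and $\widetilde{A}_1^-(\nu,\Phi)$. That is precisely why $\|\nu_j\|_{H^{s+1}}$ suffices. If $\nu_j$ entered ``with one derivative'', as you wrote, you would need $\|\nu_j\|_{H^{s+2}}$.
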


Let us first apply Proposition \ref{pro:tame2} to estimate the quadratic error terms $e'_{n\pm }$ and $\tilde{e}_{n}'$ defined in \eqref{deco1}--\eqref{deco3}; see \cite[Lemma 4.6]{TW21MR4201624} or \cite[Lemma 4.5]{TW22cMR4506578} for the proof.
\begin{lemma}\label{lem:quad}
	If $\theta_0\geq 1$ is sufficiently large and $\varepsilon>0$ is small enough, then
	\begin{align}\nonumber 
		\|e_{n+}'\|_{H_*^s(\Omega_{T})}+\|e_{n-}'\|_{H^{s+1}(\Omega_{T})}+ \| \tilde{e}_{n}' \|_{H^{s+1}(\Sigma_{T})}
		\lesssim \varepsilon^2 \theta_{n}^{\varsigma_1(s)-1}\varDelta_{n}
	\end{align}
	for $n=0,1,\ldots,{N}-1$  and $s=6,7,\ldots,\widetilde{\alpha}-2 $,
	where $$\varsigma_1(s):=\max\{(s+2-{\alpha })_++10-2{\alpha },\,s+6-2{\alpha } \}.$$
\end{lemma}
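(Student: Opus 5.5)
We estimate the quadratic errors by writing each as an integral remainder of a second-order Taylor expansion. By the definition in \eqref{deco1}--\eqref{deco3}, at step $n$ we have
\begin{align*}
e_{n+}'=\int_0^1(1-\tau)\,\mathbb{L}_+''\big(U^a+U_n+\tau\delta U_n,\ \Phi^{a+}+\Psi_n+\tau\delta\Psi_n\big)\big((\delta U_n,\delta\Psi_n),(\delta U_n,\delta\Psi_n)\big)\,\mathrm{d}\tau .
\end{align*}
The errors $e_{n-}'$ and $\tilde e_n'$ have the analogous form, with $\mathbb{L}_-''$ evaluated at $(u^a+u_n+\tau\delta u_n,\ \nu^a+\nu_n+\tau\delta\nu_n,\ \Phi^{a-}+\Psi_n+\tau\delta\Psi_n)$ and with $\mathbb{B}''$, respectively. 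Since $\mathbb{B}$ is quadratic, $\tilde e_n'=\frac12\mathbb{B}''((\delta U_n,\delta u_n,\delta\psi_n),(\delta U_n,\delta u_n,\delta\psi_n))$ exactly, and this is explicit by \eqref{B'':def}. We then apply Proposition \ref{pro:tame2} to each integrand, with base state $(U^a-\widebar U+U_n+\tau\delta U_n,\dots)$.

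The first step is to check the hypothesis $\|(U,\Psi)\|_{H^6_*}+\|(u,\Psi)\|_{H^4}\le M$ uniformly in $n$ and $\tau$. This follows from \eqref{app2a} for the approximate solution, together with Lemma \ref{lem:triangle} and ($\mathbf H_{N-1}$)(a) at $s=6\le\alpha-1$. These give $\|(U_n,u_n,\psi_n,\Psi_n)\|_6+\|\delta\cdot\|_6\lesssim\varepsilon$, so $M=C(M_0)+1$ works for $\varepsilon\le1$.

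Next we insert the bounds. Proposition \ref{pro:tame2} yields
\begin{align*}
\|e_{n+}'\|_{H^s_*}\lesssim \|\delta\|_{6}\|\delta\|_{s+2}+\|\delta\|_6^2\big(1+\|(U^a,\Psi^a)\|_{s+2}+\|(U_n,\Psi_n)\|_{s+2}+\|\delta\|_{s+2}\big).
\end{align*}
By (a), $\|\delta\|_6\le\varepsilon\theta_n^{5-\alpha}\varDelta_n$ and $\|\delta\|_{s+2}\le\varepsilon\theta_n^{s+1-\alpha}\varDelta_n$. This is valid because $s+2\le\widetilde\alpha$. By Lemma \ref{lem:triangle}, $\|(U_n,\Psi_n)\|_{s+2}\lesssim\varepsilon\theta_n^{(s+2-\alpha)_+}$, with a $\log\theta_n$ factor when $s+2=\alpha$; this factor is absorbed by slightly worsening the exponent or using $\theta_n\ge1$. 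The norm $\|(U^a,\Psi^a)\|_{s+2}$ is bounded by $C(M_0)$ as long as $s+2\le m+1$, and indeed $s+2\le\widetilde\alpha=m-5$. Collecting terms, and using $\varDelta_n\le 1/(2\theta_n)$ so that $\varDelta_n^2\lesssim\theta_n^{-1}\varDelta_n$, we obtain:
\begin{itemize}
\item the first term contributes $\varepsilon^2\theta_n^{s+6-2\alpha-1}\varDelta_n$;
\item the second term contributes $\varepsilon^2\theta_n^{10-2\alpha+(s+2-\alpha)_+-1}\varDelta_n$.
\end{itemize}
The largest of these exponents is $\varsigma_1(s)-1$. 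The $e_{n-}'$ term in $H^{s+1}$ is treated identically with the second estimate of Proposition \ref{pro:tame2}. The extension $\nu$ is controlled by the trace of $v$, which loses half a derivative; since the bound only needs $\|\nu_j\|_{H^{s+1}}\lesssim\|v_j\|_{H^{s+2}_*}$, there is no loss.

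The boundary term $\tilde e_n'$ in $H^{s+1}(\Sigma_T)$ uses the third estimate together with the trace theorem for anisotropic spaces. On $\Sigma$ we have $\|\delta U\|_{H^{s+1}(\Sigma)}\lesssim\|\delta U\|_{H^{s+2}_*}$, using that $H^{k}_*$ traces to $H^{k-1}$, and $\|\mathrm D_{\rm tan}\delta\psi\|_{H^{s+1}}\le\|\delta\psi\|_{H^{s+2.5}}\le\|\delta\|_{s+2}$. The low norm $H^2(\Sigma)$ is controlled by $\|\delta\|_6$. This gives the bound $\varepsilon^2\theta_n^{s+6-2\alpha-1}\varDelta_n$, which is again within $\varsigma_1(s)-1$.

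The main obstacle is bookkeeping at the top index $s=\widetilde\alpha-2$. There we must verify that all derivative counts stay within the range of (a) and of \eqref{app2a}, and that the $\log$ factor at $s+2=\alpha$ is harmless. This is why the term $(s+2-\alpha)_+$ appears in $\varsigma_1$. Taking $\theta_0$ large and $\varepsilon$ small absorbs the implicit constants.
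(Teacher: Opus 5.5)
Your proposal is correct and follows the same route as the proof the paper cites from \cite{TW21MR4201624,TW22cMR4506578}: write the errors as Taylor integral remainders (with $\tilde e_n'=\tfrac12\mathbb{B}''(\delta,\delta)$ exactly), then apply Proposition~\ref{pro:tame2}, hypothesis $(\mathbf{H}_{N-1})$(a), Lemma~\ref{lem:triangle}, and $\varDelta_n\le 1/(2\theta_n)$. One small precision: at $s+2=\alpha$ the $\log\theta_n$ factor is not absorbed by ``worsening the exponent'' but by the other branch of the maximum, since $\theta_n^{9-2\alpha}\log\theta_n\le\theta_n^{3-\alpha}=\theta_n^{\varsigma_1(\alpha-2)-1}$ for $\alpha\ge 7$.
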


Applying Proposition \ref{pro:tame2}, we can obtain the following estimate for the first substitution error terms $e_{n\pm}''$ and $\tilde{e}_{n}''$ defined in \eqref{deco1}--\eqref{deco3}. The proof is similar to that for \cite[Lemma 4.7]{TW21MR4201624} or \cite[Lemma 4.6]{TW22cMR4506578}, and  we omit it for brevity.
\begin{lemma} \label{lem:1st}
	If $\theta_0\geq 1$ is sufficiently large and $\varepsilon>0$ is small enough, then
	\begin{align}  \nonumber 
		\|e_{n+}''\|_{H_*^s(\Omega_{T})}+\|e_{n-}''\|_{H^{s+1}(\Omega_{T})}+ \|\tilde{e}_{n}''\|_{H^{s+1}(\Sigma_{T})}
		\lesssim \varepsilon^2 \theta_n^{\varsigma_2(s)-1}\varDelta_{n}
	\end{align}
	for $n=0,1,\ldots,{N}-1$  and $s=6,7,\ldots,\widetilde{\alpha}-2 $,	
	where $$\varsigma_2(s):=\max\{(s+2-{\alpha })_++12-2{\alpha },\, s+8-2{\alpha } \}.$$
\end{lemma}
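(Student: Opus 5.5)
The plan is to follow the standard substitution-error argument of Coulombel--Secchi as adapted in \cite[Lemma 4.7]{TW21MR4201624}. By the decompositions \eqref{deco1}--\eqref{deco3}, each first substitution error is a difference of two first-order linearizations evaluated at two basic states:
\begin{align*}
e_{n+}''=\mathbb{L}_+'(U^a+U_n,\Phi^{a+}+\Psi_n)(\delta U_n,\delta\Psi_n)-\mathbb{L}_+'(U^a+\mathcal{S}_{\theta_n}U_n,\Phi^{a+}+\mathcal{S}_{\theta_n}\Psi_n)(\delta U_n,\delta\Psi_n),
\end{align*}
with analogous formulas for $e_{n-}''$ and $\tilde e_n''$. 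By Taylor's formula in the basic state I will write each of them as
\begin{align*}
e_{n+}''=\int_0^1\mathbb{L}_+''\big(U^a+\mathcal{S}_{\theta_n}U_n+\tau(I-\mathcal{S}_{\theta_n})U_n,\ \Phi^{a+}+\mathcal{S}_{\theta_n}\Psi_n+\tau(I-\mathcal{S}_{\theta_n})\Psi_n\big)\big((\delta U_n,\delta\Psi_n),((I-\mathcal{S}_{\theta_n})U_n,(I-\mathcal{S}_{\theta_n})\Psi_n)\big)\,\mathrm{d}\tau .
\end{align*}
For $\mathbb{L}_-$ the same formula holds with $\nu^a+\nu_n$; here I note that $(I-\mathcal{S}_{\theta_n})\nu_n$ is controlled by traces of $(I-\mathcal{S}_{\theta_n})v_n$. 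For $\mathbb{B}$, whose second derivative \eqref{B'':def} is bilinear and independent of the base point, there is no integral: $\tilde e_n''=\mathbb{B}''\big((\delta U_n,\delta u_n,\delta\psi_n),((I-\mathcal{S}_{\theta_n})U_n,(I-\mathcal{S}_{\theta_n})u_n,(I-\mathcal{S}_{\theta_n})\psi_n)\big)$.

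Next I will apply Proposition~\ref{pro:tame2}. Its smallness hypothesis (the $H^6_*\times H^4$ bound by $M$) follows from Lemma~\ref{lem:triangle} together with \eqref{app2a}, since $\alpha\ge7$ and $\varepsilon$ is small. This gives, for example,
\begin{align*}
\|e_{n+}''\|_{H^s_*}\lesssim \|\delta\cdot\|_{8}\|(I-\mathcal{S}_{\theta_n})\cdot\|_{s+2}+\|\delta\cdot\|_{s+2}\|(I-\mathcal{S}_{\theta_n})\cdot\|_{8}+\|\delta\cdot\|_{8}\|(I-\mathcal{S}_{\theta_n})\cdot\|_{8}\|(U^a,\mathcal{S}_{\theta_n}U_n,\ldots)\|_{s+4}.
\end{align*}
The index $8$ rather than $6$ accounts for traces and for the $\Psi$ norm in $\|\cdot\|_s$. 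The mismatch between $H^{s+1}$ on the vacuum side and the $H^{s+2}$ required is absorbed by the $s+2$ shift. I will then insert the bounds. From ($\mathbf H_{N-1}$)(a), $\|\delta\cdot\|_r\le\varepsilon\theta_n^{r-\alpha-1}\varDelta_n$ for $r\le\widetilde\alpha$. From Lemma~\ref{lem:triangle}, $\|(I-\mathcal{S}_{\theta_n})\cdot\|_r\lesssim\varepsilon\theta_n^{r-\alpha}$, and $\|\mathcal{S}_{\theta_n}\cdot\|_{s+4}\lesssim\varepsilon\theta_n^{(s+4-\alpha)_+}$ (for $s+4\le\widetilde\alpha+6$), while $U^a$ contributes $C(M_0)$.

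Collecting terms, the first two products give $\varepsilon^2\theta_n^{s+10-2\alpha-1}\varDelta_n$. The last gives $\varepsilon^2\theta_n^{(s+4-\alpha)_++16-2\alpha-1}\varDelta_n$ up to the constant from $U^a$. The main obstacle is to get the index bookkeeping right so that it matches $\varsigma_2(s)=\max\{(s+2-\alpha)_++12-2\alpha,\ s+8-2\alpha\}$. This requires checking precisely how many derivatives Proposition~\ref{pro:tame2} loses for each component; in particular the trace of $\delta\psi$ and the factors $\mathrm{D}_{\rm tan}\psi$ in the $\mathbb{B}''$ bound cost one more derivative. If necessary, I will use the finer low norm ($H^6_*$, resp.\ $H^4$) for the factors measured at low regularity, so that the exponents drop by two to the stated ones. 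The constraint $s\le\widetilde\alpha-2$ guarantees $s+2\le\widetilde\alpha$, so all invoked norms are covered by ($\mathbf H_{N-1}$) and Lemma~\ref{lem:triangle}. Finally, choosing $\theta_0$ large (so that $\log\theta_n$ factors are harmless) and $\varepsilon$ small completes the estimate.
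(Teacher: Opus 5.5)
Your route (Taylor's formula at the base state, Proposition~\ref{pro:tame2}, then $(\mathbf{H}_{N-1})$(a) and Lemma~\ref{lem:triangle}) is the one the paper intends; it defers to \cite[Lemma 4.7]{TW21MR4201624}. Your exact formula for $\tilde e_n''$ through the constant bilinear form \eqref{B'':def} is also right.

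\textbf{The gap.} The only real content of this lemma is the exponent count, and you leave it unresolved. The estimate you actually write gives $\max\{(s+4-\alpha)_++16-2\alpha,\ s+10-2\alpha\}$, and ``if necessary, I will use the finer low norm'' is not an argument.

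\textbf{Your reasons for the shifts to $8$ and $s+4$ are mistaken.} The norm \eqref{norm:ring1} measures $\psi$ in $H^{r+1/2}(\Sigma_T)$ and $\Psi$ in $H^{r+1/2}(\Omega_T)$. So $\|\cdot\|_6$ already dominates every low-order quantity in Proposition~\ref{pro:tame2}:
\begin{itemize}
\item $\|(U_i,\Psi_i)\|_{H^6_*(\Omega_T)}$ and $\|(u_i,\Psi_i)\|_{H^4(\Omega_T)}$;
\item $\|\nu_i\|_{H^4(\Omega_T)}\lesssim\|v_i\|_{H^5_*(\Omega_T)}$, by the anisotropic trace theorem;
\item $\|(U_i,u_i,\mathrm{D}_{\rm tan}\psi_i)\|_{H^2(\Sigma_T)}$.
\end{itemize}
Likewise $\|\cdot\|_{s+2}$ dominates all the high-order quantities, for instance $\|\mathrm{D}_{\rm tan}\delta\psi_n\|_{H^{s+1}(\Sigma_T)}\le\|\delta\psi_n\|_{H^{s+2}(\Sigma_T)}$ and $\|\delta\nu_n\|_{H^{s+1}}\lesssim\|\delta v_n\|_{H^{s+2}_*}$. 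There is no extra loss from traces or from $\mathrm{D}_{\rm tan}\psi$. The base state enters with $s+2$ derivatives, not $s+4$. In fact the Taylor base point contains the unsmoothed $U_n$, whose $\|\cdot\|_{s+4}$ norm is not covered by Lemma~\ref{lem:triangle} once $s+4>\widetilde\alpha$.

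\textbf{The count with the native indices.} Write $\delta_n:=(\delta U_n,\delta u_n,\delta\psi_n,\delta\Psi_n)$ and $r_n:=(I-\mathcal S_{\theta_n})(U_n,u_n,\psi_n,\Psi_n)$. The inputs are:
\begin{itemize}
\item $\|\delta_n\|_6\le\varepsilon\theta_n^{5-\alpha}\varDelta_n$ and $\|\delta_n\|_{s+2}\le\varepsilon\theta_n^{s+1-\alpha}\varDelta_n$;
\item $\|r_n\|_k\lesssim\varepsilon\theta_n^{k-\alpha}$ for $k=6,\,s+2$;
\item the base state in $\|\cdot\|_{s+2}$ is $\lesssim C(M_0)+\varepsilon\theta_n^{(s+2-\alpha)_+}$.
\end{itemize}
These give
\[
\varepsilon\theta_n^{5-\alpha}\varDelta_n\,\varepsilon\theta_n^{s+2-\alpha}
+\varepsilon\theta_n^{s+1-\alpha}\varDelta_n\,\varepsilon\theta_n^{6-\alpha}
+\varepsilon\theta_n^{5-\alpha}\varDelta_n\,\varepsilon\theta_n^{6-\alpha}\,\theta_n^{(s+2-\alpha)_+}
\lesssim\varepsilon^2\big(\theta_n^{s+8-2\alpha}+\theta_n^{(s+2-\alpha)_++12-2\alpha}\big)\theta_n^{-1}\varDelta_n ,
\]
which is exactly $\varsigma_2(s)$. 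When $s+2=\alpha$, the extra factor $\log\theta_n$ is absorbed by the branch $s+8-2\alpha$, since $\theta_n^{6-\alpha}\log\theta_n$ is bounded for $\alpha\ge7$. The vacuum and boundary errors are handled the same way.

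\textbf{A small correction.} $(I-\mathcal S_{\theta_n})\nu_n$ is not controlled by traces of $(I-\mathcal S_{\theta_n})v_n$, because $\mathcal S_{\theta_n}$ does not commute with restriction to $\Sigma$ followed by re-extension with $\chi$. Bound it directly with \eqref{smooth.p1b} and the trace theorem:
\[
\|(I-\mathcal S_{\theta_n})\nu_n\|_{H^{s+1}(\Omega_T)}\lesssim\theta_n^{s+2-\widetilde\alpha}\|\nu_n\|_{H^{\widetilde\alpha-1}(\Omega_T)}\lesssim\theta_n^{s+2-\widetilde\alpha}\|v_n\|_{H^{\widetilde\alpha}_*(\Omega_T)}\lesssim\varepsilon\theta_n^{s+2-\alpha}.
\]
This is where $s\le\widetilde\alpha-2$ is used.
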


For applying Theorem \ref{thm:main2} to the problem \eqref{effective.NM},
we shall construct a smooth vector-valued function $(U_{n+1/2},v_{n+1/2}, \psi_{n+1/2})$ vanishing in the past, such that $(U^a+U_{n+1/2},u^a+u_{n+1/2},\varphi^a+\psi_{n+1/2})$ satisfies the constraints \eqref{bas1}--\eqref{bas6}.
\begin{proposition}
	\label{pro:modified}
	Let ${\alpha }\geq 8$.
	If $\theta_0\geq 1$ is sufficiently  large and $\varepsilon,T>0$ are small enough, then
	there exist functions $U_{n+1/2}$, $u_{n+1/2}$, and $\psi_{n+1/2}$,
	such that $(U^a+U_{n+1/2},u^a+u_{n+1/2}, \varphi^a+\psi_{n+1/2})$ satisfies \eqref{bas1}--\eqref{bas6}
	and \eqref{bas10}
	with $(U^a, u^a, \varphi^a)$ being the approximate solution constructed in Lemma \textrm{\ref{lem:app}}.
	Moreover,
	\begin{alignat}{3}
		\label{MS.id1}	& \psi_{n+1/2}=\mathcal{S}_{\theta_n}\psi_{{n}},\quad \Psi_{n+1/2}=\chi(x_1)\psi_{n+1/2},&& \quad v_{2, n+1/2}=\mathcal{S}_{\theta_n} v_{2, n},\\[1mm]
		\label{MS.est1}	&\|\mathcal{S}_{\theta_n}\Psi_n-\Psi_{n+1/2}\|_{H^s(\Omega_{T})}\lesssim \varepsilon \theta_n^{s-{\alpha }}
		&& \textrm{for } s=6,7,\ldots,\widetilde{\alpha}+6,  \\
		\label{MS.est2}	&  \|\mathcal{S}_{\theta_n}U_n-U_{n+1/2}\|_{H_*^s(\Omega_{T})}
		\lesssim \varepsilon \theta_n^{s+2-{\alpha }}
		&&\textrm{for } s=6,7,\ldots,\widetilde{\alpha}+4,  \\
		\label{MS.est3}	 & \|\mathcal{S}_{\theta_n}u_n-u_{n+1/2}\|_{H^{s}(\Omega_{T})}
		\lesssim \varepsilon \theta_n^{s+1-{\alpha }}
		&&   \textrm{for } s=6,7,\ldots,\widetilde{\alpha}+5.
	\end{alignat}
\end{proposition}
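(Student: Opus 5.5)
We build the modified state by smoothing the current iterate and then correcting only the components that enter the constraints \eqref{bas3}--\eqref{bas6}, in the spirit of \cite{CS08MR2423311,T09ARMAMR2481071,TW21MR4201624}. We fix $\psi_{n+1/2}:=\mathcal{S}_{\theta_n}\psi_n$, $\Psi_{n+1/2}:=\chi(x_1)\psi_{n+1/2}$ and $v_{2,n+1/2}:=\mathcal{S}_{\theta_n}v_{2,n}$, so that \eqref{MS.id1} and \eqref{MS.est1} are immediate; in fact $\mathcal{S}_{\theta_n}\Psi_n-\Psi_{n+1/2}=0$ if $\mathcal{S}_{\theta_n}$ commutes with multiplication by $\chi(x_1)$, and otherwise the difference is a commutator controlled by Lemma \ref{lem:triangle}.

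\emph{Plasma variables.} The first boundary condition (third constraint in \eqref{bas3}) is enforced by setting
\begin{align*}
v_{1,n+1/2}:=\mathcal{S}_{\theta_n}v_{1,n}+\mathfrak{R}_T\Big(\p_t\psi_{n+1/2}-(v^a_1+\mathcal{S}_{\theta_n}v_{1,n})|_{\Sigma}+(v^a_2+v_{2,n+1/2})\p_2(\varphi^a+\psi_{n+1/2})\Big),
\end{align*}
where $\mathfrak{R}_T$ is a lifting operator. Since $(U_n,\psi_n)$ satisfies $\mathcal{B}_1=0$ up to $\mathcal{B}(U_n,u_n,\psi_n)$, the correction splits into three pieces: the boundary error, controlled by hypothesis ($\mathbf{H}_{N-1}$)(c); commutators $(I-\mathcal{S}_{\theta_n})$ acting on $(v_n,\psi_n)$, controlled by Lemma \ref{lem:triangle}; and quadratic terms. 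This gives a bound of the form \eqref{MS.est2} for $v$. Next we define $H_{n+1/2}$ as the solution vanishing in the past of $\mathbb{H}(H^a+H_{n+1/2},v^a+v_{n+1/2},\Phi^{a+}+\Psi_{n+1/2})=0$. By the argument of Appendix \ref{app:C}, this solution automatically satisfies \eqref{bas5} and the constraint $H\cdot N=0$, because these hold for the approximate solution at $t<0$. The difference $\mathcal{S}_{\theta_n}H_n-H_{n+1/2}$ solves a linear transport equation whose source consists of $\mathcal{S}_{\theta_n}$ applied to the $H$-component of $\mathcal{L}_+(U_n,\Psi_n)-f^a$ (which vanishes in the $H$-components for $t<0$), plus commutator terms. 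Using ($\mathbf{H}_{N-1}$)(b), \eqref{f^a:est}, and the loss of one derivative from $v$ and $\Psi$, we obtain \eqref{MS.est2}; the loss $\theta_n^{2}$ comes from the loss in $v_1$ (trace and lifting) together with one more derivative in the transport estimate. We set $S_{n+1/2}:=\mathcal{S}_{\theta_n}S_n$ and fix $q_{n+1/2}$ at the end.

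\emph{Vacuum variables.} We define $u_{n+1/2}$ as the solution vanishing in the past of $\mathbb{L}_-(u^a+u_{n+1/2},\nu^a+\nu_{n+1/2},\Phi^{a-}+\Psi_{n+1/2})=0$, supplemented with the boundary condition \eqref{bas6}, namely $e+\epsilon\p_t\varphi h_2=0$. This problem is solved as in Appendix \ref{app:D}. Its solution then satisfies the second constraint of \eqref{bas4}, \eqref{bas6}, and (via Appendix \ref{app:C}) the $h$-parts of \eqref{bas3} and \eqref{bas5}. The estimate \eqref{MS.est3} follows by writing the equation for $\mathcal{S}_{\theta_n}u_n-u_{n+1/2}$: its sources are $\mathcal{S}_{\theta_n}\mathcal{L}_-(u_n,\nu_n,\Psi_n)$, controlled in $H^{s+1}$ by ($\mathbf{H}_{N-1}$)(b), together with the third boundary error and commutator terms. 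We then apply the tame estimate of Appendix \ref{app:D}, which loses one derivative relative to the data; this is the origin of the exponent $s+1-\alpha$. Finally we set $q_{n+1/2}:=\mathcal{S}_{\theta_n}q_n+\mathfrak{R}_T(\cdots)$ to enforce $2q=|h|^2-e^2$ on $\Sigma$, estimated in the same way.

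\emph{Remaining constraints.} The bounds \eqref{bas1} and \eqref{bas2} follow from Lemma \ref{lem:triangle} and Sobolev embedding once $\varepsilon$ is small. The condition \eqref{bas10} follows from \eqref{app2b}: $|H^a|+|h^a|\geq 3\delta_0/4$, and the perturbation is of size $\lesssim\varepsilon$ in $L^\infty$, so \eqref{bas10} holds with $\delta=\delta_0/2$.

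\emph{Main obstacle.} The hardest step is the vacuum correction. We must track carefully the loss of derivatives in the non-maximally-nonnegative problem of Appendix \ref{app:D}, and we must check that the smoothed data yield zero initial data, so that solutions vanish in the past and compatibility holds without further conditions. We would handle this by using the reduction of Appendix \ref{app:D} to a maximally nonnegative problem and tame estimates with the $\mathring{\rm C}_{m+4}$-type coefficients, which are controlled by $\theta_n^{(s-\alpha)_+}$ via Lemma \ref{lem:triangle}.
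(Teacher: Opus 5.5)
Your construction is the paper's. You smooth $\psi_n$, $v_{2,n}$ and $S_n$, correct $v_1$ by a lifting so that the kinematic condition holds, and take $H_{n+1/2}$, $u_{n+1/2}$ as the zero-data solutions of \eqref{MS.eq1} and \eqref{MS:u.eq}--\eqref{MS:u.bdy}; the constraints then follow from Lemma~\ref{lem:app} and Appendix~\ref{app:C}. Two minor points:
\begin{itemize}
\item The correction of $q$ is unnecessary. The pressure balance is not among \eqref{bas1}--\eqref{bas6}, and the paper simply sets $q_{n+1/2}:=\mathcal{S}_{\theta_n}q_n$.
\item In your formula for $v_{1,n+1/2}$ the lifted quantity must contain $\p_t(\varphi^a+\psi_{n+1/2})$, not $\p_t\psi_{n+1/2}$. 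After using \eqref{app1d} it reduces to the paper's $\hat w_n-(\mathcal{S}_{\theta_n}v_{1,n})|_{x_1=0}$, which vanishes in the past.
\end{itemize}

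The real gap is the derivative bookkeeping, which does not close as you state it. For $v_{n+1/2}-\mathcal{S}_{\theta_n}v_n$ you only claim a bound of the form \eqref{MS.est2}, i.e.\ $\varepsilon\theta_n^{s+2-\alpha}$, and attribute a loss to ``trace and lifting''. But this difference is used at higher order in both later steps:
\begin{itemize}
\item The source of the transport equation for $H_{n+1/2}-\mathcal{S}_{\theta_n}H_n$ contains $(H^a+\mathcal{S}_{\theta_n}H_n)\,\nabla^{\Phi^{a+}+\Psi_{n+1/2}}\cdot(v_{n+1/2}-\mathcal{S}_{\theta_n}v_n)$. This is a genuine normal derivative $\p_1$, which costs two orders in $H^s_*$.
\item In the vacuum step, the coefficient change $\nu_{n+1/2}-\mathcal{S}_{\theta_n}\nu_n$ (the paper's $\mathcal{T}_4$, which is not a commutator) is controlled in $H^s$ only through the trace of the $v$-difference, hence through its $H^{s+1}_*$ norm; see \eqref{MS:es8}.
\end{itemize}
With your rate these estimates give only $\varepsilon\theta_n^{s+4-\alpha}$ and $\varepsilon\theta_n^{s+3-\alpha}$, not \eqref{MS.est2}--\eqref{MS.est3}.

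What is needed is the lossless bound \eqref{MS.e2}, $\|v_{n+1/2}-\mathcal{S}_{\theta_n}v_n\|_{H^s_*(\Omega_T)}\lesssim\varepsilon\theta_n^{s-\alpha}$. Your own splitting (boundary error, $(I-\mathcal{S}_{\theta_n})$-terms, quadratic terms) does give it. The reason is that the lifting of \cite{OSY94MR1289186} maps $H^{s-1}(\Sigma_T)$ into $H^s_*(\Omega_T)$, exactly compensating the anisotropic trace $H^s_*(\Omega_T)\to H^{s-1}(\Sigma_T)$, so there is no loss there.

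Accordingly, the factor $\theta_n^2$ in \eqref{MS.est2} comes from $\p_1 v$ in the $H$-equation, not from a loss in $v_1$ plus a lossy transport estimate. Likewise, the exponent $s+1-\alpha$ in \eqref{MS.est3} is not produced by the $\mathcal{L}_-$-error, which $(\mathbf{H}_{N-1})$(b) controls in $H^{s+1}$ at the better rate $\theta_n^{s-\alpha-1}$. In the paper's Step 6 it comes from the coefficient perturbations $\Psi_{n+1/2}-\mathcal{S}_{\theta_n}\Psi_n$ and $\nu_{n+1/2}-\mathcal{S}_{\theta_n}\nu_n$, via \eqref{MS.est1} and \eqref{MS:es8}, and you have to estimate these explicitly.

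Finally, the proposition is needed at $n=N$ to solve for $\delta\dot U_N$, while $(\mathbf{H}_{N-1})$ controls $\mathcal{L}_\pm$ and $\mathcal{B}$ only up to index $N-1$. So you cannot bound $\mathcal{S}_{\theta_n}\mathcal{L}_-(u_n,\nu_n,\Psi_n)$ directly by $(\mathbf{H}_{N-1})$(b). The paper splits this term as $\mathcal{T}_1+\mathcal{T}_2$, i.e.\ the error at step $n-1$ plus the increment coming from $\delta u_{n-1}$. The same device, or an equivalent use of \eqref{LB:conver}, is needed wherever you invoke step-$n$ errors, including $\mathcal{B}(U_n,u_n,\psi_n)$ in the $v_1$ correction and $\mathcal{L}_+(U_n,\Psi_n)$ in the $H$ step.
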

\begin{proof}
	The proof is divided into six steps.
	
	\vspace*{2mm}
	\noindent {\it Step 1}.\quad	
	Since the modified state will be chosen to vanish in the past,
	it follows from Lemma \ref{lem:app} that
	$(U^a+U_{n+1/2},u^a+u_{n+1/2},\varphi^a+\psi_{n+1/2})$ will satisfy \eqref{bas1}--\eqref{bas2} and \eqref{bas10} for $T>0$ sufficiently small.
	Hence it suffices to focus on the constraints \eqref{bas3}--\eqref{bas6}.
	Let us define $q_{{n}+1/2}:=\mathcal{S}_{\theta_n} q_n$ and $S_{{n}+1/2}:=\mathcal{S}_{\theta_n} S_n$.
	
	\vspace*{2mm}
	\noindent {\it Step 2}.\quad	
	Set $\psi_{{n}+1/2}:=\mathcal{S}_{\theta_n}\psi_{{n}}$
	and $\Psi_{{n}+1/2}:=\chi(x_1)\psi_{n+1/2}$.
	If $6\leq s\leq \widetilde{\alpha}$, then it follows from Proposition \ref{pro:smooth} and Lemma \ref{lem:triangle} that
	\begin{align}
		\nonumber \| \mathcal{S}_{\theta_n}\Psi_n-\Psi_{n+1/2}\|_{H^s(\Omega_{T})}
		&  \lesssim \|(\mathcal{S}_{\theta_{{n}}}-{I})(\chi( x_1) \psi_{{n}}) +\chi( x_1)({I}-\mathcal{S}_{\theta_{{n}}})\psi_{{n}}\|_{H^s(\Omega_{T})}\\
		&  \lesssim \theta_{{n}}^{s-\widetilde{\alpha}} \left(
		\|\chi( x_1) \psi_{{n}}\|_{ H^{\widetilde{\alpha}}(\Omega_{T})}
		+\|\psi_n\|_{H^{\widetilde{\alpha}}(\Sigma_T)}\right)
		\lesssim \varepsilon \theta_{{n}}^{s-{\alpha }}.
		\nonumber
	\end{align}
	If $\widetilde{\alpha}<s\leq \widetilde{\alpha}+6$, then Lemma \ref{lem:triangle} implies
	\begin{align}
		\| \mathcal{S}_{\theta_n}\Psi_n-\Psi_{n+1/2}\|_{H^s(\Omega_{T})}\nonumber
		\lesssim
		\| \mathcal{S}_{\theta_n}\Psi_n\|_{H^s(\Omega_{T})}
		+\|\mathcal{S}_{\theta_n}\psi_{{n}} \|_{H^{s}(\Sigma_T)}
		\lesssim \varepsilon \theta_{{n}}^{s-{\alpha }},
		\nonumber
	\end{align}
	which completes the proof of \eqref{MS.est1}.

	\vspace*{2mm}
	\noindent {\it Step 3}.\quad	
	Set  $v_{2,n+1/2}:=\mathcal{S}_{\theta_n} v_{2, n}$.
	Let us define
	\begin{gather*}
		v_{1,n+1/2}:=
		\mathcal{S}_{\theta_{{n}}}v_{1,n}+
		\mathfrak{R}_T\left(\hat{w}_{n}- \left.\left(\mathcal{S}_{\theta_{{n}}}v_{1,n} \right)\right|_{x_1=0} \right),
	\end{gather*}
	where $\mathfrak{R}_T$ is the lifting operator $H^{s-1}(\Sigma_{T})\to H_*^{s}(\Omega_{T})$ (cf.~\cite[Theorem 1]{OSY94MR1289186}) and
	\begin{gather*}
		\hat{w}_{n}:=\p_t \psi_{n+1/2} +
		(v_2^{a}+v_{2,n+1/2}) \big|_{x_1=0}\p_2 \psi_{n+1/2} +v_{2,n+1/2}\big|_{x_1=0}
		\p_2\varphi^a  .
	\end{gather*}
	Then it follows from \eqref{app1d} that the third condition  in \eqref{bas3} holds for $(v^a+v_{n+1/2},\varphi^a+\psi_{n+1/2})$.
	As proved in \cite[Proposition 4.8]{TW21MR4201624}, we have
	\begin{align}
		\|v_{n+1/2}-\mathcal{S}_{\theta_{{n}}}v_{n}\|_{H_*^{s}(\Omega_{T})}
		\lesssim  \varepsilon \theta_n^{s-{\alpha }}
		\quad \textrm{for }\ s =6,7,\ldots, \widetilde{\alpha}+{6}.
		\label{MS.e2}
	\end{align}
	
	\vspace*{1mm}
	\noindent {\it Step 4}.\quad
	Noting that $\Psi_{n+1/2} $ and ${v}_{n+1/2}$ have been specified,
	we take ${H}_{n+1/2} $ as the unique solution of the linear transport equation
	\begin{align} \label{MS.eq1}
		\mathbb{H}(H^{a}+{H}_{n+1/2},v^{a}+{v}_{n+1/2},\Phi^{a+}+\Psi_{n+1/2})=0,
	\end{align}
	supplemented with zero initial data ${H}_{n+1/2} |_{t=0}=0,$
	where the operator $\mathbb{H}$ is defined by \eqref{Hbb:def}.
	Since $(v^a+v_{n+1/2},\varphi^a+\psi_{n+1/2})$ satisfies the third condition  in \eqref{bas3},
	no boundary condition is needed for the solvability of $H_{n+1/2}$.
	
	Since ${H}_{n+1/2}$ and $\psi_{{n}+1/2}$ vanish at $t=0$,
	it follows from Lemma \ref{lem:app}
	that $(H^a+H_{n+1/2},\varphi^a+\psi_{n+1/2})$ satisfies the second constraint in \eqref{bas3} and the first equations in \eqref{bas4}--\eqref{bas5}.
	
	As shown in \cite[Proposition 4.8]{TW21MR4201624}, we can use the Moser-type calculus inequalities for anisotropic Sobolev spaces  and hypothesis $({\bf H}_{N-1})$ to derive
	\begin{align}
		\big\| {H}_{n+1/2} -\mathcal{S}_{\theta_n}H_{n} \big\|_{H_*^{s}(\Omega_{T})}
		\lesssim \varepsilon \theta_n^{s-{\alpha }+2}
		\quad \textrm{for }\ s =6,7,\ldots, \widetilde{\alpha}+{4}.	
		\nonumber
	\end{align}

	\vspace*{2mm}
	\noindent {\it Step 5}.\quad	
	we take ${u}_{n+1/2} $ as the unique solution of Maxwell's equations
	\begin{align}    \label{MS:u.eq}
		\mathbb{L}_-(u^{a}+{u}_{n+1/2},\nu^{a}+{\nu}_{n+1/2},\Phi^{a-}+\Psi_{n+1/2})=0
		\quad\textrm{in }\Omega_{T},
	\end{align}
	supplemented with  zero initial data ${u}_{n+1/2} |_{t=0}=0$ and
	the boundary condition
	\begin{align}
		{e_{n+1/2}+\epsilon \p_t(\varphi^a+\psi_{n+1/2})h_{2, n+1/2}+\epsilon\p_t \psi_{n+1/2}h_2^a
			=0\quad\textrm{on } \Sigma_T, }
		\label{MS:u.bdy}		
	\end{align}
	where $\nu_{n+1/2}(t,x):=\chi(x_1)v_{n+1/2}(t,0,x_2)$.
	The solvability of the problem \eqref{MS:u.eq}--\eqref{MS:u.bdy} can be obtained as in Appendix \ref{app:D}.
	Since ${u}_{n+1/2}$ and $\psi_{{n}+1/2}$ vanish at the initial time, as in Appendix \ref{app:C},
	we use Lemma \ref{lem:app} to deduce
	that
	the first constraint in \eqref{bas3}, the second equations in \eqref{bas4}--\eqref{bas5}, and the condition \eqref{bas6}
	are satisfied for $(h^a+h_{n+1/2},\varphi^a+\psi_{n+1/2})$.

	\vspace*{2mm}
	\noindent {\it Step 6}.\quad	
	It remains to prove \eqref{MS.est3}.
	For this purpose,  we use \eqref{MS:u.eq} to decompose
	\begin{align}
		\mathbb{L}_-({u}_{n+1/2}-\mathcal{S}_{\theta_{n}} u_n,\nu^{a}+{\nu}_{n+1/2},\Phi^{a-}+\Psi_{n+1/2})
		=  \mathcal{T}_1+\mathcal{T}_2+\mathcal{T}_3+\mathcal{T}_4, \label{MS.e3}	
	\end{align}
	where $\mathcal{T}_1:= -\mathcal{S}_{\theta_{n}}\mathcal{L}_-( u_{n-1},{\nu}_{n-1},\Psi_{n-1})$,
	\begin{align*}
		\mathcal{T}_2:= \; &
		\mathcal{S}_{\theta_{n}}\mathcal{L}_-( u_{n-1},{\nu}_{n-1},\Psi_{n-1})
		-\mathcal{S}_{\theta_{n}}\mathcal{L}_-(u_n, {\nu}_{n},\Psi_{n}),\\
		\mathcal{T}_3:=\; &\mathcal{S}_{\theta_{n}}\mathcal{L}_-(u_n, {\nu}_{n},\Psi_{n})
		- \mathcal{L}_-(\mathcal{S}_{\theta_{n}} u_n, \mathcal{S}_{\theta_{n}} {\nu}_{n},\Psi_{n+1/2})		,\\
		\mathcal{T}_4:=\; &\mathcal{L}_-(\mathcal{S}_{\theta_{n}} u_n, \mathcal{S}_{\theta_{n}} {\nu}_{n},\Psi_{n+1/2})- \mathcal{L}_-(\mathcal{S}_{\theta_{n}} u_n,{\nu}_{n+1/2},\Psi_{n+1/2}).
	\end{align*}
	From Propositions \ref{pro:smooth} and hypothesis $(\mathbf{H}_{{N}-1})$, we have
	\begin{align}
		\|\mathcal{T}_1\|_{H^{s}(\Omega_{T})}
		\lesssim  \theta_{{n}}^{s-6}
		\|\mathcal{L}_-( u_{n-1},{\nu}_{n-1},\Psi_{n-1})\|_{H^{6}(\Omega_{T})}
		\lesssim \varepsilon \theta_n^{s-{\alpha }}\quad
		\textrm{for  }s\geq 6.
		\label{MS.p5}
	\end{align}	
	Since $\nu_{n+1/2}:=\chi(x_1)v_{n+1/2}|_{\Sigma}$ and $\nu_{n}(t,x):=\chi(x_1)v_{n}|_{\Sigma}$,
	we obtain
	\begin{align} \nonumber
		\big\|\nu_{n+1/2}-\mathcal{S}_{\theta_{{n}}}\nu_{n}\big\|_{H^{s}(\Omega_{T})}
		\lesssim 	\;&  \big\|v_{n+1/2}-\mathcal{S}_{\theta_{{n}}}v_{n} \big\|_{H^s(\Sigma_{T})}\\
		&	+\big\|\chi(x_1)(\mathcal{S}_{\theta_{{n}}}v_n)|_{\Sigma}
		-\mathcal{S}_{\theta_{{n}}}\big(\chi(x_1)v_n|_{\Sigma}\big) \big\|_{H^s(\Omega_{T})}. \label{MS:es9}
	\end{align}
	Using the trace theorem and \eqref{MS.e2} leads to
	\begin{align*}
		\|v_{n+1/2}-\mathcal{S}_{\theta_{{n}}}v_{n} \|_{H^s(\Sigma_{T})}\lesssim
		\|v_{n+1/2}-\mathcal{S}_{\theta_{{n}}}v_{n} \|_{H_*^{s+1}(\Omega_{T})}
		\lesssim  \varepsilon \theta_n^{s+1-{\alpha }}
	\end{align*}
	for $s =6,7,\ldots, \widetilde{\alpha}+{5}$.
	The last term in \eqref{MS:es9} can be estimated by
	virtue of similar calculations as in Step 2 and the trace theorem. Then
	we have
	\begin{align}
		\|\nu_{n+1/2}-\mathcal{S}_{\theta_{{n}}}\nu_{n}\|_{H^{s}(\Omega_{T})}
		\lesssim \varepsilon \theta_{{n}}^{s+1-{\alpha }}
		\quad \textrm{for }\ s =6,7,\ldots, \widetilde{\alpha}+{5}.		
		\label{MS:es8}
	\end{align}
	Using the Moser-type calculus inequalities for standard Sobolev spaces, 
	we can deduce from definition \eqref{NM0}, Lemma \ref{lem:app}, Propositions \ref{pro:smooth}, hypothesis $(\mathbf{H}_{{N}-1})$, and estimates \eqref{MS.est1}, \eqref{MS:es8} that
	\begin{align}
		\|(\mathcal{T}_2,\, \mathcal{T}_3, \, \mathcal{T}_4)\|_{H^{s}(\Omega_{T})}
		\lesssim \varepsilon \theta_n^{s+1-{\alpha }}
		\quad \textrm{for }\ s =6,7,\ldots, \widetilde{\alpha}+{5}.	
		\nonumber
	\end{align}	
	Combine the last estimate with \eqref{MS.e3} and \eqref{MS.p5} to deduce \eqref{MS.est3}, which completes the proof of this lemma.
\end{proof}

Thanks to Proposition \ref{pro:modified},
we can obtain the following estimate for the second substitution error terms $e_{n\pm}'''$ and $\tilde{e}_{n}'''$ given in \eqref{deco1}--\eqref{deco3}.
\begin{lemma} \label{lem:2nd}
	Let ${\alpha }\geq 8$.
	If $\theta_0\geq 1$ is sufficiently  large and $\varepsilon,T>0$ are small enough, then for $n=0,1, \ldots,N-1$  and $s=6,7,\ldots,\widetilde{\alpha}-2 $,
	\begin{align} \nonumber 
		&\|e_{n+}'''\|_{H_*^s(\Omega_{T})}+\|e_{n-}'''\|_{H^{s+1}(\Omega_{T})}\lesssim \varepsilon^2 \theta_n^{\varsigma_3(s)-1}\varDelta_n,\\
		& \|\tilde{e}_{n}''' \|_{ H^{s+1/2}(\Sigma_{T})}\lesssim \varepsilon^2 \theta_n^{s+7-2\alpha}\varDelta_n,
		\nonumber 
	\end{align}
	where	 $\varsigma_3(s):=\max\{(s+2-{\alpha })_++14-2{\alpha },\, s+10-2{\alpha } \}.$
\end{lemma}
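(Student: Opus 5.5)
The second substitution errors come from replacing the state $(U^a+\mathcal{S}_{\theta_n}U_n,\dots)$ by the modified state $(U^a+U_{n+1/2},\dots)$ in the linearized operators. We write them, following \cite[Lemma 4.8]{TW21MR4201624}, as integrals of second derivatives along the segment joining the two states. Concretely,
\[
e_{n+}'''=\int_0^1\mathbb{L}_+''\big(U^a+U_{n+1/2}+\tau(\mathcal{S}_{\theta_n}U_n-U_{n+1/2}),\ \Phi^{a+}+\Psi_{n+1/2}+\tau(\mathcal{S}_{\theta_n}\Psi_n-\Psi_{n+1/2})\big)\big((\delta U_n,\delta\Psi_n),(\mathcal{S}_{\theta_n}U_n-U_{n+1/2},\mathcal{S}_{\theta_n}\Psi_n-\Psi_{n+1/2})\big)\d\tau .
\]
The analogous formula holds for $e_{n-}'''$, with $(u,\nu,\Psi)$ in place of $(U,\Psi)$. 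For the boundary term, $\mathbb{B}'$ is affine in the state, so $\tilde e_n'''$ is exactly $\mathbb{B}''\big((\delta U_n,\delta u_n,\delta\psi_n),(\mathcal{S}_{\theta_n}U_n-U_{n+1/2},\mathcal{S}_{\theta_n}u_n-u_{n+1/2},0)\big)$, read off from \eqref{B'':def}. This formula uses $\psi_{n+1/2}=\mathcal{S}_{\theta_n}\psi_n$ from \eqref{MS.id1}, so the $\tilde\psi$-slot vanishes. It also uses $v_{2,n+1/2}=\mathcal{S}_{\theta_n}v_{2,n}$, so the $\tilde v_2\p_2\delta\psi_n$ term drops out as well. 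What is left involves only $\tilde h,\tilde e$, namely $-\tilde h\cdot\delta h_n+\tilde e\,\delta e_n$ and $\epsilon\tilde h_2\p_t\delta\psi_n$.

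We then apply Proposition \ref{pro:tame2} with $s$ replaced appropriately. First we check its hypothesis: the base state has $H_*^6\times H^4$ norm bounded by some $M$, which follows from Lemma \ref{lem:triangle}, \eqref{app2a}, and \eqref{MS.est1}--\eqref{MS.est3} for $\alpha\ge 8$ and $\varepsilon$ small.

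The interior estimate then produces four kinds of products.
\begin{itemize}
\item[(i)] The low norm of $\delta U_n$ times the high norm of the difference: $\varepsilon\theta_n^{5-\alpha}\varDelta_n\cdot\varepsilon\theta_n^{s+4-\alpha}$, using \eqref{MS.est2} at level $s+2$. This gives $\theta_n^{s+9-2\alpha}$.
\item[(ii)] The high norm of $\delta U_n$ times the low norm of the difference: $\varepsilon\theta_n^{s+1-\alpha}\varDelta_n\cdot\varepsilon\theta_n^{8-\alpha}$.
\item[(iii)] The cubic term involving the high norm of the state: $\varepsilon\theta_n^{(s+2-\alpha)_+}$ or $\log\theta_n$, times $\varepsilon\theta_n^{5-\alpha}\varDelta_n\cdot\varepsilon\theta_n^{8-\alpha}$. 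This gives $\varepsilon^2\theta_n^{(s+2-\alpha)_++13-2\alpha}\varDelta_n$.
\item[(iv)] The vacuum terms, where \eqref{MS.est3} at level $s+2$ gives $\theta_n^{s+3-\alpha}$, which is no worse.
\end{itemize}
These exponents are dominated by $\varsigma_3(s)-1$, with one unit of slack to absorb the logarithm. We need $s\le\widetilde\alpha-2$ so that $s+2\le\widetilde\alpha$ and hypothesis (a) applies, and so that $s+4\le\widetilde\alpha+4$ lies in the range of \eqref{MS.est2}.

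For $\tilde e_n'''$ we estimate in $H^{s+1/2}(\Sigma_T)$, not $H^{s+1}$. The reason is that traces of $\mathcal{S}_{\theta_n}u_n-u_{n+1/2}$ cost half a derivative via the trace theorem, giving $\|\cdot\|_{H^{s+1/2}(\Sigma)}\lesssim\|\cdot\|_{H^{s+1}(\Omega)}\lesssim\varepsilon\theta_n^{s+2-\alpha}$ by \eqref{MS.est3}. The plain tame product estimate in $H^{s+1/2}(\Sigma_T)$ then yields the following, with $\delta\psi_n$ controlled in $H^{s+3/2}$ by (a):
\[
\varepsilon\theta_n^{s+2-\alpha}\cdot\varepsilon\theta_n^{5-\alpha}\varDelta_n+\varepsilon\theta_n^{8-\alpha}\cdot\varepsilon\theta_n^{s+1-\alpha}\varDelta_n .
\]
Both terms are at most $\varepsilon^2\theta_n^{s+9-2\alpha}\varDelta_n$. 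To reach the sharper exponent $s+7-2\alpha$, we bound the low-order factor of the difference by its $H^4$-type norm $\varepsilon\theta_n^{5-\alpha}$, which follows from \eqref{MS.est3} at $s=6$ together with the embedding. We pair the high factor $\|\delta(h,e,\p_t\psi)_n\|_{H^{s+1/2}(\Sigma)}\lesssim\varepsilon\theta_n^{s-\alpha}\varDelta_n$ with it, and symmetrically the other way round.

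The main obstacle is this bookkeeping of exponents, in particular making the boundary exponent exactly $s+7-2\alpha$ with only half a derivative of trace loss. The point to check is that the $q$- and $v_1$-components of the difference never enter $\tilde e_n'''$ ($q$ enters $\mathbb{B}$ linearly and $v_1$ only linearly). Otherwise the worse $H_*$-trace losses from \eqref{MS.est2} would appear. The rest is routine Moser calculus.
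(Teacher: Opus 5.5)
Your proposal follows the paper's proof. In both, $\tilde e_n'''$ is exactly $\mathbb{B}''$ evaluated at $(\delta U_n,\delta u_n,\delta\psi_n)$ and $(\mathcal{S}_{\theta_n}U_n-U_{n+1/2},\mathcal{S}_{\theta_n}u_n-u_{n+1/2},0)$, bounded in $H^{s+1/2}(\Sigma_T)$ by Moser and trace inequalities with \eqref{MS.est3}; the paper keeps the $v_2$-component, which does vanish by \eqref{MS.id1}. The terms $e_{n\pm}'''$ go through the $\mathbb{L}_\pm''$ integral formula and Proposition \ref{pro:tame2}, which the paper only calls ``similar arguments''.

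A few bookkeeping slips are harmless:
\begin{itemize}
\item \eqref{MS.est3} holds only for $s\ge 6$, so the low norm of $\mathcal{S}_{\theta_n}u_n-u_{n+1/2}$ is $\lesssim\varepsilon\theta_n^{7-\alpha}$, not $\varepsilon\theta_n^{5-\alpha}$. Paired with $\varepsilon\theta_n^{s-\alpha}\varDelta_n$, this still gives exactly $s+7-2\alpha$, as in the paper.
\item Your interior exponents equal $\varsigma_3(s)-1$ exactly, so there is no spare unit. The $\log\theta_n$ at $s=\alpha-2$ is absorbed because there $s+9-2\alpha$ exceeds $13-2\alpha$ by $\alpha-6\ge 2$.
\item You omitted the $\varepsilon\theta_n^{s+4-\alpha}$ part of the base state's $H_*^{s+2}$ norm in the cubic term. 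It contributes only $\theta_n^{s+17-3\alpha}\le\theta_n^{s+9-2\alpha}$, since $\alpha\ge 8$.
\end{itemize}
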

\begin{proof}
	By virtue of \eqref{B'':def} and \eqref{MS.id1}, we have
	\begin{align}
		\nonumber \tilde{e}_{n}'''=
		\;&
		\mathbb{B}''
		\big((\delta U_n ,\delta u_n,\delta \psi_n),\, (\mathcal{S}_{\theta_n} U_n-U_{n+1/2},\mathcal{S}_{\theta_n} u_n-u_{n+1/2},0)\big)  \\[1mm]
		=\;&  	\begin{bmatrix}
			(\mathcal{S}_{\theta_n} v_{2,n}-v_{2,n+1/2})\p_2 \delta\psi_n\\[1mm]
			-\delta h_n\cdot (\mathcal{S}_{\theta_n} h_{n}-h_{n+1/2})
			+\delta e_n  (\mathcal{S}_{\theta_n} e_{n}-e_{n+1/2})\\[1mm]
			\epsilon (\mathcal{S}_{\theta_n} h_{2,n}-h_{2,n+1/2})\p_t \delta\psi_n
		\end{bmatrix} .\nonumber
	\end{align}
	Applying the Moser-type calculus inequalities, 
	we use hypothesis $(\mathbf{H}_{{N}-1})$, \eqref{MS.est3}, and \eqref{MS:es8}
	to get
	\begin{align*}
		\|\tilde{e}_{n}''' \|_{ H^{s+1/2}(\Sigma_{T})}\lesssim \;&
		\|\delta\psi_n\|_{ H^{6}(\Sigma_{T})}\|(\mathcal{S}_{\theta_n} v_{n}-v_{n+1/2},\, \mathcal{S}_{\theta_n} h_{n}-h_{n+1/2} )\|_{ H^{s+1/2}(\Sigma_{T})}\\
		& +		\|\delta\psi_n\|_{ H^{s+3/2}(\Sigma_{T})}\|(\mathcal{S}_{\theta_n} v_{n}-v_{n+1/2},\, \mathcal{S}_{\theta_n} h_{n}-h_{n+1/2} )\|_{L^{\infty}(\Sigma_{T})}\\
		& +\|\delta u_n\|_{ H^{s+1}(\Omega_{T})}
		\|\mathcal{S}_{\theta_n} u_{n}-u_{n+1/2}\|_{H^{6}(\Omega_{T})}\\
		&+\|\delta u_n\|_{ H^{6}(\Omega_{T})}
		\|\mathcal{S}_{\theta_n} u_{n}-u_{n+1/2}\|_{H^{s+1}(\Omega_{T})}
		\lesssim \varepsilon^2 \theta_n^{s+7-2\alpha}\varDelta_n.
	\end{align*}
	Employing similar arguments yields the estimate for $e_{n\pm }'''$,  which completes the proof of the lemma.
\end{proof}

Using \eqref{Alinhac1} and \eqref{Alinhac2}, we rewrite the last error terms $e_{{n}\pm}^{*}$ in \eqref{deco1} and \eqref{deco2} as
\begin{align*}
	&e_{{n}+}^{*}=\frac{\delta \Psi_{{n}}
	}{\p_1(\Phi^{a+}+\Psi_{n+1/2} )}\p_1\mathbb{L}_{+}(U^{a}+V_{n+1/2},\Phi^{a+}+\Psi_{n+1/2}) ,\\[1mm]
	&e_{{n}-}^{*}=\frac{\delta \Psi_{{n}}
	}{\p_1(\Phi^{a-}+\Psi_{n+1/2} )} \p_1\mathbb{L}_{-}(h^{a}+h_{n+1/2},\nu^{a}+\nu_{n+1/2},\Phi^{a-}+\Psi_{n+1/2}).
\end{align*}
As in \cite[Lemma 4.10]{TW21MR4201624},
we can get the next lemma by use of the embedding and Moser-type calculus inequalities,
hypothesis ($\mathbf{H}_{N-1}$), and Proposition \ref{pro:modified}.
\begin{lemma}\label{lem:last}
	Let $\widetilde{\alpha}\geq {\alpha }+2$ and ${\alpha }\geq 8$.
	If $\theta_0\geq 1$ is sufficiently  large and $\varepsilon,T>0$ are small enough, then	
	\begin{align} \nonumber 
		\|e_{n+}^*\|_{H_*^s(\Omega_{T})}+\|e_{n-}^*\|_{H^{s+1}(\Omega_{T})}
		\lesssim \varepsilon^2 \theta_n^{\varsigma_4 (s)-1}\varDelta_n
	\end{align}
	for $n=0,1, \ldots,N-1$  and $s=6,7,\ldots,\widetilde{\alpha}-2 $,
	where $$ \varsigma_4(s):=\max\{(s-{\alpha })_++18-2{\alpha },\, s+12-2{\alpha }\}.$$
\end{lemma}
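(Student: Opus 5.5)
The estimate follows the scheme of \cite[Lemma 4.10]{TW21MR4201624}: the error $e^*_{n\pm}$ is the product of the small increment $\delta\Psi_n$ and the normal derivative of the nonlinear residual evaluated at the modified state. I would first bound that residual and then close with tame product estimates.

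\emph{Step 1: split the residual.} For the plasma part, write
\begin{align*}
\mathbb{L}_+(U^a+U_{n+1/2},\Phi^{a+}+\Psi_{n+1/2})
=\;&\mathcal{L}_+(U_{n+1/2},\Psi_{n+1/2})-f^a
\\
&+\big(f^a+\mathbb{L}_+(U^a,\Phi^{a+})\big).
\end{align*}
The last bracket vanishes for $t>0$ by the definition of $f^a$, and also for $t<0$ because of \eqref{app1e}. It therefore remains to estimate $\mathcal{L}_+(U_{n+1/2},\Psi_{n+1/2})-f^a$.

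\emph{Step 2: estimate $\mathcal{L}_+(U_{n+1/2},\Psi_{n+1/2})-f^a$.} I would decompose it into three differences:
\begin{enumerate}
\item $\mathcal{L}_+(U_{n+1/2},\Psi_{n+1/2})-\mathcal{L}_+(\mathcal{S}_{\theta_n}U_n,\mathcal{S}_{\theta_n}\Psi_n)$, controlled through Proposition \ref{pro:modified}, namely \eqref{MS.est1}--\eqref{MS.est2}, which gives size $\varepsilon\theta_n^{s+2-\alpha}$ in $H^s_*$;
\item $\mathcal{L}_+(\mathcal{S}_{\theta_n}U_n,\mathcal{S}_{\theta_n}\Psi_n)-\mathcal{L}_+(U_n,\Psi_n)$, controlled through Lemma \ref{lem:triangle} and the Moser inequalities;
\item $\mathcal{L}_+(U_n,\Psi_n)-f^a$, handled by hypothesis $(\mathbf{H}_{N-1})$(b) for $n\le N-1$.
\end{enumerate}
Combining the three pieces should give
\begin{align*}
\|\mathbb{L}_+(\cdots)\|_{H^{s+1}_*(\Omega_T)}\lesssim \varepsilon\,\theta_n^{s+3-\alpha}
\end{align*}
for $s$ up to about $\widetilde\alpha-1$. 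The extra derivative is needed because of the $\p_1$ in $e^*_{n+}$; since $\|\p_1 u\|_{H^s_*}\le\|u\|_{H^{s+2}_*}$, I would take $H^{s+2}_*$ norms, giving roughly $\varepsilon\theta_n^{s+4-\alpha}$.

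The vacuum residual is handled more cheaply. By the construction in Step~5 of Proposition \ref{pro:modified}, $\mathbb{L}_-(u^a+u_{n+1/2},\nu^a+\nu_{n+1/2},\Phi^{a-}+\Psi_{n+1/2})=0$ exactly. Hence $e^*_{n-}=0$ and only $e^*_{n+}$ requires work.

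\emph{Step 3: tame product estimate.} Apply the Moser-type calculus inequality in $H^s_*$ to the product of $\delta\Psi_n/\p_1(\Phi^{a+}+\Psi_{n+1/2})$ and $\p_1\mathbb{L}_+$:
\begin{align*}
\|e^*_{n+}\|_{H^s_*}\lesssim{}&\|\delta\Psi_n\|_{W^{2,\infty}_*}\,\|\p_1\mathbb{L}_+\|_{H^s_*}\\
&+\|\delta\Psi_n\|_{H^s_*}\,\|\p_1\mathbb{L}_+\|_{W^{2,\infty}_*}\\
&+\|\delta\Psi_n\|_{W^{2,\infty}_*}\,\|\p_1\mathbb{L}_+\|_{W^{2,\infty}_*}\,\|\Psi_{n+1/2}\|_{H^s_*}.
\end{align*}
Use the embedding $H^6_*\hookrightarrow W^{2,\infty}_*$ (with a couple of extra derivatives spent for $\p_1$) together with:
\begin{itemize}
\item hypothesis (a), giving $\|\delta\Psi_n\|_s\le\varepsilon\theta_n^{s-\alpha-1}\varDelta_n$;
\item Step 2;
\item Lemma \ref{lem:triangle} for $\Psi_{n+1/2}$.
\end{itemize}
Collecting powers, the first term gives $\varepsilon^2\theta_n^{7-\alpha}\theta_n^{s+4-\alpha}$, the second gives $\varepsilon^2\theta_n^{s-\alpha-1}\theta_n^{(12-\alpha)_+}$, and the third involves $\theta_n^{(s-\alpha)_+}$. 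Matching these against $\varsigma_4(s)=\max\{(s-\alpha)_++18-2\alpha,\;s+12-2\alpha\}$ requires tracking exactly which norm index is used in each factor.

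\emph{Main obstacle.} The hard part is Step 2 in the low norms: the residual $\mathbb{L}_+$ at the modified state must be small in $W^{2,\infty}_*$ with the correct power of $\theta_n$. Hypothesis (b) is available only for $s\ge6$ and $s\le\widetilde\alpha-2$, so the restriction $\widetilde\alpha\ge\alpha+2$ has to be used so that the high norm $H^{s+2}_*$ stays within the admissible range. The loss of two derivatives in \eqref{MS.est2} is presumably what produces the constants $18$ and $12$ in $\varsigma_4$. If this bookkeeping does not close, my fallback is to estimate $\p_1\mathbb{L}_+$ directly in terms of the smoothed quantities, which have arbitrary regularity by Lemma \ref{lem:triangle}.
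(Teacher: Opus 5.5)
Your strategy is the one the paper relies on: its proof is only a pointer to Lemma~4.10 of Trakhinin--Wang, using Moser and embedding inequalities, $(\mathbf{H}_{N-1})$, and Proposition~\ref{pro:modified}. Your observation that $e^*_{n-}\equiv0$ because of \eqref{MS:u.eq} is correct. (In Step~1 the bracket $f^a+\mathbb{L}_+(U^a,\Phi^{a+})$ need not vanish for $t<0$, where it equals $\mathbb{L}_+(U_0,\Phi_0^+)$ if \eqref{app1e} is taken literally. This is harmless, since $\delta\Psi_n=0$ there.)

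The exponent count is the whole content of the lemma, and it does not close as written; note that $\varsigma_4(s)=s+12-2\alpha$ for every $s\ge6$. Two of your intermediate rates are wrong.
\begin{itemize}
\item Piece~1 of your splitting is $\int_0^1\mathbb{L}'_+(\cdots)\big(U_{n+1/2}-\mathcal{S}_{\theta_n}U_n,\,\Psi_{n+1/2}-\mathcal{S}_{\theta_n}\Psi_n\big)\,\mathrm{d}\tau$. Here $\mathbb{L}'_+$ contains $\widetilde{A}_1^+\p_1$, which costs two anisotropic derivatives on top of the two already lost in \eqref{MS.est2}. So the correct bound is $\|\mathcal{L}_+(U_{n+1/2},\Psi_{n+1/2})-f^a\|_{H^\sigma_*(\Omega_T)}\lesssim\varepsilon\theta_n^{\sigma+4-\alpha}$, i.e.\ $\|\p_1\mathbb{L}_+\|_{H^s_*}\lesssim\varepsilon\theta_n^{s+6-\alpha}$, not $\varepsilon\theta_n^{s+4-\alpha}$.
\item Conversely, $\delta\Psi_n$ is controlled in the standard space $H^{s+1/2}(\Omega_T)$. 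Hence $\|\delta\Psi_n\|_{W^{2,\infty}_*}\lesssim\|\delta\Psi_n\|_{H^{13/2}(\Omega_T)}\le\varepsilon\theta_n^{5-\alpha}\varDelta_n$, with no derivatives spent for $\p_1$.
\end{itemize}
Your two errors cancel in the first Moser term. With the correct rates, the three terms carry the powers $(5-\alpha)+(s+6-\alpha)$, then $(s-\alpha-1)+(12-\alpha)$ (using $\|\p_1\mathbb{L}_+\|_{W^{2,\infty}_*}\lesssim\|\mathbb{L}_+\|_{H^8_*}$), then $(5-\alpha)+(12-\alpha)+(s-\alpha)_+$. This is exactly $\varsigma_4(s)-1$.

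Two steps would actually fail as you state them.

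\textbf{The factor $\theta_n^{(12-\alpha)_+}$ in your second term.} For $\alpha>12$ (the lemma is used with $\alpha\ge13$) it produces $\varepsilon^2\theta_n^{s-\alpha-1}\varDelta_n$, which exceeds the target $\varepsilon^2\theta_n^{s+11-2\alpha}\varDelta_n$. The low norm of the residual must genuinely decay: $\|\mathbb{L}_+(U^a+U_{n+1/2},\Phi^{a+}+\Psi_{n+1/2})\|_{H^8_*(\Omega_T)}\lesssim\varepsilon\theta_n^{12-\alpha}$. This comes only from your three-piece splitting, using $(\mathbf{H}_{N-1})$(b) at level $8\le\widetilde\alpha-2$. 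Your fallback of bounding through smoothed quantities gives only $(\cdot)_+$ exponents and cannot work there.

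\textbf{The range.} You need $\sigma=s+2$ up to $\widetilde\alpha$. But hypothesis (b), and Lemma~\ref{lem:triangle} for piece~2 (which needs $(I-\mathcal{S}_{\theta_n})U_n$ in $H^{\sigma+2}_*$), are available only for $\sigma\le\widetilde\alpha-2$. The assumption $\widetilde\alpha\ge\alpha+2$ does not change that. For $\sigma\in\{\widetilde\alpha-1,\widetilde\alpha\}$, drop the cancellation:
\begin{itemize}
\item a tame bound gives $\|\mathcal{L}_+(U_{n+1/2},\Psi_{n+1/2})\|_{H^\sigma_*}\lesssim\varepsilon\theta_n^{\sigma+4-\alpha}$, from \eqref{MS.est1}--\eqref{MS.est2} (valid up to $\widetilde\alpha+4$) and Lemma~\ref{lem:triangle};
\item $\|f^a\|_{H^\sigma_*(\Omega_T)}\le\|f^a\|_{H^{\widetilde\alpha}(\Omega_T)}\le\varepsilon$ for $T$ small, the approximate solution being smooth enough.
\end{itemize}
Nothing is lost, because $\sigma+4-\alpha\ge\widetilde\alpha-\alpha+3>0$ there. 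So your fallback is correct only in this top range. It must also include the non-smoothed term $f^a$, whose smallness comes from $T$ rather than from the smoothing.
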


We utilize Lemmas \ref{lem:quad}--\ref{lem:last} to obtain the following estimates for the accumulated error terms $E_{N}^{\pm}$ and $\widetilde{E}_N $ defined by \eqref{E.E.tilde} and \eqref{e.e.tilde}.
\begin{lemma} \label{lem:sum2}
	Let $\widetilde{\alpha}={\alpha }+4$ and ${\alpha }\geq 13$.
	If $\theta_0\geq 1$ is sufficiently  large and $\varepsilon,T>0$ are small enough, then	
	\begin{align} \label{es.sum2}
		\|E_{N}^+\|_{H_*^{{\alpha }+2}(\Omega_{T})}+\|E_{N}^-\|_{H^{{\alpha }+3}(\Omega_{T})}
		\lesssim \varepsilon^2 \theta_{{N}},\quad
		\| \widetilde{E}_{{N}} \|_{H^{{\alpha }+5/2}(\Sigma_{T})}\lesssim \varepsilon^2.
	\end{align}
\end{lemma}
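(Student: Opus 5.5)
The plan is to sum the single-step bounds of Lemmas \ref{lem:quad}, \ref{lem:1st}, \ref{lem:2nd}, and \ref{lem:last} over $n=0,\ldots,N-1$, using the definitions \eqref{E.E.tilde} and \eqref{e.e.tilde}. The interior and boundary errors are handled separately, at the fixed regularity indices $s=\alpha+2$ for $E_N^{\pm}$ and a matching index for $\widetilde{E}_N$.

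\emph{Step 1: interior errors.} Take $s=\alpha+2$. This is admissible, since Lemmas \ref{lem:quad}--\ref{lem:last} hold for $s\le\widetilde{\alpha}-2=\alpha+2$. For this $s$ and $\alpha\geq 13$, compute the exponents:
\begin{align*}
\varsigma_1(\alpha+2)&=\max\{14-\alpha,\,10-\alpha\}=14-\alpha,\\
\varsigma_2(\alpha+2)&=16-\alpha,\\
\varsigma_3(\alpha+2)&=\max\{18-\alpha,\,12-\alpha\}=18-\alpha,\\
\varsigma_4(\alpha+2)&=\max\{20-\alpha,\,14-\alpha\}=20-\alpha.
\end{align*}
Hence each $\varsigma_i(\alpha+2)\le 20-\alpha\le 7<\ldots$; more usefully, each is at most $1$ once $\alpha\geq 19$. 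For smaller $\alpha$ I would instead use the cruder bound $\varsigma_i\le 1$ when it holds and otherwise exploit the $\varDelta_n$ factor. The key elementary fact is $\varDelta_n=\theta_{n+1}-\theta_n\simeq \theta_n^{-1}$. Therefore
\begin{align*}
\sum_{n=0}^{N-1}\theta_n^{\varsigma-1}\varDelta_n\lesssim
\begin{cases}
\theta_N^{\varsigma}, & \varsigma>0,\\
\log\theta_N, & \varsigma=0,\\
1, & \varsigma<0.
\end{cases}
\end{align*}
Summing the four error families then gives $\|E_N^+\|_{H_*^{\alpha+2}}+\|E_N^-\|_{H^{\alpha+3}}\lesssim \varepsilon^2\theta_N^{\max_i\varsigma_i(\alpha+2)}$. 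The claimed bound is $\varepsilon^2\theta_N$, so one needs $\max_i\varsigma_i(\alpha+2)\le 1$.

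\emph{The main obstacle} is exactly this exponent bookkeeping. With the stated $\varsigma_4$, the condition $20-\alpha\le 1$ would force $\alpha\geq 19$. So for $\alpha\geq 13$ I would recheck which $s$ the Nash--Moser scheme actually needs: $E_N^\pm$ enters only through $\mathcal{S}_{\theta_N}$ in \eqref{source}, so a bound in $H_*^{\alpha+2}$ of order $\theta_N$ is what the subsequent estimate of $f_N$ requires. Concretely, I would first verify the sharper bound $\varsigma_4(s)\le s+12-2\alpha$ in the range used. If it genuinely holds only for the larger exponent, I would fall back on interpolating $E_N$ between a low norm and the top norm.

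\emph{Step 2: boundary errors.} For $\widetilde{E}_N$ only $\tilde e_n',\tilde e_n'',\tilde e_n'''$ appear. Take $s=\alpha+2$ in Lemma \ref{lem:2nd}, which bounds $\|\tilde e_n'''\|_{H^{\alpha+5/2}}$ by $\varepsilon^2\theta_n^{9-\alpha}\varDelta_n$. For $\alpha\geq 13$ the exponent gives $\theta_n^{\,9-\alpha}\varDelta_n\lesssim\theta_n^{-5}$, which is summable. Likewise use $\tilde e_n',\tilde e_n''$ at $s=\alpha+2$, giving the exponents $\varsigma_1-1=13-\alpha<0$ and $\varsigma_2-1=15-\alpha<0$. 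The $H^{s+1}$ norm there dominates $H^{s+1/2}$. All these sums are bounded uniformly in $N$, which yields $\|\widetilde{E}_N\|_{H^{\alpha+5/2}(\Sigma_T)}\lesssim\varepsilon^2$.

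Throughout, $\theta_0$ is taken large and $\varepsilon,T$ small, so that the hypotheses of the four lemmas are in force.
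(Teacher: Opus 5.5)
Your strategy is the same as the paper's: combine Lemmas \ref{lem:quad}--\ref{lem:last} at the top admissible index $s=\alpha+2=\widetilde{\alpha}-2$, then sum using $\varDelta_n\simeq\theta_n^{-1}$. The exponent bookkeeping, which is the whole content of this lemma, is wrong, and as written the proposal does not prove the statement. In every first entry of the maxima you replaced $-2\alpha$ by $-\alpha$, and the second entry of $\varsigma_1$ is also off by $2$. At $s=\alpha+2$ the correct values are
\[
\varsigma_1=\max\{14-2\alpha,\,8-\alpha\},\quad
\varsigma_2=\max\{16-2\alpha,\,10-\alpha\},\quad
\varsigma_3=\max\{18-2\alpha,\,12-\alpha\},\quad
\varsigma_4=\max\{20-2\alpha,\,14-\alpha\}.
\]
For $\alpha\geq 13$ these equal $8-\alpha$, $10-\alpha$, $12-\alpha$ and $14-\alpha$, respectively. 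In particular, $\varsigma_4(\alpha+2)\leq 1$ holds exactly when $\alpha\geq 13$, which is why that hypothesis appears. Since $\varsigma_4$ dominates the other three, your own summation formula immediately gives $\|E_N^+\|_{H_*^{\alpha+2}(\Omega_T)}+\|E_N^-\|_{H^{\alpha+3}(\Omega_T)}\lesssim\varepsilon^2\theta_N$. The obstacle you flagged (needing $\alpha\geq 19$) is an artifact of the slip. Your Step 1 ends in unexecuted alternatives (rechecking, interpolation) rather than an argument covering $13\leq\alpha\leq 18$. The \emph{sharper bound} $\varsigma_4(s)\leq s+12-2\alpha$ you proposed to verify is in fact just the correct value of $\varsigma_4(\alpha+2)$ once $\alpha\geq 6$.

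Step 2 has an additional flaw in the summability criterion. Boundedness of $\sum_{n<N}\theta_n^{\varsigma-1}\varDelta_n$ requires $\varsigma<0$, not $\varsigma-1<0$; for $0\leq\varsigma<1$ the sum grows like $\log\theta_N$ or $\theta_N^{\varsigma}$. Even by your own values, $13-\alpha$ and $15-\alpha$ are not negative at $\alpha=13$, so your $\tilde e_n''$ term would contribute growth of order $\theta_N^{3}$. With the correct exponents all three series converge uniformly in $N$:
\begin{itemize}
\item $\varsigma_1(\alpha+2)=8-\alpha\leq-5$ for $\tilde e_n'$;
\item $\varsigma_2(\alpha+2)=10-\alpha\leq-3$ for $\tilde e_n''$;
\item $\theta_n^{9-\alpha}\varDelta_n=\theta_n^{(10-\alpha)-1}\varDelta_n$ for $\tilde e_n'''$, which you handled correctly.
\end{itemize}
Then $\|\widetilde{E}_N\|_{H^{\alpha+5/2}(\Sigma_T)}\lesssim\varepsilon^2$ follows as in the paper. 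No new idea is needed; the fix is to redo the arithmetic. Without that fix, neither estimate is established on the whole stated range $\alpha\geq 13$.
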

\begin{proof}
	Lemmas \ref{lem:quad}--\ref{lem:last} imply that the error terms $e_{n}^{\pm}$, $\tilde{e}_n$ defined by \eqref{e.e.tilde} satisfy
	\begin{align} \label{es.sum1a}
		\left\{
		\begin{aligned}
			&	\|e_{n}^+ \|_{H_*^s(\Omega_{T})}+\|e_{n}^- \|_{H^{s+1}(\Omega_{T})}
			\lesssim \varepsilon^2 \theta_n^{\varsigma_4(s)-1}\varDelta_n,\\[1mm]
			&	\|\tilde{e}_n\|_{H^{s+1/2}(\Sigma_T)}
			\lesssim \varepsilon^2 \theta_n^{\varsigma_2(s)-1}\varDelta_n,
		\end{aligned}
		\right.
	\end{align}	
	for $n=0,1,\ldots,{N}-1 $ and $s=6,7,\ldots,\widetilde{\alpha}-2$.
	Note that $\varsigma_4({\alpha }+2)\leq 1$ for $\alpha\geq 13$.
	Let ${\alpha }+2= \widetilde{\alpha}-2$ and ${\alpha }\geq 13$. Then
	\begin{align*}
		&\|E_{{N}}^+\|_{H_*^{\alpha+2}(\Omega_{T})}+\|E_{N}^-\|_{H^{{\alpha }+3}(\Omega_{T})}
		\lesssim \sum_{n=0}^{{N}-1}\left( \|e_{n}^+ \|_{H_*^{\alpha+2}(\Omega_{T})}
		+\|e_{n}^-\|_{H^{{\alpha }+3}(\Omega_{T})}\right)
		\lesssim \varepsilon^2\theta_{{N}},
	\end{align*}
	due to  $\varDelta_n=\theta_{n+1}-\theta_n$ and $\theta_{{n}}:=\sqrt{\theta^2_0+{n}}$.
	Since $\varsigma_2({\alpha }+2)=10-{\alpha }\leq -3$, we get
	\begin{align*}
		\|\widetilde{E}_{{N}}\|_{H^{{\alpha }+5/2}(\Sigma_T)}
		\lesssim \sum_{n=0}^{{N}-1} \|\tilde{e}_{n} \|_{H^{{\alpha }+5/2}(\Sigma_T)}
		\lesssim \varepsilon^2,
	\end{align*}
	which finishes the proof of this lemma.
\end{proof}

\subsection{Proof of Theorem \ref{thm:main3}}
To get hypothesis $(\mathbf{H}_{{N}})$ from $(\mathbf{H}_{{N}-1})$,
we deduce the estimates for the source terms $f_{{N}}^{\pm}$ and $g_{{N}} $ given in \eqref{source} as follows.
\begin{lemma} \label{lem:source}
	Let $\widetilde{\alpha}={\alpha }+4$ and ${\alpha }\geq 13$.
	If $\theta_0\geq 1$ is large enough and $\varepsilon,T>0$ are sufficiently small, then
	\begin{align}
		&
		\|f_{{N}}^+\|_{H_*^s(\Omega_{T})}+\|f_{{N}}^-\|_{H^{s+1}(\Omega_{T})}
		\lesssim \varDelta_{{N}}\left(\theta_{{N}}^{s-{\alpha }-2} (\|f^a\|_{H_*^{\alpha+1}(\Omega_{T})}
		+\varepsilon^2 )  +\varepsilon^2\theta_{{N}}^{\varsigma_4(s)-1}\right),
		\nonumber \\[0.5mm] 
		&
		\| g_{{N}} \|_{H^{s+3/2}(\Sigma_{T}) }
		\lesssim  \varepsilon^2 \varDelta_{{N}}\left(\theta_{{N}}^{s-{\alpha }-2}
		+\theta_{{N}}^{\varsigma_2(s+1)-1}\right)\nonumber
		\qquad\qquad \textrm{for }s=6,7,\ldots,\widetilde{\alpha}+1.
	\end{align}
\end{lemma}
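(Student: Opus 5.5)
The plan is to follow the standard Nash--Moser bookkeeping, as in \cite{CS08MR2423311,TW21MR4201624}. Everything comes from the definition \eqref{source} of the source terms, written at steps $N$ and $N-1$. Subtracting the two identities gives
\begin{align*}
f_N^+&=(\mathcal{S}_{\theta_N}-\mathcal{S}_{\theta_{N-1}})f^a-(\mathcal{S}_{\theta_N}-\mathcal{S}_{\theta_{N-1}})E_{N-1}^+-\mathcal{S}_{\theta_N}e_{N-1}^+,\\
f_N^-&=-(\mathcal{S}_{\theta_N}-\mathcal{S}_{\theta_{N-1}})E_{N-1}^--\mathcal{S}_{\theta_N}e_{N-1}^-,\\
g_N&=-(\mathcal{S}_{\theta_N}-\mathcal{S}_{\theta_{N-1}})\widetilde{E}_{N-1}-\mathcal{S}_{\theta_N}\tilde{e}_{N-1}.
\end{align*}
Each piece is then bounded separately with the smoothing properties of Proposition \ref{pro:smooth}.

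For the difference terms, I write $\mathcal{S}_{\theta_N}-\mathcal{S}_{\theta_{N-1}}=\int_{\theta_{N-1}}^{\theta_N}\frac{\d}{\d\theta}\mathcal{S}_\theta\,\d\theta$ and apply \eqref{smooth.p1c} with $k=s$. Since $\theta_{N-1}\le\theta\le\theta_N\le 2\theta_{N-1}$ for $\theta_0$ large, this gives a factor $\varDelta_{N-1}\lesssim\varDelta_N$ times $\theta_N^{s-j-1}$ times the $H^j$-norm of the argument.
\begin{itemize}
\item For $f^a$, take $j=\alpha+1$. This yields $\varDelta_N\theta_N^{s-\alpha-2}\|f^a\|_{H_*^{\alpha+1}(\Omega_T)}$.
\item For $E_{N-1}^\pm$, take $j=\alpha+2$ (respectively $\alpha+3$ in the standard norm for the minus part). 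Lemma \ref{lem:sum2} gives $\|E_{N-1}^\pm\|\lesssim\varepsilon^2\theta_{N-1}$, so the contribution is $\varepsilon^2\varDelta_N\theta_N^{s-\alpha-2}$.
\item For $\widetilde{E}_{N-1}$, take $j=\alpha+5/2$ in the $H^{s+3/2}(\Sigma_T)$ norm. Lemma \ref{lem:sum2} gives the bound $\varepsilon^2$, hence $\varepsilon^2\varDelta_N\theta_N^{s+3/2-\alpha-5/2-1}=\varepsilon^2\varDelta_N\theta_N^{s-\alpha-2}$.
\end{itemize}
The smoothing operators on $\Sigma_T$ allow noninteger indices, so these choices are admissible.

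For the last terms $\mathcal{S}_{\theta_N}e_{N-1}$, the interior pieces are handled by \eqref{smooth.p1a} with $k=j$ at levels $s\le\widetilde{\alpha}-2$. Then \eqref{es.sum1a} gives $\varepsilon^2\theta_{N-1}^{\varsigma_4(s)-1}\varDelta_{N-1}\lesssim\varepsilon^2\theta_N^{\varsigma_4(s)-1}\varDelta_N$.

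The main obstacle is the range $s=\widetilde{\alpha}-1,\widetilde{\alpha},\widetilde{\alpha}+1$, which lies beyond the range of the error lemmas. There I use \eqref{smooth.p1a} with a loss, namely $\|\mathcal{S}_{\theta_N}e\|_{H^s_*}\lesssim\theta_N^{s-(\widetilde{\alpha}-2)}\|e\|_{H_*^{\widetilde{\alpha}-2}}$. This gives $\theta_N^{s-\widetilde{\alpha}+2+\varsigma_4(\widetilde{\alpha}-2)-1}$, and I would check that this exponent is $\le\varsigma_4(s)-1$. It is, because $\varsigma_4$ is affine with slope $1$ for $s\ge\alpha$ and $\widetilde{\alpha}-2\ge\alpha$.

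The boundary error needs the same treatment, with an extra care point. The bound must be in $H^{s+3/2}$, while \eqref{es.sum1a} controls $\tilde{e}_{N-1}$ in $H^{s'+1/2}$. I therefore apply \eqref{es.sum1a} at $s'=s+1$, for $s+1\le\widetilde{\alpha}-2$, which produces exactly the exponent $\varsigma_2(s+1)-1$. For larger $s$ I again use the smoothing loss from $s'=\widetilde{\alpha}-2$, using that $\varsigma_2$ has slope $1$ there. Finally, since $\varepsilon^2\le\varepsilon$ and the terms are consistent, summing the three contributions gives both estimates of the lemma.
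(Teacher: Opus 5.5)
Your proposal is correct and follows the paper's proof: you decompose $f_N^\pm$ and $g_N$ via \eqref{source}, bound the difference terms with \eqref{smooth.p1c} and Lemma \ref{lem:sum2}, and split the $\mathcal{S}_{\theta_N}e_{N-1}$ terms at $s=\widetilde{\alpha}-2$ using \eqref{es.sum1a} below and the smoothing loss above. Your explicit check that the exponents close because $\varsigma_2$ and $\varsigma_4$ have slope $1$ in the relevant range, and your use of level $s+1$ for $\tilde{e}_{N-1}$, spell out details the paper leaves implicit.
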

\begin{proof}
	It follows from Proposition \ref{pro:smooth} and estimates \eqref{es.sum1a} that
	\begin{align*}
		&	\big\|\mathcal{S}_{\theta_{{N}}} e_{N-1}^+ \big\|_{H_*^s(\Omega_{T})}
		\lesssim 	\big\|  e_{N-1}^+ \big\|_{H_*^s(\Omega_{T})}
		\lesssim \varepsilon^2 \theta_N^{\varsigma_4(s)-1}\varDelta_N
		\quad&& \textrm{if } 6\leq s\leq \widetilde{\alpha}-2,\\
		&	\big\|\mathcal{S}_{\theta_{{N}}} e_{N-1}^+ \big\|_{H_*^s(\Omega_{T})}
		\lesssim \theta_{{N}}^{s-\widetilde{\alpha}+2}	
		\big\|  e_{N-1}^+ \big\|_{H_*^{\widetilde{\alpha}-2}(\Omega_{T})}
		\lesssim \varepsilon^2 \theta_N^{\varsigma_4(s)-1}\varDelta_N
		\quad &&\textrm{if }  s\geq \widetilde{\alpha}-2.
	\end{align*}
	Similarly, for all $s\geq 6$, we have
	\begin{align*}
		\big\|\mathcal{S}_{\theta_{{N}}} e_{N-1}^- \big\|_{H_*^{s+1}(\Omega_{T})}
		\lesssim \varepsilon^2 \theta_N^{\varsigma_4(s)-1}\varDelta_N,
		\quad 	
		\big\|\mathcal{S}_{\theta_{{N}}} \tilde{e}_{N-1}  \big\|_{H_*^{s+3/2}(\Sigma_{T})}
		\lesssim \varepsilon^2 \theta_N^{\varsigma_2(s+1)-1}\varDelta_N.
	\end{align*}
	By virtue of  identities \eqref{source}, Proposition \ref{pro:smooth}, and Lemma \ref{lem:sum2}, we infer
	\begin{align*}
		\big\|f_N^+\big\|_{H_*^{s}(\Omega_{T})}
		&\lesssim
		\big\|(\mathcal{S}_{\theta_{{N}}}-\mathcal{S}_{\theta_{{N}-1}})f^a
		-(\mathcal{S}_{\theta_{{N}}}-\mathcal{S}_{\theta_{{N}-1}})E_{{N}-1}^+
		-\mathcal{S}_{\theta_{{N}}}e_{{N}-1}^+\big\|_{H_*^{s}(\Omega_{T})} \\
		&
		\lesssim \varDelta_{{N}}
		\theta_{{N}}^{s-{\alpha }-2}\big(\big\|f^a\big\|_{H_*^{\alpha +1}(\Omega_{T})}
		+\theta_{{N}}^{-1}\big\|E_{{N}-1}^+\big\|_{H_*^{\alpha +2}(\Omega_{T})} \big)
		+\varepsilon^2 \theta_N^{\varsigma_4(s)-1}\varDelta_N,\\[1mm]
		\big\|f_{{N}}^-\big\|_{H^{s+1}(\Omega_{T})}
		&\lesssim \big\|(\mathcal{S}_{\theta_{{N}}}-\mathcal{S}_{\theta_{{N}-1}})E_{{N}-1}^-\big\|_{H^{s+1}(\Omega_{T})}
		+\big\|\mathcal{S}_{\theta_{{N}}}e_{{N}-1}^-\big\|_{H^{s+1}(\Omega_{T})}\\
		&\lesssim
		\varDelta_{{N}}
		\theta_{{N}}^{s-{\alpha }-3}\big\|E_{{N}-1}^-\big\|_{H^{\alpha+3}(\Omega_{T})}
		+\varepsilon^2 \theta_N^{\varsigma_4(s)-1}\varDelta_N,\\[1mm]
		\big\|g_{{N}} \big\|_{H^{s+3/2}(\Sigma_{T})}
		&\lesssim \big\|(\mathcal{S}_{\theta_{{N}}}-\mathcal{S}_{\theta_{{N}-1}})\widetilde{E}_{{N}-1}\big\|_{H^{s+3/2}(\Sigma_{T})}
		+\big\|\mathcal{S}_{\theta_{{N}}}\tilde{e}_{{N}-1}\big\|_{H^{s+3/2}(\Sigma_{T})}\\
		&\lesssim
		\varDelta_{{N}}
		\theta_{{N}}^{s-{\alpha }-2}\big\|\widetilde{E}_{{N}-1}\big\|_{H^{\alpha+5/2}(\Sigma_{T})}
		+\varepsilon^2 \theta_N^{\varsigma_2(s+1)-1}\varDelta_N,
	\end{align*}
	which together with Lemma \ref{lem:sum2} completes the proof of this lemma.
\end{proof}

Applying the tame estimate \eqref{tame.es} to the problem \eqref{effective.NM},
we can use Proposition \ref{pro:modified} and Lemma \ref{lem:source} to derive the estimate (a) in hypothesis $(\mathbf{H}_{{N}})$.
For brevity we omit the proof, which is similar to that for \cite[Lemma 16]{CS08MR2423311} and
\cite[Lemma 15]{T09ARMAMR2481071}.
\begin{lemma}\  \label{lem:Hl1}
	Let $\widetilde{\alpha}={\alpha }+4$ and ${\alpha }\geq 13$.
	If $\varepsilon,T,\|f^a\|_{H_*^{\alpha+1}(\Omega_{T})}/\varepsilon$ are sufficiently small and $\theta_0\geq 1$ is large enough, then
	\begin{align} \nonumber  
		\|\delta U_N\|_{H_*^s(\Omega_{T})}+\|\delta u_N\|_{H^s(\Omega_{T})} +\| \delta\psi_N\|_{H^{s+1/2}(\Sigma_T)}
		\leq \varepsilon \theta_{{N}}^{s-{\alpha }-1}\varDelta_{{N}}
	\end{align}
	for $s=6,7,\ldots,\widetilde{\alpha} $.
\end{lemma}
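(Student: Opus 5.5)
We apply the tame estimate \eqref{tame.es} of Theorem \ref{thm:main2} to the effective linear problem \eqref{effective.NM}, whose basic state is $(U^a+U_{N+1/2},u^a+u_{N+1/2},\varphi^a+\psi_{N+1/2})$. By Proposition \ref{pro:modified} this state satisfies \eqref{bas1}--\eqref{bas6} and \eqref{bas10}. We then convert the resulting bound on the good unknowns $(\delta\dot U_N,\delta\dot u_N,\delta\psi_N)$ into a bound on $(\delta U_N,\delta u_N,\delta\psi_N)$ using \eqref{good.NM}.

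First, we must check that \eqref{bas1} holds with a uniform $K$ and that the coefficient norm $\mathring{\rm C}_{s+4}$ is controlled. Lemma \ref{lem:triangle} and estimates \eqref{MS.est1}--\eqref{MS.est3} give, for $s+4\le\widetilde\alpha+4$,
\begin{align*}
\mathring{\rm C}_{s+4}\lesssim 1+\varepsilon^2\theta_N^{2(s+6-\alpha)_+}.
\end{align*}
The shift by $2$ comes from \eqref{MS.est2}, which is the worst of \eqref{MS.est1}--\eqref{MS.est3}. Lemma \ref{lem:app} bounds $(U^a,u^a,\varphi^a)$ by $C(M_0)$. For $s=6$ this gives a $K$ independent of $N$, so $T_0$ in Theorem \ref{thm:main2} can be fixed once.

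Second, we insert the source bounds of Lemma \ref{lem:source} into \eqref{tame.es}. The term $\|(f^+_N,f^-_N,g_N)\|_{H^{s+1}_*\times H^{s+2}\times H^{s+5/2}}$ is handled by Lemma \ref{lem:source} with $s$ replaced by $s+1$. This gives a bound
\begin{align*}
\varDelta_N\bigl(\theta_N^{s-\alpha-1}(\|f^a\|_{H^{\alpha+1}_*}+\varepsilon^2)+\varepsilon^2\theta_N^{\varsigma_4(s+1)-1}+\varepsilon^2\theta_N^{\varsigma_2(s+2)-1}\bigr).
\end{align*}
Since $\alpha\ge 13$, one checks $\varsigma_4(s+1)\le s-\alpha$ and $\varsigma_2(s+2)\le s-\alpha$ for $6\le s\le\widetilde\alpha$. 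The low-norm term $\|(f,g)\|_{H^6_*\times H^7\times H^8}$ is bounded by $\varepsilon\varDelta_N\theta_N^{6-\alpha}$ times small factors. Multiplied by $\mathring{\rm C}_{s+4}$, it contributes at most $\varepsilon^2\varDelta_N\theta_N^{2(s+6-\alpha)_++6-\alpha}$. This is below $\theta_N^{s-\alpha-1}$ because $\alpha\ge13$.

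Third, we recover $\delta U_N$ from $\delta\dot U_N=\delta U_N-\delta\Psi_N\,\p_1(U^a+U_{N+1/2})/\p_1(\Phi^{a+}+\Psi_{N+1/2})$. The Moser-type calculus inequalities bound the correction term in $H^s_*$ by
\begin{align*}
\|\delta\Psi_N\|_{H^s}\,\|\cdots\|_{W^{2,\infty}}+\|\delta\Psi_N\|_{L^\infty}\,\|U^a+U_{N+1/2}\|_{H^{s+2}_*}.
\end{align*}
Here $\|\delta\Psi_N\|_{H^{s}}\lesssim\|\delta\psi_N\|_{H^{s-1/2}}$, and $\|\delta\psi_N\|_{H^6}$ is controlled by the tame bound at $s=6$. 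The factor $\|U_{N+1/2}\|_{H^{s+2}_*}$ grows like $\varepsilon\theta_N^{(s+4-\alpha)_+}$, and the product still fits under $\varepsilon\theta_N^{s-\alpha-1}\varDelta_N$ for $s\le\widetilde\alpha=\alpha+4$. The treatment of $\delta u_N$ is the same.

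Collecting these gives
\begin{align*}
\|\delta U_N\|_{H^s_*}+\|\delta u_N\|_{H^s}+\|\delta\psi_N\|_{H^{s+1/2}}\le C\bigl(\|f^a\|_{H^{\alpha+1}_*}+\varepsilon^2\bigr)\theta_N^{s-\alpha-1}\varDelta_N.
\end{align*}
We then choose $\varepsilon$ and $\|f^a\|_{H^{\alpha+1}_*}/\varepsilon$ small so that $C(\|f^a\|/\varepsilon+\varepsilon)\le1$, which is the claim. The main obstacle is the exponent bookkeeping in the two high-norm products, namely $\mathring{\rm C}_{s+4}$ times the low norms of the sources and the good-unknown correction at top order $s=\widetilde\alpha$. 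This is where $\widetilde\alpha=\alpha+4$ and $\alpha\ge13$ must be used sharply. For the loss of two derivatives in \eqref{MS.est2}, we would first try to exploit that $\mathring{\rm C}_{s+4}$ enters only multiplied by a low norm of size $\varepsilon^2\theta_N^{6-\alpha}$ or smaller.
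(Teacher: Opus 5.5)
Your proposal follows the paper's route. The paper omits the details and defers to Coulombel--Secchi's Lemma 16: apply \eqref{tame.es} to \eqref{effective.NM} around the modified state of Proposition \ref{pro:modified}, insert Lemma \ref{lem:source} with $s$ replaced by $s+1$, and undo the good unknowns \eqref{good.NM} by Moser-type estimates. Your checks $\varsigma_4(s+1)\le s-\alpha$ and $\varsigma_2(s+2)\le s-\alpha$ are correct.

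One exponent count is wrong as written. Since \eqref{tame.es} bounds squared norms, after taking square roots the coefficient enters as $\mathring{\rm C}_{s+4}^{1/2}\lesssim 1+\varepsilon\theta_N^{(s+6-\alpha)_+}$, not as $\mathring{\rm C}_{s+4}$. Your exponent $2(s+6-\alpha)_++6-\alpha$ equals $26-\alpha$ at $s=\widetilde\alpha=\alpha+4$. That is $\le s-\alpha-1=3$ only for $\alpha\ge 23$, so the claim ``because $\alpha\ge13$'' is false for the expression you wrote. The correct exponent is $(s+6-\alpha)_++6-\alpha=16-\alpha$, which is $\le 3$ exactly when $\alpha\ge 13$; this is one place where that hypothesis is genuinely used.

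Also, for $\delta\Psi_N=\chi(x_1)\delta\psi_N$ one only has $\|\delta\Psi_N\|_{H^s(\Omega_T)}\lesssim\|\delta\psi_N\|_{H^s(\Sigma_T)}$, not $\|\delta\psi_N\|_{H^{s-1/2}(\Sigma_T)}$. This is harmless, because \eqref{tame.es} already controls $\|\delta\psi_N\|_{H^{s+1/2}(\Sigma_T)}$.
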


By virtue of identities \eqref{LB:conver}, estimates \eqref{es.sum2}--\eqref{es.sum1a}, and Proposition \ref{pro:smooth},
we can obtain the other estimates in hypothesis $(\mathbf{H}_{{N}})$ as stated in the next lemma, whose proof is similar to that of  \cite[Lemmas 17--18]{CS08MR2423311} and
\cite[Lemma 16]{T09ARMAMR2481071}.

\begin{lemma}\ \label{lem:Hl2}
	Let $\widetilde{\alpha}={\alpha }+4$ and ${\alpha }\geq 13$.
	If $\varepsilon,T, \|f^a\|_{H_*^{\alpha+1}(\Omega_{T})}/{\varepsilon}$ are sufficiently small and $\theta_0\geq 1$ is large enough, then
	\begin{alignat}{3}\label{Hl.b}
		\max\left\{\|\mathcal{L}_+( U_N,  \Psi_N)-f^a\|_{H_*^s(\Omega_{T})},\,
		\|\mathcal{L}_-( u_N,  \nu_N, \Psi_N)\|_{H^{s+1}(\Omega_{T})}\right\}
		\leq 2 \varepsilon \theta_{{N}}^{s-{\alpha }-1}
	\end{alignat}
	for  $s=6,7,\ldots,\widetilde{\alpha}-2$, and
	\begin{align}
		\label{Hl.c}
		\|\mathcal{B}( U_N, u_N, \psi_N) \|_{H^{s+3/2}(\Sigma_{T})}
		\leq  \varepsilon \theta_{{N}}^{s-{\alpha }-1}
		\quad  \textrm{for } s=7,8,\ldots,{\alpha }.
	\end{align}
\end{lemma}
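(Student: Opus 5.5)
The estimates follow from the identities \eqref{LB:conver} once the smoothing-operator bounds and the accumulated-error bounds are inserted. I treat the interior estimate \eqref{Hl.b} and the boundary estimate \eqref{Hl.c} separately, in both cases working at step $N$ with hypothesis $(\mathbf{H}_{N-1})$ and Lemmas \ref{lem:sum2}, \ref{lem:source}, \ref{lem:Hl1} available.

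\textbf{Interior estimate.} From \eqref{LB:conver} with $N$ replaced by $N-1$,
\[
\mathcal{L}_+(U_N,\Psi_N)-f^a=(\mathcal{S}_{\theta_{N-1}}-I)f^a+(I-\mathcal{S}_{\theta_{N-1}})E_{N-1}^+ + e_{N-1}^+ .
\]
I bound the three terms as follows.

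1. For the first term I use \eqref{smooth.p1b}. When $s\le \alpha+1$ this gives $\theta_{N-1}^{s-\alpha-1}\|f^a\|_{H_*^{\alpha+1}}$. When $s>\alpha+1$ I instead write it as $\mathcal{S}_{\theta_{N-1}}f^a - f^a$, use \eqref{smooth.p1a}, and note that $f^a\in H^m$ has $m$ large enough to cover $s\le\widetilde\alpha-2$. In both cases the smallness of $\|f^a\|_{H_*^{\alpha+1}}/\varepsilon$ makes the term at most $\tfrac{\varepsilon}{2}\theta_N^{s-\alpha-1}$, using that $\theta_{N-1}$ and $\theta_N$ are comparable.

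2. For the second term I use \eqref{smooth.p1b} with $j=\alpha+2$ when $s\le\alpha+2$, and \eqref{smooth.p1a} when $s\le\widetilde\alpha-2=\alpha+2$ only requires the case $s\le \alpha+2$. Combined with Lemma \ref{lem:sum2}, this gives $\varepsilon^2\theta_{N-1}^{s-\alpha-2}\theta_{N-1}=\varepsilon^2\theta_{N}^{s-\alpha-1}$ up to constants.

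3. For the third term, \eqref{es.sum1a} gives $\varepsilon^2\theta_{N-1}^{\varsigma_4(s)-1}\varDelta_{N-1}$. Since $\alpha\ge13$, I check that $\varsigma_4(s)-1\le s-\alpha-1$ for $s\le\alpha+2$: indeed $s+12-2\alpha\le s-\alpha$ and $(s-\alpha)_++18-2\alpha\le s-\alpha$, and also $\varDelta_{N-1}\le 1$.

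With $\varepsilon$ small, the three contributions sum to at most $2\varepsilon\theta_N^{s-\alpha-1}$. The estimate for $\mathcal{L}_-$ is the same, using the $H^{s+1}$ norm and $E^-$, and there is no $f^a$ term.

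\textbf{Boundary estimate.} Here
\[
\mathcal{B}(U_N,u_N,\psi_N)=(I-\mathcal{S}_{\theta_{N-1}})\widetilde E_{N-1}+\tilde e_{N-1}.
\]
For $s\le\alpha$ and the $H^{s+3/2}$ norm, \eqref{smooth.p1b} with $j=\alpha+5/2$ and Lemma \ref{lem:sum2} bound the first term by $\varepsilon^2\theta_{N-1}^{s-\alpha-1}$. For the second term I use the $\tilde e$ bound of \eqref{es.sum1a} at level $s+1$, which gives $\varepsilon^2\theta^{\varsigma_2(s+1)-1}\varDelta$. I then verify $\varsigma_2(s+1)-1\le s-\alpha-1$ for $s\le\alpha$ when $\alpha\ge13$: the relevant exponents are $s+9-2\alpha\le s-\alpha$ and $13-2\alpha\le s-\alpha$, the latter since $s\ge7$. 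Smallness of $\varepsilon$ then gives the factor $\varepsilon$ in \eqref{Hl.c}.

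\textbf{Expected difficulty.} The main obstacle is bookkeeping. I must make sure every regularity index used stays inside the ranges where the error lemmas and the smoothing estimates are valid; in particular, the $\tilde e$ estimate at level $s+1\le\alpha+1\le\widetilde\alpha-2$ has to be admissible. I also need the comparability $\theta_{N-1}\sim\theta_N$, which holds because $\theta_0\ge1$ is taken large. For $N=0$ the same computation applies directly, with $E_0=0$ and $f_0^+=\mathcal{S}_{\theta_0}f^a$.
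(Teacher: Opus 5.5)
Your route is the one the paper intends. The paper proves this lemma only by citing \eqref{LB:conver}, \eqref{es.sum2}--\eqref{es.sum1a}, Proposition~\ref{pro:smooth} and the analogous lemmas of Coulombel--Secchi and Trakhinin. Your handling of $(I-\mathcal{S}_{\theta_{N-1}})E^\pm_{N-1}$, $e^\pm_{N-1}$, $(I-\mathcal{S}_{\theta_{N-1}})\widetilde E_{N-1}$ and $\tilde e_{N-1}$ is correct, including the index ranges, $\varDelta_{N-1}\le 1$ and $\theta_{N-1}\sim\theta_N$.

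\textbf{Gap at the top index.} At $s=\widetilde\alpha-2=\alpha+2$ your splitting gives
\[
\|(\mathcal{S}_{\theta_{N-1}}-I)f^a\|_{H^{\alpha+2}_*(\Omega_T)}\lesssim \theta_{N-1}\|f^a\|_{H^{\alpha+1}_*(\Omega_T)}+\|f^a\|_{H^{\alpha+2}_*(\Omega_T)} .
\]
The second term is not made small by the smallness of $\|f^a\|_{H^{\alpha+1}_*(\Omega_T)}/\varepsilon$. An $H^{\alpha+2}_*$ norm cannot be controlled by an $H^{\alpha+1}_*$ norm. Nor can any bound of the form $\theta^{s-\alpha-1}\|f^a\|_{H^{\alpha+1}_*}$ hold for $s>\alpha+1$, since $\mathcal{S}_\theta f^a$ is smooth and $f^a$ need not be.

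The fix is to use the other hypothesis of the lemma, the smallness of $T$. The proof of Theorem~\ref{thm:main3} applies Lemma~\ref{lem:app} with compatibility order $\widetilde\alpha+5\ge\alpha+2$. So \eqref{f^a:est} gives $\|f^a\|_{H^{\alpha+2}_*(\Omega_T)}\lesssim\delta_0(T)$. After fixing $\varepsilon$, choose $T$ with $C\delta_0(T)\le\varepsilon/4$. This suffices because $\theta_N\ge 1$.

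\textbf{Minor arithmetic slip.} The first entry of $\varsigma_2(s+1)$ is $(s+3-\alpha)_++12-2\alpha$, which for $s\le\alpha$ is bounded by $15-2\alpha$, not $13-2\alpha$. The needed inequality $\varsigma_2(s+1)\le s-\alpha$ still holds, because $15-2\alpha\le 7-\alpha\le s-\alpha$ for $\alpha\ge 8$. So \eqref{Hl.c} is unaffected.
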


From Lemmas \ref{lem:Hl1} and \ref{lem:Hl2}, we have derived hypothesis $(\mathbf{H}_{{N}})$,
provided that ${\alpha }=\widetilde{\alpha}-4\geq 13$ holds,
$\theta_0 \geq 1$ is large enough,
and $\varepsilon,$ $T$, $\|f^a\|_{H_*^{\alpha+1}(\Omega_{T})}/\varepsilon$ are sufficiently small.
Fixing the constants ${\alpha }\geq 13$, $\widetilde{\alpha}={\alpha }+4$,
$\varepsilon>0$, and $\theta_0\geq1$,
we can establish $(\mathbf{H}_{0})$ as in \cite[Lemma 17]{T09ARMAMR2481071}.

\begin{lemma}\ \label{lem:H0}
	If time $T>0$ is small enough, then hypothesis $(\mathbf{H}_0)$ is satisfied.
\end{lemma}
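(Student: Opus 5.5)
The plan is to verify the three parts (a)--(c) of hypothesis $(\mathbf{H}_0)$ directly, using that $(U_0,u_0,\psi_0)=0$ by Assumption (A-1) and that only the first step $(\delta U_0,\delta u_0,\delta\psi_0)$ is nontrivial. Since $U_0=u_0=\psi_0=0$, we have $\mathcal{L}_+(0,0)=0$, $\mathcal{L}_-(0,0,0)=\mathbb{L}_-(u^a,\nu^a,\Phi^{a-})$ and $\mathcal{B}(0,0,0)=\mathbb{B}(U^a,u^a,\varphi^a)=0$ by \eqref{app1d}. For part (b) with $n=0$ we need
\[
\|f^a\|_{H_*^s(\Omega_T)}\le 2\varepsilon\theta_0^{s-\alpha-1}
\]
for $6\le s\le\widetilde\alpha-2=\alpha+2$. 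Here the strongest norm is $H_*^{\alpha+2}\subset$ (controlled by) $H^{\alpha+2}$ with $\alpha+2\le m$. By \eqref{f^a:est}, $\|f^a\|_{H^m(\Omega_T)}\le\delta_0(T)\to0$, so choosing $T$ small (with $\varepsilon,\theta_0$ already fixed) gives this bound for every $s$ in the finite range. The $\mathcal{L}_-$ part vanishes, since $u^a$ solves \eqref{NP1b} for $(\nu^a,\Phi^{a-})$ by Step 3 of Lemma~\ref{lem:app}. Part (c) is trivial because $\mathcal{B}(0,0,0)=0$.

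For part (a) with $n=0$, I would note that $\delta\dot U_0,\delta\dot u_0,\delta\psi_0$ solve \eqref{effective.NM} with modified state $U_{1/2},u_{1/2},\psi_{1/2}$ built by Proposition~\ref{pro:modified}. Since $\psi_0=0$, these modified quantities are essentially zero apart from the correction functions, and by uniqueness of the transport and Maxwell problems they are in fact $0$. The source terms are $f_0^+=\mathcal{S}_{\theta_0}f^a$ and $f_0^-=g_0=0$ by (A-2). Applying the tame estimate \eqref{tame.es} of Theorem~\ref{thm:main2} (valid for $T\le T_0$, with the basic state $(U^a,u^a,\varphi^a)$ satisfying \eqref{bas1}--\eqref{bas6} and \eqref{bas10} by Lemma~\ref{lem:app}) gives
\[
\|(\delta\dot U_0,\delta\dot u_0,\delta\psi_0)\|_s\lesssim C(M_0)\,\|\mathcal{S}_{\theta_0}f^a\|_{H_*^{s+1}}\lesssim C(M_0)\,\theta_0^{(s+1-m)_+}\delta_0(T).
\]
I would then recover $\delta U_0,\delta u_0,\delta\Psi_0$ from \eqref{good.NM} with Moser inequalities, using the bound \eqref{app2a} on $\p_1(U^a,u^a)$. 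The approximate solution has regularity $H^{m+1}$, which suffices for $s\le\widetilde\alpha=\alpha+4\le m-1$.

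The main obstacle is bookkeeping the regularity indices. The coefficient loss $\mathring{\rm C}_{s+4}$ in \eqref{tame.es} must be controlled by \eqref{app2a}, which requires $s+4\le m+1$ at $s=\widetilde\alpha$. This may force either a relation between $m$ and $\alpha$ or a smoothing of $U^a$, which I would handle by the paper's choice $\widetilde\alpha=m-5$. After this check, the right-hand side is $C(M_0,\theta_0)\delta_0(T)$, while the target $\varepsilon\theta_0^{s-\alpha-1}\varDelta_0$ is a fixed positive number. Taking $T$ small therefore closes all estimates simultaneously, finishing $(\mathbf{H}_0)$.
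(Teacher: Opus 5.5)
Your proposal is correct and follows essentially the argument the paper relies on; the paper gives no details and just refers to the corresponding lemma in Trakhinin's work on current-vortex sheets. Since $(U_0,u_0,\psi_0)=0$, parts (b) and (c) reduce to the smallness of $\|f^a\|_{H^{\alpha+2}_*(\Omega_T)}$ together with $\mathbb{L}_-(u^a,\nu^a,\Phi^{a-})=0$ and \eqref{app1d}. Part (a) follows from the tame estimate \eqref{tame.es} applied around the approximate solution, with the step-$0$ modified state identically zero, source $(\mathcal{S}_{\theta_0}f^a,0,0)$, and index bookkeeping ($\widetilde\alpha+4\le m+1$, $\widetilde\alpha+1\le m$) consistent with the paper. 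This gives a bound $C(M_0)\delta_0(T)$ that falls below the fixed target $\varepsilon\theta_0^{s-\alpha-1}\varDelta_0$ once $T$ is small.
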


We are ready to conclude the proof of Theorem \ref{thm:main3}, that is, the existence of 2D relativistic plasma--vacuum interfaces.

\vspace{2mm}
\noindent  {\bf Proof of Theorem {\rm\ref{thm:main3}}.}\quad
Suppose that the initial data $(U_0,u_0, \varphi_0)$ satisfy all the assumptions in Theorem {\rm\ref{thm:main3}}.
Set $\widetilde{\alpha}=m+{5}$ and ${\alpha }=\widetilde{\alpha}-4\geq 13$.
Then the initial data $(U_0,u_0,\varphi_0)$ are compatible up to order $
\widetilde{\alpha}+{5}$.
Taking $\varepsilon, T>0$ small enough and $\theta_0\geq 1$ suitably large, we can
verify all the requirements of Lemmas \ref{lem:Hl1}--\ref{lem:H0} from  \eqref{app2a} and \eqref{f^a:est}.
Therefore, we can find some time $T>0$, such that hypothesis $(\mathbf{H}_{{N}})$ is satisfied for all ${N}\in\mathbb{N}$, leading to
\begin{align*}
	\sum_{N=0}^{\infty}\left(
	\|\delta U_N\|_{H_*^s(\Omega_{T})}+\|\delta u_N\|_{H^{s}(\Omega_{T})}
	+\| \delta\psi_N \|_{H^{s+1/2}(\Sigma_T)}
	\right)
	\lesssim \sum_{N=0}^{\infty}\theta_N^{s-{\alpha }-2} <\infty
\end{align*}
for $s=6,7,\ldots, {\alpha }-1.$
So the sequence $(U_{N},u_N,\psi_N)$ converges to some limit $(U, u, \psi)$ in $H_*^{{\alpha }-1}(\Omega_{T})\times H^{{\alpha }-1}(\Omega_{T}) \times H^{{\alpha }-1/2}({\Sigma_T})$.
Passing to the limit in \eqref{Hl.b} and \eqref{Hl.c} for $s=\alpha-1=m $ implies \eqref{P.new}, and
hence $(\widehat{U}, \hat{u}, \varphi)=(U^a+U, u^a+u, \varphi^a+\psi)$
is a solution of the nonlinear problem \eqref{NP1} on $[0,T]$.
The uniqueness of solutions to the problem \eqref{NP1} can be achieved by a standard argument (see, {\it e.g.}, \cite[\S 13]{ST14MR3151094} or \cite[\S 3.5]{TW22aMR4444136}).
The proof is complete.
\qed

	
	\appendix
	\titleformat{\section}{\large\bfseries\centering}{Appendix \thesection}{1em}{}

\vspace*{2mm}

\section[RMHD and symmetrization]{RMHD and symmetrization} \label{app:A}
The equations of ideal relativistic magnetohydrodynamics (RMHD) for perfectly conducting plasmas in Minkowski spacetime $\mathbb{R}^{1+d}$ read as (see \cite[\S\S 30--34]{Lichnerowicz67}, \cite[Chapter 22]{GKP19}, and \cite[\S 2.4]{A90zbMATH05040442}):
\begin{align}\label{RMHD1}
	\nabla_{\alpha} (\rho u^{\alpha})=0,\quad
	\nabla_{\alpha}  T^{\alpha\beta}=0,\quad
	\nabla_{\alpha}(u^{\alpha}b^{\beta}-u^{\beta}b^{\alpha})=0,
\end{align}
where $\nabla_{\alpha}$ is the covariant derivative with respect to the Minkowski metric
\begin{align*}
	(g_{\alpha \beta}):=
	\left\{
	\begin{aligned}
		&\mathrm{diag}\,(-1,1,1)\quad &&\textrm{if } d=2,\\
		&\mathrm{diag}\,(-1,1,1,1)\quad &&\textrm{if }d=3.
	\end{aligned}
	\right.
\end{align*}
We denote by $u^{\alpha}$ the components of the $(d+1)$-velocity,
by $b^{\alpha}$ the components of the magnetic field $(d+1)$-vector with respect to the plasma velocity,
and by $T^{\alpha\beta}$ the total energy--momentum tensor,
so that
\begin{align}\label{tensorT}
	g_{\alpha \beta} u^{\alpha} u^{\beta}=-1,\ \
	g_{\alpha \beta} u^{\alpha} b^{\beta}=0,\ \
	T^{\alpha\beta}=(\rho \mathfrak{f}+|b|^2)u^{\alpha}u^{\beta}+\epsilon^2 q g^{\alpha\beta} -b^{\alpha}b^{\beta},
\end{align}
where $\mathfrak{f}=1+\epsilon^{2}(\mathfrak{e}+ p/\rho)$ is the index of the fluid,
$\epsilon^{-1}$ is the speed of light,
and $q=p+\frac{1}{2} \epsilon^{-2}|b|^2$ is the total pressure,
with $|b|^2:=g_{\alpha \beta} b^{\alpha} b^{\beta}$.
The thermodynamic variables $\rho$ (particle number density),
$\mathfrak{e}$ (internal energy), and $p$ (pressure) should satisfy
the Gibbs relation \eqref{Gibbs}.

As in \cite{TW21MR4201624,FT13MR3044369},
we can derive the RMHD equations \eqref{RMHD0}--\eqref{H.inv1} from \eqref{RMHD1}--\eqref{tensorT} by introducing the coordinate velocity $v:=(v_1,\ldots,v_d)$,
the magnetic field $H:=(H_1,\ldots,H_d)$, and
the spacetime coordinates $(t,x)=(\epsilon x^0, x^1,\ldots,x^d)$
with
\begin{align*}
	v_i:=\epsilon^{-1}\varGamma^{-1} u^{i},\quad
	\varGamma:=u^0>0,\quad
	H_i:=\epsilon^{-1}(u^0 b^i-u^i b^0).
\end{align*}

For smooth solutions, the Gibbs relation \eqref{Gibbs} implies $u^{\alpha} \nabla_{\alpha} S=0$ (cf.~\cite[\S 34]{Lichnerowicz67}), and hence $(\p_t +v\cdot \nabla) S=0$.
{The symmetrization of the RMHD system \eqref{RMHD0} was obtained in \cite{RS81MR0628566,A90zbMATH05040442,AP87MR0877994} via Godunov's procedure, in \cite{FT13MR3044369} using the Lorentz transform, and in \cite{TW21MR4201624} by direct verification.}
More precisely,
under the physical assumption \eqref{cs:def},
for smooth solutions $V:=(p,\varGamma v,H,S)$ with $\rho_*<\rho<\rho^*$,
the RMHD equations \eqref{RMHD0} can be reduced to the following {\it symmetric hyperbolic system} (cf.~\cite[(5.10)]{TW21MR4201624}):
\begin{align}\label{RMHD.vec}
	B_0(V)\p_t V+
	B_j(V)\p_j V=0,
\end{align}
where
\begin{align} \label{B.def}
	B_0(V):=
	\begin{bmatrix}
		\;\dfrac{\varGamma}{\rho a^2}\;&\; \epsilon^2 v^{\mathsf{T}}\;& \;0\;&\; 0\;\\[2.5mm]
		\epsilon^2 v &\mathcal{A}_0&O_d& 0\\[1.5mm]
		0& O_d&\mathcal{M}_0&0\\[1mm]
		\w0\;&\;\w0\;&\;\w0\;&\;\w1
	\end{bmatrix},\quad
	B_j(V)=\begin{bmatrix}
		\;\dfrac{\varGamma v_j}{\rho a^2}\;
		& \;\bm{e}_j^{\mathsf{T}}\;& \;0\;& \;0\;\\[2.5mm]
		\bm{e}_j& \mathcal{A}_j & \mathcal{N}_j^{\mathsf{T}} & 0\\[1.5mm]
		0& \mathcal{N}_j & v_j\mathcal{M}_0 &0\\[1mm]
		\w0\;&\;\w0\;&\;\w0\;&\;\w{v_j}
	\end{bmatrix},
\end{align}
with
\begin{align} \label{M0.cal:def}
	\mathcal{M}_0:= \;&  \varGamma^{-1} \big(I_d+\epsilon^2\varGamma^2 v\otimes v\big) ,\quad a:=\sqrt{p_{\rho}(\rho,S)},	\\[0.5mm]
\nonumber	\mathcal{A}_0:=\;&
	\big(\rho \mathfrak{f}\varGamma+{\epsilon^2 \varGamma^{-1}|H|^2} \big)\big(I_d-\epsilon^2v\otimes v \big)
	- \epsilon^2 \varGamma^{-1} \big(|b|^2  v\otimes v+ H\otimes H\big)\\[0.5mm]
	&
	+ \epsilon^4 \varGamma^{-1}   (v\cdot H )\big(H\otimes v+v\otimes H\big),
	\nonumber
\end{align}	
and for $j=1,\ldots,d$,
\begin{align*}	
	\mathcal{N}_j:= \;&	\big\{\varGamma^{-2} {H}+\epsilon^2(v\cdot H )v \big\}\otimes	\big(\bm{e}_j- \epsilon^2 v_j v \big)- {\varGamma^{-2}} {H_j} I_d,\\[1mm]	
	\mathcal{A}_j:= \;&
	v_j\left\{
	\big(\rho \mathfrak{f}\varGamma+{\epsilon^2\varGamma^{-1} |H|^2}\big)\big(I_d-\epsilon^2v\otimes v \big)
	+\epsilon^2{\varGamma^{-1}} \big(|b|^2 v\otimes v  -H\otimes H\big)\right\}\\[0.5mm]
	&
	+{\epsilon^2 {\varGamma^{-1}} H_j}
	\big\{\varGamma^{-2} {\big(H\otimes v+v\otimes H\big)}
	-2(v\cdot H )\big(I_d-\epsilon^2 v\otimes v\big) \big\}\\[0.5mm]
	&+{\varGamma^{-1}} \left\{\epsilon^2 (v\cdot H )H-|b|^2v\right\}\otimes  {\bm{e}_j}
	+{\varGamma^{-1}} {\bm{e}_j} \otimes \left\{\epsilon^2 (v\cdot H )H-|b|^2v\right\}.
\end{align*}
Here we denote $\bm{e}_j:=(\delta_{j1},\ldots,\delta_{jd})$, where $\delta_{ij}$ is the Kronecker delta.
{The positivity of $B_0(V)$ is guaranteed by the physical condition \eqref{cs:def}.}
Changing the primary unknowns to $U:=(q,v,H,S) $, we obtain the equations \eqref{RMHD:vec} with coefficient matrices $A_j^+(U)$ given by
\setlength{\arraycolsep}{4.5pt}
\begin{align}
	\label{Ai.def.r}
	A_{j}^+(U):= {\bm{J}}^{-1}B_0(V)^{-1} B_{j}(V) {\bm{J}}\qquad
	\textrm{for }j=1,\ldots,d,
\end{align}
where
\begin{align}\label{J.bm:def}
	{\bm{J}}:=\frac{\p V}{\p U}
	=\begin{bmatrix}
		1 &  \bm{a}^{\mathsf{T}} & -\bm{b}^{\mathsf{T}} &0\\
		0 &   \varGamma^2\mathcal{M}_0 & O_d &0\\
		0 & O_d & I_d &0\\
		\w{0} & \w{0} & \w{0}& \w{1}
	\end{bmatrix},
	\quad
	\begin{bmatrix}
		\bm{a}\\[1.5mm] \bm{b}
	\end{bmatrix}:=
	\begin{bmatrix}
		\epsilon^2 |H|^2 v -\epsilon^2(v\cdot H) H\\[1.5mm]
		\varGamma^{-2}{H}+\epsilon^2 (v\cdot H)v
	\end{bmatrix}.
\end{align}

Since system \eqref{RMHD.vec} is symmetric hyperbolic, we can choose the symmetrizer $S^+(U)$  for system \eqref{RMHD:vec} as
\begin{align}
	\label{S+U}
	S^+(U):={\bm{J}}^{\mathsf{T}} B_0(V) {\bm{J}},
\end{align}
where $B_0(V)$ and ${\bm{J}}$ are defined in \eqref{B.def} and \eqref{J.bm:def}, respectively.
From \eqref{Ai.def.r}--\eqref{S+U}, we find that $S^+(U)A_{j}^+(U)$ is identical to $A_{j}(U)$ defined in \cite[(5.12)]{TW21MR4201624} for $j=1,\ldots,d$. In view of the identity \cite[(5.13)]{TW21MR4201624},
we can use the first conditions in \eqref{BC2a} and \eqref{BC2b} to derive
\begin{align} 
S^+(U)A_{\rm b}^+
	=\begin{bmatrix}
		0 & \varGamma N^{\mathsf{T}} & 0   \\
		\varGamma N
		&  O_d  &0 \\
		0 & 0 & O_{d+1}
	\end{bmatrix} \qquad
	\textrm{ on } \Sigma(t),	
	\label{iden:App1}
\end{align}
where $A_{\rm b}^+$ is defined by \eqref{Ab:def}.

\vspace*{2mm}
	\section[Boundary conditions on the interface
	]{Boundary conditions on the interface
	} \label{app:B}
	

For the sake of completeness, we derive here the boundary conditions \eqref{BC1} for the relativistic plasma--vacuum interface problem. We focus on the spatial dimension $d=3$, since the 2D case can be treated similarly.

Introduce the plasma electric field
	\begin{align} \label{E:def}
		E:=-\epsilon v\times H.
	\end{align}
	In view of \eqref{Gamma:def} and \eqref{E:def}, we infer
	\begin{align}
		\nonumber
		&2q=2p+|H|^2-|E|^2,\qquad
		E\times H=\epsilon |H|^2v-\epsilon (v\cdot H)H,\\
		\nonumber &
		E\otimes E+\epsilon^2\big(|v|^2H\otimes H + |H|^2 v\otimes v  -(v\cdot H)(H\otimes v+v\otimes H)\big)=|E|^2I_3,
	\end{align}
	from which we can rewrite \eqref{RMHD0b}--\eqref{RMHD0d} as
	\begin{align}\label{RMHD2b}
		&\p_t\big(\rho \mathfrak{f} \varGamma^2-\epsilon^2p\big)+\tfrac{1}{2}\epsilon^2\p_t\big(|H|^2+|E|^2\big)
		+\nabla \cdot \big( \rho \mathfrak{f} \varGamma^2 v+\epsilon E\times H \big)=0,\\[0.5mm]
		\nonumber		&\p_t\big( \rho \mathfrak{f} \varGamma^2 v+\epsilon E\times H \big)
		+\nabla p+\tfrac{1}{2}\nabla\big(|H|^2+ |E|^2\big)
		\\
		&\hspace*{7.7em}\label{RMHD2c}
		+\nabla\cdot \big( \rho \mathfrak{f} \varGamma^2 v\otimes v\big)-
		\nabla\cdot \big( H\otimes H+E\otimes E \big)=0,\\[0.5mm]
		&\epsilon\p_t H+\nabla\times E=0.\label{RMHD2d}
	\end{align}
	By virtue of the Maxwell's equations \eqref{Maxwell1}--\eqref{e.inv1}, we obtain
	\begin{align} \label{Max2}
		&	\tfrac{1}{2}\epsilon^2\p_t\big(|h|^2+|e|^2\big)+\nabla\cdot \big(\epsilon e\times h\big)=0,\\[1mm]
		\label{Max3}
		& 	 \p_t(\epsilon  e\times h)+\tfrac{1}{2}\nabla\cdot\big(|h|^2+|e|^2\big)-\nabla\cdot (h\otimes h+e\otimes e)=0.
	\end{align}	

Let $\Sigma(t)\subset\mathbb{R}^3$ be a smooth surface with a well-defined unit normal $\bm{n}(t,x)$ and move with the normal speed $\mathcal{V}(t,x)$ at point $x\in \Sigma(t)$ and time $t\geq 0$.
Denote by $\Omega^+(t)$ and $\Omega^-(t)$ the spatial regions occupied by the plasma and vacuum at time $t$, respectively, so that $\Sigma(t)=\overline{\Omega^+(t)}\cap\overline{\Omega^-(t)}$.
Assume that the unit normal $\bm{n}$ points into $\Omega^+(t)$ and that the interface $\Sigma(t)$ moves with the plasma flow, so that $\mathcal{V}=v\cdot \bm{n}$ on $\Sigma(t)$.
Then from the conservation laws \eqref{H.inv1}, \eqref{Maxwell1}--\eqref{h.inv1}, and \eqref{RMHD2b}--\eqref{Max3}, we obtain the following boundary conditions on $\Sigma(t)$:
	\begin{align} \nonumber 
		\left\{
		\begin{aligned}
			& -\epsilon^2 p \mathcal{V} +\tfrac{1}{2}\epsilon^2 \big(|H|^2+ |E|^2-|h|^2-|e|^2  \big)  \mathcal{V}
			=		\epsilon \bm{n}\cdot \big( E\times H-e\times h\big),\\[0.5mm]
			&\epsilon\mathcal{V} \big(E\times H -e\times h\big)+\big(H\otimes H+E\otimes E-h\otimes h-e\otimes e \big)\bm{n}
			\\
			&\hspace*{12em}
			=p\bm{n} +\tfrac{1}{2}\big(|H|^2+|E|^2-|h|^2-|e|^2\big)\bm{n},\\[0.5mm]
			&\epsilon\mathcal{V} \big(H-h\big)=\bm{n}\times (E-e),\qquad\quad
			\bm{n}\cdot (H-h)=0,
		\end{aligned}
		\right.
	\end{align}
	where $E$ is defined by \eqref{E:def}.
Assuming that both the plasma and vacuum magnetic fields are tangential to the interface $\Sigma(t)$, we obtain the desired boundary conditions \eqref{BC1} for 3D relativistic plasma--vacuum interfaces.
	
 
 \vspace*{2mm}
\section[Proof of Proposition \ref{Pro:invol}]{Proof of Proposition \ref{Pro:invol}} \label{app:C}	
The vacuum equations \eqref{NP1b} can be rewritten as
\begin{align} \label{h:equ.non}
	\epsilon \p_t^{\Phi^-} h+\nabla^{\Phi^-}\times e+\epsilon  (\nabla^{\Phi^-}\cdot h) \nu =0,
\end{align}	
and
\begin{alignat}{3}
\label{e:equ.non.3D}	
		&\epsilon \p_t^{\Phi^-} e-\nabla^{\Phi^-}\times h+\epsilon  (\nabla^{\Phi^-}\cdot e) \nu =0\qquad&&\textrm{if }d=3,\\
\label{e:equ.non.2D}			
		&\epsilon \p_t^{\Phi^-} e-\nabla^{\Phi^-}\times h =0\qquad&&\textrm{if }d=2. 	
\end{alignat}

Applying the divergence operators $\nabla^{\Phi^+}\cdot$
and $\nabla^{\Phi^-}\cdot$ to \eqref{Hbb:def} and \eqref{h:equ.non}--\eqref{e:equ.non.3D}, respectively, we get
\begin{alignat*}{3}
	&\big(\p_t^{\Phi^+}+v\cdot\nabla^{\Phi^+}\big)\big(\nabla^{\Phi^+}\cdot H\big)+\big(\nabla^{\Phi^+}\cdot v\big)\big(\nabla^{\Phi^+}\cdot H\big)=0\quad&&\textrm{in \ }\Omega,\\
	&\big(\p_t^{\Phi^-}+\nu \cdot\nabla^{\Phi^-}\big)\big(\nabla^{\Phi^-}\cdot h\big)+\big(\nabla^{\Phi^-}\cdot \nu \big)\big(\nabla^{\Phi^-}\cdot h\big)=0\quad&&\textrm{in \ }\Omega,\\
	&\big(\p_t^{\Phi^-}+\nu \cdot\nabla^{\Phi^-}\big)\big(\nabla^{\Phi^-}\cdot e\big)+\big(\nabla^{\Phi^-}\cdot \nu \big)\big(\nabla^{\Phi^-}\cdot e\big)=0\quad&&\textrm{in \ }\Omega,
	\quad\textrm{if }d=3.
\end{alignat*}
Since $\p_t\varphi=v \cdot N=\nu \cdot N$ on $\Sigma$ (cf.~\eqref{NP1c} and \eqref{nu:def0}),
we can deduce \eqref{H.h.inv2} and \eqref{e.inv2} provided that they hold at the initial time.
	
Take the scalar product of \eqref{Hbb:def} with $N:=(1, -\nabla'\varphi)$ and use the first condition in
\eqref{NP1c} to infer
\begin{align*}
	(\p_t+v'\cdot\nabla')(H\cdot N)+ (\nabla' \cdot v')(H\cdot N)=0\quad \textrm{on \ } \Sigma.
\end{align*}
Consequently, the first equation in \eqref{H.h.cons2} is satisfied for $t>0$ as long as it holds initially.

Taking the scalar product of \eqref{h:equ.non} with $N$, we utilize
the conditions \eqref{NP1c}
to discover
\begin{align*}
	\p_t(h\cdot N)=0\quad\textrm{on \ }\Sigma.
\end{align*}
The last identity implies the second equation in \eqref{H.h.cons2} provided it holds at $t=0$.

\vspace*{2mm}
	\section[Solvability of problem \eqref{u.natural:eq}]{Solvability of problem \eqref{u.natural:eq}} \label{app:D}

Let us demonstrate the existence and regularity of solutions $u^{\natural}$ to problem \eqref{u.natural:eq} in the following five steps.

\vspace*{2mm}
\noindent {\it Step 1}.\quad We first derive an important boundary constraint for \eqref{u.natural:eq}.
Dropping the subscript $\natural$ for notational simplicity,
we rewrite the problem \eqref{u.natural:eq} as
\begin{subequations} \label{B1}
	\begin{alignat}{3}
		\label{B1a}
		&\epsilon \p_t^{\mathring{\Phi}^-}h+\nabla^{\mathring{\Phi}^-}\times e+\epsilon   (\nabla^{\mathring{\Phi}^-}\cdot h) \mathring{\nu}= (f_1^-,\, f_2^-,\, f_3^-)
		&\qquad &  \textrm{in } \Omega_T,\\[0.5mm]
		\label{B1b}
		& \epsilon \p_t^{\mathring{\Phi}^-}e-\nabla^{\mathring{\Phi}^-}\times h+\epsilon  ( \nabla^{\mathring{\Phi}^-}\cdot e )\mathring{\nu}=(f_4^-,\, f_5^-,\, f_6^-)
		&& \textrm{in }\Omega_T,\\[0.5mm]
		\label{B1c}
	& e_2+\p_2\mathring{\varphi}e_1-\epsilon \p_t\mathring{\varphi} h_3=g_3
&\quad  &\textrm{on } \Sigma_T,\\[0.5mm]
		\label{B1d}
& e_3+\p_3\mathring{\varphi}e_1+\epsilon \p_t\mathring{\varphi} h_2=g_4
&\quad  &\textrm{on } \Sigma_T,\\[0.5mm]
		\label{B1e}
		&
		(h, e) =0   &&\textrm{if } t<0.
	\end{alignat}
\end{subequations}
Take the scalar product of \eqref{B1a} with $\mathring{N}=(1,-\p_2\mathring{\Phi}^-,-\p_3\mathring{\Phi}^-)$
and use the third constraint in \eqref{bas3} to obtain
\begin{align}\nonumber
\epsilon \p_t\big(h\cdot \mathring{N}\big)
&	+\p_2\big(e_3 +\p_3\mathring{\varphi}e_1+\epsilon \p_t\mathring{\varphi} h_2\big)
\\[0.5mm]
&	-\p_3\big(e_2+\p_2\mathring{\varphi}e_1-\epsilon \p_t\mathring{\varphi} h_3 \big)
=f_1^--\p_2\mathring{\varphi}f_2^- -\p_3\mathring{\varphi}f_3^-
	\quad\textrm{on } \Sigma_{T}.
		\label{B:id1}
\end{align}
Combining the last identity with \eqref{B1c}--\eqref{B1e} implies the boundary constraint
\begin{align}
	\label{B:con1}
	h\cdot \mathring{N} =g_5
	\qquad\textrm{on } \Sigma_{T},
\end{align}
where
\begin{align} \label{g4:def}
	g_5(t,x'):=\epsilon^{-1}\int_{-\infty}^{t}\big(f_1^- -\p_2\mathring{\varphi}f_2^-
	-\p_3\mathring{\varphi}f_3^-
	 -\p_2 g_4+\p_3 g_3\big)(s,0, x')\mathrm{d}s.
\end{align}

\vspace*{2mm}
\noindent {\it Step 2}.\quad
Let us subtract from the solution of \eqref{B1} a regular function satisfying \eqref{B1c}--\eqref{B1e} and \eqref{B:con1}.
For this purpose, we set $u^{\sharp}:=(h_1^{\sharp}, 0, 0, 0, e_2^{\sharp},e_3^{\sharp})$ with
\begin{align} \label{u.sharp:def}
	h_1^{\sharp}:=\mathfrak{R}_{T} g_5,\quad
	e_2^{\sharp}:=\mathfrak{R}_{T} g_3,\quad
	e_3^{\sharp}:=\mathfrak{R}_{T} g_4,
\end{align}
where $\mathfrak{R}_T$ is a continuous lifting operator from $H^{{m+3/2}}(\Sigma_T)$ to $H^{{m+2}}(\Omega_{T})$.
Then $u^{\sharp}$ satisfies \eqref{B1c}--\eqref{B1e}, \eqref{B:con1}, and
\begin{align} \nonumber
	\|u^{\sharp}\|_{H^{m+2}(\Omega_{T})}^2\, &\lesssim_K
	\|(f_1^-,\, \p_2\mathring{\varphi}f_2^- ,\,\p_3\mathring{\varphi}f_3^- ,\, \p_2 g_4,\,\p_3 g_3,\, g_3,\, g_4)\|_{H^{m+3/2}(\Sigma_T)}^2\\
	& \lesssim_K
	\| (g_3,g_4)\|_{H^{m+5/2}(\Sigma_T)}^2
	+ \|f^-\|_{H^{m+2}(\Omega_{T})}^2
	+\mathring{\rm C}_{m+4}\|f^-\|_{H^{6}(\Omega_{T})}^2,
	\label{u.sharp:es}
\end{align}
where $\mathring{\rm C}_{m+4}$ is defined by \eqref{norm:ring}.
Introduce
\begin{align} \label{u.tilde:def}
	\tilde{u}=(\tilde{h},\tilde{e}):=u-u^{\sharp},
\end{align}
so that
\begin{align} \label{u.tilde:eq}
	\left\{
	\begin{aligned}
		&L_-(\mathring{\nu},\mathring{\Phi}^-) \tilde{u}=\tilde{f}^-
		:= f^- -L_-(\mathring{\nu},\mathring{\Phi}^-)  u^{\sharp}
		&\quad  &\textrm{in } \Omega_T,	\\
	& \tilde{e}_2+\p_2\mathring{\varphi}\tilde{e}_1-\epsilon \p_t\mathring{\varphi} \tilde{h}_3=0
&\quad  &\textrm{on } \Sigma_T,\\[0.5mm]
& \tilde{e}_3+\p_3\mathring{\varphi}\tilde{e}_1+\epsilon \p_t\mathring{\varphi} \tilde{h}_2=0
&\quad  &\textrm{on } \Sigma_T,\\[0.5mm]
		&\tilde{u}=0
		&& \textrm{if } t<0,
	\end{aligned}
	\right.
\end{align}
and
\begin{align} \label{B:con2}
	\tilde{h}\cdot \mathring{N}  =0
	\quad  \textrm{on } \Sigma_T.
\end{align}
Similar to \eqref{mu:def1.3D}--\eqref{mu:def2}, we introduce
\begin{align}	\label{mu.tilde:def}
	\tilde{\mu}:=\mathring{J}_2^{-1}\tilde{u}.
\end{align}
Then we can reduce \eqref{u.tilde:eq} to the following equivalent problem:
\begin{align}\label{B3a}
	\left\{
	\begin{aligned}
		&	{\mathsf{A}}_0^-\p_t   \mu+  {\mathsf{A}}_j^-\p_j   \mu  +{\mathsf{A}}_{4}^- \mu =\mathring{J}_2^{\mathsf{T}}S_{-}(\mathring{\nu})\tilde{f}^-
		\quad \textrm{in }\Omega_T,
		\\[0.5mm]
		&		\tilde\mu_5=\tilde\mu_6=0 \quad
		\textrm{on }\Sigma_T,\qquad\qquad
		\tilde\mu  =0\quad
		\textrm{if } t<0,
	\end{aligned}
	\right.
\end{align}
where the coefficient matrices $\mathsf{A}_i^{-}$ and the symmetrizer $S_-$
are defined by \eqref{A.sf:def} and  \eqref{S-3D}, respectively.
The second identity in \eqref{decom1.3D} implies that
the problem \eqref{B3a} is characteristic and has correct number of boundary conditions.
However, the boundary conditions in \eqref{B3a} are {\it not maximally nonnegative} \cite{R85MR0797053}.

\vspace*{2mm}
\noindent {\it Step 3}.\quad
To overcome the above difficulty, We make use of the boundary constraint \eqref{B:con2}.
More precisely, we set
\begin{align}
	\widetilde{w}:=
	\big( \tilde\mu_1  ,\, \tilde\mu_2-\epsilon \mathring{\nu}_3 \tilde{\mu_4},\,  \tilde\mu_3+\epsilon \mathring{\nu}_2 \tilde{\mu_4},  \,
	\tilde{\mu_4}  ,\,    \tilde\mu_5+\epsilon \mathring{\nu}_3 \tilde\mu_1,  \,\tilde\mu_6-\epsilon \mathring{\nu}_2 \tilde\mu_1
	\big)
	=\mathring{J}_3^{-1} \mathring{J}_2^{-1}\tilde{u},
	\label{w.tilde:def}	
\end{align}
where $\mathring{J}_3$ is defined by \eqref{J3.ring:def}.
Using \eqref{B:con2} leads to the reformulated problem
\begin{align}\label{B3c}
	\left\{
	\begin{aligned}
		& { \bm{A}}_0^-\p_t \widetilde{w}   +{ \bm{A}}_j^-\p_j\widetilde{w}     +{ \bm{A}}_4^-  \widetilde{w} =
		\mathring{J}_3^{\mathsf{T}}\mathring{J}_2^{\mathsf{T}}S_{-}(\mathring{\nu})\tilde{f}^-
		\quad \textrm{in }\Omega_T, \\[1mm]
		&\widetilde{w}_5=\widetilde{w}_6=0 \quad
		\textrm{on }\Sigma_T,\qquad\qquad
		\widetilde{w}  =0\quad
		\textrm{if } t<0,
	\end{aligned}
	\right.
\end{align}
where $	\bm{A}_i^-$, for $i=0,\ldots,4$, are defined by \eqref{A.bm:def}.
It follows from the second identity in \eqref{decom2.3D} that the boundary conditions in \eqref{B3c} are {\it maximally nonnegative}.
Therefore, we can apply the results of \cite{S95MMASMR1346663,S96aMR1401431,OSY95MR1346224} to obtain the existence and regularity of solutions $\widetilde{w} $ to the problem \eqref{B3c} in anisotropic Sobolev spaces.

\vspace*{2mm}
\noindent {\it Step 4}.\quad
To compensate the missing normal regularity of the characteristic variables $\widetilde{w}_1=\tilde{h}\cdot\mathring{N}$
and $\widetilde{w}_4=\tilde{e}\cdot\mathring{N}$,
we apply the
divergence operator $\nabla^{\mathring{\Phi}^-}\cdot$ to the interior equations in \eqref{u.tilde:eq} and obtain
\begin{alignat}{3}
\nonumber	
&	\epsilon \big(\p_t^{\mathring{\Phi}^-}+\mathring{\nu}\cdot \nabla^{\mathring{\Phi}^-}\big) \nabla^{\mathring{\Phi}^-}\cdot \tilde{h}+\epsilon \big(\nabla^{\mathring{\Phi}^-}\cdot \mathring{\nu}\big) \nabla^{\mathring{\Phi}^-}\cdot \tilde{h}=
	\p_1^{\mathring{\Phi}^-}\tilde{f}_1^-  +  	\p_2^{\mathring{\Phi}^-}\tilde{f}_2^-
	+  	\p_3^{\mathring{\Phi}^-}\tilde{f}_3^-,\\
&	\epsilon \big(\p_t^{\mathring{\Phi}^-}+\mathring{\nu}\cdot \nabla^{\mathring{\Phi}^-}\big) \nabla^{\mathring{\Phi}^-}\cdot \tilde{e}+\epsilon \big(\nabla^{\mathring{\Phi}^-}\cdot \mathring{\nu}\big) \nabla^{\mathring{\Phi}^-}\cdot \tilde{e}=
\p_1^{\mathring{\Phi}^-}\tilde{f}_4^-  +  	\p_2^{\mathring{\Phi}^-}\tilde{f}_5^-
+  	\p_3^{\mathring{\Phi}^-}\tilde{f}_6^-.
\nonumber
\end{alignat}
Since $\p_t\mathring{\varphi}=\mathring{\nu}\cdot \mathring{N}$ on $\Sigma$, we can use the standard energy method 
to infer
\begin{align} \nonumber
\|\mathrm{D}^{\alpha}(\nabla^{\mathring{\Phi}^-}\cdot \tilde{h},\, \nabla^{\mathring{\Phi}^-}\cdot \tilde{e})(t) \|_{L^2(\Omega)}^2
	\lesssim_K
	\|(\tilde{f}^-, \widetilde{w})\|_{H^{m+1}(\Omega_{T})}^2
	+ \mathring{\rm C}_{m+4}
	\|(\tilde{f}^-, \widetilde{w})\|_{W^{2,\infty}(\Omega_{T})}^2
\end{align}
for all $|\alpha|\leq m$.
Combining the last inequality with the anisotropic energy estimates for solutions $\widetilde{w}$ of \eqref{B3c} (see, for instance, \cite[Theorem A]{S95MMASMR1346663}) and using the estimate \eqref{u.sharp:es}, we can derive 
\begin{align} \nonumber
	\| \widetilde{w} \|_{H^{m+1}(\Omega_{T})}^2
	\lesssim_K
	\|(f^-, g)\|_{H^{m+2}(\Omega_{T})\times H^{m+5/2}(\Sigma_T)}^2
	+ \mathring{\rm C}_{m+4}
	\|(f^-, g)\|_{H^{7}(\Omega_{T})\times H^{8}(\Sigma_T)}^2.
\end{align}
The same estimate holds also for the solution $u$ of the problem \eqref{B1} due to \eqref{u.sharp:es}, \eqref{u.tilde:def}, and \eqref{w.tilde:def}.

\vspace*{2mm}
\noindent {\it Step 5}.\quad
Finally, it remains to show the equivalence of the problems \eqref{B3a} and \eqref{B3c}.
It follows from the identity \eqref{B:id1} and
the definitions \eqref{g4:def}, \eqref{u.sharp:def}, \eqref{u.tilde:def},  \eqref{mu.tilde:def} that both solutions of \eqref{B3a} and \eqref{B3c} should satisfy
\begin{align} \label{B:id2}
	\epsilon \p_t\tilde{\mu}_1+\p_2\tilde{\mu}_6-\p_3\tilde{\mu}_5
	=0
	\quad
	\textrm{on }\Sigma_{T}.
\end{align}
In view of \eqref{w.tilde:def},
solutions of \eqref{B3c} should satisfy $\tilde\mu_5+\epsilon \mathring{v}_3 \tilde\mu_1=\tilde\mu_6-\epsilon \mathring{v}_2\tilde\mu_1=0$ on $\Sigma$, which along with \eqref{B:id2} and Gr\"{o}nwall's inequality yields $\tilde{\mu}_1|_{\Sigma}=0$
and  $\tilde{\mu}_5|_{\Sigma}=\tilde{\mu}_6|_{\Sigma}=0$.
On the other hand,
plugging the boundary conditions in \eqref{B3a} into \eqref{B:id2} gives $\tilde{\mu}_1|_{\Sigma}=0$, and hence $\widetilde{w}_5|_{\Sigma}=\widetilde{w}_6|_{\Sigma}=0$ follows.


	\vspace*{4mm}
	

	{\footnotesize 
		  }

\end{document}